\theoremstyle{plain}
\newtheorem{theorem}{Theorem}[section]
\newtheorem*{theorem*}{Theorem}
\newtheorem{lemma}[theorem]{Lemma}
\newtheorem{assumption}[theorem]{Assumption}
\newtheorem{corollary}[theorem]{Corollary}
\newtheorem{proposition}[theorem]{Proposition}
\theoremstyle{definition}
\newtheorem{definition}[theorem]{Definition}
\newtheorem*{definition*}{Definition}
\newtheorem{example}[theorem]{Example}
\newtheorem{remark}[theorem]{Remark}
\numberwithin{equation}{section}
\newcommand{\PreserveBackslash}[1]{\let\temp=\\#1\let\\=\temp}
\newcolumntype{R}[1]{>{\raggedleft\arraybackslash }b{#1}}
\newcolumntype{L}[1]{>{\raggedright\arraybackslash }b{#1}}
\newcolumntype{C}[1]{>{\centering\arraybackslash }m{#1}}
\newcommand{\eps}{\varepsilon}
\def\Ac{\mathcal{A}}
\def\Pc{\mathcal{P}}
\def\Fc{\mathcal{F}}
\def\Tc{\mathcal{T}}
\def\Cc{\mathcal{C}}
\def\Uc{\mathcal{U}}
\def\Xc{\mathcal{X}}
\def\Yc{\mathcal{Y}}
\def\R{\mathbb{R}}
\def\Rd{\R^n}
\def\Rn{\R^n}
\def\N{\mathbb{N}}
\def\rmx{\mathrm{x}}
\def\rmy{\mathrm{y}}
\newcommand{\rmz}{{\mathrm z}}
\DeclareMathOperator{\Id}{Id}
\def\BW{\operatorname{BW}}
\def\supp{\text{\normalfont supp}}
\def\dom{\text{\normalfont dom}}
\def\p{\text{\normalfont p}}
\newcommand{\calP}{\mathcal{P}}
\newcommand{\calR}{\mathcal{R}}
\newcommand{\Sc}{\mathfrak{S}_c}
\newcommand{\ov}[1]{\overline{#1}}
\newcommand{\xb}{{\bar x}}
\newcommand{\yb}{{\bar y}}
\newcommand{\zb}{{\bar z}}
\newcommand{\wb}{{\bar w}}
\newcommand{\nub}{{\bar\nu}}
\newcommand{\cu}{{\underline c}}
\newcommand{\xu}{{\underline x}}
\newcommand{\yu}{{\underline y}}
\newcommand{\Xu}{{\underline X}}
\newcommand{\Yu}{{\underline Y}}
\newcommand{\abs}[1]{\lvert #1 \rvert}
\providecommand\@dotsep{5}
\renewcommand{\listoftodos}[1][\@todonotes@todolistname]{%
  \@starttoc{tdo}{#1}}
\def\smtw{(LMP)}
\def\snccconv{(NNCC-conv)}
\DeclareMathOperator{\KL}{KL}
\newcommand{\ib}{{\bar\imath}}
\newcommand{\jb}{{\bar\jmath}}
\newcommand{\lb}{{\bar\ell}}
\newcommand{\mb}{{\bar m}}
\DeclareMathOperator{\GW}{GW}
\DeclareMathOperator{\GH}{GH}
\newcommand{\Gauged}{\mathbb{G}}
\newcommand{\bX}{{\mathbf X}}
\newcommand{\bY}{{\mathbf Y}}
\newcommand{\bZ}{{\mathbf Z}}
\DeclareMathOperator{\hypo}{hypo}
\DeclareMathOperator{\contact}{contact}
\newcommand{\bracket}[1]{\langle #1 \rangle}
\newcommand{\norm}[1]{\lVert #1 \rVert}
\title[Nonnegative cross-curvature in infinite dimensions]{
Nonnegative cross-curvature in infinite dimensions: \\
Synthetic definition and spaces of measures}
\author[F. Léger]{Flavien Léger}
\address{Flavien Léger, Inria and CEREMADE, Université Paris Dauphine, Paris, France} 
\email{flavien.leger@inria.fr}
\author[G. Todeschi]{Gabriele Todeschi}
\address{Gabriele Todeschi, LIGM, Univ. Gustave Eiffel, CNRS, F-77454 Marne-la-Vallée, France
} 
\email{gabriele.todeschi@univ-eiffel.fr}
\author[F.-X. Vialard]{François-Xavier Vialard}
\address{François-Xavier Vialard, LIGM, Univ. Gustave Eiffel, CNRS, F-77454 Marne-la-Vallée, France} 
\email{francois-xavier.vialard@univ-eiffel.fr}
\date{\today}
\begin{document}

\begin{abstract}
    Nonnegative cross-curvature (NNCC) is a geometric property of a cost function defined on a product space originating in optimal transportation and the Ma--Trudinger--Wang theory. 
    Motivated by applications in optimization, gradient flows and mechanism design,
    we propose a variational formulation of nonnegative cross-curvature on c-convex domains
    applicable to infinite dimensions and nonsmooth settings.
    The resulting class of NNCC spaces is closed under Gromov--Hausdorff convergence and for this class, we extend many properties of classical nonnegative cross-curvature: 
    stability under generalized Riemannian submersions, 
    characterization in terms of the convexity of certain sets of c-concave functions, 
    and in the metric case, it is a subclass of positively curved spaces in the sense of Alexandrov.     
    One of our main results is that Wasserstein spaces of probability measures inherit the NNCC property from their base space. 
    Additional examples of NNCC costs include the Bures--Wasserstein and Fisher--Rao squared distances, the Hellinger--Kantorovich squared distance (in some cases), the relative entropy on probability measures, and the $2$-Gromov--Wasserstein squared distance on metric measure spaces. 
\end{abstract}

\maketitle

\setcounter{tocdepth}{2}
\tableofcontents

\section{Introduction}
The MTW condition was introduced by Ma, Trudinger, and Wang in their study of the regularity of the optimal transport problem with a general cost function \cite{ma2005regularity}. 
They formulated this condition as the positivity of a fourth-order tensor, now called the MTW tensor, on orthogonal directions. Kim and McCann subsequently studied a strengthening of the MTW condition \cite{kim2010continuity,kim2012towards} requiring nonnegativity of the MTW tensor in every direction (as opposed to orthogonal directions only). They called this condition \emph{nonnegative cross-curvature}, cross-curvature being the name they gave to the MTW tensor.
The main goal of this paper is to give a notion of nonnegative cross-curvature for a product of two arbitrary sets endowed with an arbitrary cost function. 

Interestingly, the MTW tensor and nonnegative cross-curvature appeared in other contexts. In \cite{Figalli_Kim_McCann_screening2011} Figalli, Kim and McCann 
studied certain calculus of variations
problems coming from economics where the optimization is constrained to functions satisfying a generalized convexity condition known as c-concavity. In their setting, they proved that nonnegative cross-curvature guarantees the convexity of the set of all c-concave functions and the convexity of their objective function, with both theoretical and computational repercussions. 
In \cite{leger-vialard2023} the first and third authors obtained geometric formulas for the asymptotics of Laplace-type integrals that concentrate on the graph of a map. In particular, their formulas involve scalar contractions of the MTW tensor. 
In \cite{matthes2019variational} Matthes and Plazotta introduced a second-order time discretization of gradient flows in metric spaces which is well-posed under a semi-convexity assumption satisfied by Riemannian manifolds whose squared distance has nonnegative cross-curvature. 
Finally in a recent preprint \cite{leger2023gradient} the first author and Aubin-Frankowski established a framework for doing explicit and implicit first-order optimization schemes using an arbitrary minimizing movement cost function. They proved convergence rates which were shown to be tractable when the cost function has nonnegative cross-curvature.

The applications of nonnegative cross-curvature mentioned above are severely constrained by the available theory, which requires a certain amount of regularity on the objects at play. For example, the cost function $c(x,y)$ typically needs to be four times differentiable and defined on domains $X$ and $Y$ which are manifolds with the same finite dimension. 
In the principal--agent problems studied by Figalli, Kim and McCann, the cost $c(x,y)$ may represent the disutility of an agent $x\in X$ to purchase a good $y\in Y$. Since the sets of agents $X$ and products $Y$ may have nothing in common and may either be discrete or continuous, it is desirable to remove any differentiable restriction on the cost function and have a theory that does not force these two spaces to be modeled by manifolds.
The optimization scheme proposed by Matthes and Plazotta is defined on metric spaces $(X,d)$ and uses a squared distance cost $c(x,y)=d^2(x,y)$ with $X=Y$. Their main interest is infinite-dimensional but when using nonnegative cross-curvature their analysis confines them to finite-dimensional Riemannian manifolds. A similar situation arises in the framework of Léger and Aubin-Frankowski where the use of nonnegative cross-curvature is limited to finite-dimensional smooth manifolds. This highlights the value of a condition that extends to the infinite-dimensional setting.

\subsection{The MTW condition}

There has been more focus in the literature on the closely related MTW condition than on the nonnegative cross-curvature condition, due to its direct connection to optimal transport. 
The solution of an optimal transport problem is a measure that concentrates on the graph of a map under specific assumptions.
Early results on the regularity of this map date back to the pioneering work of Caffarelli \cite{caffarelli1992regularity,caffarelli1996boundary}, Delanoë \cite{Delanoe1991} and Urbas \cite{Urbas1997}. 
Later on, the MTW condition was identified by Ma, Trudinger and Wang \cite{ma2005regularity} as a key ingredient to ensure regularity of the transport map for the problem with a general cost function $c(x,y)$. This followed contributions for particular cost functions \cite{GlimmOliker,wang2004design,GangboMcCann_shape2000}. Loeper \cite{loeper2009regularity} then showed that the MTW condition was necessary for regularity.
Villani and Loeper \cite{loeper2010nonfocal,villani2009optimal} expanded this approach and introduced more general $\operatorname{MTW}(K,C)$ conditions, to distinguish between the different components of the MTW tensor.
Following these advancements, several works focused on finding domains and cost functions satisfying these regularity requirements \cite{figalli2008approximation,figalli2009continuity,loeper2011sphere,FigalliRiffordVillani_nearly_round_2012,Kim_counterexamples_2008,Lee_Li_new_examples_2012,Lee_McCann_mechanical_action2011,loeper2010nonfocal}. Let us note there also exist more applied situations where the regularity of the transport map is of interest. These include the reflector problem \cite{wang2004design}, stability and statistical estimation of optimal transport \cite{gallouet2022strong,vacher2021dimension,muzellec2021near}, as well as numerical methods for solving transport problems \cite{JacobsLeger2020,JacobsLeeLeger2021}.

While the original setting to formulate the MTW condition requires the cost function $c(x,y)$ to be four times differentiable, there have been several contributions that reduce this requirement, depending on the desired application. Loeper \cite{loeper2009regularity} obtained a synthetic formulation of the MTW condition based on a certain maximum principle, see also \cite[Section 7.5]{ma2005regularity}, \cite[Section 2.5]{TrudingerWang_rational_mech_2009}. When $c(x,y)$ is the square of a Riemannian distance, Villani \cite{Villani_4th_order} expressed the MTW condition in terms of distances and angles and showed its stability under Gromov--Hausdorff convergence. 
Guillen and Kitagawa \cite{GuillenKitagawa2014} formulated a quantitative version of Loeper's maximum principle using a $C^1$ cost $c$. They then obtained the regularity of the optimal transport map assuming only that $c\in C^3$. 
More recently using a cost function $c\in C^2,$ Loeper and Trudinger \cite{LoeperTrudinger2021} formulated a local weakening of Loeper's maximum principle that is equivalent to the MTW condition when $c\in C^4$. Their interest was showing that under their condition a locally c-concave function is in fact c-concave globally. 
Finally, Rankin \cite{Rankin_A3w_remark_2023} obtained an equivalent form of the MTW condition when $c\in C^2$. His approach can be seen as performing a Taylor expansion of the cross-difference \cite{McCann_glimpse2014} in orthogonal directions to recover the MTW condition.

\subsection{The Kim--McCann geometry}\label{ssec:KM}

A contribution of Kim and McCann fully revealed the geometric nature of the MTW tensor. 
In \cite{kim2010continuity} they introduced a pseudo-Riemannian structure on a product manifold $X\times Y$ equipped with a nondegenerate cost function $c\in C^4(X\times Y)$ (see \Cref{sec:sncc-def}). Their metric can be written as $g_{\operatorname{KM}}=-\nabla^2_{xy}c$.
In this geometry the curves known in optimal transport as c-segments (\Cref{def:c-segment}) are geodesics for which the second variable is kept fixed. Their main result is that the MTW tensor $\mathfrak{S}_c$ (\Cref{def:MTW-tensor}) can be expressed through the Riemann curvature tensor of $g_{\operatorname{KM}}$, as
\[
\mathfrak{S}_c(x,y)(\xi,\eta) = 2\,R_{\operatorname{KM}}(U,V,U,V)\,.
\]
Here $\xi$ and $\eta$ are tangent vectors at $x$ and $y$ respectively and $U,V$ are tangent vectors at $(x,y)$ in the tangent bundle $T(X\times Y)$ defined by $U=(\xi,0)$ and $V=(0,\eta)$. 
The nonnegative cross-curvature condition: $\mathfrak{S}_c(x,y)(\xi,\eta)\geq 0$ for all $\xi,\eta,$ and the MTW condition: $\mathfrak{S}_c(x,y)(\xi,\eta)\geq 0$ when $\nabla^2_{xy}c(x,y)(\xi,\eta)=0$ can therefore be connected to the nonnegativity of $R_{\operatorname{KM}}$ along certain directions.

\subsection{Stability by products}
While nonnegative cross-curvature is a more stringent condition than the MTW condition, it comes with an important benefit: stability by taking products, as shown by Kim and McCann \cite{kim2012towards}. 
Namely, if two product manifolds endowed with cost functions are nonnegatively cross-curved, then so is their product endowed with the natural sum of costs. 
In contrast, this is not the case for the MTW condition.
This stability by products can be observed directly since nonnegative cross-curvature corresponds to the nonnegativity of the sectional curvature of the Kim--McCann pseudo-Riemannian metric in certain directions (see Section \ref{ssec:KM}). 
Since the sectional curvature of a product is the sum of the sectional curvatures, the property follows since the expressions $U = (\xi,0)$ and $V = (0,\eta)$ as in the previous section are also stable by taking products. Similar arguments do not hold for the MTW condition.\footnote{What prevents the MTW condition from passing to products is precisely the fact that there are more vectors $(U_1, U_2, V_1, V_2)$ such that $g^1_{\operatorname{KM}}(x_1,y_1)(U_1, V_1) + g^2_{\operatorname{KM}}(x_2,y_2)(U_2, V_2) = 0$ than the product of sets of vectors satisfying $g^1_{\operatorname{KM}}(x_1,y_1)(U_1, V_1) = 0$ and $g^2_{\operatorname{KM}}(x_2,y_2)(U_2, V_2) = 0$. Here, we used two couples of spaces $X_1 \times Y_1$ and $X_2 \times Y_2$ and their corresponding metrics.}

This stability by taking products is key for our extension of nonnegative cross-curvature to infinite-dimensional cases. Let us informally discuss the Wasserstein cost on the space of measures.
Nonnegative cross-curvature is also stable by (smooth and finite-dimensional) Riemannian submersions, as shown by Kim and McCann \cite{kim2010continuity,kim2012towards}. 
Since the Wasserstein space can be considered as the quotient of an infinite product, it motivates one of our main results: if the underlying cost is nonnegatively cross-curved, so is the Wasserstein cost on the space of measures.

\subsection{Description of our main results}

In a smooth setting, a formula of Kim and McCann reformulates nonnegative cross-curvature as the convexity of a difference of cost functions along specific curves known as c-segments (\Cref{thm:KM_characterization}). We observe that their condition characterizes not only nonnegative cross-curvature but also c-segments (\Cref{lemma:auto-csegment}), which frees us from any differentiable prerequisite needed to define c-segments. Our notion of a space with nonnegative cross-curvature (NNCC space) follows naturally.

\begin{definition}
    Let $X$ and $Y$ be two arbitrary sets and $c\colon X\times Y\to [-\infty,+\infty]$ an arbitrary function. 
    We say that $(X\times Y,c)$ is an NNCC space if for every $(x_0,x_1,\yb)\in X\times X\times Y$ such that $c(x_0,\yb)$ and $c(x_1,\yb)$ are finite, there exists a path $\rmx\colon [0,1]\to X$ such that $\rmx(0)=x_0$, $\rmx(1)=x_1$, $c(\rmx(s),\yb)$ is finite and for all $s\in (0,1)$,
    \begin{equation}\label{eq:intro-nncc-inequality}
        \forall y\in Y,\quad c(\rmx(s),\yb)-c(\rmx(s),y)\leq (1-s)[c(x_0,\yb)-c(x_0,y)]+s[c(x_1,\yb)-c(x_1,y)]\,,
    \end{equation}
    whenever the right-hand side is well-defined.
\end{definition}

Note that the source of undefiniteness in the right-hand side of \eqref{eq:intro-nncc-inequality} comes from the terms $c(x_i,y)$, since $c(x_i,\yb)$ is assumed to be finite for $i=0,1$. We refer to the paths $s\mapsto (\rmx(s),\yb)$ as \emph{variational c-segments}. 
They are important in their own right since they play a role similar to straight lines in affine geometry and geodesics in positively or nonpositively curved  metric spaces (see Section \ref{sec:NNCC-PC}), and can be seen as providing a nonsmooth geometry to the product $X\times Y$ based on the cost $c$.

In \Cref{sec:products-submersions} we extend two results of Kim and McCann that say that nonnegative cross-curvature is preserved by products of domains (\Cref{prop:products}) and Riemannian submersions (\Cref{prop:sncc-surj}). 
For the latter we identify on general sets and costs a structure 
that generalizes Riemannian submersions as well as submetries in metric spaces, and which we call cost submersion (\Cref{DefCostSubmersion}). Given two projections $P_1\colon X\to\Xu$, $P_2\colon Y\to\Yu$, if the product $X\times Y$ can be turned into a fibered space whose fibers $(P_1^{-1}(\{\xu\}), P_2^{-1}(\{\yu\}))$ are ``equidistant'' for the cost $c$, then there is a natural cost on the fibers defined by $\cu(\xu,\yu)=\inf\{c(x,y) : x\in P_1^{-1}(\{\xu\}), y\in P_2^{-1}(\{\yu\})\}$. In that case we can project certain variational c-segments and obtain 

\begin{proposition}
    If $(X\times Y,c)$ is an NNCC space then so is $(\Xu\times \Yu, \cu)$.
\end{proposition}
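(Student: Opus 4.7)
The plan is to reduce the downstairs NNCC inequality to the upstairs one by lifting the data through the projections $P_1$ and $P_2$ and exploiting the equidistance of fibers that defines a cost submersion. Fix $\xu_0, \xu_1 \in \Xu$ and a target $\bar{\yu} \in \Yu$ with $\cu(\xu_0, \bar{\yu})$ and $\cu(\xu_1, \bar{\yu})$ finite. First I would pick any $y^* \in P_2^{-1}(\{\bar{\yu}\})$; then, using cost-submersion equidistance in the ``fix $y^*$ and minimize over the $P_1$-fiber'' direction, choose $x_i \in P_1^{-1}(\{\xu_i\})$ with $c(x_i, y^*) = \cu(\xu_i, \bar{\yu})$ for $i = 0, 1$ (if the infimum is not attained, select $x_i$ up to some $\varepsilon > 0$ and send $\varepsilon \to 0^+$ at the end). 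Since $c(x_0, y^*)$ and $c(x_1, y^*)$ are then finite, the upstairs NNCC property applied to the triple $(x_0, x_1, y^*)$ produces a variational c-segment $\rmx \colon [0,1] \to X$ joining $x_0$ to $x_1$. The natural candidate downstairs is its projection $\xu(s) := P_1(\rmx(s))$, which by construction has the right endpoints.

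To verify the downstairs inequality at fixed $s \in (0,1)$ and arbitrary test point $\yu \in \Yu$, I would use equidistance in the opposite direction: for $\eta > 0$, pick $y_\eta \in P_2^{-1}(\{\yu\})$ with $c(\rmx(s), y_\eta) \leq \cu(\xu(s), \yu) + \eta$. Substituting $y = y_\eta$ in the upstairs NNCC inequality and inserting the three elementary relations
\[
    c(\rmx(s), y^*) \geq \cu(\xu(s), \bar{\yu}), \qquad c(x_i, y^*) = \cu(\xu_i, \bar{\yu}), \qquad c(x_i, y_\eta) \geq \cu(\xu_i, \yu),
\]
which follow from the pointwise inequality $\cu(P_1(x), P_2(y)) \leq c(x, y)$ together with the choice of the $x_i$, one bounds the two sides and obtains
\[
    \cu(\xu(s), \bar{\yu}) - \cu(\xu(s), \yu) - \eta \leq (1-s)[\cu(\xu_0, \bar{\yu}) - \cu(\xu_0, \yu)] + s[\cu(\xu_1, \bar{\yu}) - \cu(\xu_1, \yu)].
\]
Letting $\eta \to 0^+$ concludes the verification.

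The principal obstacle is conceptual rather than computational: the argument uses the equidistance property symmetrically in $X$ and $Y$, once horizontally to produce the lifts $x_i$ given $y^*$ and once vertically to produce the $\eta$-approximating $y_\eta$ given $\rmx(s)$. One must therefore confirm that \Cref{DefCostSubmersion} genuinely encodes both directions of equidistance, which is natural by analogy with submetries but needs to be spelled out. A secondary, more routine point is to honor the ``whenever well-defined'' clause of the NNCC definition: the finiteness of $\cu(\xu_i, \bar{\yu})$ must be shown to propagate through the lifts to $c(x_i, y^*)$ and hence to $c(\rmx(s), y^*)$ (to preserve the hypothesis of the upstairs NNCC and avoid the $(+\infty)+(-\infty)$ pathology), and the cases where $\cu(\xu(s), \yu)$ or $\cu(\xu_i, \yu)$ equals $-\infty$ must be dispatched separately, where the downstairs inequality becomes trivially true but ought to be checked explicitly.
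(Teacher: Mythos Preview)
Your approach is essentially the paper's: lift the endpoints optimally over a fixed $y^*$, run the upstairs variational c-segment, project via $P_1$, and verify the downstairs inequality by pushing the upstairs one through the fiberwise infimum (your $\eta$-approximation is exactly the paper's ``maximize the left-hand side over $y\in P_2^{-1}(\{\yu\})$'').

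Two small points deserve attention. First, the $\varepsilon$-hedge for the lifts $x_i$ is unnecessary---part (2) of \Cref{DefCostSubmersion} guarantees that the fiberwise infima are \emph{attained} whenever $\cu(\xu_i,\bar\yu)$ is finite---and would in fact be problematic if ever invoked, since the curve $\rmx$ and hence $\xu(s)$ would depend on $\varepsilon$ with no compactness available to extract a limit. Second, you do not quite close the loop on finiteness of $\cu(\xu(s),\bar\yu)$: the bound $c(\rmx(s),y^*)\geq\cu(\xu(s),\bar\yu)$ only yields $<+\infty$, not $>-\infty$. The paper isolates this as \Cref{lemma:c-subm-optimal}: applying the upstairs NNCC inequality with test points $y\sim y^*$ and using optimality of $(x_0,y^*)$, $(x_1,y^*)$ forces the right-hand side to be $\leq 0$, hence $c(\rmx(s),y^*)\leq c(\rmx(s),y)$ for all $y\sim y^*$, so $(\rmx(s),y^*)$ is itself optimal and $\cu(\xu(s),\bar\yu)=c(\rmx(s),y^*)$ is finite.
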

\noindent
This is a similar result to the two facts that in Riemannian geometry, nonnegative sectional curvature is preserved under Riemannian submersions, and in metric geometry nonnegative curvature in the sense of Alexandrov is preserved under submetries.

In \Cref{sec:NNCC-PC} we consider the case where the cost is given by a squared distance, looking in particular at the connections between NNCC spaces and positively curved metric spaces in the sense of Alexandrov (PC spaces). In \Cref{lemma:distance-1-convex-c-segments} we show that given an arbitrary metric space $(X,d)$, a variational c-segment $(\rmx(s),\yb)$ on $(X\times X,d^2)$ satisfies the inequality 
\begin{equation}\label{eq:intro-vcs-ineq}
    d^2(\rmx(s),\yb)\leq (1-s)d^2(\rmx(0),\yb)+s\,d^2(\rmx(1),\yb)-s(1-s)d^2(\rmx(0),\rmx(1))\,.
\end{equation}
In other words, the squared distance to $\yb$ is $1$-convex along $\rmx$. If $(X,d)$ is a PC space, this is exactly the opposite inequality that a geodesic satisfies. The need for reverse inequalities of the type \eqref{eq:intro-vcs-ineq} on spaces of measures is precisely what led Ambrosio, Gigli and Savaré to introduce their \emph{generalized geodesics} \cite{ambrosio2005gradient}, see \Cref{sec:perspectives}. 
As a consequence of \eqref{eq:intro-vcs-ineq} we generalize a result by Loeper who showed that on Riemannian manifolds, nonnegative sectional curvature is always implied by the MTW condition (thus by nonnegative cross-curvature); our analogue in metric geometry reads

\begin{proposition}
    Let $(X,d)$ be a geodesic space. If $(X\times X,d^2)$ is an NNCC space then $(X,d)$ is a PC space. 
\end{proposition}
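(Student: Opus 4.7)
The plan is to invoke NNCC with a carefully chosen $\bar y$, deduce that the ensuing variational c-segment must pass through the specified geodesic point at the matching time, and then read off the Alexandrov inequality from a second instance of the NNCC inequality.

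More concretely, fix a geodesic $\gamma\colon [0,1]\to X$ from $p_0$ to $p_1$, write $L = d(p_0,p_1)$, and fix $s\in(0,1)$ together with an arbitrary test point $z\in X$. The goal is
\[
d^2(\gamma(s),z) \ge (1-s)\,d^2(p_0,z) + s\,d^2(p_1,z) - s(1-s)\,L^2.
\]
I would start by applying the NNCC property of $(X\times X,d^2)$ to the triple $(x_0,x_1,\bar y) = (p_0,p_1,\gamma(s))$, producing a path $\rmx\colon[0,1]\to X$ with $\rmx(0)=p_0$, $\rmx(1)=p_1$ and satisfying the variational inequality of the definition at every $t\in(0,1)$ and every $y\in X$.

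The crux of the argument is then to show that $\rmx(s) = \gamma(s)$. Specializing the NNCC inequality at time $t=s$ with $y=\rmx(s)$, and using the geodesic identities $d(p_0,\gamma(s))=sL$ and $d(p_1,\gamma(s))=(1-s)L$, one obtains
\[
d^2(\rmx(s),\gamma(s)) \le s(1-s)L^2 - \bigl[(1-s)\,d^2(p_0,\rmx(s)) + s\,d^2(p_1,\rmx(s))\bigr].
\]
A short minimization shows that for nonnegative reals with $a+b\ge L$ one has $(1-s)a^2 + s b^2 \ge s(1-s)L^2$ (the minimum being attained at $a=sL$, $b=(1-s)L$). Combined with the triangle inequality $d(p_0,\rmx(s)) + d(p_1,\rmx(s)) \ge L$, this forces the bracketed term to be at least $s(1-s)L^2$, so $d^2(\rmx(s),\gamma(s)) \le 0$ and hence $\rmx(s) = \gamma(s)$.

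The second step is to apply the NNCC inequality again at $t=s$, this time with $y=z$:
\[
d^2(\rmx(s),\gamma(s)) - d^2(\rmx(s),z) \le (1-s)\bigl[s^2L^2 - d^2(p_0,z)\bigr] + s\bigl[(1-s)^2L^2 - d^2(p_1,z)\bigr].
\]
Substituting $\rmx(s)=\gamma(s)$ from the previous step rearranges exactly into the desired Alexandrov inequality. The only genuine difficulty is identifying the clever choice $\bar y = \gamma(s)$: it is precisely what makes the triangle inequality saturate and pins the c-segment onto the geodesic. Everything else is bookkeeping.
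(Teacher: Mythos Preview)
Your proposal is correct and follows essentially the same approach as the paper: choose $\bar y=\gamma(s)$, use the NNCC inequality with $y=\rmx(s)$ together with the triangle inequality to force $\rmx(s)=\gamma(s)$, then apply the NNCC inequality once more with $y=z$ to read off the PC inequality. The only cosmetic difference is that the paper packages the first step as a separate lemma (the inequality $d^2(\rmx(s),\bar y)\le (1-s)d^2(x_0,\bar y)+s\,d^2(x_1,\bar y)-s(1-s)d^2(x_0,x_1)$) and proves the bound $(1-s)a^2+sb^2\ge s(1-s)L^2$ via Young's inequality with parameter $\varepsilon=1/s-1$ rather than by your direct minimization.
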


In \Cref{SecGH} we consider the Gromov--Hausdorff notion of convergence in the class of compact metric spaces. We show that when the cost is a squared distance $d^2(x,y)$, or more generally of the form $f(d(x,y))$ where $f$ is locally Lipschitz, the class of NNCC spaces is closed under Gromov--Hausdorff convergence:

\begin{theorem}
    If $(X_n\times X_n,f(d_n))$ is a sequence of NNCC spaces and $(X_n,d_n)$ converges to $(Z,d_Z)$ in the Gromov--Hausdorff topology, then $(Z\times Z,f(d_Z))$ is an NNCC space.
\end{theorem}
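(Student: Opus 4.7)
The plan is to lift variational c-segments from each $X_n$ to $Z$ via Gromov--Hausdorff approximations and pass the defining inequality to the limit. By definition of GH convergence, fix $\eps_n$-approximate isometries $\phi_n : X_n \to Z$ with $\eps_n \to 0$, so that $\phi_n(X_n)$ is $\eps_n$-dense in $Z$ and $|d_Z(\phi_n(x), \phi_n(x')) - d_n(x, x')| \leq \eps_n$. Given $z_0, z_1, \bar z \in Z$, pick $x_0^n, x_1^n, \bar x^n \in X_n$ with $\phi_n(x_0^n) \to z_0$, $\phi_n(x_1^n) \to z_1$, and $\phi_n(\bar x^n) \to \bar z$. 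The NNCC property of $(X_n \times X_n, f \circ d_n)$ then supplies a path $\rmx_n : [0,1] \to X_n$ from $x_0^n$ to $x_1^n$ satisfying the defining inequality with base point $\bar x^n$ for every $s \in (0,1)$ and every test point in $X_n$.

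Next, I would construct the candidate limit path $\rmz : [0,1] \to Z$. Set $\rmz(0) = z_0$ and $\rmz(1) = z_1$, and for each $s \in (0,1)$ use compactness of $Z$ to extract, along a sub-subsequence that may depend on $s$, a limit $\rmz(s)$ of $\phi_n(\rmx_n(s))$. Since the NNCC inequality in the definition is pointwise in $s$, it is enough to verify it one value of $s$ at a time, along the corresponding sub-subsequence. To conclude, fix $s \in (0,1)$ and $y \in Z$, pick $y^n \in X_n$ with $\phi_n(y^n) \to y$, and pass to the limit in
\[
    f(d_n(\rmx_n(s), \bar x^n)) - f(d_n(\rmx_n(s), y^n)) \leq (1-s)[f(d_n(x_0^n, \bar x^n)) - f(d_n(x_0^n, y^n))] + s[f(d_n(x_1^n, \bar x^n)) - f(d_n(x_1^n, y^n))].
\]
The approximate-isometry property gives convergence of the six distances involved; because the diameters of the $X_n$ are uniformly bounded (as $Z$ is compact and GH-convergence bounds diameters), these distances live in a fixed bounded interval $[0, R]$ on which $f$ is Lipschitz, and applying $f$ therefore passes to the limit term by term. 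The resulting inequality is exactly the NNCC inequality for $\rmz$ on $Z$.

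The main technical subtlety I expect is that the parameter $s$ ranges over the uncountable set $[0,1]$, so a single sequential diagonal extraction cannot produce one subsequence working simultaneously at every $s$. The strategy above sidesteps this by verifying the inequality pointwise and allowing the subsequence to depend on $s$, which is legitimate because the NNCC definition does not require any regularity of the path $\rmx$. A cleaner alternative, when available, would be to establish equicontinuity of $\{\rmx_n\}$ and invoke Arzelà--Ascoli: this is natural when $c = d^2$ on uniquely geodesic spaces, where \Cref{lemma:distance-1-convex-c-segments} forces variational c-segments to be constant-speed geodesics, but it is not automatic in the present generality, which is why the local Lipschitz hypothesis on $f$ (rather than any quantitative convexity of $f$) is the right assumption here.
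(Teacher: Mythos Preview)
Your proposal is correct and follows essentially the same approach as the paper: fix $s$, lift the three endpoints to $X_n$, invoke the NNCC inequality there, push the intermediate point back to $Z$ via the approximation, extract a subsequential limit by compactness, and pass to the limit using the local Lipschitz property of $f$ together with the uniform diameter bound. The paper phrases the GH convergence via correspondences rather than $\eps_n$-approximate isometries, but this is purely cosmetic; your explicit discussion of why the subsequence may depend on $s$ (and why this is harmless given that the NNCC definition imposes no regularity on the path) matches the paper's remark that ``throughout all these operations $s$ is kept fixed.''
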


In \Cref{sec:sncc-discussion} we compare the NNCC condition to Loeper's maximum principle, a synthetic version of the MTW condition. We point out similarities in the two formulations, ``below the chord'' and ``below the max''.

In \Cref{sec:Figalli-Kim-McCann} we derive a characterization of NNCC spaces that extends one of the main results of Figalli, Kim and McCann in their aforementioned work. In particular, we prove that NNCC spaces with finite costs are characterized by the convexity of the set of c-concave functions with nonempty c-subdifferentials,
\begin{equation}\label{eq:intro-Phic0}
\Phi^c_0 = \{\phi\colon Y\to\R : \partial^c\phi(y)\neq\emptyset \text{ for every $y\in Y$}\}.
\end{equation}

Section \ref{sec:sncc_proba} is concerned with one of our main results:  nonnegative cross-curvature can be lifted from a ground space to the space of measures. In fact, combining \Cref{ThMainTheorem} and \Cref{ThConverseImplication} we can state the following equivalence:

\begin{theorem}\label{thm:wasserstein-intro}
    Let $X$ and $Y$ be Polish spaces and let $c\colon X\times Y\to\R\cup\{+\infty\}$ be a lower semi-continuous cost function bounded from below such that $\sup_{x\in X}\inf_{y\in Y}c(x,y)<+\infty$. Then $(\Pc(X)\times\Pc(Y),\Tc_c)$ is an NNCC space if and only if $(X\times Y,c)$ is an NNCC space.
\end{theorem}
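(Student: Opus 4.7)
The plan is to prove the two implications separately. For the forward direction ($X\times Y$ NNCC implies $(\Pc(X)\times\Pc(Y),\Tc_c)$ NNCC), my plan is to realize $\Pc(X)$ as a cost submersion of an infinite product and invoke the stability \Cref{prop:sncc-surj}. Let $(\Omega,\mathrm{Leb})$ be a standard probability space and set $\tilde X := L^0(\Omega,X)$, $\tilde Y := L^0(\Omega,Y)$ with $\tilde c(\rmx,\rmy) := \int_\Omega c(\rmx(\omega),\rmy(\omega))\,d\omega$. Fiber-wise application of the NNCC hypothesis on $X\times Y$ combined with a Kuratowski--Ryll-Nardzewski measurable selection upgrades the pointwise NNCC inequality into the integrated one on $\tilde X\times\tilde Y$, which is an infinite-product analog of \Cref{prop:products}. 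The marginal map $P\colon\tilde X\to\Pc(X)$, $\rmx\mapsto\rmx_*\mathrm{Leb}$ (and analogously on the $Y$ side) realizes $\Tc_c(\mu,\nu)$ as the infimum of $\tilde c$ over fibers, because every coupling $\pi\in\Pi(\mu,\nu)$ arises as the joint law of some lift $(\rmx',\rmy')\in\tilde X\times\tilde Y$; this is exactly the cost-submersion structure of \Cref{DefCostSubmersion}, so \Cref{prop:sncc-surj} transfers NNCC to $(\Pc(X)\times\Pc(Y),\Tc_c)$. A more hands-on variant is to glue optimal plans $\pi_i\in\Pi(\mu_i,\bar\nu)$ across their common $\bar\nu$-marginal into $\Theta\in\Pc(X\times X\times Y)$, measurably select a c-segment $s\mapsto\rmx(s;x_0,x_1,\bar y)$ fiber-wise, and set $\mu_s:=\rmx(s;\cdot)_*\Theta$.

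For the converse, my plan is to apply $\Pc$-NNCC to Dirac data and then extract a single point from the resulting curve via a cleverly chosen test measure. Applying $\Pc$-NNCC with $\mu_0=\delta_{x_0}$, $\mu_1=\delta_{x_1}$, $\bar\nu=\delta_{\bar y}$ produces $\mu_s\in\Pc(X)$ with the prescribed Dirac endpoints, satisfying the variational inequality against every $\nu\in\Pc(Y)$. Set
\[
    H(\rmx,y) := [c(\rmx,\bar y)-c(\rmx,y)]-(1-s)[c(x_0,\bar y)-c(x_0,y)]-s[c(x_1,\bar y)-c(x_1,y)];
\]
the crucial observation is $H(\rmx,\bar y)\equiv 0$, which forces $\sup_y H(\rmx,y)\geq 0$ for every $\rmx$. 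Now test the $\Pc$-inequality at $\nu_\varepsilon:=(T_\varepsilon)_*\mu_s$, where $T_\varepsilon\colon X\to Y$ is a measurable $\varepsilon$-almost-maximizer of $y\mapsto H(\rmx,y)$ (existence follows from standard measurable selection on Polish spaces). Bounding $\Tc_c(\mu_s,\nu_\varepsilon)\leq\int c(\rmx,T_\varepsilon(\rmx))\,d\mu_s$ via the coupling $(\mathrm{id},T_\varepsilon)_*\mu_s$, the $\Pc$-NNCC inequality collapses algebraically to $\int H(\rmx,T_\varepsilon(\rmx))\,d\mu_s\leq 0$, hence $\int\sup_y H(\rmx,y)\,d\mu_s\leq\varepsilon$. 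Sending $\varepsilon\to 0$ and combining with $\sup_y H\geq 0$ pointwise yields $\sup_y H(\rmx,y)=0$ for $\mu_s$-a.e.\ $\rmx$; any such $\rmx$ serves as $\rmx(s)$ in the desired variational c-segment on $X$.

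The hard part I expect is on the forward side: controlling $\Tc_c(\mu_s,\nu)$ from below for arbitrary $\nu$. The coupling $(\rmx(s;\cdot),\mathrm{proj}_Y)_*\Theta$ only upper bounds $\Tc_c(\mu_s,\bar\nu)$, and a naive integration of the pointwise NNCC inequality along $\Theta$ coupled with a plan in $\Pi(\bar\nu,\nu)$ does not close the gap because of this asymmetry. The cost-submersion formalism on $\tilde X$ absorbs the difficulty by working with the integrated cost $\tilde c$, where the infimum-over-fibers definition of $P$ automatically produces matching optimal couplings on both the $\bar\nu$ and $\nu$ sides. The converse, by contrast, is essentially a one-line measure-theoretic trick once the identity $H(\rmx,\bar y)=0$ is noticed; the standing hypothesis $\sup_{x\in X}\inf_{y\in Y}c(x,y)<+\infty$ together with the lower bound on $c$ enters to keep $T_\varepsilon$-related integrals finite, so that the computation above is rigorous.
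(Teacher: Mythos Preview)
Your converse direction is essentially the paper's argument (\Cref{ThConverseImplication}) recast as a direct proof rather than a contrapositive; the paper also uses Dirac data $\mu_i=\delta_{x_i}$, $\bar\nu=\delta_{\bar y}$ and a ``witness map'' pushed forward to build the test measure. You do gloss over an integrability issue: your $T_\varepsilon$ can make $\Tc_c(\delta_{x_i},\nu_\varepsilon)=\int c(x_i,T_\varepsilon(\rmx))\,d\mu_s=+\infty$, in which case the Wasserstein NNCC inequality becomes vacuous under the $+\infty$-rule of \Cref{def:sncc}. The paper handles this by truncating the witness map to $\bar y$ outside a sublevel set $\{c(\cdot,T(\cdot))\leq\alpha\}$; with that patch your argument works.

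The forward direction has a real gap. Your hands-on variant is exactly the paper's lifted c-segment (\Cref{def:lifted_csegment}, \Cref{lemma:existence-lifted}), and you correctly isolate the difficulty: controlling $\Tc_c(\mu_s,\sigma)$ from below for an arbitrary test measure $\sigma$. But your proposed fix via cost submersion fails. The map $P\colon L^0(\Omega,X)\to\Pc(X)$, $\rmx\mapsto\rmx_*\mathrm{Leb}$, is \emph{not} a cost submersion in the sense of \Cref{DefCostSubmersion}: the identity $\inf_{\rmy'\sim\rmy}\tilde c(\rmx,\rmy')=\Tc_c(P(\rmx),P(\rmy))$ must hold for \emph{every} lift $\rmx$, yet if $\rmx$ happens to generate the full $\sigma$-algebra of $\Omega$ then any $\rmy'$ is $\sigma(\rmx)$-measurable, so only Monge couplings of $(P(\rmx),P(\rmy))$ are reachable and the one-sided infimum can be strictly larger than $\Tc_c$. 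This is precisely the obstruction you already see in the hands-on approach; the submersion formalism does not absorb it. The paper's resolution is the Coupling Extension \Cref{lemma:coupling-extension}: rather than coupling $\Theta$ with a plan in $\Pi(\bar\nu,\sigma)$, one takes an \emph{optimal} $\tilde\pi(s)\in\Pi(\mu_s,\sigma)$ and glues it with $\gamma_\nu$ along their common marginal $\mu_s=(\Lambda_s)_\#\gamma_\nu$ into a $4$-plan $\gamma^4$ satisfying $(\Lambda_s,\mathrm{id})_\#\gamma^4=\tilde\pi(s)$. Integrating the pointwise NNCC inequality against $\gamma^4$ then produces $\int c\,d\tilde\pi(s)=\Tc_c(\mu_s,\sigma)$ on the left-hand side. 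This gluing, carried out at the level of measures rather than random variables, is the key technical step you are missing.
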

\noindent
As an immediate corollary we have:
\begin{corollary}
    The $2$-Wasserstein space $(\Pc_2(\R^n)\times\Pc_2(\R^n),W_2^2)$ is NNCC. 
\end{corollary}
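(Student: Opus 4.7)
The plan is to invoke \Cref{thm:wasserstein-intro} with $X=Y=\R^n$ and cost $c(x,y)=|x-y|^2$, so that the optimal transport cost $\Tc_c$ coincides with $W_2^2$ on $\Pc_2(\R^n)\times\Pc_2(\R^n)$. The hypotheses on the cost are immediate: $c$ is continuous, hence lower semi-continuous; $c\geq 0$, hence bounded from below; and $\sup_{x\in\R^n}\inf_{y\in\R^n}|x-y|^2=0<+\infty$ by choosing $y=x$.

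The only substantive step is to check that $(\R^n\times\R^n,|x-y|^2)$ is itself an NNCC space. Given $(x_0,x_1,\yb)\in\R^n\times\R^n\times\R^n$, I propose the straight segment $\rmx(s)=(1-s)x_0+sx_1$ as variational c-segment. Expanding the squared norms yields the identity
\[
|x-\yb|^2-|x-y|^2 \;=\; 2\,x\cdot(y-\yb)+|\yb|^2-|y|^2,
\]
which is affine in the variable $x$. Consequently $c(\rmx(s),\yb)-c(\rmx(s),y)$ equals exactly the convex combination $(1-s)[c(x_0,\yb)-c(x_0,y)]+s[c(x_1,\yb)-c(x_1,y)]$, so the NNCC inequality holds with equality for every $y\in\R^n$. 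Geometrically this reflects the fact that Euclidean space is flat and therefore trivially nonnegatively cross-curved, with affine segments as c-segments.

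Applying \Cref{thm:wasserstein-intro} then gives that $(\Pc(\R^n)\times\Pc(\R^n),\Tc_c)$ is NNCC. To transfer this to $\Pc_2(\R^n)$, I observe that for a variational c-segment $s\mapsto\mu_s$ with $\mu_0,\mu_1,\bar\nu\in\Pc_2(\R^n)$, finiteness of $\Tc_c(\mu_s,\bar\nu)=W_2^2(\mu_s,\bar\nu)$ together with $\bar\nu\in\Pc_2(\R^n)$ forces $\mu_s\in\Pc_2(\R^n)$ by the triangle inequality for $W_2$. Since $\Tc_c$ and $W_2^2$ agree on $\Pc_2(\R^n)\times\Pc_2(\R^n)$, the NNCC inequality carries over verbatim. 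No essential difficulty is anticipated: the content of the corollary is absorbed into \Cref{thm:wasserstein-intro}, leaving only the flat-space computation above and a routine second-moment check.
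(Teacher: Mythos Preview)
Your proposal is correct and matches the paper's approach: the corollary is stated as immediate from \Cref{thm:wasserstein-intro}, and the flat-space verification you give is precisely the content of \Cref{ex:Hilbert-norm}. Your added care in restricting from $\Pc(\R^n)$ to $\Pc_2(\R^n)$ via the finiteness of $\Tc_c(\mu_s,\bar\nu)$ and the triangle inequality is a detail the paper leaves implicit, but it is the right observation and presents no difficulty.
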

\noindent
In \Cref{SecCounterExampleMTW} we show that the MTW condition, even in its weak formulation known as Loeper's maximum principle, does not lift to the Wasserstein space, motivating the focus on nonnegative cross-curvature rather than the MTW condition in this paper.

Finally \Cref{SecExamples} contains several new examples of finite- and infinite-dimensional NNCC spaces. We compile them in the following list which contains also examples located in other sections:
\begin{enumerate}[1.]
    \item (\Cref{ssec:examples_riemannian}) Compact Riemannian manifolds with convex injectivity domains which are nonnegatively cross-curved in the smooth sense, $\mathfrak{S}_c\geq 0$, with $c$ being the squared Riemannian distance. This includes spheres and products of spheres.
    
    \item (\Cref{ssec:BW}) The squared Bures--Wasserstein metric on symmetric positive semi-definite matrices
    \[\operatorname{BW}^2(S_1,S_2) = \operatorname{tr}(S_1) + \operatorname{tr}(S_2) - 2 \operatorname{tr}((S_1^{1/2}S_2S_1^{1/2})^{1/2})\,.
    \]
    This example has the structure of a manifold with boundary, a case encompassed by our definition.
    
    \item (Examples \ref{ex:Bregman} and \ref{ex:Bregman-II}) Bregman divergences on infinite-dimensional spaces.
    
    \item (\Cref{sec:KL}) The relative entropy or Kullback--Leibler divergence $\operatorname{KL}(\mu,\nu)$ between probability measures.
    
    \item (\Cref{sec:hellinger}) The squared Hellinger distance 
    \[\mathcal{H}^2(\mu,\nu) = \int  \Big(\sqrt{\frac{d\mu}{d\lambda}} - \sqrt{\frac{d\nu}{d\lambda}}\Big)^2 d\lambda\,,
    \] 
    and the square of the Fisher--Rao distance $\operatorname{FR}(\mu,\nu) = \arccos\left(1 - \frac{1}{2} \mathcal{H}(\mu,\nu)\right)$ on probability measures.
    
    \item (\Cref{sec:GW}) The squared Gromov--Wasserstein distance
    \[
        \GW^2(\bX,\bY)=\inf_{\pi\in\Pi(\mu,\nu)}\int\abs{f(x,x')-g(y,y')}^2\,d\pi(x,y)\,d\pi(x',y')\,,
    \]
    defined on metric measure spaces or more generally gauge spaces.
    
    \item (\Cref{SecUOT}) The unbalanced optimal transport cost when the underlying cone is an NNCC space.
\end{enumerate}

\subsection{Perspectives} \label{sec:perspectives}

\subsubsection*{EVI gradient flows}
In their seminal book \cite{ambrosio2005gradient}, Ambrosio, Gigli and Savaré develop a theory to construct the gradient flow of a function $F$ on a general metric space $(X,d)$. 
A key condition underpinning much of the uniqueness, stability and rates of convergence of their \emph{EVI gradient flows} is \textbf{Assumption 4.0.1}, which requires for every $x_0,x_1,\yb\in X$ the existence of a curve $\gamma$ connecting $x_0$ to $x_1$ along which
\begin{equation}\label{eq:assumption-4.0.1.}
    F(\cdot) + \frac{1}{2\tau}d^2(\cdot,\yb) \quad\text{is $(\frac{1}{\tau}+\lambda)$-convex}.
\end{equation}
Here $\tau>0$ is a step size and $\lambda$ a strong convexity parameter. It is natural to split \eqref{eq:assumption-4.0.1.} into a structural condition on $(X,d)$ and a convexity condition on $F$, namely $1$-convexity of $\frac12 d^2(\cdot,\yb)$ and $\lambda$-convexity of $F$ along $\gamma$. 
However \cite{ambrosio2005gradient} provides little information on how to find such curves in general, except in the case where $X$ is the space of probability measures over a Euclidean space and $d$ is the $2$-Wasserstein distance---the central focus of the book---in the form of \emph{generalized geodesics}.

Our variational c-segments $(\rmx(\cdot),\yb)$ offer a solution to the question above, since they provide in a metric setting exactly the needed $1$-convexity, as a result of \eqref{eq:intro-vcs-ineq}. 
As an immediate consequence, all our examples of metric NNCC spaces provide \textbf{new playing fields to construct EVI gradient flows}: 
the Wasserstein space on the sphere and on the Bures--Wasserstein space, the Gromov--Wasserstein space, and the squared Hellinger and Fisher--Rao distances on probability measures.

Perhaps more importantly, variational c-segments and NNCC spaces pave the way for extending the Ambrosio--Gigli--Savaré theory of gradient flows beyond metric spaces, using minimizing movement schemes based on cost functions $c(x,y)$ which are more general than squared distances, by following the arguments put forth by Léger--Aubin-Frankowski in finite dimensions \cite{leger2023gradient}.

\subsubsection*{Functional inequalities}
The Prékopa--Leindler inequality \cite{Prekopa1,Prekopa2,Leindler} is a functional form of the celebrated Brunn--Minkowski inequality \cite{Gardner_BrunnMinkowski2002}: it says that given $0<s<1$ and three nonnegative functions $f,g,h$ on $\Rn$, if 
\begin{equation}\label{eq:intro-PLI}
    \forall x_0,x_1\in\Rn, \quad h((1-s)x_0+sx_1)\geq f(x_0)^{1-s}g(x_1)^s,
\end{equation}
then $\int h\geq (\int f)^{1-s}(\int g)^s$. From the Prékopa--Leindler inequality can be deduced concentration of measure inequalities \cite{Maurey91,BobkovLedoux2000}, marginal preservation of log-concavity \cite{Prekopa2}, and log-Sobolev, Brascamp--Lieb, Talagrand, and Poincaré inequalities \cite{Maurey91,BobkovLedoux2000,CEMS06}. 
In \cite{CEMS01} the Prékopa--Leindler inequality was generalized to Riemannian manifolds, and later understood to in fact characterize Ricci curvature lower bounds. 
This Riemannian extension relies on an optimal transport interpretation due to McCann \cite{McCannPhdThesis}: when $f,g$ are probability densities, a function $h$ satisfying \eqref{eq:intro-PLI} must majorize the McCann displacement interpolant $\rho_s$ between $f$ and $g$, in which case the integral of $h$ exceeds $1$. 

When $X$ and $Y$ are $n$-dimensional manifolds and $c\in C^2(X\times Y)$, our construction of \textbf{lifted c-segments} may be used to further extend McCann's perspective beyond the Riemannian case, to a geometry where fluid particles travel along variational c-segments. Indeed if $(\mu_s,\nu)$ is a lifted c-segment induced by $(\Gamma,\gamma_\nu,\Lambda_s)$ (\Cref{def:lifted_csegment}), then adapting the arguments of \cite{McCannPhdThesis,CEMS01} shows the bound
\begin{equation}
    \rho_s(\Lambda_s,y)\leq \rho_0(x_0,y)^{1-s} \rho_1(x_1,y)^{s} 
\end{equation}
to be satisfied, under regularity assumptions, for every $(x_0,x_1,y)\in\Gamma$, where now $\rho_s(x,y)$ stands for the Radon--Nikodym derivative of $\mu_s\otimes\nu$ with respect to the volume form of the Kim--McCann metric. This point of view opens the door to Prékopa--Leindler inequalities based on cost functions $c(x,y)$, from which may be deduced new log-Sobolev, Brascamp--Lieb, Poincaré and Talagrand inequalities.

\subsubsection*{Mechanism design}

Mechanism design \cite{borgers2015introduction,VohraBook2011} is a branch of game theory with applications to contract theory \cite{QuinziiRochet85}, voting theory \cite{Gibbard73,Satterthwaite75,Barbera2001}, optimal taxation \cite{mirrlees1971exploration,LaffontMartimort2002}, market design \cite{bichler2017market}, and more. A typical setup involves an agent (or a population of agents) and a mechanism designer, or \emph{principal}, who is able to offer the agent one of several alternatives $y \in Y$ in exchange of a monetary transfer $t\in\R$ (which may be positive or negative). The agent's characteristics are encoded by a \emph{type} $x \in X$, which is unknown to the principal. The utility of an agent of type $x$ receiving alternative $y$ and paying transfer $t$ is often assumed to be quasi-linear, i.e.\ of the form $-c(x,y)-t$.

Whatever the principal's objective may be (for instance, maximize her profits), her task is to devise a \emph{direct mechanism} $(T,t)$, where the ``decision rule'' $T\colon X\to Y$ and the ``transfer rule'' $t\colon X\to\R$ describe the alternative $T(x)$ and transfer $t(x)$ offered to any agent claiming to be of type $x$. A direct mechanism should then ensure that agents report their type truthfully: this takes the form of the so-called \emph{incentive compatibility condition}
\begin{equation}
    \forall x,x'\in X,\quad c(x,T(x))+t(x)\leq c(x,T(x'))+t(x')\,.
\end{equation}
In \cite{CarlierJME01}, Carlier characterized incentive compatible mechanisms $(T,t)$ in terms of c-concave functions $\psi\colon X\to \R$ with nonempty c-subdifferentials, in the form $T(x)\in\partial^c\psi(x)$, $t(x)=\psi(x)-c(x,T(x))$. This is precisely the set of functions \eqref{eq:intro-Phic0} whose convexity is shown to be equivalent to the NNCC condition in \Cref{sec:Figalli-Kim-McCann}. Therefore, like the Figalli--Kim--McCann result, a better understanding of convexity in the choice of direct mechanisms can be expected to have important theoretical and numerical consequences. 

\subsubsection*{Optimal transport in infinite dimensions}
Optimal transport in infinite dimensions has been studied for different infinite-dimensional underlying spaces: the Wiener space \cite{FeyelUstunel}, the configuration space \cite{decreusefond2006wassersteindistanceconfigurationspace}, RCD spaces \cite{CAVALLETTI2014136}, the Wasserstein space \cite{DELLOSCHIAVO2020108397, emami2024mongekantorovichproblemwassersteinspace}. In these articles, the existence of an optimal map was often established. For instance,  in \cite{emami2024mongekantorovichproblemwassersteinspace}, under appropriate hypotheses, an optimal map exists and is given by the exponential map of the gradient of a c-concave function for the cost $W_2^2$. The knowledge of the NNCC property on the Wasserstein space gives practical information on the structure of these optimal potentials. Lastly, the NNCC property would also appear as a key property for studying the regularity of optimal maps in infinite dimensions, which, to the best of our knowledge, has not yet been addressed in the literature.

\section{A synthetic formulation of nonnegative cross-curvature}\label{sec:sncc}

\subsection{Variational c-segments and NNCC spaces}\label{sec:sncc-def}

To understand the rationale behind our nonsmooth definition of nonnegative cross-curvature (\Cref{def:nncc-space}) let us first consider a smooth finite-dimensional setting. Take two $n$-dimensional smooth manifolds $X$ and $Y$ together with a function $c\colon X\times Y\to\R$, called the \emph{cost function}. The MTW tensor, which will be defined in a moment, is intimately tied to particular curves in the product space $X\times Y$ called \emph{c-segments}.

\begin{definition}\label{def:c-segment}
    Let $c\in C^1(X\times Y)$. A c-segment is a path $(s\in[0,1])\mapsto (\rmx(s),\yb)\in X\times Y$ that satisfies
    \begin{equation}\label{eq:def-c-segment}
        \nabla_yc(\rmx(s),\yb)=(1-s)\nabla_yc(\rmx(0),\yb)+s\nabla_yc(\rmx(1),\yb)\,.
    \end{equation}
\end{definition}

A standard set of assumptions to work with the MTW tensor is the following \cite{ma2005regularity,kim2010continuity,loeper2009regularity,TrudingerWang_2nd_boundary2009}:
\begin{align}
    &\text{$c\in C^4(X\times Y)$;} \label{eq:ass-C4}\\
    &\text{for all $(x,y)\in X\times Y$, the linear maps $\nabla^2_{xy}c(x,y)\colon T_xX \to T^*_yY$ are nonsingular.}
    \label{eq:ass-non-deg}
\end{align} 
Here $T_xX$ and $T^*_yY$ denote a tangent and a cotangent space respectively. Assumption \eqref{eq:ass-non-deg} is often called \emph{non-degeneracy} of the cost and ensures that c-segments \eqref{eq:def-c-segment} are well-defined locally in time, by the implicit function theorem. 
When needing globally well-defined c-segments, an additional pair of natural assumptions are the following: for all $(x,y)\in X\times Y$,
\begin{align}
    &\text{the maps $\nabla_xc(x,\cdot)\colon Y\to T_x^*X$, \quad $\nabla_yc(\cdot, y)\colon X\to T^*_yY$ are injective;}\label{eq:ass-bitwist}\\
    &\text{$\nabla_xc(x,Y)$ and $\nabla_yc(X,y)$ are convex subsets of the cotangent spaces $T^*_x X$ and $T^*_y Y$.}\label{eq:ass-biconvex}
\end{align}
Assumption \eqref{eq:ass-bitwist} is often called the \emph{bi-twist condition} and \eqref{eq:ass-biconvex} is often referred to as c-convexity of the domains $X$ and $Y$. Together \eqref{eq:ass-bitwist} and \eqref{eq:ass-biconvex} imply for any $x_0,x_1\in X$ and any $\yb\in Y$ the existence of a unique c-segment $s\mapsto (\rmx(s),\yb)$ such that $\rmx(0)=x_0$ and $\rmx(1)=x_1$. Note also that \eqref{eq:ass-non-deg} is implied by \eqref{eq:ass-bitwist} under \eqref{eq:ass-C4}.

\begin{definition}[MTW tensor]\label{def:MTW-tensor}
    Under the assumptions \eqref{eq:ass-C4}--\eqref{eq:ass-non-deg} the MTW tensor can be defined by 
\begin{equation} \label{eq:Sc-coord}
    \mathfrak{S}_c(x,y)(\xi,\eta) = (c_{ik\mb}c^{\mb r}c_{r\jb\lb}-c_{i\jb k\lb})|_{(x,y)}\,\xi^i\eta^\jb\xi^k\eta^\lb.
\end{equation}
Here $x\in X$, $y\in Y$, $\xi$ is a tangent vector at $x$ and $\eta$ is a tangent vector at $y$. Given local coordinates $x^i$ on $X$ and $y^\ib$ on $Y$, we denote $c_i=\partial_ic$, $c_\ib=\partial_\ib c$, $c_{ij}=\partial_{ij}c$, etc, so that unbarred indices refer to $x$-derivatives while barred indices refer to $y$-derivatives. We also denote by $c^{\jb i}$ the inverse of the matrix $c_{i\jb}$ and adopt the convention that summation over repeated indices is not explicitly written.
\end{definition}

\begin{definition}[Nonnegative cross-curvature]\label{DefNNCCStrong}
    We say that $c$ has nonnegative cross-curvature, or that $c$ is nonnegatively cross-curved, if for all $(x,y)\in X\times Y,$ and all tangent vectors $\xi\in T_xX$ and $\eta\in T_yY$,
    \begin{equation}\label{eq:condition-nncc}
    \mathfrak{S}_c(x,y)(\xi,\eta)\geq 0\,.
    \end{equation}
\end{definition}

The first step towards a nonsmooth version of condition \eqref{eq:condition-nncc} is a characterization of nonnegative cross-curvature due to Kim and McCann, which we state here in a slightly stronger form than their original statement. In particular Assumption \eqref{eq:ass-bitwist} can be relaxed, the important point being that c-segments connecting different points always exist. 

\begin{theorem}[{\cite[Theorem 2.10]{kim2012towards}}, {\cite[Lemma 4.3]{Figalli_Kim_McCann_screening2011}}]\label{thm:KM_characterization}
    Let $X$, $Y$ and $c$ satisfy Assumptions \eqref{eq:ass-C4}--\eqref{eq:ass-biconvex}. Then $c$ is nonnegatively cross-curved if and only if for any three points $x_0,x_1\in X$ and $\yb\in Y$, and for any $y\in Y$, the function 
    \begin{equation}\label{eq:thm:KM_characterization}
    (s\in[0,1])\mapsto c(\rmx(s),\yb)-c(\rmx(s),y)
    \end{equation}
    is convex, where $s\mapsto(\rmx(s),\yb)$ is the c-segment such that $\rmx(0)=x_0$ and $\rmx(1)=x_1$.  
\end{theorem}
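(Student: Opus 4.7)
The plan is to establish a single integral identity expressing the second $s$-derivative of the function in \eqref{eq:thm:KM_characterization} as a weighted integral of $\Sc$ along a dual c-segment in $Y$, from which both implications of the theorem follow immediately.

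First, fix a c-segment $s\mapsto(\rmx(s),\yb)$. Differentiating $h(s):=c(\rmx(s),\yb)-c(\rmx(s),y)$ twice in $s$ and using the c-segment equation $c_{i\bar k}(\rmx(s),\yb)\dot{\rmx}^i(s)=\mathrm{const}$ to eliminate $\ddot{\rmx}$, one finds that $h''(s)$ depends only on $\xb:=\rmx(s)$, $\xi:=\dot{\rmx}(s)$ and $y$: writing $h''(s)=\Psi(y)$, we have
\[
\Psi(y)=[c_{ij}(\xb,\yb)-c_{ij}(\xb,y)]\,\xi^i\xi^j-[c_i(\xb,\yb)-c_i(\xb,y)]\,c^{\bar k i}(\xb,\yb)\,c_{jl\bar k}(\xb,\yb)\,\xi^j\xi^l.
\]
In particular $\Psi(\yb)=0$.

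Next, using \eqref{eq:ass-bitwist}--\eqref{eq:ass-biconvex}, pick a dual c-segment $t\mapsto\rmy(t)$ at $\xb$ joining $\yb$ to $y$, i.e.\ the path along which $\nabla_x c(\xb,\rmy(t))$ is affine in $t$ (equivalently, $c_{i\bar m}(\xb,\rmy(t))\dot{\rmy}^{\bar m}(t)$ is constant in $t$). A direct calculation, using this equation to eliminate $\ddot{\rmy}$, then yields
\[
\frac{d}{dt}\Psi(\rmy(t))\Big|_{t=0}=0,\qquad \frac{d^2}{dt^2}\Psi(\rmy(t))=\Sc(\xb,\rmy(t))(\xi,\dot{\rmy}(t)).
\]
The first identity holds because the correction subtracted in $\Psi$ is engineered precisely to kill the first-order $c_{ij\bar k}$ contribution. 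The second is the crux of the argument: the piece $c_{ij\bar p\bar q}$ produced by differentiating $c_{ij}$ twice in $t$ and the piece $c_{ij\bar m}c^{\bar m r}c_{r\bar p\bar q}$ produced by substituting $\ddot{\rmy}$ recombine into exactly the MTW tensor \eqref{eq:Sc-coord}. Taylor's formula with integral remainder at $t=1$ therefore gives
\[
h''(s)=\Psi(y)=\int_0^1(1-t)\,\Sc(\xb,\rmy(t))(\xi,\dot{\rmy}(t))\,dt.
\]
This tensorial identity is the main technical step; once $\Psi$ is identified as the correct quantity and the dual c-segment at $\xb$ as the right probe, it is a mechanical computation, but one that requires careful bookkeeping of four-index tensors and repeated use of both c-segment ODEs.

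Both directions of the theorem now follow. For $(\Leftarrow)$, if $\Sc\geq 0$ everywhere then the integrand is nonnegative, so $h''(s)\geq 0$ and $h$ is convex. For $(\Rightarrow)$, assume the convexity statement holds for every choice of data, so that $\Psi(y)\geq 0$ for every admissible $y$. Given arbitrary $\xb\in X$, $\yb\in Y$, $\xi\in T_{\xb}X$ and $\eta\in T_{\yb}Y$, use \eqref{eq:ass-bitwist}--\eqref{eq:ass-biconvex} to realize $\xi$ as $\dot{\rmx}(s)$ for some c-segment through $\xb$ at $\yb$, and to produce a family $y_\varepsilon$ of endpoints of dual c-segments at $\xb$ with initial velocity $\varepsilon\eta$. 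Substituting into the integral identity and letting $\varepsilon\to 0$ yields $\Sc(\xb,\yb)(\xi,\eta)=\lim_{\varepsilon\to 0}2\varepsilon^{-2}\Psi(y_\varepsilon)\geq 0$, completing the proof.
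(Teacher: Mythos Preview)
Your argument is correct and is precisely the Kim--McCann approach that the paper cites rather than proves: the integral identity
\[
h''(s)=\int_0^1(1-t)\,\Sc(\xb,\rmy(t))(\xi,\dot\rmy(t))\,dt
\]
is exactly the ``non-tensorial expression of $\Sc$'' from \cite[Lemma~4.5]{kim2010continuity} that the paper invokes verbatim in the proof of \Cref{prop:Riemannian-NNCC}. Your derivation of $\Psi$, the vanishing of its first $t$-derivative at $\yb$, the identification of its second $t$-derivative with $\Sc$ via the dual c-segment ODE, and the scaling argument for the converse are all standard and match the cited references.
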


Looking at the condition \eqref{eq:thm:KM_characterization} given by \Cref{thm:KM_characterization}, it seems that the cost $c$ needs to be at least differentiable with respect to the $y$ variable in order to be able to define c-segments.
Our key observation is that defining c-segments is in fact not necessary, as the next result shows.

\begin{lemma}\label{lemma:auto-csegment}

Consider two smooth manifolds $X$, $Y$ and $c\in C^1(X\times Y)$. Let $(s\in [0,1]) \mapsto \rmx(s)$ be a smooth curve in $X$ and let $\yb\in Y$ be such that for all $s\in[0,1]$,
\begin{equation}\label{eq:lemma:auto-csegment}
    \forall y\in Y,\quad c(\rmx(s),\yb)-c(\rmx(s),y)\leq (1-s)[c(\rmx(0),\yb)-c(\rmx(0),y)] + s[c(\rmx(1),\yb)-c(\rmx(1),y)]\,.
\end{equation}
Then $\nabla_yc(\rmx(s),\yb) = (1-s)\nabla_y c(\rmx(0),\yb) + s \nabla_yc(\rmx(1),\yb)$.
\end{lemma}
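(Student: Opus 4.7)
The plan is to exploit the fact that setting $y=\yb$ in the hypothesis~\eqref{eq:lemma:auto-csegment} causes both sides to vanish identically, making $\yb$ an equality case; the desired c-segment identity then falls out as a first-order optimality condition.

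Concretely, I would introduce the auxiliary function
\[
h(s,y):=c(\rmx(s),\yb)-c(\rmx(s),y)-(1-s)\bigl[c(\rmx(0),\yb)-c(\rmx(0),y)\bigr]-s\bigl[c(\rmx(1),\yb)-c(\rmx(1),y)\bigr],
\]
so that the hypothesis reads $h(s,y)\leq 0$ for every $(s,y)\in[0,1]\times Y$. Substituting $y=\yb$ makes every bracketed term collapse to zero, giving $h(s,\yb)=0$ for every $s\in[0,1]$. Hence for each fixed $s$, the map $y\mapsto h(s,y)$ attains its global maximum at $y=\yb$.

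Since $c\in C^1(X\times Y)$, the function $y\mapsto h(s,y)$ is differentiable on the manifold $Y$, so Fermat's rule at the interior maximizer $\yb$ yields $\nabla_y h(s,\yb)=0$. A direct computation using $\nabla_y\bigl[c(\rmx(s),\yb)-c(\rmx(s),y)\bigr]\bigr|_{y=\yb}=-\nabla_y c(\rmx(s),\yb)$ gives
\[
-\nabla_y c(\rmx(s),\yb)+(1-s)\nabla_y c(\rmx(0),\yb)+s\,\nabla_y c(\rmx(1),\yb)=0,
\]
which rearranges to the claimed c-segment identity.

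No real obstacle is expected: the argument is essentially a one-line application of Fermat's rule at the ``free'' maximizer $y=\yb$, and in fact the smoothness of the curve $\rmx$ is not used—only the $C^1$ regularity of $c$ in the $y$ variable matters. This explains why the lemma extends naturally to the nonsmooth setting of \Cref{def:nncc-space} and legitimizes removing differentiability assumptions from the definition of c-segments.
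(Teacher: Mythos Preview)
Your proof is correct and is essentially the same argument as the paper's: both exploit that $y=\yb$ is a global maximizer of $y\mapsto h(s,y)$ and extract the first-order optimality condition. The paper makes this explicit by picking a curve $\rmy(t)$ through $\yb$, forming the difference quotient $t^{-1}[c(\rmx(s),\yb)-c(\rmx(s),\rmy(t))]$, and passing to the limit $t\to 0$ to obtain the directional derivative along $\dot\rmy(0)$; you invoke Fermat's rule directly. These are the same idea, with the paper's version being slightly more explicit about how gradients on a manifold are computed via tangent vectors of curves.
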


The c-segment equation is therefore implied by \eqref{eq:lemma:auto-csegment}. Note that condition \eqref{eq:lemma:auto-csegment} is weaker than convexity of $s\mapsto c(\rmx(s),\yb)-c(\rmx(s),y)$. It simply assumes that $c(\rmx(s),\yb)-c(\rmx(s),y)$ lies \emph{below the chord}, i.e.\ below the linear interpolation of its values at $s=0$ and $s=1$. 

\begin{proof}[Proof of \Cref{lemma:auto-csegment}]
    Fix $(\rmx,\yb)$ satisfying \eqref{eq:lemma:auto-csegment} and consider an arbitrary smooth curve $(t\in (-\varepsilon,\varepsilon))\mapsto \rmy(t)$ in $Y$ such that $\rmy(0)=\yb$. Then we have a family $\{g(\cdot, t)\}_{t\in (-\varepsilon,\varepsilon)\setminus\{0\}}$ of functions
    \[
    g(s,t)\coloneqq \frac{c(\rmx(s),\rmy(0))-c(\rmx(s),\rmy(t))}{t}
    \]
    which satisfies 
    \begin{equation}\label{eq:proof-lemma:auto-csegment-1}
        g(s,t)\leq (1-s)\,g(0,t)+s\,g(1,t)\,.
    \end{equation}
    As $t\to 0$ this family converges pointwise to $h(s)\coloneqq -\langle q(s),\dot\rmy(0)\rangle$ with $q(s)\coloneqq \nabla_y c(\rmx(s),\yb)$. Here $\langle \cdot,\cdot\rangle$ denotes the duality pairing between a cotangent vector and a tangent vector. Inequality \eqref{eq:proof-lemma:auto-csegment-1} passes to the limit, $h(s)\leq (1-s)h(0)+s\,h(1)$, i.e.\ $\langle q(s)-(1-s)\,q(0)-s\,q(1),\dot\rmy(0)\rangle \geq 0$.
    Since $\dot\rmy(0)$ is an arbitrary tangent vector we deduce that $q(s)=(1-s)q(0)+s\,q(1)$. 
\end{proof}

All the considerations above motivate our extension of nonnegative cross-curvature to nonsmooth settings. When $X$ and $Y$ are two arbitrary sets and $c\colon X\times Y\to [-\infty,+\infty]$ is an arbitrary function potentially taking $\pm\infty$ values, we refer to $(X\times Y,c)$ as a \emph{cost space}, by analogy with the terminology for a metric space.
We say that a quantity $\alpha\in [-\infty,+\infty]$ is finite if it is not equal to $\pm\infty$. 
Throughout this paper, the arithmetic rules we adopt for infinite values are the usual ones in the totally ordered set $[-\infty,+\infty]$,
\begin{align*}
    \alpha+(+\infty) = +\infty + \alpha = \alpha-(-\infty) = +\infty \quad &\text{for } \alpha\in (-\infty,+\infty]\\
    \alpha+(-\infty) = -\infty + \alpha = \alpha-(+\infty) = -\infty \quad &\text{for } \alpha\in [-\infty,+\infty)\,.
\end{align*}
The expressions $(+\infty)+(-\infty)$, $(+\infty)-(+\infty)$, $(-\infty)+(+\infty)$ and $(-\infty)-(-\infty)$ will be called \emph{undefined combinations}. They are a priori undefined but may take a specific $\pm\infty$ value on a case-by-case basis. We are now ready to introduce the curves that will play the role of c-segments on cost spaces.

\begin{definition}[Variational c-segments]\label{def:sncc}
    Let $(X\times Y,c)$ be a cost space. 
    Given a path $\rmx\colon[0,1]\rightarrow X$ and $\yb\in Y$, we say that $(\rmx,\yb)$ is a \emph{variational c-segment on $(X\times Y,c)$} if for all $s\in[0,1]$, $c(\rmx(s),\yb)$ is finite and for all $s\in (0,1)$, we have 
    \begin{equation}\label{eq:NNCC-inequality}
        \forall y\in Y,\quad c(\rmx(s),\yb)-c(\rmx(s),y)\leq (1-s)[c(\rmx(0),\yb)-c(\rmx(0),y)]+s[c(\rmx(1),\yb)-c(\rmx(1),y)],
    \end{equation}
    with the rule $(+\infty)+(-\infty)=(-\infty)+(+\infty)=+\infty$ in the right-hand side.     
\end{definition}

Here by path we mean an arbitrary function $\rmx\colon[0,1]\rightarrow X$. In particular, we do not impose any continuity or regularity on these paths. 
If the space $(X\times Y,c)$ is clear from context we may omit it and simply say variational c-segment. 
We call condition \eqref{eq:NNCC-inequality} the \emph{NNCC inequality}. 
The NNCC inequality is nonlocal since it has to hold for every $y\in Y$. This is a stringent condition, and for an arbitrary cost space $(X\times Y,c)$ there may not exist many paths that satisfy it. When they always exist we define:

\begin{definition}[NNCC space]\label{def:nncc-space}
    We say that $(X\times Y,c)$ is a cost space with nonnegative cross-curvature (NNCC space) if for every $(x_0,x_1,\yb)\in X\times X\times Y$
    such that $c(x_0,\yb)$ and $c(x_1,\yb)$ are finite, there exists a variational c-segment $(\rmx,\yb)$ on $X\times Y$ such that $\rmx(0)=x_0$ and $\rmx(1)=x_1$. 
\end{definition}

When $X=Y$ we will sometimes say that $(X,c)$ is an NNCC space rather than write the product $(X\times X,c)$. NNCC spaces bundle together the notions of nonnegative cross-curvature and c-convexity of the domain \eqref{eq:ass-biconvex}, which is a key assumption for \Cref{thm:KM_characterization}. This is similar in spirit to defining standard convexity of a function $f\colon \Omega\subset\R^n\to\R$ via $f((1-s)x_0+sx_1)\leq (1-s)f(x_0)+sf(x_1)$, where one generally assumes the domain $\Omega$ to be convex so that $(1-s)x_0+sx_1$ is in $\Omega$. On the other hand, if $f$ is twice differentiable the condition $\nabla^2f\geq 0$ can be considered independently from the convexity of $\Omega$. In the same way the differential and local MTW tensor $\Sc$ can be defined independently of the c-convexity of the domains.

NNCC spaces $(X\times Y,c)$ break the symmetry between the sets $X$ and $Y$ that existed when working with $\mathfrak{S}_c$. The two sets need not have the same dimension (in a finite-dimensional manifold setting, say) and they play different roles since $X$ is home to a curve $\rmx$, thus presumably has some continuous structure (although we do not impose this at the outset), while $Y$ may truly be an arbitrary set.

The NNCC inequality does not ask $s\mapsto c(\rmx(s),\yb)-c(\rmx(s),y)$ to be convex but simply \emph{below the chord}, i.e.\ below the line joining the values at $s=0$ to $s=1$. This choice is sufficient for the applications we have in mind and is more closely related to the Figalli--Kim--McCann characterization of nonnegative cross-curvature, see our characterization in terms of c-subdifferentials in \Cref{prop:NNCC-contact-sets}. That being said, it may sometimes be of interest to consider the ``convex'' version of the NNCC inequality, so we introduce the following condition:
\begin{enumerate}[\normalfont(NNCC-conv)]
    \item For any $x_0,x_1\in X$ and $\yb\in Y$ such that $c(x_0,\yb)$ and $c(x_1,\yb)$ are finite, there exists a path $\rmx\colon[0,1]\rightarrow X$ with $\rmx(0)=x_0$, $\rmx(1)=x_1$ such that $c(\rmx(s),\yb)$ is finite and such that for every $y\in Y$, the function $s\mapsto c(\rmx(s),\yb)-c(\rmx(s),y)$ is convex.\label{condition-NNCC-conv}
\end{enumerate}
We shall refer to the paths $(\rmx(s),\yb)$ as \snccconv{}-variational c-segments. The function $h(s)\coloneqq c(\rmx(s),\yb)-c(\rmx(s),y)$ takes values in $[-\infty,+\infty]$ and its convexity should be understood as the convexity of its epigraph \cite{RockafellarBook}
\begin{equation*}
    \operatorname{epi} h \coloneqq \{(s,\alpha)\in [0,1]\times\R : h(s) \leq \alpha\}.
\end{equation*}
When $X$ is a topological space, let us also define another condition which will be related to the MTW condition and the Loeper maximum principle in \Cref{sec:sncc-discussion}. 
\begin{enumerate}[\normalfont(LMP)]
    \item For any $x_0,x_1\in X$ and $\yb\in Y$ such that $c(x_0,\yb)$ and $c(x_1,\yb)$ are finite, there exists a continuous path $\rmx\colon[0,1]\rightarrow X$ with $\rmx(0)=x_0$, $\rmx(1)=x_1$ such that $c(\rmx(s),\yb)$ is finite for every $s\in [0,1]$ and such that 
    \begin{equation}\label{eq:condition-LMP}
    \forall y\in Y,\quad c(\rmx(s),\yb)-c(\rmx(s),y)\leq \max\{c(x_0,\yb)-c(x_0,y),c(x_1,\yb)-c(x_1,y)\}.
    \end{equation}\label{condition-LMP}
\end{enumerate}
We similarly refer to the paths $(\rmx(s),\yb)$ in \eqref{eq:condition-LMP} as \smtw{}-variational c-segments. 
Condition \smtw{} is independent of the continuous reparametrization. That is, if $\rmx(\cdot)$ is a curve satisfying \eqref{eq:condition-LMP}, then for any continuous function $f:[0,1]\rightarrow[0,1]$, with $f(0)=0$ and $f(1)=1$, $\rmx(f(\cdot))$ satisfies it as well. 
In contrast, these generalized reparametrizations do not preserve the inequality defining NNCC, nor the inequality \snccconv{}.

Let us now look at some elementary examples.

\begin{example}[Hilbert squared norms]\label{ex:Hilbert-norm}
    Let $(H,\langle\cdot,\cdot\rangle)$ be a Hilbert space and denote $\lVert u\rVert^2 = \langle u,u \rangle$. First, consider the cost $c(x,y)=\lVert x-y\rVert^2$. Then $(H\times H, c)$ is an NNCC space. Indeed, for any $x_0,x_1,\yb \in H$ define $\rmx(s)=(1-s) x_0+s x_1$. For any $y\in H$ we have 
    \begin{equation}\label{eq:ex-Hilbert}
    c(\rmx(s),\yb)-c(\rmx(s),y) = 2\langle \rmx(s),y-\yb\rangle + \lVert \yb\rVert^2-\lVert y\rVert^2,
    \end{equation}
    which is an affine function of $s$ in view of the linear interpolation $\rmx(s)$. The NNCC inequality is here an equality.
    Furthermore variational c-segments can be shown to be unique, using \eqref{eq:ex-Hilbert} and adapting the argument of Lemma \ref{lemma:auto-csegment}.
    
    More generally consider two arbitrary sets $X$ and $Y$, a function $F\colon X\to H$ whose image $F(X)$ is a convex subset of $H$ and an arbitrary function $G\colon Y\to H$. Let 
    \[
        c(x,y)=\lVert F(x)-G(y)\rVert^2.
    \] 
    Then $(X\times Y,c)$ is an NNCC space. To see why, fix $x_0,x_1\in X$ and $\yb \in Y$. Since $F(X)$ is a convex subset of $H$, for each $s\in (0,1)$ there exists a point $\rmx(s)\in X$ such that $F(\rmx(s))=(1-s) F(x_0) + s F(x_1)$. Define also $\rmx(0)=x_0$ and $\rmx(1)=x_1$. Then for any $y\in Y$,
    \begin{align*}
        c(\rmx(s),\yb)-c(\rmx(s),y) &= \langle 2F(\rmx(s))-G(\yb)-G(y), G(y)-G(\yb)\rangle\\
        &= 2\langle (1-s) F(x_0) + s F(x_1), G(y)-G(\yb)\rangle + \lVert G(\yb)\rVert^2 - \lVert G(y)\rVert^2,
    \end{align*}
    again an affine function of $s$.     
\end{example}

\begin{example}[Bregman divergences] \label{ex:Bregman}
    The previous example can be extended to Bregman divergences in Banach spaces. Let $E$ be a Banach space, $u\colon E\to\R$ a Gateaux-differentiable function and consider the \emph{Bregman divergence}
    \[
    c(x,y) = u(x)-u(y)-Du(y)(x-y),
    \]
    where $Du$ denotes the Gateaux derivative of $u$. Take a convex subset $C\subset E$ and an arbitrary subset $D\subset E$. Then $(C\times D, c)$ is an NNCC space. Indeed one can argue as in the previous example (which is a particular case with $u(x)=\lVert x\rVert^2$), and for $x_0,x_1\in C$, $\yb\in D$ define $\rmx(s)=(1-s)x_0+sx_1$ and find that 
    \[
    c(\rmx(s),\yb)-c(\rmx(s),y) = (Du(y)-Du(\yb))(\rmx(s)) + u(y)-u(\yb)+Du(\yb)(\yb)-Du(y)(y)\,.
    \]
\end{example}

\begin{example}[Bregman divergences II] \label{ex:Bregman-II}
    Let us now consider the case of a reversed Bregman divergence. Let $E$ be a Banach space, $u\colon E\to\R$ a Gateaux-differentiable function and consider
    \[
    c(x,y) = u(y)-u(x)-Du(x)(y-x)\,.
    \]
    Compared to the previous example, we have switched the roles of $x$ and $y$. 
    Let $X \subset E$ be such that $Du(X) \subset E^*$ is a convex subset of the dual space of $E$ and let $Y$ be an arbitrary subset of $E$. Then we can find $\rmx(s)\in X$ such that $Du(\rmx(s)) = (1-s)Du(x_0) + s Du(x_1)$, for any $x_0,x_1\in X$ and $0<s<1$. The corresponding difference of costs in the NNCC inequality is affine in $s$ which shows $(X\times Y,c)$ to be an NNCC space.
\end{example}

\begin{example}[The semi-geostrophic cost]
	Another possible extension of the Hilbert case is to consider the cost
	\begin{equation}\label{eq:semigeostrophic}
	c((x,a),(y,b)) = \frac{1}{2b}\| x- y\|^2 + g \frac a b
	\end{equation}
	on $X=Y=H\times (0,+\infty)$, where $g\in\R\setminus \{0\}$ is a given constant. Following Lemma \ref{lemma:auto-csegment}, a simple computation shows that a variational c-segment from $\big((x_0,a_0),(\yb,\bar b)\big)$ to $\big((x_1,a_1),(\yb,\bar b)\big)$ is necessarily of the form
	\[
	(\rmx(s),a(s))=\Big( (1-s)x_0+sx_1,  (1-s)a_0+sa_1+\frac{s(1-s)}{2g}\|x_0-x_1\|^2\Big)\,,
	\]
	independently of the base point $(\yb,\bar b)$, and the difference of costs is again affine so that $(X\times Y,c)$ is NNCC.
	If $H=\R^2$ and $g$ is the standard acceleration of gravity, \eqref{eq:semigeostrophic} is referred to as the semi-geostrophic cost and it is used within the framework of optimal transport for applications to the  semi-geostrophic equations, a model for atmospheric flows and frontogenesis \cite{CullenMaroofi2003}. 
\end{example}

Note that in the previous examples c-segments $(\rmx,\yb)$ are independent of the base point $\yb$. In the next example, we present the case where the cost is a distance, which leads to discontinuous variational c-segments.

\begin{example}[The Monge cost]\label{ExMongeCost}
    Let $(X,d)$ be any metric space and consider the cost $c(x,y)=d(x,y)$. Then $(X\times X, c)$ is an NNCC space. To show this, fix $x_0,x_1,\yb\in X$ and define the path
    \[
    \rmx(s)=\begin{cases}
        x_0&\text{for }s=0\,;\\
        \yb&\text{for }0<s<1\,;\\
        x_1&\text{for }s=1\,.
    \end{cases}
    \]
    Let $y\in X$ and define $f(s)=c(\rmx(s),\yb)-c(\rmx(s),y)$. We see that 
    \[
    f(s)=\begin{cases}
        d(x_0,\yb)-d(x_0,y)&\text{when }s=0\,;\\
        -d(\yb,y)&\text{when }0<s<1\,;\\
        d(x_1,\yb)-d(x_1,y)&\text{when }s=1\,.
    \end{cases}
    \]
    By the triangle inequality, the constant value $f(s)$ for $s\in (0,1)$ is smaller than both $f(0)$ and $f(1)$, and therefore $f$ is convex on $[0,1]$, showing that $(\rmx,\yb)$ is a variational c-segment.
\end{example}

\subsection{Products and submersions} \label{sec:products-submersions}

In \cite{kim2012towards} Kim and McCann exhibited two operations preserving classical nonnegative cross-curvature:
direct products, and Riemannian submersions for costs given by a squared Riemannian distance. In this section, we present nonsmooth extensions of these two results. We start with finite products, and we will also consider an instance of infinite products in \Cref{prop:RE} in the next section.

\begin{proposition}[Products preserve nonnegative cross-curvature]\label{prop:products}
    Let $A$ be a finite set and let $(X^a\times Y^a,c^a)_{a\in A}$ be a family of NNCC spaces, with costs $c^a\colon X^a\times Y^a\to  (-\infty,+\infty]$. Let $\boldsymbol{X}=\prod_{a\in A}X^a$, $\boldsymbol{Y}=\prod_{a\in A}Y^a$ and $\boldsymbol{c}(\boldsymbol x,\boldsymbol y)=\sum_{a\in A}c^a(x^a,y^a)$ defined on $\boldsymbol X\times \boldsymbol Y$. Then $(\boldsymbol X\times \boldsymbol Y, \boldsymbol c)$ is an NNCC space.
\end{proposition}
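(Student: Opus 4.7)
The plan is to construct the product variational c-segment coordinate by coordinate and then verify the NNCC inequality by summing the individual ones. Given $\boldsymbol{x}_0 = (x_0^a)_{a \in A}$, $\boldsymbol{x}_1 = (x_1^a)_{a \in A} \in \boldsymbol{X}$ and $\boldsymbol{\bar y} = (\bar y^a)_{a \in A} \in \boldsymbol{Y}$ with $\boldsymbol{c}(\boldsymbol{x}_i, \boldsymbol{\bar y})$ finite for $i = 0, 1$, I would first observe that since each $c^a$ takes values in $(-\infty, +\infty]$ and $A$ is finite, finiteness of the sum $\sum_{a \in A} c^a(x_i^a, \bar y^a)$ forces every summand to be finite. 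Invoking the NNCC hypothesis on each factor then yields paths $\rmx^a \colon [0,1] \to X^a$ with $\rmx^a(0) = x_0^a$, $\rmx^a(1) = x_1^a$, such that $(\rmx^a, \bar y^a)$ is a variational c-segment on $(X^a \times Y^a, c^a)$ with $c^a(\rmx^a(s), \bar y^a)$ finite throughout $[0,1]$. Setting $\boldsymbol{\rmx}(s) = (\rmx^a(s))_{a \in A}$, the product cost $\boldsymbol{c}(\boldsymbol{\rmx}(s), \boldsymbol{\bar y}) = \sum_{a \in A} c^a(\rmx^a(s), \bar y^a)$ is a finite sum of finite reals, hence finite.

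It remains to check the product NNCC inequality: for every $s \in (0,1)$ and $\boldsymbol{y} = (y^a)_{a \in A} \in \boldsymbol{Y}$, I would sum over $a \in A$ the coordinatewise inequalities
\[
c^a(\rmx^a(s), \bar y^a) - c^a(\rmx^a(s), y^a) \leq (1-s)[c^a(x_0^a, \bar y^a) - c^a(x_0^a, y^a)] + s[c^a(x_1^a, \bar y^a) - c^a(x_1^a, y^a)]\,.
\]
When all the values $c^a(\cdot, y^a)$ appearing are finite, the two sides of the summed inequality coincide exactly with the two sides of the desired product inequality, and the claim follows directly by linearity of the sum.

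The main, mild obstacle is the bookkeeping of $+\infty$ values under the convention $(+\infty) + (-\infty) = +\infty$ fixed in \Cref{def:sncc}. I would handle it by a short case analysis. If some $c^{a_0}(\rmx^{a_0}(s), y^{a_0}) = +\infty$, then $\boldsymbol{c}(\boldsymbol{\rmx}(s), \boldsymbol{y}) = +\infty$, so the product left-hand side equals $-\infty$ and the inequality holds trivially. Otherwise every $c^a(\rmx^a(s), y^a)$ is finite. In this regime, if some $c^{a_0}(x_0^{a_0}, y^{a_0}) = +\infty$, then the right-hand side of the individual NNCC inequality for the factor $a_0$ evaluates to $-\infty$ (regardless of whether the $x_1$-term is finite or $-\infty$, since no mixed combination arises), contradicting the finiteness of its left-hand side; by the same argument applied to $\boldsymbol{x}_1$, every $c^a(x_0^a, y^a)$ and every $c^a(x_1^a, y^a)$ must be finite, placing us in the generic case handled above and completing the proof. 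I do not anticipate any further conceptual difficulty beyond this careful accounting.
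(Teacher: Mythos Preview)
Your proof is correct and follows essentially the same approach as the paper: build the variational c-segment coordinatewise and sum the NNCC inequalities. The paper handles the $+\infty$ bookkeeping in one sentence, simply observing that since no $c^a$ takes the value $-\infty$, no undefined combinations arise when summing; your case analysis is more elaborate than needed but reaches the same conclusion.
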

\begin{proof}
    Let $\boldsymbol{x}_0=(x^a_0)_a\in\boldsymbol{X}$, $\boldsymbol{x}_1=(x^a_1)_a\in\boldsymbol{X}$ and $\boldsymbol{\yb}=(\yb^a)_a\in\boldsymbol{Y}$
    be such that $\boldsymbol c(\boldsymbol x_0, \boldsymbol \yb)$ and $\boldsymbol c(\boldsymbol x_1, \boldsymbol \yb)$ are finite. Then each $c^a(x^a_0,\yb^a)$ and $c^a(x^a_1,\yb^a)$, $a\in A$, is finite. 
    For each $a\in A$, since $(X^a\times Y^a,c^a)$ is an NNCC space there exists a variational c-segment $(\rmx^a(\cdot),\yb^a)$ from $(x^a_0,\yb^a)$ to $(x^a_1,\yb^a)$; then for all $y^a\in Y^a$, 
    \[
    c^a(\rmx^a(s),\yb^a)-c(\rmx^a(s),y^a)\leq (1-s)[c(x^a_0,\yb^a)-c(x^a_0,y^a)]+s[c(x^a_1,\yb^a)-c(x^a_1,y^a)]\,.
    \]
    Summing this inequality over $a$ gives the desired result. 
    Note that here there is no undefined combination since costs $c^a$ do not take the value $-\infty$.
\end{proof}

Let us now turn our attention to Riemannian submersions. We first present an extension of Kim and McCann's results to principal bundles endowed with an invariant cost in a smooth setting (\Cref{ThPrincipalFiberResult}). We then introduce \emph{cost submersions}, a nonsmooth notion of projections that preserve NNCC spaces (\Cref{prop:sncc-surj}). 

In Riemannian geometry, Riemannian submersions often appear in the following context. Consider a principal fiber bundle $(G,L)$ endowed with a Riemannian metric $g$ (on $L$). Suppose that the group $G$ acts via isometries of the Riemannian metric $g$, then the projection map $L \to L / G$ can be turned into a Riemannian submersion. This situation can be generalized to the setting of principal fiber bundles with invariant costs, instead of metrics. 
We start with the necessary definitions.

\begin{definition}[Submersion and projection of a cost]\label{def:proj-fiber}
Let $ P_i: M_i \to N_i$ for $i = 1,2$ be two submersions between manifolds with compact fibers and ${c}: M_1 \times M_2 \to \R$ be a continuous cost. 
Define the projected cost $\underline{c}: N_1 \times N_2 \to \R$ by 
\begin{equation}
\underline c(\underline x,\underline y) \coloneqq \min \{ {c}({x},{y})\,;\, {x} \in  P_1^{-1}(\{\underline x\})\text{ and }{y} \in  P_2^{-1} (\{\underline y \}) \}\,.
\end{equation}
\end{definition}
In general, this definition does not lead to interesting costs. For instance, one is often interested in costs which are distances when $M_1=M_2$ and $N_1=N_2$. In this situation, the projected cost is not a distance in general. However, that is the case under a transitive group action on the fibers, leaving the cost invariant. We now describe this structure in the general case of a cost on a product space. 

\begin{definition}[Principal bundle with $c$-invariant cost]\label{ThInvariantCostDefinition}
Consider a principal fiber bundle $(G,L_1 \times L_2)$ where $G$ is a compact Lie group and a continuous cost $c\colon L_1 \times L_2 \to \R$ such that $G$ leaves the cost invariant. Namely, we assume 
    \begin{equation}
        c(g \cdot x,g\cdot y) = c(x,y)\,, \quad \forall (x,y) \in L_1 \times L_2\,.
    \end{equation}
    We consider the projection $P : L_1 \times L_2  \to L_1/G \times L_2 /G$.
The corresponding projected cost is denoted $\underline c$.
\end{definition}

We can now state our slight generalization of Kim and McCann's result on Riemannian submersions.

\begin{theorem}[Cross-curvature for invariant costs on principal fiber bundles]  \label{ThPrincipalFiberResult}
    Let $G$ be a compact Lie group.
    Consider  a principal fiber bundle $(G,L_1 \times L_2)$ with a cost $c: L_1 \times L_2 \to \R$ which is $G$-invariant. Then, under Assumption \eqref{AssumptionLifts}, there is more cross-curvature on $L_1/G \times L_2 /G$ than on $L_1 \times L_2$. More precisely,
    \begin{equation}
    \mathfrak{S}_{\underline{c}}( P_1(x), P_2(y))(\xi,\eta) \geq \mathfrak{S}_{c}(x,y)(\tilde{\xi},\tilde{\eta}) \,,
    \end{equation}
    where $x,y$ are optimal lifts of their projections and $\tilde \xi, \tilde \eta$ are horizontal lifts of $\xi,\eta$.
\end{theorem}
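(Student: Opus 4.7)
The plan is to use the variational characterization of cross-curvature given by \Cref{thm:KM_characterization}, combined with an O'Neill-type argument in the Kim--McCann geometric framework. At the infinitesimal level, $\mathfrak{S}_c(x_0,\bar y)(\xi,\eta)$ can be recovered (up to a positive universal factor) as the coefficient of $s^2 t^2$ in the Taylor expansion at the origin of the cross-difference
\[
\phi(s,t) := c(x(s),\bar y) - c(x(s),y(t))\,,
\]
where $s \mapsto x(s)$ is the c-segment with $\dot x(0)=\xi$ based at $\bar y$ and $t \mapsto y(t)$ is any smooth variation of $\bar y$ with $\dot y(0)=\eta$. The inequality $\mathfrak{S}_{\underline c} \geq \mathfrak{S}_c$ will follow from comparing $\phi$ with its projected analogue via the infimum defining $\underline c$.

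I would start by fixing $(\underline x_0,\underline{\bar y}) \in N_1 \times N_2$ together with tangent vectors $\xi,\eta$, and choosing optimal lifts $(x_0,\bar y)\in L_1\times L_2$ realizing $\underline c(\underline x_0,\underline{\bar y})=c(x_0,\bar y)$; such lifts exist by compactness of the group $G$ and continuity of $c$. Let $\tilde\xi, \tilde\eta$ be the horizontal lifts of $\xi,\eta$ at $x_0,\bar y$ with respect to the Kim--McCann metric. Under Assumption \eqref{AssumptionLifts}, the c-segment $s\mapsto (x(s),\bar y)$ starting at $(x_0,\bar y)$ with $\dot x(0)=\tilde\xi$ exists for small $s$, and $G$-invariance of the cost makes the defining equation \eqref{eq:def-c-segment} $G$-equivariant, so the projection $(\underline x(s),\underline{\bar y}) := (P_1(x(s)),\underline{\bar y})$ is a c-segment in $N_1\times N_2$ with $\dot{\underline x}(0)=\xi$. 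Pick a smooth path $t\mapsto y(t)$ in $L_2$ with $\dot y(0)=\tilde\eta$ and let $\underline y(t) = P_2(y(t))$, so that $\dot{\underline y}(0)=\eta$.

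The central step is the comparison between $\phi(s,t)$ above and its projected version $\underline\phi(s,t) := \underline c(\underline x(s),\underline{\bar y}) - \underline c(\underline x(s),\underline y(t))$. Since $\underline c \leq c$ for any pair of lifts while $\underline c(\underline x(s),\underline{\bar y}) = c(x(s),\bar y)$ along the horizontal c-segment (by $G$-invariance together with preservation of optimality along horizontal lifts), we obtain $\underline\phi(s,t) \geq \phi(s,t)$ globally. Matching the $2$-jets of $\phi$ and $\underline\phi$ at the origin follows from the horizontality of $\tilde\xi,\tilde\eta$: at an optimal lift, derivatives of $c$ in directions tangent to the $G$-orbits vanish, so the $O(t)$, $O(t^2)$, $O(s)$, $O(s^2)$ and $O(st)$ corrections a priori introduced by the infimum are in fact absent. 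Extracting the coefficient of $s^2 t^2$ on both sides of $\underline\phi \geq \phi$ then delivers the desired inequality $\mathfrak{S}_{\underline c}(\underline x_0,\underline{\bar y})(\xi,\eta) \geq \mathfrak{S}_c(x_0,\bar y)(\tilde\xi,\tilde\eta)$.

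The main obstacle is proving the second-order matching between $\phi$ and $\underline\phi$, which is the cost analogue of the vanishing of the O'Neill boundary terms under a Riemannian submersion. It should reduce to a first-variation argument: at an optimal lift $(x_0,\bar y)$, the gradients $\nabla_x c$ and $\nabla_y c$ annihilate the tangent directions to the $G$-orbits, and horizontality of $\tilde\xi,\tilde\eta$ ensures no orbit component is generated to first order in $(s,t)$. A careful bookkeeping of these first-variation identities, together with the role of Assumption \eqref{AssumptionLifts} in propagating horizontality of the c-segment, is what makes the fourth-order inequality go through; anything less and extra lower-order terms of indefinite sign would contaminate the coefficient of $s^2t^2$.
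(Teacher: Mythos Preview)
Your overall strategy coincides with the paper's: set $F(s,t)=c(x(s),y(t))-\underline c(\underline x(s),\underline y(t))$, observe $F\ge 0$, and deduce $\partial_s^2\partial_t^2 F\big|_{0}\ge 0$, which is exactly $\mathfrak S_{\underline c}-\mathfrak S_c\ge 0$. The gaps are in the execution of the low-order cancellations.

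First, your choice of $t\mapsto y(t)$ is too loose. Picking an arbitrary smooth curve with $\dot y(0)=\tilde\eta$ only forces $\partial_tF(0,0)=0$; the claim that the $O(t^2)$ term is absent does not follow from horizontality alone. The paper fixes this by taking $y(t)$ to be the \emph{optimal} lift of $\underline y(t)$ at the point $x(0)$, which yields $F(0,t)\equiv 0$ identically, in parallel with $F(s,0)\equiv 0$ coming from optimality of the horizontal c-segment. With both axes killed exactly, all pure powers $s^k$ and $t^k$ disappear from the Taylor expansion.

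Second, and this is the real crux you flag but do not resolve: even after the $2$-jet vanishes, the expansion of $F$ still contains the cross terms $f_{12}\,st^2$, $f_{21}\,s^2t$, $f_{13}\,st^3$, $f_{31}\,s^3t$, which are lower order than $s^2t^2$ along generic directions and have no sign. These cannot be handled by a first-variation argument. The paper kills them by testing $F\ge 0$ along the one-parameter families $(s,t)=(r,\pm r)$ and $(s,t)=(r^2,r)$ (and the symmetric choice), which forces $f_{12}=f_{21}=f_{13}=f_{31}=0$; only then does $F(s,t)=f_{22}\,s^2t^2+o(s^2t^2)$, and nonnegativity gives $f_{22}\ge 0$. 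Your sketch stops short of this step.
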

We refer to Appendix \ref{sec:appendix-submersion} for the proof and a complete definition of the objects involved in the theorem. Importantly, this suggests that the result could also be transferred to the Wasserstein space, in which case we need a \emph{robust} notion of cost-preserving submersion, under which our notion of NNCC space is preserved.
This leads us to extending Riemannian submersions to arbitrary cost spaces, in the spirit of \emph{submetries} for metric spaces, see \Cref{rem:submetries}.

\begin{definition}[Cost submersion]\label{DefCostSubmersion}
    Consider two cost spaces defined by $c\colon X\times Y\to [-\infty,+\infty]$ and $\cu\colon \Xu\times \Yu\to [-\infty,+\infty]$, and let $P_1\colon X\to\underline{X}$, $P_2\colon Y\to\underline{Y}$ be two surjective maps. 
    Write $x\sim x'$ whenever $P_1(x)=P_1(x')$ and $y\sim y'$ whenever $P_2(y)=P_2(y')$. We say that $P\colon (x,y)\mapsto (P_1(x),P_2(y))$ is a \emph{cost submersion} if: 
    \begin{enumerate}
    \item for every $(x,y)\in X\times Y,$ setting $(\xu,\yu)=P(x,y)$ we have 
\begin{equation}\label{eq:c-subm-property}
    \cu(\xu,\yu)=\inf_{x'\sim x,y'\sim y}c(x',y')=\inf_{y'\sim y}c(x,y')=\inf_{x'\sim x}c(x',y)\,;
\end{equation}
\item in the previous equation, all the infima are attained when $\cu(\xu,\yu)$ is finite.
\end{enumerate} 
We also say that $(x,y)$ is \emph{optimal} if $c(x,y)$ is finite and $c(x,y)=\inf_{x'\sim x,y'\sim y}c(x',y')$, i.e.\ $c(x,y)=\cu(P_1(x),P_2(y))$.
\end{definition}
Through a cost submersion, we intend to transfer a structure from the ``total space'' $(X\times Y,c)$ (variational c-segments, NNCC space) to the ``base space'' $(\Xu\times\Yu,\cu)$. 
Let us start with variational c-segments. First we show that if its endpoints are optimal, a variational c-segment on $(X\times Y,c)$ is ``horizontal''. Note that in this section we denote the base point by $y_0$ instead of $\yb$ for better readability.

\begin{lemma}\label{lemma:c-subm-optimal}
    Let $P\colon (X\times Y,c)\to(\Xu\times \Yu,\cu)$ be a cost submersion. Let $s\mapsto(\rmx(s),y_0)$ be a variational c-segment on $(X\times Y,c)$ such that $(\rmx(0),y_0)$ and $(\rmx(1),y_0)$ are optimal. Then $(\rmx(s),y_0)$ is optimal for each $0<s<1$. 
\end{lemma}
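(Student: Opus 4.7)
The plan is to use the NNCC inequality together with the optimality of the endpoints to show that $c(\rmx(s), y_0)$ is the minimum value of $c(\rmx(s), y')$ over all $y' \sim y_0$, which by the cost submersion property characterizes optimality.

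First I would unpack what needs to be shown. By Definition \ref{DefCostSubmersion}, $(\rmx(s), y_0)$ is optimal if and only if $c(\rmx(s), y_0)$ is finite and equals $\inf_{y' \sim y_0} c(\rmx(s), y')$. Finiteness is automatic from the definition of a variational c-segment. Since $y_0 \sim y_0$, only the inequality $c(\rmx(s), y_0) \leq c(\rmx(s), y)$ for every $y \sim y_0$ needs to be established.

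To prove this inequality, fix $y \sim y_0$. The optimality assumption at the endpoints gives $c(\rmx(i), y_0) = \cu(P_1(\rmx(i)), P_2(y_0)) \leq c(\rmx(i), y)$ for $i = 0, 1$, so both quantities $c(\rmx(i), y_0) - c(\rmx(i), y)$ lie in $[-\infty, 0]$ (the value $+\infty$ is excluded because $c(\rmx(i), y_0)$ is finite and $c(\rmx(i), y) \geq c(\rmx(i), y_0) > -\infty$). Consequently the right-hand side of the NNCC inequality \eqref{eq:NNCC-inequality} is a convex combination (with strictly positive weights) of two nonpositive quantities from $[-\infty, 0]$, which is well-defined without invoking the $(+\infty) + (-\infty)$ convention and is itself nonpositive. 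The NNCC inequality then yields $c(\rmx(s), y_0) - c(\rmx(s), y) \leq 0$, as required.

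I do not anticipate any real obstacle here; the only mild care needed is checking that the $\pm\infty$ arithmetic does not create an undefined combination on the right-hand side of \eqref{eq:NNCC-inequality}, which is handled by the observation above that both individual terms are nonpositive (hence not $+\infty$). Once the inequality is established, combining with the cost submersion identity $\cu(P_1(\rmx(s)), P_2(y_0)) = \inf_{y' \sim y_0} c(\rmx(s), y')$ gives optimality of $(\rmx(s), y_0)$.
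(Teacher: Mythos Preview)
Your proof is correct and follows essentially the same approach as the paper: apply the NNCC inequality with $y\sim y_0$, observe that endpoint optimality forces each bracket on the right-hand side into $[-\infty,0]$, and conclude $c(\rmx(s),y_0)\leq c(\rmx(s),y)$, which together with the cost submersion identity yields optimality. Your treatment of the $\pm\infty$ arithmetic is slightly more explicit than the paper's, but the argument is the same.
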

\begin{proof}
    Fix $0<s<1$. By definition $c(\rmx(s),y_0)$ is finite and for all $y\in Y$,
    \begin{equation}\label{eq:prop:sncc-proj-1}
        c(\rmx(s),y_0)-c(\rmx(s),y)\leq (1-s)[c(\rmx(0),y_0)-c(\rmx(0),y)]+s[c(\rmx(1),y_0)-c(\rmx(1),y)]\,.
    \end{equation}
    For any $y\sim y_0$ the right-hand side of \eqref{eq:prop:sncc-proj-1} lies in $[-\infty,0]$ since the endpoints $(\rmx(0),y_0)$ and $(\rmx(1),y_0)$ realize the minimum of $c$ along their respective fibers. We deduce that $c(\rmx(s),y_0)\leq c(\rmx(s),y)$ for any $y\sim y_0$ which by the cost submersion property \eqref{eq:c-subm-property} implies that $(\rmx(s),y_0)$ is optimal.
\end{proof}

A ``horizontal'' variational c-segment can then be projected to give a variational c-segment on the base.

\begin{lemma}[Projecting variational c-segments]\label{lemma:projected-c-segments}
    Let $P\colon (X\times Y,c)\to(\Xu\times \Yu,\cu)$ be a cost submersion. Let $\underline{x_0},\underline{x_1}\in\underline{X}$ and $\underline{y_0}\in\underline{Y}$ be such that $\cu(\underline{x_0},\underline{y_0})$ and $\cu(\underline{x_1},\underline{y_0})$ are finite. Given $y_0\in P_2^{-1}(\{\underline{y_0}\})$, 
    suppose there exists a variational c-segment $s\mapsto (\rmx(s),y_0)$ on $(X\times Y,c)$ with $P_1(\rmx(0))=\underline{x_0}$ and $P_1(\rmx(1))=\underline{x_1}$ such that $(\rmx(0),y_0)$ and $(\rmx(1),y_0)$ are optimal. Define $\underline\rmx(s)= P_1(\rmx(s))$. Then $s\mapsto(\underline\rmx(s),\underline{y_0})$ is a variational c-segment on $(\Xu\times\Yu,\cu)$ between $(\underline{x_0},\underline{y_0})$ and $(\underline{x_1},\underline{y_0})$. 
\end{lemma}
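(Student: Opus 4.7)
The plan is to transfer the NNCC inequality from the total space to the base space by taking a supremum over the fibers above $\underline{y}$. The core mechanism is that, by optimality together with Lemma \ref{lemma:c-subm-optimal}, the cost $c(\rmx(s), y_0)$ coincides with $\underline{c}(\underline{\rmx}(s), \underline{y_0})$ for every $s \in [0,1]$, so only the $\underline{y}$-variable needs to be contracted by the submersion.

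First I would establish that $\underline{c}(\underline{\rmx}(s), \underline{y_0})$ is finite for every $s \in [0,1]$. For $s = 0, 1$ this is built into the hypothesis. For $s \in (0,1)$, Lemma \ref{lemma:c-subm-optimal} gives optimality of $(\rmx(s), y_0)$, so $\underline{c}(\underline{\rmx}(s), \underline{y_0}) = c(\rmx(s), y_0)$, which is finite by the very definition of a variational c-segment. Then, fixing $s \in (0,1)$ and $\underline{y} \in \underline{Y}$, I would apply the NNCC inequality for $(\rmx,y_0)$ to every $y \in P_2^{-1}(\{\underline{y}\})$ and take the supremum over such $y$. On the left, the cost submersion property \eqref{eq:c-subm-property} gives
\[
    \sup_{y \sim \underline{y}}\bigl[c(\rmx(s), y_0) - c(\rmx(s), y)\bigr] = c(\rmx(s), y_0) - \underline{c}(\underline{\rmx}(s), \underline{y}).
\]
On the right, the elementary inequality ``supremum of a sum $\leq$ sum of suprema'' converts the chord to
\[
    (1-s)\bigl[c(\rmx(0), y_0) - \underline{c}(\underline{\rmx}(0), \underline{y})\bigr] + s\bigl[c(\rmx(1), y_0) - \underline{c}(\underline{\rmx}(1), \underline{y})\bigr].
\]
Replacing $c(\rmx(k), y_0)$ by $\underline{c}(\underline{\rmx}(k), \underline{y_0})$ for $k \in \{0, s, 1\}$ (using optimality at $k = 0, 1$ from the hypothesis and at $k = s$ from Lemma \ref{lemma:c-subm-optimal}) yields the projected NNCC inequality.

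The main technical obstacle is ensuring these manipulations respect the arithmetic convention $(+\infty) + (-\infty) = +\infty$ on the right-hand side. The only delicate case is $\underline{c}(\underline{\rmx}(s), \underline{y}) = -\infty$, where the projected left-hand side equals $+\infty$ and so we must verify the projected right-hand side is $+\infty$ as well. I would handle this by observing that $\underline{c}(\underline{\rmx}(s), \underline{y}) = -\infty$ provides a sequence $y_n \sim \underline{y}$ with $c(\rmx(s), y_n) \to -\infty$; applying the NNCC inequality for $c$ along this sequence and using that $c(\rmx(0), y_0), c(\rmx(1), y_0)$ are finite forces at least one of $c(\rmx(0), y_n), c(\rmx(1), y_n)$ to tend to $-\infty$ along a subsequence, so that $\underline{c}(\underline{\rmx}(0), \underline{y}) = -\infty$ or $\underline{c}(\underline{\rmx}(1), \underline{y}) = -\infty$, making the projected right-hand side $+\infty$ under the convention. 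The symmetric case $\underline{c}(\underline{\rmx}(s), \underline{y}) = +\infty$ is trivial since the projected left-hand side then reduces to $-\infty$.
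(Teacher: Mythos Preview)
Your proposal is correct and follows essentially the same approach as the paper: apply the NNCC inequality upstairs for each $y$ in the fiber $P_2^{-1}(\{\underline{y}\})$, use optimality of the endpoints (and Lemma~\ref{lemma:c-subm-optimal} for the interior points) to replace $c(\rmx(k),y_0)$ by $\underline{c}(\underline{\rmx}(k),\underline{y_0})$, and contract the $y$-variable via the cost-submersion identity \eqref{eq:c-subm-property}.

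The only difference is in the order of operations. The paper first bounds $-c(\rmx(i),y)\le -\underline{c}(\underline{x_i},\underline{y})$ for each individual $y$, so that the right-hand side no longer depends on $y$; only then does it take the supremum, which therefore affects the left-hand side alone. You instead take the supremum on both sides and invoke ``$\sup$ of a sum $\le$ sum of $\sup$s''. Both routes are valid, but the paper's ordering makes the infinite-value bookkeeping automatic: since the map $(a,b)\mapsto (1-s)a+sb$ with the convention $(+\infty)+(-\infty)=+\infty$ is monotone in each argument, replacing each bracket $c(\rmx(i),y_0)-c(\rmx(i),y)$ by the larger quantity $\underline{c}(\underline{x_i},\underline{y_0})-\underline{c}(\underline{x_i},\underline{y})$ preserves the inequality, and the delicate case $\underline{c}(\underline{\rmx}(s),\underline{y})=-\infty$ is absorbed by the subsequent $\sup_y$ on the left. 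Your explicit sequence argument for that case is correct, but unnecessary once the operations are ordered as in the paper.
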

\begin{proof}
    Fix $\yu\in\Yu$. By the NNCC inequality, for any $y\in P_2^{-1}(\{\yu\})$,
    \begin{equation*}
        c(\rmx(s),y_0)-c(\rmx(s),y)\leq (1-s)[c(\rmx(0),y_0)-c(\rmx(0),y)]+s[c(\rmx(1),y_0)-c(\rmx(1),y)]\,.
    \end{equation*}
    Here we have $c(\rmx(0),y_0)=\cu(\underline{x_0},\underline{y_0})$ and $c(\rmx(1),y_0)=\cu(\underline{x_1},\underline{y_0})$ and we can bound $-c(\rmx(0),y)\leq -\cu(\underline{x_0},\yu)$ and $-c(\rmx(1),y)\leq -\cu(\underline{x_1},\yu)$. By \Cref{lemma:c-subm-optimal} we also have $c(\rmx(s),y_0)=\cu(\underline{\rmx}(s),\underline{y_0})$ even though we could do without this information here (only retaining the latter quantity is finite) and simply bound $c(\rmx(s),y_0)\geq \cu(\underline{\rmx}(s),\underline{y_0})$ (but see \Cref{rem:c-sub-NNCC-conv}). Therefore
    \[
        \cu(\underline{\rmx}(s),\underline{y_0})-c(\rmx(s),y)\leq (1-s)[\cu(\underline{x_0},\underline{y_0})-\cu(\underline{x_0},\yu)]+s[\cu(\underline{x_1},\underline{y_0})-\cu(\underline{x_1},\yu)]\,.
    \]
    Maximizing the left-hand side over $y\in P_2^{-1}(\{\yu\})$ combined with the cost-submersion property \eqref{eq:c-subm-property} gives us the desired NNCC inequality,
    \begin{equation}\label{eq:nncc-ineq-proj}
        \cu(\underline{\rmx}(s),\underline{y_0})-\cu(\underline{\rmx}(s),\yu)\leq (1-s)[\cu(\underline{x_0},\underline{y_0})-\cu(\underline{x_0},\yu)]+s[\cu(\underline{x_1},\underline{y_0})-\cu(\underline{x_1},\yu)]\,.
    \end{equation}
\end{proof}

As a direct consequence we obtain:

\begin{proposition}[Cost submersions preserve NNCC]\label{prop:sncc-surj}
    Let $P\colon (X\times Y,c)\to(\Xu\times \Yu,\cu)$ be a cost submersion. If $(X\times Y,c)$ is an NNCC space then so is $(\underline{X}\times \underline{Y},\underline{c})$.
\end{proposition}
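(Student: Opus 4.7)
The proof should follow almost immediately by chaining together the two preceding lemmas, with the only real work being to produce a good lift of the base data. Fix arbitrary $\underline{x_0},\underline{x_1}\in\underline{X}$ and $\underline{y_0}\in\underline{Y}$ with $\cu(\underline{x_0},\underline{y_0})$ and $\cu(\underline{x_1},\underline{y_0})$ finite; the task is to exhibit a variational c-segment on $(\underline{X}\times\underline{Y},\cu)$ from $(\underline{x_0},\underline{y_0})$ to $(\underline{x_1},\underline{y_0})$.

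The first step is to lift the base configuration to an optimal configuration upstairs sharing a common second coordinate. Pick any $y_0\in P_2^{-1}(\{\underline{y_0}\})$. By the cost submersion property \eqref{eq:c-subm-property} we have $\cu(\underline{x_0},\underline{y_0})=\inf_{x'\in P_1^{-1}(\{\underline{x_0}\})}c(x',y_0)$, and since this infimum is finite, it is attained by assumption (ii) of \Cref{DefCostSubmersion}; choose $x_0\in P_1^{-1}(\{\underline{x_0}\})$ attaining it. Analogously, choose $x_1\in P_1^{-1}(\{\underline{x_1}\})$ with $c(x_1,y_0)=\cu(\underline{x_1},\underline{y_0})$. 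By construction both $(x_0,y_0)$ and $(x_1,y_0)$ are optimal in the sense of \Cref{DefCostSubmersion}, and both $c(x_0,y_0)$ and $c(x_1,y_0)$ are finite.

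Since $(X\times Y,c)$ is an NNCC space, there exists a variational c-segment $s\mapsto(\rmx(s),y_0)$ on $(X\times Y,c)$ with $\rmx(0)=x_0$ and $\rmx(1)=x_1$. As its endpoints are optimal, \Cref{lemma:c-subm-optimal} guarantees that $(\rmx(s),y_0)$ is optimal for every $s\in(0,1)$ as well, so the variational c-segment is \emph{horizontal} with respect to the cost submersion. We are then exactly in the hypotheses of \Cref{lemma:projected-c-segments}, which we apply: setting $\underline{\rmx}(s)\coloneqq P_1(\rmx(s))$, the path $s\mapsto(\underline{\rmx}(s),\underline{y_0})$ is a variational c-segment on $(\underline{X}\times\underline{Y},\cu)$ from $(\underline{x_0},\underline{y_0})$ to $(\underline{x_1},\underline{y_0})$. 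Since the base data were arbitrary, $(\underline{X}\times\underline{Y},\cu)$ is an NNCC space.

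The only subtlety worth double-checking is that the lifting step genuinely produces two optimal lifts \emph{sharing the same $y_0$}; this is what allows the endpoints of the upstairs variational c-segment to have a common second coordinate, as required by \Cref{def:sncc}. The formulation \eqref{eq:c-subm-property} of the cost submersion property, which asserts that $\cu(\underline{x},\underline{y})$ can be computed by optimizing only the $x$-coordinate while keeping any chosen $y$ fixed in its fiber, is precisely what makes this simultaneous lifting possible, so there is no genuine obstacle.
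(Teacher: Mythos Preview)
Your proof is correct and follows essentially the same route as the paper's own proof: lift $\underline{y_0}$ arbitrarily, use the cost submersion property \eqref{eq:c-subm-property} together with attainment to find optimal lifts $x_0,x_1$ over the same $y_0$, apply the NNCC hypothesis upstairs, and project via \Cref{lemma:projected-c-segments}. The only difference is that you explicitly invoke \Cref{lemma:c-subm-optimal} to argue horizontality at intermediate times, whereas the paper's proof omits this since that lemma is already used inside the proof of \Cref{lemma:projected-c-segments}; this is harmless redundancy, not a deviation in strategy.
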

\begin{proof}
    Let $\underline{x_0},\underline{x_1}\in\underline{X}$ and $\underline{y_0}\in\underline{Y}$ be such that $\cu(\underline{x_0},\underline{y_0})$ and $\cu(\underline{x_1},\underline{y_0})$ are finite. Take any $y_0\in P_2^{-1}(\{\underline{y_0}\})$. By definition of a cost submersion there exist $x_0\in P_1^{-1}(\{\underline{x_0}\})$, $x_1\in P_1^{-1}(\{\underline{x_1}\})$ such that $(x_0,y_0)$ and $(x_1,y_0)$ are optimal. Since $(X\times Y,c)$ is an NNCC space there exists a variational c-segment $(\rmx,y_0)$ such that $\rmx(0)=x_0$, $\rmx(1)=x_1$. By \Cref{lemma:projected-c-segments},  $(\underline\rmx,\underline{y_0})$ with $\underline\rmx(s)\coloneqq P_1(\rmx(s))$ is a suitable variational c-segment on $(\Xu\times\Yu,\cu)$.
\end{proof}

\begin{remark}\label{rem:c-sub-NNCC-conv}
    It is not hard to check that Lemmas \ref{lemma:c-subm-optimal}, \ref{lemma:projected-c-segments} and \Cref{prop:sncc-surj} hold when the NNCC condition is replaced with condition \snccconv{} or condition \smtw{}. For the \smtw{} condition, the proofs follow exactly the same line of reasoning. For the \snccconv{} condition, one needs to verify inequality \eqref{eq:nncc-ineq-proj} on any subinterval $[a,b]$ of $[0,1]$, 
    \[
    \cu(\underline{\rmx}(s),\underline{y_0})-\cu(\underline \rmx(s),\yu)\leq (1-s)[\cu(\underline\rmx(a),\underline{y_0})-\cu(\underline\rmx(a),\yu)]+s[\cu(\underline\rmx(b),\underline{y_0})-\cu(\underline\rmx(b),\yu)]\,.
    \]
    In order to repeat the proof of Lemma \ref{lemma:projected-c-segments} the points $(\rmx(a),y_0)$ and $(\rmx(b),y_0)$ need to be optimal: this is ensured by \Cref{lemma:c-subm-optimal}. Therefore, the situation is analogous to Kim and McCann's result \cite[Corollary 4.7]{kim2012towards} which shows that in the classical smooth setting both nonnegative cross-curvature and the MTW condition \eqref{eq:MTW-condition} are preserved by Riemannian submersions. 
\end{remark}

\begin{remark}[Submetries]\label{rem:submetries}
    Given $x\in X$ and $\alpha\in\R$, let $X_\alpha(x)\coloneqq \{y\in Y : c(x,y)\leq \alpha\}$, and define similarly the sets $\Xu_\alpha(\xu)$, $Y_\alpha(y)$, $\Yu_\alpha(\yu)$. Then one can check that $P_1,P_2$ realize a cost submersion if and only if for every $x\in X$, the image of $X_\alpha(x)$ under $P_2$ is $\Xu_\alpha(P_1(x))$ and for every $y\in Y,$ the image of $Y_\alpha(y)$ under $P_1$ is $\Yu_\alpha(P_2(y))$. 
    In the metric case $X=Y,$ $\Xu=\Yu$, and $c(x,y)=d(x,y)^p$ where $d$ is a distance and $p>0$, the projected cost is such that $\underline{d}(\xu,\yu)\coloneqq \cu(\xu,\yu)^{1/p}$ is automatically a distance too. In that setting cost submersions are \emph{submetries}, which were introduced by Berestovskii \cite{Berestovskii1987} as a metric version of Riemannian submersions. See also \cite[Section 4.6]{BuragoGromovPerelman1992} and \cite{KapovitchLytchak2022}.
\end{remark}

As an interesting consequence of \Cref{prop:sncc-surj}, translation-invariant NNCC costs on the Euclidean space are stable by taking infimal convolutions:

\begin{corollary}
[Infimal convolution of translation invariant costs]\label{cor:infimal-convolution}
    Let $X = Y = \R^n$ and let $c_1,c_2\colon \R^n \to [0,+\infty)$. Define the infimal convolution of the costs $c_1(x - y)$ and $c_2(x - y)$ by
    \begin{equation}
        (c_1 \boxplus c_2)(x,y) \coloneqq \inf \{ c_1(x' - y') + c_2(x'' - y'') \,: \, x' + x'' = x \text{ and }y' + y'' = y\}\,.
    \end{equation}
     Then, if $(\Rn\times\Rn,c_1)$ and $(\Rn\times\Rn,c_2)$ are both NNCC spaces with lower semi-continuous costs and either $c_1$ or $c_2$ is coercive,  then $(\Rn\times\Rn,c_1 \boxplus c_2)$ is also an NNCC space.
\end{corollary}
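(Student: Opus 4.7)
The plan is to realize $(\R^n\times\R^n, c_1\boxplus c_2)$ as the cost-submersion image of a product NNCC space, then combine \Cref{prop:products} and \Cref{prop:sncc-surj}.

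First, I would consider the product construction. Let $\bX = \R^n \times \R^n$ and $\bY = \R^n \times \R^n$, endowed with the sum cost
\[
C\big((x_1,x_2),(y_1,y_2)\big) = c_1(x_1-y_1) + c_2(x_2-y_2).
\]
Since each $(\R^n\times\R^n, c_i(\cdot-\cdot))$ is NNCC by assumption, \Cref{prop:products} gives that $(\bX\times\bY, C)$ is an NNCC space.

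Next, I would introduce the two sum maps $P_1,P_2\colon \R^n\times\R^n\to\R^n$ defined by $P_1(x_1,x_2)=x_1+x_2$ and $P_2(y_1,y_2)=y_1+y_2$, and verify that $P=(P_1,P_2)$ is a cost submersion from $(\bX\times\bY, C)$ onto $(\R^n\times\R^n,\, c_1\boxplus c_2)$ in the sense of \Cref{DefCostSubmersion}. The key computation is the change of variables $(u,v)=(x_1-y_1,\,x_2-y_2)$: given $(x_1,x_2)$ with $x_1+x_2=\xu$, the constraint $y_1+y_2=\yu$ is equivalent to $u+v=\xu-\yu$ (with $u,v$ free), hence
\[
\inf_{y_1+y_2=\yu} \big[c_1(x_1-y_1)+c_2(x_2-y_2)\big] = \inf_{u+v=\xu-\yu}\big[c_1(u)+c_2(v)\big] = (c_1\boxplus c_2)(\xu,\yu),
\]
independently of the choice of representative $(x_1,x_2)$ in the fiber over $\xu$. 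The analogous computation for $\inf_{x_1+x_2=\xu}$ at fixed $(y_1,y_2)$ yields the same value, giving the cost-submersion identity \eqref{eq:c-subm-property}.

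The main technical step is attainment of the infima when $(c_1\boxplus c_2)(\xu,\yu)$ is finite. Here I would use the standing assumptions: $c_1,c_2\geq 0$, lower semi-continuity of $c_1$ and $c_2$, and coercivity of (say) $c_1$. Given a minimizing sequence $(u_k,v_k)$ with $u_k+v_k=\xu-\yu$, nonnegativity of $c_2$ bounds $c_1(u_k)$, whence coercivity forces $(u_k)$ to be bounded in $\R^n$; passing to a subsequence $u_k\to u^\star$, we get $v_k\to v^\star\coloneqq \xu-\yu-u^\star$, and joint lower semi-continuity of $(u,v)\mapsto c_1(u)+c_2(v)$ yields a minimizer. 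Translating back via $y_1=x_1-u^\star$, $y_2=x_2-v^\star$ (for any chosen $(x_1,x_2)$ with $x_1+x_2=\xu$), we obtain optimal lifts, establishing condition (2) of \Cref{DefCostSubmersion}.

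Finally, applying \Cref{prop:sncc-surj} to the cost submersion $P$ transfers the NNCC property from $(\bX\times\bY,C)$ to $(\R^n\times\R^n,\,c_1\boxplus c_2)$, completing the proof. The only nontrivial step is the attainment argument above; everything else is a direct invocation of the two stability results already proved for products and cost submersions.
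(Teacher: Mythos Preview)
Your proposal is correct and follows essentially the same approach as the paper: realize the infimal convolution as the projected cost under the sum maps $P_1,P_2$, invoke \Cref{prop:products} on the product $(\R^n\times\R^n)^2$ with cost $c_1+c_2$, verify the cost-submersion identity \eqref{eq:c-subm-property} (the paper phrases this via the translation group action, you via the change of variables $(u,v)=(x_1-y_1,x_2-y_2)$, which is the same computation), check attainment from coercivity and lower semi-continuity, and conclude by \Cref{prop:sncc-surj}.
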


\begin{proof}
Note that the infimal convolution is a projection of the costs $c_1 + c_2$ on the product space by the projections $P_1(x',x'')= x'+x''$ and $P_2(y',y'') =y' + y'' $. To obtain the required properties in Proposition \ref{prop:sncc-surj}, we reformulate it as an example of Definition \ref{ThInvariantCostDefinition}. 
Define the linear action of the group $\R^n$ on $\R^n \times \R^n$ by $\lambda \in \R^n$,  $(x',x'') \mapsto (x' + \lambda,x'' -\lambda)$.
The associated diagonal action leaves the cost $c_1 \boxplus c_2$ invariant. 
 As a consequence, the two following infima coincide:
\begin{multline}\label{EqTICosts}
    \inf \{ c_1(x' - y') + c_2(x - y - (x' - y')) \,: \, x',y' \in \R^n \}
     = \inf_{y'}\{ c_1(x'-y') + c_2(x-y - (x'-y'))\}\,.
\end{multline}
By the lower semi-continuity of the costs and coercivity of one of the two costs, the infimum on the right-hand side in Eq.$\,$\eqref{EqTICosts} is attained.
By symmetry, the same argument applies when fixing $(y',y'')$ and minimizing on $(x',x'')$. The assumption in Definition \ref{DefCostSubmersion} is thus satisfied.
\end{proof}

\begin{example}[Soft-Threshold value functions]
As a direct application of \Cref{cor:infimal-convolution}, the infimal convolution between the Euclidean norm and its square (both satisfying NNCC) satisfies the NNCC condition: for $\varepsilon > 0$ on $X = Y = \R^n$,
    \begin{equation}\label{EqSoftThresholding}
         \|\cdot\| \boxplus \frac{1}{2\varepsilon}\| \cdot \|^2(x-y) = \begin{cases}
            \frac{1}{2\varepsilon}\|x-y\|^2 \text{ if } \|x-y\|\leq {\varepsilon}  \\
             \|x-y\| - \frac{\varepsilon}{2}\text{ otherwise}.
        \end{cases}
    \end{equation}
    The $L^1$ norm is often used as a regularization promoting sparsity in inverse problems or statistics, in particular in the Lasso method, see \cite{tibshirani96regression}.
    These methods use the argmin of the infimal convolution between the $L^1$ norm and the squared Euclidean norm. The corresponding cost is the sum of the previous cost in Formula \eqref{EqSoftThresholding} over all coordinates. By stability to products, this cost also satisfies the NNCC property.
\end{example}

\begin{remark}[Approximation of lower semi-continuous translation invariant costs]\label{RemarkTICosts}
A direct consequence of this property is that any lower semi-continuous cost of the form $c(x- y)$ with $c: \R^n \to [0,+\infty]$ that is NNCC can be approximated by continuous costs that are NNCC via the usual approximation $c \boxplus \frac 1{2\varepsilon} \| \cdot- \cdot\|^2$ for $\varepsilon \to 0$. 
\end{remark}

\subsection{NNCC metric spaces are positively curved} \label{sec:NNCC-PC}

When $M$ is a smooth Riemannian manifold and $d$ denotes the Riemannian distance, Loeper \cite{loeper2009regularity} showed that if the cost $c(x,y)=d^2(x,y)$ satisfies the MTW condition \eqref{eq:MTW-condition}, then $M$ necessarily has nonnegative sectional curvature. Therefore, in a smooth Riemannian setting nonnegative cross-curvature, stronger than the MTW condition, always implies nonnegative sectional curvature. In this section we prove an analogue of this result in metric geometry. Let us start with some definitions.
Given a complete metric space $(X,d)$, a \emph{curve} is a continuous map $\gamma\colon [0,1]\rightarrow X$. Its length is defined as
\[
\operatorname{Length}(\gamma)\coloneqq \sup \sum_{k=1}^N d(\gamma(t_k),\gamma(t_{k-1}))\,,
\]
where the supremum is taken over all $N\geq 1$ and all partitions $0=t_0 <t_1 <\dots<t_N =1$. Let $x_0=\gamma(0)$ and $x_1=\gamma(1)$ (we say that $\gamma$ connects $x_0$ to $x_1$). By the triangle inequality it holds that $d(x_0,x_1)\le \operatorname{Length}(\gamma)$.
The metric space $(X,d)$ is then called a \emph{geodesic space} if for any two points $x_0,x_1\in X$,
\[
d(x_0,x_1)=\min_\gamma \operatorname{Length}(\gamma)\,,
\]
where the minimization is taken over all curves connecting $x_0$ to $x_1$. The fact that the minimum is actually attained is important and the minimizer is called a (length-minimizing) geodesic. By convention, geodesics are always considered to have constant speed parametrization, namely
\[
d(\gamma(s),\gamma(r))=\lvert s-r\rvert\,d(x_0,x_1)\,,
\]
for any $s,r\in[0,1]$.

Alexandrov introduced a synthetic notion of curvature bounds for general (non-smooth) metric spaces \cite{aleksandrov1951theorem}. This notion is a generalization of lower bounds for the sectional curvature on Riemannian manifolds. It is based on comparing (appropriately defined) triangles in $(X,d)$ with reference triangles in a 2-dimensional Riemannian manifold with constant curvature.
Defining these curvature bounds can be done in several equivalent ways. To give a notion of nonnegative curvature, the reference Riemannian manifold is taken to be the flat $\R^2$ space. In that case a geodesic space $(X,d)$ is called a positively curved (PC) space (in the sense of Alexandrov) if, for any point $y\in X$ and any geodesic $\gamma$,
\begin{equation}\label{eq:PC-inequality}
    \forall s\in[0,1],\quad d^2(\gamma(s),y)\geq (1-s)d^2(\gamma(0),y)+s\,d^2(\gamma(1),y)-s(1-s)d^2(\gamma(0),\gamma(1))\,.
\end{equation}
Note that if $(X,d)$ is a Hilbert space equality holds in \eqref{eq:PC-inequality}, in which case this relation is an alternative representation of the parallelogram law. Analogously, a geodesic space $(X,d)$ is called a nonpositively curved (NPC) space (in the sense of Alexandrov) if for any geodesic $\gamma$ and any $y\in X$, \eqref{eq:PC-inequality} holds with a reverse inequality. We refer to \cite{burago2001course} for more details. A non-trivial example of a PC space is the $2$-Wasserstein space $(\Pc_2(\R^n),W_2)$ \cite{ambrosio2005gradient} (see also Section \ref{sec:sncc_proba}).
The main result of this section is the following.

\begin{proposition}[NNCC$\implies$PC]\label{thm:NNCC-implies-PC}
    Consider a geodesic space $(X,d)$ such that $(X\times X,d^2)$ is an NNCC space. Then $(X,d)$ is a positively curved space in the sense of Alexandrov.
\end{proposition}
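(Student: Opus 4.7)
My plan is to apply the NNCC hypothesis with a base point chosen cleverly along the geodesic itself. Fix a geodesic $\gamma\colon [0,1]\to X$ from $x_0 = \gamma(0)$ to $x_1 = \gamma(1)$, a test point $y \in X$, and $s_0 \in (0,1)$ (the PC inequality \eqref{eq:PC-inequality} at $s_0 \in \{0,1\}$ is a trivial equality). Since $(X \times X, d^2)$ is an NNCC space and both $d^2(x_i, \gamma(s_0))$ are finite, I may pick a variational c-segment $\rmx\colon[0,1]\to X$ with $\rmx(0) = x_0$, $\rmx(1) = x_1$ and base point $\yb \coloneqq \gamma(s_0)$.

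The first step is to show that this c-segment passes through $\gamma(s_0)$ precisely at time $s = s_0$. I evaluate \Cref{lemma:distance-1-convex-c-segments} (i.e.\ inequality \eqref{eq:intro-vcs-ineq}) at $s = s_0$ with $\yb = \gamma(s_0)$, and substitute the constant-speed identities $d^2(x_0,\gamma(s_0)) = s_0^2\, d^2(x_0,x_1)$ and $d^2(x_1,\gamma(s_0)) = (1-s_0)^2\, d^2(x_0,x_1)$. The right-hand side collapses to $s_0(1-s_0)\bigl[s_0 + (1-s_0) - 1\bigr]\, d^2(x_0,x_1) = 0$, so $d^2(\rmx(s_0),\gamma(s_0)) \leq 0$ and therefore $\rmx(s_0) = \gamma(s_0)$.

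The second and final step is to invoke the NNCC inequality \eqref{eq:NNCC-inequality} for the pair $(\rmx, \gamma(s_0))$ at $s = s_0$ with test point $y$. Thanks to $\rmx(s_0) = \gamma(s_0)$, the left-hand side reduces to $-d^2(\gamma(s_0), y)$; substituting the same constant-speed identities on the right-hand side and rearranging produces exactly the PC inequality \eqref{eq:PC-inequality} at $s_0$. Since $s_0 \in (0,1)$ and $\gamma, y$ were arbitrary, $(X,d)$ is PC.

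The crux of the argument is the choice of base point $\yb = \gamma(s_0)$ \emph{along} the geodesic: the $1$-convexity bound \eqref{eq:intro-vcs-ineq} then pins the variational c-segment to $\gamma$ at the prescribed time, and this coincidence is what converts the ``below the chord'' NNCC inequality into the opposite ``above the chord'' PC inequality. Beyond identifying this choice of base point, no substantial technical obstacle remains.
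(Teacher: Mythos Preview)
Your proof is correct and follows essentially the same approach as the paper's own proof: choose the base point $\yb=\gamma(s_0)$ on the geodesic, use \Cref{lemma:distance-1-convex-c-segments} together with the constant-speed identities to force $\rmx(s_0)=\gamma(s_0)$, and then read off the PC inequality from the NNCC inequality at $s=s_0$. The only differences are notational (the paper writes $t$ for your $s_0$).
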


The proof of this proposition relies on the following lemma which is also of independent interest since it describes the behavior of variational c-segments in a metric space.

\begin{lemma}\label{lemma:distance-1-convex-c-segments}
    Let $(X,d)$ be any metric space, $\yb\in X$ and consider a variational c-segment $(s\in[0,1])\mapsto(\rmx(s),\yb)$ in $(X\times X,d^2)$. Then for all $s\in[0,1]$,
    \begin{equation}\label{eq:distance-1-convex-c-segments}
        d^2(\rmx(s),\yb)\leq (1-s)d^2(\rmx(0),\yb)+sd^2(\rmx(1),\yb)-s(1-s)d^2(\rmx(0),\rmx(1))\,.
    \end{equation}
\end{lemma}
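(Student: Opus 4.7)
The plan is to apply the NNCC inequality with the clever choice $y=\rmx(s)$, which cancels one of the cost terms on the left and produces distances to the endpoints on the right. Then use the triangle inequality to recover the desired $-s(1-s)d^2(\rmx(0),\rmx(1))$ term. Specifically, substituting $y=\rmx(s)$ in the defining inequality of a variational c-segment yields
\begin{equation*}
d^2(\rmx(s),\yb) \leq (1-s)\,d^2(\rmx(0),\yb) + s\,d^2(\rmx(1),\yb) - (1-s)\,d^2(\rmx(0),\rmx(s)) - s\,d^2(\rmx(1),\rmx(s)),
\end{equation*}
since $d^2(\rmx(s),\rmx(s))=0$ and no undefined combination appears. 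Comparing with the target inequality \eqref{eq:distance-1-convex-c-segments}, it remains to show
\begin{equation*}
(1-s)\,d^2(\rmx(0),\rmx(s)) + s\,d^2(\rmx(1),\rmx(s)) \geq s(1-s)\,d^2(\rmx(0),\rmx(1)).
\end{equation*}

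For the second step, set $p=d(\rmx(0),\rmx(s))$, $q=d(\rmx(s),\rmx(1))$, and $a=d(\rmx(0),\rmx(1))$. The triangle inequality gives $a\leq p+q$, so $s(1-s)\,a^2 \leq s(1-s)(p+q)^2$. It therefore suffices to prove
\begin{equation*}
(1-s)\,p^2 + s\,q^2 - s(1-s)(p+q)^2 \geq 0.
\end{equation*}
Expanding the square and collecting terms, the left-hand side simplifies to $(1-s)^2 p^2 - 2s(1-s)pq + s^2 q^2 = \bigl((1-s)p - s q\bigr)^2 \geq 0$, which closes the argument.

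I do not anticipate any real obstacle here: the main insight is the test-point choice $y=\rmx(s)$, after which the computation is purely algebraic. Note that the argument only uses the NNCC inequality for a single test point (namely $\rmx(s)$) and the triangle inequality for $d$; no continuity or parametrization properties of the path $\rmx$ are required, matching the level of generality of \Cref{def:sncc}.
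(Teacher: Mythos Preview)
Your proof is correct and essentially identical to the paper's: both take $y=\rmx(s)$ in the NNCC inequality and then establish the elementary bound $(1-s)p^2+sq^2\ge s(1-s)(p+q)^2$, the only cosmetic difference being that the paper invokes Young's inequality with $\varepsilon=1/s-1$ where you complete the square.
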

\begin{proof}
    The NNCC inequality says that     
    for all $y\in X$, 
    \begin{equation}\label{eq:proof-lemma:distance-1-convex-c-segments}
        d^2(\rmx(s),\yb)-d^2(\rmx(s),y)\leq (1-s)[d^2(x_0,\yb)-d^2(x_0,y)]+s[d^2(x_1,\yb)-d^2(x_1,y)]\,,
    \end{equation}
    where we denote $x_0=\rmx(0)$ and $x_1=\rmx(1)$. 
    By the triangle and Young's inequalities, we may write for any $y\in X$,
    \begin{align*}
        s(1-s)d^2(x_0,x_1) &\leq s(1-s) \big(d(x_0,y) + d(y,x_1))^2\\
        &\leq s(1-s) \big((1+\varepsilon)d^2(x_0,y)+ (1+\varepsilon^{-1}) d^2(x_1,y)\big)\\
        &= (1-s) d^2(x_0,y) + s \,d^2(x_1,y)\,,
    \end{align*}
    with $\varepsilon=1/s-1$. Then taking $y=\rmx(s)$ in \eqref{eq:proof-lemma:distance-1-convex-c-segments} gives the desired inequality. 
\end{proof}

\begin{proof}[Proof of \Cref{thm:NNCC-implies-PC}]
    Let us prove that every geodesic satisfies the PC inequality \eqref{eq:PC-inequality}. Fix $x_0,x_1\in X$, a geodesic $\gamma\colon[0,1]\to X$ joining $x_0$ to $x_1$, and fix $t\in[0,1]$.
    Since $(X\times X,d^2)$ is NNCC there exists a variational c-segment $s\mapsto (\rmx(s),\gamma(t))$ such that $\rmx(0)=x_0$ and $\rmx(1)=x_1$. 
    
    Let us first show that at $s=t$ we have $\rmx(t)=\gamma(t)$. By \Cref{lemma:distance-1-convex-c-segments} we know that at $s=t$,
    \begin{equation}\label{eq:proof-prop-pc-2}
    d^2(\rmx(t),\gamma(t))\leq (1-t)d^2(x_0,\gamma(t)) +t\,d^2(x_1,\gamma(t))-t(1-t)d^2(x_0,x_1)\,.
    \end{equation}
    Since $\gamma$ is a geodesic, we have 
    \begin{equation}\label{eq:proof-prop-pc-1}
       \begin{aligned}
        d^2(x_0,\gamma(t))&=t^2d^2(x_0,x_1),\\
        d^2(x_1,\gamma(t))&=(1-t)^2d^2(x_0,x_1)\,. 
       \end{aligned}
    \end{equation}
    This gives us $d^2(\rmx(t),\gamma(t))\leq 0$, i.e.\ $\rmx(t)=\gamma(t)$. We then use the NNCC inequality: for all $s\in[0,1]$,
    \begin{equation}
        \forall y\in Y,\quad d^2(\rmx(s),\gamma(t))-d^2(\rmx(s),y)\leq (1-s)[d^2(x_0,\gamma(t)) -d^2(x_0,y)]+s\,[d^2(x_1,\gamma(t))-d^2(x_1,y)]\,.
    \end{equation}
    Taking $s=t$, using \eqref{eq:proof-prop-pc-1} and $\rmx(t)=\gamma(t)$ we obtain the desired inequality,
    \begin{equation}\label{eq:proof-prop-pc-3}
    \forall y\in X,\quad d^2(\gamma(t),y)\geq (1-t)d^2(x_0,y) + t\, d^2(x_1,y) - t(1-t)d^2(x_0,x_1)\,.
    \end{equation}  
\end{proof}

\Cref{thm:NNCC-implies-PC} says that the existence of variational c-segments for \emph{any} base point $\yb$ implies positive curvature. Let us now prove a result in the converse direction: in a PC space, geodesic curves passing through a given base point are variational c-segments.

\begin{lemma}[PC inequality and NNCC inequality]\label{lemma:geodesics-are-vcs}
    Let $(X,d)$ be a geodesic space. Let $\gamma\colon[0,1]\to X$ be a geodesic and fix any $t\in[0,1]$. Then $\gamma$ satisfies the PC inequality \eqref{eq:PC-inequality} if and only if $s\mapsto(\gamma(s),\gamma(t))$ is a variational c-segment on $(X\times X,d^2)$, i.e.\ it satisfies the NNCC inequality \eqref{eq:NNCC-inequality}. 
\end{lemma}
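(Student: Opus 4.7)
My plan is to reduce the equivalence to a direct algebraic manipulation, exploiting the fact that for a constant-speed geodesic $\gamma$ we know all pairwise distances along $\gamma$ in closed form. Specifically, writing $D\coloneqq d^2(\gamma(0),\gamma(1))$ and using $d(\gamma(r),\gamma(r'))=|r-r'|\,d(\gamma(0),\gamma(1))$, one has
\[
d^2(\gamma(0),\gamma(t))=t^2 D,\quad d^2(\gamma(1),\gamma(t))=(1-t)^2 D,\quad d^2(\gamma(s),\gamma(t))=(s-t)^2 D.
\]
Both inequalities (PC and NNCC) are statements that must hold for every $y\in X$ and every $s\in[0,1]$, and they share the same quantifier over $y$, so it suffices to check that the two quantified inequalities are literally the same relation after substituting the identities above.

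The concrete plan is: start from the NNCC inequality with $\rmx(s)=\gamma(s)$ and $\yb=\gamma(t)$, move the $y$-terms to one side and the $\gamma(t)$-terms to the other, obtaining
\[
d^2(\gamma(s),y)\ge (1-s)d^2(\gamma(0),y)+s\,d^2(\gamma(1),y)+\big[d^2(\gamma(s),\gamma(t))-(1-s)d^2(\gamma(0),\gamma(t))-s\,d^2(\gamma(1),\gamma(t))\big].
\]
Plugging in the three geodesic identities above, the bracket becomes $\big[(s-t)^2-(1-s)t^2-s(1-t)^2\big]D$. A short expansion gives
\[
(s-t)^2-(1-s)t^2-s(1-t)^2=-s(1-s),
\]
so the bracket equals $-s(1-s)d^2(\gamma(0),\gamma(1))$. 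This produces exactly the PC inequality \eqref{eq:PC-inequality} applied to the geodesic $\gamma$ at parameter $s$. Conversely, every step above is a reversible substitution, so starting from the PC inequality one recovers the NNCC inequality at base point $\gamma(t)$.

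A couple of routine checks remain: the finiteness requirement in \Cref{def:sncc} is trivial since $d^2$ is real-valued, so $c(\gamma(s),\gamma(t))$ is always finite; the NNCC inequality is only required for $s\in(0,1)$ while PC is stated for $s\in[0,1]$, but the endpoints of PC are tautologies, so the two quantifier ranges match. I expect no real obstacle here, since the proof is purely an algebraic rewrite once the key identity $(s-t)^2-(1-s)t^2-s(1-t)^2=-s(1-s)$ is observed; the only point worth emphasizing in the write-up is that this identity is independent of $t$, which is what makes the equivalence hold for \emph{any} choice of base point $\gamma(t)$ on the geodesic.
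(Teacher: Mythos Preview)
Your proposal is correct and follows essentially the same approach as the paper's proof: write out the NNCC inequality with $\rmx(s)=\gamma(s)$, $\yb=\gamma(t)$, substitute the geodesic identities $d^2(\gamma(r),\gamma(r'))=(r-r')^2 d^2(\gamma(0),\gamma(1))$, and observe that the resulting inequality is exactly the PC inequality via the algebraic identity $(s-t)^2-(1-s)t^2-s(1-t)^2=-s(1-s)$. Your additional remarks on finiteness and the endpoint cases are accurate and helpful.
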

\begin{proof}
    The path $s\mapsto(\gamma(s),\gamma(t))$ is a variational c-segment on $(X\times X,d^2)$ if and only if for all $s\in[0,1]$ and all $y\in X$,
    \begin{equation}\label{eq:prop:geodesics-are-vcs}
    d^2(\gamma(s),\gamma(t))-d^2(\gamma(s),y)\leq (1-s)[d^2(\gamma(0),\gamma(t))-d^2(\gamma(0),y)]+s\,[d^2(\gamma(1),\gamma(t))-d^2(\gamma(1),y)].
    \end{equation}
    Since $\gamma$ is a geodesic we have $d^2(\gamma(s),\gamma(t))=(s-t)^2d^2(\gamma(0),\gamma(1))$. Then \eqref{eq:prop:geodesics-are-vcs} simplifies into
    \[
    d^2(\gamma(s),y)\geq (1-s)d^2(\gamma(0),y)+s\,d^2(\gamma(1),y)-s(1-s)d^2(\gamma(0),\gamma(1)).
    \]
    This is precisely the PC inequality \eqref{eq:PC-inequality}.
\end{proof}

As a direct consequence we have:

\begin{proposition}[Geodesics are variational c-segments]\label{prop:geodesics-are-vcs}
    Let $(X,d)$ be a PC metric space. Let $\gamma\colon[0,1]\to X$ be a geodesic and fix any $t\in[0,1]$. Then $s\mapsto(\gamma(s),\gamma(t))$ is a variational c-segment on $(X\times X,d^2)$. 
\end{proposition}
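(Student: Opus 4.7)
The proposition is advertised in the paper as a direct consequence of Lemma \ref{lemma:geodesics-are-vcs}, so my plan is essentially to invoke that lemma and unpack the definition of a PC space. The strategy has two short steps.

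First, I would recall that by definition of a PC metric space in the sense of Alexandrov, every geodesic $\gamma\colon[0,1]\to X$ satisfies the PC inequality \eqref{eq:PC-inequality}: for every $y\in X$ and every $s\in[0,1]$,
\[
d^2(\gamma(s),y)\geq (1-s)d^2(\gamma(0),y)+s\,d^2(\gamma(1),y)-s(1-s)d^2(\gamma(0),\gamma(1)).
\]
This is exactly the hypothesis needed to apply Lemma \ref{lemma:geodesics-are-vcs}, which asserts that a geodesic $\gamma$ satisfies the PC inequality \emph{if and only if} the path $s\mapsto(\gamma(s),\gamma(t))$ satisfies the NNCC inequality for the cost $c=d^2$, for any fixed $t\in[0,1]$.

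Second, I would apply the ``only if'' direction of Lemma \ref{lemma:geodesics-are-vcs} for the given $t\in[0,1]$: the PC inequality for $\gamma$ delivers the NNCC inequality \eqref{eq:NNCC-inequality} for the path $s\mapsto(\gamma(s),\gamma(t))$ with base point $\yb=\gamma(t)$. Since distances are finite, the values $d^2(\gamma(s),\gamma(t))$ are finite at every $s\in[0,1]$, so the remaining finiteness condition required by Definition \ref{def:sncc} is automatic. Therefore $(\gamma(\cdot),\gamma(t))$ is a variational c-segment on $(X\times X,d^2)$, which is the claim.

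There is essentially no obstacle here, since all the work has already been done in Lemma \ref{lemma:geodesics-are-vcs}; the only thing to be slightly careful about is recording that the finiteness clause of Definition \ref{def:sncc} holds trivially for a real-valued metric, so that the NNCC inequality alone is enough to conclude.
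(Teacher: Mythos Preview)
Your proposal is correct and matches the paper's approach exactly: the proposition is stated as a direct consequence of Lemma \ref{lemma:geodesics-are-vcs}, and your argument simply unpacks that implication together with the definition of a PC space. The remark about the finiteness clause being automatic for a real-valued metric is a nice touch but not strictly needed, since the paper leaves the deduction implicit.
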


\begin{remark}
    Let $(X,d)$ be a non-positively curved space (NPC space). Then, reversing inequalities in the proof of \Cref{lemma:geodesics-are-vcs} directly implies the following result: if $\gamma\colon[0,1]\to X$ is a geodesic and $t\in [0,1]$, then $s\mapsto (\gamma(s),\gamma(t))$ is a variational c-segment on $(X\times X,c=-d^2)$.
\end{remark}

\subsection{Stability under Gromov--Hausdorff convergence}\label{SecGH}

A recurrent benefit of synthetic formulations is stability under weak notions of convergence. One may think about pointwise convergence of convex functions, or closer to the present subject, stability under Gromov--Hausdorff convergence of nonnegative sectional curvature in the sense of Alexandrov \cite{burago2001course}, Ricci curvature lower bounds \cite{LottVillani2009,10.1007/s11511-006-0002-8,sturm_2006_ii}, or the MTW condition for the squared distance on Riemannian manifolds \cite{Villani_4th_order}. 
In this section, we show that when the cost is a function of a distance, our notion of an NNCC space is preserved under Gromov--Hausdorff convergence. While there are only a few known NNCC spaces, this stability result may be useful for building new examples. 
Note that the standard, differential definition of nonnegative cross-curvature (requiring that the cost is $C^4$) is not stable under such a notion of convergence.
We also note that the proof of \Cref{thm:GH} is short and elementary, in contrast to some results in the same spirit such as stability of Ricci curvature lower bounds under Gromov--Hausdorff convergence.

We define the Gromov--Hausdorff distance following Burago--Burago--Ivanov \cite[Section 7.3.3.]{burago2001course}. Given two sets $X$ and $Y$ we say that $R\subset X\times Y$ is a correspondence between $X$ and $Y$ if for each $x\in X$ there exists $y\in Y$ such that $(x,y)\in R$ and for each $y\in Y$ there exists $x\in X$ such that $(x,y)\in R$. Let $\calR(X,Y)$ denote the set of all correspondences between $X$ and $Y$. Let $\bX=(X,d_X)$ and $\bY=(Y,d_Y)$ denote two compact metric spaces. The Gromov--Hausdorff distance between them is defined by
\[
    \GH(\bX,\bY)=\inf_{R\in\calR(X,Y)}\sup\{\abs{d_X(x,x')-d_Y(y,y')} : x,x'\in X,y,y'\in Y,(x,y)\in R,(x',y')\in R\}\,.
\]
This quantity vanishes if and only if there exists an isometry between $X$ and $Y$, that is an invertible map $T\colon X \to Y$ such that $d_Y(T(x),T(x')) = d_X(x,x')$ for all $x,x' \in X$. The Gromov--Hausdorff distance is a true metric on the quotient by isometries of the space of compact metric spaces.
\begin{theorem} \label{thm:GH}
    Let $\bX_n=(X_n,d_n)$ be a sequence of compact metric spaces, let $f\colon [0,+\infty)\to [0,+\infty)$ be a locally Lipschitz function and define the costs $c_n(x,y)=f(d_n(x,y))$ on $X_n\times X_n$. Suppose that for each $n\geq 0$, $(X_n\times X_n,c_n)$ is an NNCC space. Suppose that $\bX_n$ converges to a compact metric space $\bZ=(Z,d_Z)$ in the Gromov--Hausdorff topology. Then $(Z\times Z,c_Z\coloneqq f(d_Z))$ is an NNCC space.
\end{theorem}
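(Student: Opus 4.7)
My approach is to lift the NNCC structure from the approximating spaces $X_n$ to the limit $Z$ by transferring variational c-segments through Gromov--Hausdorff correspondences and extracting convergent subsequences pointwise in the curve parameter.

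Fix $z_0, z_1, \bar z \in Z$; the goal is to construct a variational c-segment $(\rmz(\cdot), \bar z)$ on $(Z\times Z, c_Z)$ with endpoints $z_0$ and $z_1$. By Gromov--Hausdorff convergence, choose correspondences $R_n \in \calR(X_n, Z)$ with distortion $\eps_n \to 0$, and pick $x_0^n, x_1^n, \bar x^n \in X_n$ with $(x_0^n, z_0), (x_1^n, z_1), (\bar x^n, \bar z) \in R_n$. Since $Z$ is compact, its diameter $D_Z$ is finite, and the distortion bound gives $\operatorname{diam}(X_n) \leq D_Z + \eps_n$; hence all relevant distances are eventually bounded by some $D$, and $f$ is $L$-Lipschitz on $[0,D]$ for some constant $L$. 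For each $n$, use NNCC of $(X_n \times X_n, c_n)$ to produce a variational c-segment $(\rmx_n(\cdot), \bar x^n)$ joining $x_0^n$ to $x_1^n$. For each $s \in (0,1)$, pick $z_n(s) \in Z$ with $(\rmx_n(s), z_n(s)) \in R_n$; by compactness of $Z$, along a subsequence $(n_k) = (n_k(s))$ we have $z_{n_k}(s) \to \rmz(s) \in Z$. Declare $\rmz(s)$ to be this limit, and set $\rmz(0) = z_0, \rmz(1) = z_1$.

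To verify the NNCC inequality at this $s$ and at an arbitrary $y \in Z$, pick $y_n \in X_n$ with $(y_n, y) \in R_n$. The NNCC inequality at $X_{n_k}$ with test point $y_{n_k}$ involves only six distances $d_{n_k}(\cdot,\cdot)$ among the points $\rmx_{n_k}(s), \bar x^{n_k}, x_0^{n_k}, x_1^{n_k}, y_{n_k}$, each being $\eps_{n_k}$-close to its counterpart in $(Z, d_Z)$ by the correspondence bound. Using the uniform Lipschitz bound on $f$ over $[0,D]$ and passing to the limit $k\to\infty$ yields the NNCC inequality at $(\rmz(s), \bar z)$ for this $y$, with $\rmz(s)$ playing the role provided by the subsequence.

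The main delicate point is that the subsequence $(n_k)$ depends on $s$, so the construction produces $\rmz$ only as a function of $s$ with no continuity properties. This is harmless because \Cref{def:sncc} asks for the NNCC inequality purely pointwise in $s$ and imposes no regularity on the path $\rmz$. It is equally important that for a single $s$-dependent subsequence, each test point $y \in Z$ can be handled independently: this is automatic because the NNCC inequality at $X_{n_k}$ is uniform in $y_{n_k} \in X_{n_k}$, so any $y$-dependent choice of approximants is accommodated along the already-chosen subsequence.
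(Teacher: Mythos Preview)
Your proof is correct and follows essentially the same approach as the paper's: transfer endpoints and base point through the correspondences, invoke NNCC on $X_n$, pull the resulting point back to $Z$, extract a convergent subsequence by compactness (for each fixed $s$ separately), and pass to the limit in the NNCC inequality using the uniform Lipschitz bound on $f$ together with the distortion bound. Your explicit remark that the $s$-dependent subsequence causes no issue because \Cref{def:sncc} requires no regularity of the path, and that the test point $y$ can be handled along the already-chosen subsequence, matches the paper's implicit reasoning exactly.
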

\begin{proof}
    Since $\GH(\bX_n,\bZ)$ converges to $0$ as $n\to\infty$ there exists a sequence $\eps_n>0$ converging to $0$ and for each $n\geq 0$ a correspondence $R_n\in\calR(X_n,Z)$ such that 
    \begin{equation}\label{YO}
        \forall (x,z)\in R_n,\forall(x',z')\in R_n,\quad \abs{d_n(x,x')-d_Z(z,z')} \leq \eps_n\,.
    \end{equation}
    Let us fix $\zb,z,\wb\in Z$, which will play the respective roles of the starting point, ending point and base point of a variational c-segment. For each $n\geq 0$, since $R_n$ is a correspondence between $X_n$ and $Z$, there exist $\xb_n,x_n,\yb_n\in X_n$ such that 
    \[
    (\xb_n,\zb)\in R_n,\quad (x_n,z)\in R_n,\quad (\yb_n,\wb)\in R_n\,.
    \]
    Fix $0<s<1$. For each $n\geq 0$, since $(X_n\times X_n,c_n)$ is an NNCC space there exists a point $\rmx_n(s)\in X_n$ such that 
    \begin{equation}\label{YOO}
        \forall y\in X_n,\quad c_n(\rmx_n(s),\yb_n)-c_n(\rmx_n(s),y)\leq (1-s)[c_n(\xb_n,\yb_n)-c_n(\xb_n,y)] + s[c_n(x_n,\yb_n)-c_n(x_n,y)]\,.
    \end{equation}
    For each $n$ we can then find a point $\rmz_n(s)\in Z$ such that $(\rmx_n(s),\rmz_n(s))\in R_n$. Since $(Z,d_Z)$ is compact there exists $\rmz(s)\in Z$ such that a subsequence of $(\rmz_n(s))_n$ converges to $\rmz(s)$ as $n\to\infty$.     
    Note that throughout all these operations $s$ is kept fixed. 
    
    To conclude we want to pass to the limit in \eqref{YOO}. Fix $w\in Z$ and let $y_n\in X_n$ such that $(y_n,w)\in R_n$. By \eqref{YO} we have 
    \begin{align*}
        \abs{d_n(\rmx_n(s),\yb_n)-d_Z(\rmz_n(s),\wb)}\leq \eps_n\,,\\
        \abs{d_n(\rmx_n(s),y_n)-d_Z(\rmz_n(s),w)}\leq \eps_n\,,\\
        \abs{d_n(\xb_n,\yb_n)-d_Z(\zb,\wb)}\leq \eps_n\,,\\
        \abs{d_n(\xb_n,y_n)-d_Z(\zb,w)}\leq \eps_n\,,\\
        \abs{d_n(x_n,\yb_n)-d_Z(z,\wb)}\leq \eps_n\,,\\
        \abs{d_n(x_n,y_n)-d_Z(z,w)}\leq \eps_n\,,
    \end{align*}
    since all the above points are in respective correspondence for $R_n$. This implies 
    \[
    \abs{c_n(\rmx_n(s),\yb_n)-c_Z(\rmz_n(s),\wb)}=\abs{f(d_n(\rmx_n(s),\yb_n))-f(d_Z(\rmz_n(s),\wb))}\leq C\eps_n\,,
    \]
    where $C$ is a constant that depends on the diameters of $(X_n,d_n)$ and $(Z,d_Z)$ and the local Lipschitz constant of $f$. Combined with \eqref{YOO} we find 
    \begin{equation*}
        c_Z(\rmz_n(s),\wb)-c_Z(\rmz_n(s),w)\leq (1-s)[c_Z(\zb,\wb)-c_Z(\zb,w)] + s[c_Z(z,\wb)-c_Z(z,w)]+C\eps_n\,.
    \end{equation*}
    As $n\to\infty$ we obtain 
    \begin{equation*}
        c_Z(\rmz(s),\wb)-c_Z(\rmz(s),w)\leq (1-s)[c_Z(\zb,\wb)-c_Z(\zb,w)] + s[c_Z(z,\wb)-c_Z(z,w)]\,.
    \end{equation*}
\end{proof}

\subsection{The MTW condition and the Loeper maximum principle} \label{sec:sncc-discussion}

The goal of this section is to provide points of comparison between our synthetic notion of nonnegative cross-curvature and the synthetic version of the MTW condition known as Loeper's maximum principle. Compare: \Cref{thm:loeper-formulation}\ref*{enum:LMP-c-segment} and \Cref{thm:KM_characterization}, \Cref{lem:auto-csegment-mtw} and \Cref{lemma:auto-csegment}, \eqref{eq:loeper-formulation-2} and the NNCC inequality, and anticipating the next section, \Cref{lemma:subdiff-connected} and \Cref{prop:NNCC-contact-sets}. Because we use cost functions that may take $\pm\infty$ values, in certain cases some care has to be taken to properly define c-transforms, c-subdifferentials, etc. Appendix \ref{sec:app:infinite} contains the necessary background material to handle infinite values, and the main definitions will be recalled in the main text.

Given two $n$-dimensional smooth manifolds $X$ and $Y$ and a nondegenerate cost function $c\in C^4(X\times Y)$ (see the regularity assumptions \eqref{eq:ass-C4}--\eqref{eq:ass-non-deg}), the MTW condition is given by: for all $(x,y)\in X\times Y$ and all $\xi\in T_xX$, $\eta\in T_yY$,
\begin{equation}\label{eq:MTW-condition}
    \Sc {(x,y)}(\xi,\eta)\geq 0\quad\text{whenever}\quad\nabla^2_{xy}c(x,y)(\xi,\eta)=0\,.
\end{equation}
Here $\Sc$ is the MTW tensor, defined in \Cref{def:MTW-tensor}.
In other words, the MTW condition requires nonnegativity of $\Sc$ on pairs of vectors $(\xi,\eta)$ that are orthogonal for the Kim--McCann metric $\nabla^2_{xy}c$. In contrast nonnegative cross-curvature demands $\Sc\geq 0$ for all tangent vectors $(\xi,\eta)$. 

While \eqref{eq:MTW-condition} is a differential condition based on $\Sc$, thus needing four derivatives on the cost function, Loeper proved a series of equivalent characterizations that turn out to have lower regularity requirements \cite[Theorems 3.1 and 3.2, Proposition 2.11]{loeper2009regularity}. We state here some of them, slightly reformulated and for simplicity under the full set of assumptions \eqref{eq:ass-C4}--\eqref{eq:ass-biconvex} although not every assumption is always needed. See also \cite[Section 2.5]{TrudingerWang_rational_mech_2009}, \cite[Theorem 3.1]{kim2010continuity}.

\begin{theorem} \label{thm:loeper-formulation}
    Let $X,Y$ be two $n$-dimensional manifolds and suppose that $(X,Y,c)$ is a triple satisfying \eqref{eq:ass-C4}--\eqref{eq:ass-biconvex}. Then the following statements are equivalent. 
    \begin{enumerate}[(i)]
    
        \item\label{enum:LMP-curve} For any $(x_0,x_1,\yb)\in X\times X\times Y,$ there exists a continuous curve $\rmx$ joining $x_0$ to $x_1$ such that for all $s\in[0,1]$,
        \begin{equation}\label{eq:loeper-formulation-2}
        \forall y\in Y, \quad c(\rmx(s),\yb)-c(\rmx(s),y)\leq \max\{c(x_0,\yb)-c(x_0,y), c(x_1,\yb)-c(x_1,y)\}.
        \end{equation}
        
        \item\label{enum:LMP-c-segment} For any $(x_0,x_1,\yb)\in X\times X\times Y,$ denoting by $\rmx$ the c-segment joining $x_0$ to $x_1$ with base $\yb$ we have for all $s\in[0,1]$,
        \begin{equation}\label{eq:loeper-formulation}
        \forall y\in Y, \quad c(\rmx(s),\yb)-c(\rmx(s),y)\leq \max\{c(x_0,\yb)-c(x_0,y), c(x_1,\yb)-c(x_1,y)\}.
        \end{equation}
            
        \item\label{enum:LMP-MTW-condition} $c$ satisfies the MTW condition \eqref{eq:MTW-condition}.
    \end{enumerate}
\end{theorem}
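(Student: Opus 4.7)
The equivalences split naturally into the trivial implication (ii) $\Rightarrow$ (i), since c-segments are continuous curves, and the two substantive directions (iii) $\Rightarrow$ (ii) and (i) $\Rightarrow$ (iii), which I will treat in turn.

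For (iii) $\Rightarrow$ (ii), fix $x_0, x_1 \in X$ and $\yb, y \in Y$, let $\rmx$ be the c-segment from $x_0$ to $x_1$ with base $\yb$, and set $h(s) = c(\rmx(s), \yb) - c(\rmx(s), y)$. The plan is a maximum principle: suppose $h$ attains a strict interior maximum at some $s^* \in (0,1)$ (the strict case can be forced by replacing $h$ with $h + \eps s(1-s)$ and letting $\eps \to 0^+$). Then $h'(s^*) = 0$ and $h''(s^*) \leq 0$. Differentiating the c-segment defining equation in $s$ yields $\dot{\rmx}(s) = -[\nabla^2_{xy} c(\rmx(s),\yb)]^{-1}[\nabla_y c(x_1,\yb) - \nabla_y c(x_0,\yb)]$, and a direct computation (the classical Loeper identity) expresses $h''(s^*)$ in terms of $\Sc(\rmx(s^*),y)(\dot{\rmx}(s^*), \tilde{\eta})$ for a suitable vector $\tilde{\eta} \in T_y Y$ encoding the discrepancy between $\nabla_y c(\rmx(s^*),\yb)$ and $\nabla_y c(\rmx(s^*), y)$. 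The first-order condition $h'(s^*) = 0$ translates exactly into the Kim--McCann orthogonality $\nabla^2_{xy} c(\dot{\rmx}(s^*), \tilde{\eta}) = 0$, so the MTW condition (iii) forces $h''(s^*) \geq 0$: contradiction with the strict interior maximum.

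For (i) $\Rightarrow$ (iii), the difficulty is that (i) only supplies some continuous curve, not necessarily the c-segment. I argue by contradiction: assume MTW fails at $(x_0, y_0)$ for directions $\xi \in T_{x_0}X$, $\eta \in T_{y_0}Y$ with $\nabla^2_{xy}c(x_0,y_0)(\xi,\eta) = 0$ and $\Sc(x_0,y_0)(\xi,\eta) < 0$. In local coordinates, choose $x_\pm = x_0 \pm \eps \xi$, $\yb = y_0$, and $y_\delta = y_0 + \delta \eta$ for small $\eps, \delta > 0$. Applying (i) yields a continuous path $\rmx^\eps \colon [0,1] \to X$ joining $x_-$ to $x_+$ and satisfying the below-the-max inequality for every $y \in Y$, in particular for $y = y_\delta$. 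Using non-degeneracy of $\nabla^2_{xy}c$ together with the inequality, one shows that $\rmx^\eps$ remains within an $O(\eps)$-neighborhood of $x_0$. A fourth-order Taylor expansion at $(x_0, y_0)$ of $h^\eps(s) \coloneqq c(\rmx^\eps(s), \yb) - c(\rmx^\eps(s), y_\delta)$ and of its endpoint values, using $\nabla^2_{xy}c(\xi,\eta) = 0$ to eliminate mixed quadratic terms, then produces at $s = 1/2$ a leading contribution proportional to $\eps^2 \delta^2 \Sc(x_0,y_0)(\xi,\eta) < 0$, which violates the below-the-max bound.

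The hardest step is the Taylor expansion argument in (i) $\Rightarrow$ (iii): one must control the arbitrary curve $\rmx^\eps$ to ensure it remains $O(\eps)$-close to $x_0$ throughout $[0,1]$ (not merely near its endpoints) uniformly enough for the fourth-order expansion in $(\eps, \delta)$ to be dominated by the MTW term. Intuitively this is enforced by the below-the-max inequality itself together with uniform coercivity of $\nabla^2_{xy} c$ near $(x_0, y_0)$, but the quantitative execution, and the reparametrization needed to locate the extremal slice at the symmetric midpoint, requires care.
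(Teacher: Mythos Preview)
The paper does not itself prove this theorem; it is cited from \cite{loeper2009regularity} with references to \cite{kim2010continuity,TrudingerWang_rational_mech_2009}. What the paper does establish is \Cref{lem:auto-csegment-mtw}: any curve satisfying the below-the-max inequality \eqref{eq:lemma:auto-csegment-mtw} is a continuous reparametrization of the c-segment with the same endpoints and base. Since \eqref{eq:loeper-formulation-2} is invariant under continuous reparametrization, this gives (i) $\Rightarrow$ (ii) directly, making your direct attack on (i) $\Rightarrow$ (iii) unnecessary. Once (ii) is available, (ii) $\Rightarrow$ (iii) is the classical local expansion along the \emph{explicit} c-segment joining $x_0 \pm \eps\xi$ with base $y_0$, tested against $y_\delta = y_0 + \delta\eta$; here confinement of the curve is automatic because the curve is prescribed.

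The gap you flag in your (i) $\Rightarrow$ (iii) is genuine and is exactly why the literature routes through (ii): the below-the-max inequality alone does not give quantitative $O(\eps)$ confinement of an arbitrary continuous curve uniformly in $s$, and without that the fourth-order Taylor balance fails. The rigidity of \Cref{lem:auto-csegment-mtw} is the missing idea --- the inequality in fact forces the curve to be a reparametrized c-segment, eliminating the freedom you were trying to control. Your (iii) $\Rightarrow$ (ii) sketch is standard in spirit, but the claim that $h'(s^*)=0$ ``translates exactly'' into $\nabla^2_{xy}c(\dot\rmx,\tilde\eta)=0$ glosses over the passage from the finite difference $\nabla_xc(\rmx(s^*),\yb)-\nabla_xc(\rmx(s^*),y)$ to a single tangent vector $\tilde\eta$; in the rigorous versions this requires either an integral of $\Sc$ along a second c-segment in the $y$ variable (as in \cite[Lemma~4.5]{kim2010continuity}) or a local-in-$y$ argument.
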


Inequality~\eqref{eq:loeper-formulation}, when applied to all c-segments contained in a given c-segment, implies that the function $s\mapsto c(\rmx(s),\yb)-c(\rmx(s),y)$ is \emph{quasi-convex}, i.e.\ its lower levelsets are convex. This can be compared with the Kim--McCann condition in \Cref{thm:KM_characterization} that characterizes nonnegative cross-curvature and involves the \emph{convexity} of the same function $s\mapsto c(\rmx(s),\yb)-c(\rmx(s),y)$. 

Condition \ref*{enum:LMP-curve} is the synthetic formulation \smtw{} introduced in Section \ref{sec:sncc-def}. Condition \ref*{enum:LMP-c-segment} is often the one called Loeper's maximum principle. 
The equivalence of \ref*{enum:LMP-curve} and \ref*{enum:LMP-c-segment} in \Cref{thm:loeper-formulation} says that the continuous curve in \eqref{eq:loeper-formulation-2} can always be taken to be a c-segment. In fact Loeper's proof of \cite[Proposition 2.11]{loeper2009regularity} shows that \eqref{eq:loeper-formulation-2} encodes c-segments, up to time reparametrization.  We state a version of this result here and give it a proof in \Cref{AppendixOtherProofs} for the reader's convenience. Note the similarity with \Cref{lemma:auto-csegment}. 

\begin{lemma}[Curves satisfying \smtw{} are automatically reparametrizations of c-segments]\label{lem:auto-csegment-mtw}

    \leavevmode
    
    \noindent
    Let $X$ and $Y$ be two $n$-dimensional smooth manifolds and let $c\in C^1(X\times Y)$. Let $(s\in [0,1]) \mapsto \rmx(s)$ be a smooth curve in $X$ and let $\yb\in Y$ be such that, for all $y\in Y$, 
    \begin{equation}\label{eq:lemma:auto-csegment-mtw}
        c(\rmx(s),\yb)-c(\rmx(s),y)\leq \max\{c(\rmx(0),\yb)-c(\rmx(0),y), c(\rmx(1),\yb)-c(\rmx(1),y)\}.
    \end{equation}
    Then, there exists a continuous function $f\colon[0,1]\to[0,1]$, with $f(0) = 0$ and $f(1) = 1$, such that $\nabla_yc(\rmx(s),\yb) = (1- f(s))\nabla_y c(\rmx(0),\yb) + f(s)\nabla_yc(\rmx(1),\yb)$ for all $s\in[0,1]$.
    \end{lemma}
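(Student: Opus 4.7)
The plan is to mirror the proof of \Cref{lemma:auto-csegment}, replacing the ``below the chord'' input by the ``below the max'' input and accepting a weaker conclusion. Fix an arbitrary smooth curve $t\mapsto\rmy(t)\in Y$ with $\rmy(0)=\yb$ and set
\[
g(s,t)=\frac{c(\rmx(s),\rmy(0))-c(\rmx(s),\rmy(t))}{t},\qquad q(s)\coloneqq \nabla_y c(\rmx(s),\yb).
\]
For $t>0$, dividing \eqref{eq:lemma:auto-csegment-mtw} (with $y=\rmy(t)$) by $t$ preserves the max and gives $g(s,t)\le \max\{g(0,t),g(1,t)\}$; letting $t\to 0^+$ yields $h(s)\le \max\{h(0),h(1)\}$ with $h(s)\coloneqq -\langle q(s),\dot\rmy(0)\rangle$. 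Applying the same step to $t\mapsto \rmy(-t)$ (whose initial velocity is $-\dot\rmy(0)$) gives the matching lower bound $h(s)\ge \min\{h(0),h(1)\}$. Since $\dot\rmy(0)$ is an arbitrary tangent vector at $\yb$, this shows that for every $v\in T_\yb Y$ and every $s\in[0,1]$,
\begin{equation}\label{eq:plan-sandwich}
\langle q(s),v\rangle\in \bigl[\min\{\langle q(0),v\rangle,\langle q(1),v\rangle\},\max\{\langle q(0),v\rangle,\langle q(1),v\rangle\}\bigr].
\end{equation}

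The next step is the main content: extract from \eqref{eq:plan-sandwich} that $q(s)$ lies on the affine segment $[q(0),q(1)]$ in $T^*_\yb Y$. For any $v$ in the kernel of the linear form $w\mapsto \langle q(1)-q(0),w\rangle$, the upper and lower bounds in \eqref{eq:plan-sandwich} coincide, so $\langle q(s)-q(0),v\rangle=0$ on that kernel. This forces the covector $q(s)-q(0)$ to be a scalar multiple of $q(1)-q(0)$, i.e.\ $q(s)=(1-f(s))q(0)+f(s)q(1)$ for a uniquely determined scalar $f(s)\in\R$ (assuming $q(0)\neq q(1)$; if $q(0)=q(1)$, \eqref{eq:plan-sandwich} forces $q(s)\equiv q(0)$ and one just sets $f(s)=s$). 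Testing \eqref{eq:plan-sandwich} against a $v$ for which $\langle q(1)-q(0),v\rangle>0$ then pins $f(s)$ into $[0,1]$, while evaluation at the endpoints gives $f(0)=0$ and $f(1)=1$.

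Finally, the continuity of $f$ follows from the explicit formula $f(s)=\langle q(s)-q(0),v\rangle/\langle q(1)-q(0),v\rangle$ (for any admissible $v$), together with the continuity of $s\mapsto q(s)$ which holds because $c\in C^1$ and $\rmx$ is smooth.

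The main obstacle is the second step: from the sandwich \eqref{eq:plan-sandwich} one gets only directional information, whereas in the NNCC setting of \Cref{lemma:auto-csegment} the full affine interpolation $h(s)=(1-s)h(0)+sh(1)$ drops out immediately. Here the argument must leverage the kernel direction specifically—ensuring that $q(s)$ is forced onto the line through $q(0)$ and $q(1)$ even though the parameter along that line is only constrained to lie in $[0,1]$, hence the unavoidable reparametrization $f$.
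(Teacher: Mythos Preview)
Your proof is correct and follows essentially the same route as the paper: both first pass to the directional inequality $\langle q(s),v\rangle\le\max\{\langle q(0),v\rangle,\langle q(1),v\rangle\}$ for all $v\in T_{\yb}Y$ (via the same limiting argument as in \Cref{lemma:auto-csegment}) and then argue linear-algebraically in $T^*_{\yb}Y$. Your use of the kernel of $q(1)-q(0)$ to land directly on the affine line through $q(0),q(1)$ is in fact a slight streamlining of the paper's version, which writes $q(s)=\alpha q(0)+\beta q(1)$ and separately treats the cases where $q(0),q(1)$ span a two-dimensional space, are colinear but distinct, or coincide.
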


Because the MTW condition originates in the problem of the regularity of optimal transport maps, conditions \ref*{enum:LMP-curve} and \ref*{enum:LMP-c-segment} in \Cref{thm:loeper-formulation} are sometimes stated in a different but equivalent form, connectedness (for \ref*{enum:LMP-curve}) and c-convexity (for \ref*{enum:LMP-c-segment}) of the \emph{c-subdifferential}. We make this link explicit here under great generality, and allowing $\pm\infty$ values. Given two arbitrary sets $X$ and $Y$ and an arbitrary cost $c\colon X\times Y\to [-\infty,+\infty]$, the \emph{c-transform} of the function $\phi\colon Y\to [-\infty,+\infty]$ is defined by 
\begin{equation}\label{eq:def-c-transform-main}
\phi^c(x)=\sup\{\beta\in\R : \phi(\cdot)\leq c(x,\cdot)-\beta\} = \inf_{y\in Y} c(x,y)-\phi(y)\,,
\end{equation}
with the rule $(+\infty)-(+\infty)=(-\infty)-(-\infty)=+\infty$ inside the infimum, see \eqref{eq:def-c-transform} in Appendix \ref{sec:app:infinite}. We similarly define the c-transform of a function $\psi\colon X\to [-\infty,+\infty]$ which we still denote by $\psi^c$. 
A function $\phi\colon Y\to [-\infty,+\infty]$ is said to be \emph{c-concave} if there exists $\psi\colon X\to[-\infty,+\infty]$ such that $\phi=\psi^c$. 
We denote by $\Phi^c$ the set of c-concave functions on $Y$. 
We also recall that the c-subdifferential $\partial^c\phi(\yb)$ of $\phi\colon Y\to [-\infty,+\infty]$ at a point $\yb\in Y$ where $\phi(\yb)$ is finite is the subset 
\begin{equation}\label{eq:def-c-subdiff-main}
\partial^c\phi(\yb)=\{x\in X : \text{$c(x,\yb)$ is finite and } \phi^c(x)+\phi(\yb)= c(x,\yb)\},
\end{equation}
and if $\phi(\yb)=\pm\infty$ then $\partial^c\phi(\yb)=\emptyset$, see \eqref{eq:def-c-subdifferential-y}.

\begin{lemma}\label{lemma:subdiff-connected}
    Let $X$ and $Y$ be two arbitrary set and let $c\colon X\times Y\to [-\infty,+\infty]$ be an arbitrary function. 
    Given $x_0,x_1,\tilde x\in X$ and $\yb\in Y$ such that $c(x_0,\yb)$ and $c(x_1,\yb)$ are finite, the following statements are equivalent.

    \begin{enumerate}[(i)]
        \item\label{enum:LMP-csubdiff-i} 
        $c(\tilde x,\yb)$ is finite and 
        \[
        \forall y\in Y,\,\, c(\tilde x,\yb)-c(\tilde x,y)\leq \max\{c(x_0,\yb)-c(x_0,y), c(x_1,\yb)-c(x_1,y)\}.
        \]
        
        \item\label{enum:LMP-csubdiff-ii} For all $\phi\in\Phi^c$, $x_0\in\partial^c\phi(\yb)$ and $x_1\in \partial^c\phi(\yb)$ necessarily implies $\tilde x\in \partial^c\phi(\yb)$.
        
        \item\label{enum:LMP-csubdiff-iii} For all $\phi\colon Y\to [-\infty,+\infty]$, $x_0\in\partial^c\phi(\yb)$ and $x_1\in \partial^c\phi(\yb)$ necessarily implies $\tilde x\in \partial^c\phi(\yb)$.

    \end{enumerate}
    As a direct consequence suppose that $X$ is a topological space. Then the following statements are equivalent. 
    \begin{enumerate}[(i),resume]
        \item\label{enum:LMP-csubdiff-iv} $(X\times Y,c)$ satisfies condition \smtw{}. 
        
        \item\label{enum:LMP-csubdiff-v} For every c-concave function $\phi\colon Y\to [-\infty,+\infty]$ and every $\yb\in Y,$ $\partial^c\phi(\yb)$ is pathwise connected or empty. 
        
        \item\label{enum:LMP-csubdiff-vi} For every function $\phi\colon Y\to [-\infty,+\infty]$ and every $\yb\in Y,$ $\partial^c\phi(\yb)$ is pathwise connected or empty. 
    \end{enumerate}
\end{lemma}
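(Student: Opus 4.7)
The plan is to prove the cycle (i) $\Rightarrow$ (iii) $\Rightarrow$ (ii) $\Rightarrow$ (i), from which the second set of equivalences will follow almost mechanically. The implication (iii) $\Rightarrow$ (ii) is immediate since $\Phi^c$ is contained in the set of all functions $Y\to[-\infty,+\infty]$. The single operational fact used throughout is that, unpacking \eqref{eq:def-c-transform-main}--\eqref{eq:def-c-subdiff-main}, $x\in\partial^c\phi(\yb)$ amounts to $\phi(\yb)$ and $c(x,\yb)$ being finite together with
\[
    \forall y\in Y,\quad c(x,\yb)-c(x,y)\leq \phi(\yb)-\phi(y).
\]

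For (i) $\Rightarrow$ (iii): given $\phi$ with $x_0,x_1\in\partial^c\phi(\yb)$, the reformulation above yields $c(x_i,\yb)-c(x_i,y)\leq \phi(\yb)-\phi(y)$ for $i=0,1$ and all $y$. Taking the max over $i$ and then chaining with (i) gives $c(\tilde x,\yb)-c(\tilde x,y)\leq \phi(\yb)-\phi(y)$ for all $y$; since (i) also supplies $c(\tilde x,\yb)$ finite, this proves $\tilde x\in\partial^c\phi(\yb)$.

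For (ii) $\Rightarrow$ (i): build a c-concave witness. Fix $\alpha\in\R$, define $\psi\colon X\to[-\infty,+\infty]$ by $\psi(x_i)=c(x_i,\yb)-\alpha$ for $i=0,1$ and $\psi(x)=-\infty$ otherwise, and set $\phi=\psi^c\in\Phi^c$. The infimum defining $\phi(y)=\psi^c(y)$ only sees $x_0$ and $x_1$ (the contribution from other $x$ is $+\infty$ under the infinite-value conventions), giving
\[
    \phi(y)=\alpha+\min\{c(x_0,y)-c(x_0,\yb),\,c(x_1,y)-c(x_1,\yb)\}.
\]
In particular $\phi(\yb)=\alpha$, and a short computation gives $\phi^c(x_i)=c(x_i,\yb)-\alpha$, so $x_0,x_1\in\partial^c\phi(\yb)$. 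By (ii), $\tilde x\in\partial^c\phi(\yb)$, i.e.\ $c(\tilde x,\yb)-c(\tilde x,y)\leq \phi(\yb)-\phi(y)$ for all $y$. Since by construction $\phi(\yb)-\phi(y)=\max\{c(x_0,\yb)-c(x_0,y),c(x_1,\yb)-c(x_1,y)\}$, this is exactly condition (i).

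For the second cycle under the topological assumption, (vi) $\Rightarrow$ (v) is trivial. For (iv) $\Rightarrow$ (vi), given $x_0,x_1\in\partial^c\phi(\yb)$ the costs $c(x_i,\yb)$ are finite, so \smtw{} delivers a continuous path $\rmx$ from $x_0$ to $x_1$ satisfying (i) at each $s$; applying the already-proved (i) $\Rightarrow$ (iii) pointwise shows $\rmx(s)\in\partial^c\phi(\yb)$ for every $s$, so $\partial^c\phi(\yb)$ is pathwise connected. For (v) $\Rightarrow$ (iv), apply (v) to the witness $\phi$ built above: the resulting continuous path in $\partial^c\phi(\yb)$ is a \smtw{}-path for $(x_0,x_1,\yb)$, precisely because along $\partial^c\phi(\yb)$ the inequality $c(\rmx(s),\yb)-c(\rmx(s),y)\leq \phi(\yb)-\phi(y)$ coincides with the max-principle inequality of (i). The only real bookkeeping concerns the $\pm\infty$ arithmetic in the c-transform computation of the witness $\phi$, but this is routinely handled by the conventions introduced earlier in the paper.
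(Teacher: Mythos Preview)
Your proof is correct and follows essentially the same route as the paper's: the same cycle (i) $\Rightarrow$ (iii) $\Rightarrow$ (ii) $\Rightarrow$ (i), the same characterization of $\partial^c\phi(\yb)$ via the inequality $c(x,\yb)-c(x,y)\leq\phi(\yb)-\phi(y)$, and the same witness $\phi=\psi^c$ built from a $\psi$ supported on $\{x_0,x_1\}$ (your additive constant $\alpha$ is harmless but unnecessary). Your explicit treatment of (iv)--(vi) spells out what the paper simply labels a ``direct consequence'', and does so correctly.
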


Since the proof of \Cref{lemma:subdiff-connected} is similar to arguments found in the existing literature, we postpone it to \Cref{AppendixOtherProofs}. The equivalence of \ref*{enum:LMP-csubdiff-iv} and \ref*{enum:LMP-csubdiff-v} is well-known to specialists (in smooth settings), see \cite[Proposition 2.11]{loeper2009regularity} and its proof, \cite[Section 2.5]{TrudingerWang_rational_mech_2009}, \cite[Proposition 12.15]{villani2009optimal}. It is stated here under more general assumptions.

\subsection{The Figalli--Kim--McCann characterization} \label{sec:Figalli-Kim-McCann}

In~\cite{Figalli_Kim_McCann_screening2011}, Figalli, Kim and McCann characterized nonnegative cross-curvature in a smooth and compact setting in terms of the convexity of the set $\Phi^c$ of c-concave functions. In this section we derive a related characterization of the NNCC inequality and NNCC spaces. Here is their result, stated under a variant of their assumptions.

\begin{theorem}[{\cite[Theorem 3.2]{Figalli_Kim_McCann_screening2011}}] \label{thm:FigalliKimMcCann}
    Let $X$ and $Y$ be two open bounded subsets of $\Rn$, suppose that $(X,Y,c)$ satisfies \eqref{eq:ass-C4}--\eqref{eq:ass-biconvex} and that $c$ is continuous on the closure $\overline X\times\overline Y$. Then $c$ has nonnegative cross-curvature if and only if the set $\Phi^c$ is convex, in the sense that for any two $\phi_0,\phi_1\in\Phi^c$ and any $s\in [0,1]$, the function $\phi_s\coloneqq (1-s)\phi_0+s\phi_1$ is c-concave.
\end{theorem}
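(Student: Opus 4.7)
The plan is to prove both implications by combining the Kim--McCann characterization \Cref{thm:KM_characterization} with the rigidity statement \Cref{lemma:auto-csegment}, which forces any point satisfying an NNCC-type inequality to be a c-segment endpoint. As a preliminary, I recall that in the present compact continuous setting, c-concavity of a function is equivalent to its c-subdifferential being nonempty at every point of $Y$: a function that admits a c-support at every $\yb$ is automatically the infimum of its c-supports, hence c-concave.

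For the forward implication, assume $c$ has nonnegative cross-curvature and fix $\phi_0, \phi_1 \in \Phi^c$ together with $s \in [0,1]$ and $\yb \in Y$. Select $x_i \in \partial^c \phi_i(\yb)$, which is nonempty by compactness of $\overline X$ and continuity of $c$. Let $\rmx(\cdot)$ denote the c-segment from $x_0$ to $x_1$ with base $\yb$, provided by \eqref{eq:ass-bitwist}--\eqref{eq:ass-biconvex}. \Cref{thm:KM_characterization} applied to this c-segment, combined with the support inequalities $c(x_i,\yb) - c(x_i,y) \leq \phi_i(\yb) - \phi_i(y)$, is expected to yield
\[
c(\rmx(s), \yb) - c(\rmx(s), y) \leq \phi_s(\yb) - \phi_s(y) \qquad \forall y \in Y,
\]
where $\phi_s := (1-s)\phi_0 + s\phi_1$. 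This identifies $\rmx(s) \in \partial^c \phi_s(\yb)$, so $\phi_s \in \Phi^c$.

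For the converse, assume $\Phi^c$ is convex and fix $x_0, x_1 \in X$, $\yb \in Y$, with $\rmx(\cdot)$ the c-segment joining them. By \Cref{thm:KM_characterization} it suffices, for each $y \in Y$, to show that $s \mapsto c(\rmx(s), \yb) - c(\rmx(s), y)$ lies below every chord on every subinterval $[a,b] \subseteq [0,1]$, since the global convexity of a function on $[0,1]$ is equivalent to below-the-chord on every subinterval. Fix such $[a,b]$ and $t \in [0,1]$. The costs $\tilde\phi_i(y) := c(\rmx(a + i(b-a)), y)$ belong to $\Phi^c$, hence so does $\tilde\phi_t := (1-t)\tilde\phi_0 + t\tilde\phi_1$ by hypothesis. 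Any $\tilde x \in \partial^c \tilde\phi_t(\yb)$ (nonempty by compactness) then satisfies an NNCC-type inequality at parameter $t$ with endpoints $\rmx(a), \rmx(b)$; \Cref{lemma:auto-csegment} forces $\nabla_y c(\tilde x, \yb) = (1-t) \nabla_y c(\rmx(a), \yb) + t \nabla_y c(\rmx(b), \yb)$, and the bi-twist hypothesis \eqref{eq:ass-bitwist} combined with the observation that the restriction of a c-segment to a subinterval is itself a c-segment uniquely identifies $\tilde x = \rmx((1-t)a + tb)$. Substituting back produces the desired chord inequality on $[a,b]$.

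The principal obstacle is the identification $\tilde x = \rmx((1-t)a + tb)$ in the backward direction, which relies on the combined punch of \Cref{lemma:auto-csegment}, bi-twist injectivity, and the stability of the c-segment equation under restriction and reparametrization. A secondary technicality is verifying nonemptiness of all invoked c-subdifferentials, which is routinely handled by continuity of $c$ on the compact $\overline X \times \overline Y$.
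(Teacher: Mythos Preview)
The paper does not prove \Cref{thm:FigalliKimMcCann}; it is quoted from \cite{Figalli_Kim_McCann_screening2011}. The paper's own contribution in this direction is \Cref{prop:NNCC-contact-sets}, a nonsmooth analogue (NNCC space $\iff$ convexity of $\widetilde\Phi^c$) established via a pointwise equivalence between the NNCC inequality and a c-subdifferential condition. Your argument mirrors that proposition closely: in particular, the choice $\tilde\phi_i = c(\rmx(a+i(b-a)),\cdot)$ in your backward direction is exactly the trick used there to pass from the subdifferential hypothesis back to the NNCC inequality.

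Your proof is essentially correct, but your invocation of \Cref{lemma:auto-csegment} in the backward direction is imprecise as written. That lemma is stated for a \emph{smooth curve} $\rmx(\cdot)$ satisfying \eqref{eq:lemma:auto-csegment} for all $s\in[0,1]$, whereas you have a single point $\tilde x$ satisfying the inequality at a single parameter $t$; the collection $\{\tilde x_t\}_t$ is not a priori a smooth (or even continuous) curve, so the lemma does not literally apply. What actually works is the observation that the \emph{proof} of \Cref{lemma:auto-csegment} is pointwise in $s$: differentiating the single inequality
\[
c(\tilde x,\yb)-c(\tilde x,y)\leq (1-t)[c(\rmx(a),\yb)-c(\rmx(a),y)]+t[c(\rmx(b),\yb)-c(\rmx(b),y)]
\]
in $y$ at $y=\yb$, in both directions $\pm\eta$ (using that $Y$ is open), yields the gradient identity $\nabla_y c(\tilde x,\yb)=(1-t)\nabla_y c(\rmx(a),\yb)+t\nabla_y c(\rmx(b),\yb)$ directly. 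Once this is made explicit, the identification $\tilde x=\rmx((1-t)a+tb)$ via bi-twist goes through and the remainder of the argument is sound.
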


One key feature of the assumptions in \Cref{thm:FigalliKimMcCann} is that compactness of $X$ and $Y$ combined with continuity of the cost function guarantees c-concave functions $\phi$ to always have nonempty c-subdifferentials (see \eqref{eq:def-c-subdiff-main}), in the sense that
\begin{equation} \label{eq:condition-nonempty-subdiff}
    \forall y\in Y,\quad \partial^c\phi(y)\neq\emptyset\,.
\end{equation}
Moreover, condition \eqref{eq:condition-nonempty-subdiff} is by itself always stronger than c-concavity. Indeed if $x\in\partial^c\phi(y)$ then 
\[
c(x,y)-\phi^c(x)\leq \phi(y)\,,
\]
with every quantity finite, which in turn gives $\phi^{cc}(y)\leq \phi(y)$, thus in fact $\phi^{cc}(y)= \phi(y)$. This means that \eqref{eq:condition-nonempty-subdiff} implies $\phi=\phi^{cc}$. When $X$ and $Y$ are more general sets or when considering a more general cost function, condition \eqref{eq:condition-nonempty-subdiff} may not be automatically satisfied by a c-concave function $\phi$. 
It turns out that more than c-concavity, it is condition \eqref{eq:condition-nonempty-subdiff} that we will need here. Since condition \eqref{eq:condition-nonempty-subdiff} forces $\phi$ to be finite-valued, we relax it to allow infinite values and only require c-subdifferentials to be nonempty at points where $\phi$ is finite.
Given an arbitrary set $Y$, we therefore define
\begin{equation}\label{eq:def-Phic0}
    \widetilde\Phi^c=\{\phi\colon Y\to[-\infty,+\infty]: \,\,\partial^c\phi(y)\neq\emptyset\text{ at every point $y\in Y$ where $\phi$ is finite}\}.
\end{equation}
Note that contrary to \eqref{eq:condition-nonempty-subdiff}, in general the relaxed condition $\phi\in\widetilde\Phi^c$ doesn't imply (and is not implied by) c-concavity of $\phi$.
The next result then gives a characterization of NNCC spaces in terms of c-subdifferentials; it is an analogue of \Cref{lemma:subdiff-connected} for the MTW condition. Given $s\in (0,1)$, in order to properly define the convex combination $\phi_s$ of two functions $\phi_0,\phi_1\colon Y\to [-\infty,+\infty]$ that take infinite values, we use the hypograph,
\begin{equation}\label{eq:def-phi-s-hypo}
\hypo\phi_s \coloneqq \{(y,\beta)\in Y\times\R : \beta=(1-s)\beta_0+s\,\beta_1, (y,\beta_0)\in\hypo\phi_0,\,(y,\beta_1)\in\hypo\phi_1\},
\end{equation}
see also the use of hypographs in Appendix \ref{sec:app:infinite}. This says that  
\begin{equation}\label{eq:def-phi-s}
    \phi_s(y)\coloneqq (1-s)\phi_0(y)+s\,\phi_1(y)\,,
\end{equation}
with the rule $(+\infty)+(-\infty)=(-\infty)+(+\infty)=-\infty$ in the right-hand side.

\begin{proposition}\label{prop:NNCC-contact-sets}
    Let $X$ and $Y$ be two arbitrary sets and $c\colon X\times Y\to [-\infty,+\infty]$ an arbitrary function. Given $x_0,x_1,\tilde x\in X$ and $\yb\in Y$ such that $c(x_0,\yb)$, $c(x_1,\yb)$ are finite and given $s\in (0,1)$, the following statements are equivalent.
    \begin{enumerate}[(i)]
        \item\label{enum:NNCC-csubdiff-i} $c(\tilde x,\yb)$ is finite and 
        \[\forall y\in Y,\quad c(\tilde x,\yb)-c(\tilde x,y)\leq (1-s) [c(x_0,\yb)-c(x_0,y)]+ s [c(x_1,\yb)-c(x_1,y)]\,, 
        \]
        with undefined combinations in the right-hand side assigned the value $+\infty$.
        
        \item\label{enum:NNCC-csubdiff-ii} For all $\phi_0,\phi_1\in\widetilde\Phi^c$, $x_0\in\partial^c\phi_0(\yb)$ and $x_1\in \partial^c\phi_1(\yb)$ necessarily implies $\tilde x\in \partial^c\phi_s(\yb)$.
        
        \item\label{enum:NNCC-csubdiff-iii} For all $\phi_0,\phi_1\colon Y\to[-\infty,+\infty]$, $x_0\in\partial^c\phi_0(\yb)$ and $x_1\in \partial^c\phi_1(\yb)$ necessarily implies $\tilde x\in \partial^c\phi_s(\yb)$.
    \end{enumerate}
    Above and below, $\phi_s$ is always defined by \eqref{eq:def-phi-s}. As a direct consequence we have that the following statements are equivalent. 
    \begin{enumerate}[(i),resume]
        \item\label{enum:NNCC-csubdiff-iv} $(X\times Y,c)$ is an NNCC space. 
        \item\label{enum:NNCC-csubdiff-v} $\widetilde\Phi^c$ is convex in the sense that $\phi_0,\phi_1\in\widetilde\Phi^c$ implies $\phi_s\in\widetilde\Phi^c$ for every $0<s<1$. 
    \end{enumerate}
\end{proposition}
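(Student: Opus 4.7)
The plan is to prove the equivalence of \ref*{enum:NNCC-csubdiff-i}, \ref*{enum:NNCC-csubdiff-ii}, \ref*{enum:NNCC-csubdiff-iii} via the chain \ref*{enum:NNCC-csubdiff-i}$\Rightarrow$\ref*{enum:NNCC-csubdiff-iii}$\Rightarrow$\ref*{enum:NNCC-csubdiff-ii}$\Rightarrow$\ref*{enum:NNCC-csubdiff-i}, and then deduce \ref*{enum:NNCC-csubdiff-iv}$\iff$\ref*{enum:NNCC-csubdiff-v} by applying the pointwise equivalence with a well-chosen pair of test functions. The overall strategy mirrors the proof of \Cref{lemma:subdiff-connected}, replacing ``max'' by ``convex combination'' and ``$\Phi^c$'' by ``$\widetilde\Phi^c$''; the key technical difficulty is that the convention $(+\infty)+(-\infty)=+\infty$ used in \ref*{enum:NNCC-csubdiff-i} differs from the hypograph convention $(+\infty)+(-\infty)=-\infty$ used in \eqref{eq:def-phi-s}, so particular care will be needed to verify that these two conventions match up under the sign changes that occur when translating between the cost inequality and the c-subdifferential inequality.

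For \ref*{enum:NNCC-csubdiff-i}$\Rightarrow$\ref*{enum:NNCC-csubdiff-iii}: if $x_i\in\partial^c\phi_i(\yb)$ for $i=0,1$, then $\phi_i(\yb)$ and $c(x_i,\yb)$ are finite and $c(x_i,\yb)-c(x_i,y)\le \phi_i(\yb)-\phi_i(y)$ for every $y\in Y$ (valued in $[-\infty,+\infty]$). Taking the appropriate convex combination of these two inequalities, substituting into \ref*{enum:NNCC-csubdiff-i}, and going through a short case analysis on the four possible infinities of $\phi_0(y)$ and $\phi_1(y)$, one verifies that $c(\tilde x,\yb)-c(\tilde x,y)\le \phi_s(\yb)-\phi_s(y)$ for all $y$, which combined with finiteness of $c(\tilde x,\yb)$ (from \ref*{enum:NNCC-csubdiff-i}) gives $\tilde x\in \partial^c\phi_s(\yb)$. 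The implication \ref*{enum:NNCC-csubdiff-iii}$\Rightarrow$\ref*{enum:NNCC-csubdiff-ii} is immediate since $\widetilde\Phi^c$ is a subclass.

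For \ref*{enum:NNCC-csubdiff-ii}$\Rightarrow$\ref*{enum:NNCC-csubdiff-i}, the central step is to construct explicit witness functions. Set
\[
\phi_0(y)=c(x_0,y)-c(x_0,\yb),\qquad \phi_1(y)=c(x_1,y)-c(x_1,\yb).
\]
Writing $\psi_0(x_0)=c(x_0,\yb)$ and $\psi_0(x)=-\infty$ otherwise (and similarly for $\psi_1$), one checks using the c-transform arithmetic of \Cref{sec:app:infinite} that $\phi_0=\psi_0^c$ and $\phi_1=\psi_1^c$, so both are c-concave. Moreover a direct computation yields $\phi_0^c(x_0)=c(x_0,\yb)$, and therefore at every $y$ with $c(x_0,y)$ finite one has $x_0\in \partial^c\phi_0(y)$; in particular $\phi_0\in\widetilde\Phi^c$ and $x_0\in\partial^c\phi_0(\yb)$, and symmetrically for $\phi_1$. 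Applying \ref*{enum:NNCC-csubdiff-ii} to this pair gives $\tilde x\in\partial^c\phi_s(\yb)$, hence $c(\tilde x,\yb)$ is finite and $c(\tilde x,\yb)-c(\tilde x,y)\le \phi_s(\yb)-\phi_s(y)$ for all $y$. Because $\phi_s(\yb)=0$, a case-by-case check shows that $-\phi_s(y)$ (hypograph convention) coincides with $(1-s)[c(x_0,\yb)-c(x_0,y)]+s[c(x_1,\yb)-c(x_1,y)]$ read with the convention of \ref*{enum:NNCC-csubdiff-i}; this recovers \ref*{enum:NNCC-csubdiff-i} exactly, and this matching of conventions is the most delicate point of the argument.

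The second part \ref*{enum:NNCC-csubdiff-iv}$\iff$\ref*{enum:NNCC-csubdiff-v} follows by globalizing. For \ref*{enum:NNCC-csubdiff-iv}$\Rightarrow$\ref*{enum:NNCC-csubdiff-v}: let $\phi_0,\phi_1\in\widetilde\Phi^c$ and $\yb\in Y$ be such that $\phi_s(\yb)$ is finite; this forces $\phi_0(\yb)$ and $\phi_1(\yb)$ to be finite, so there exist $x_i\in\partial^c\phi_i(\yb)$, and the NNCC assumption produces, for each $s\in(0,1)$, a point $\tilde x$ satisfying \ref*{enum:NNCC-csubdiff-i}; by \ref*{enum:NNCC-csubdiff-i}$\Rightarrow$\ref*{enum:NNCC-csubdiff-iii} we get $\tilde x\in\partial^c\phi_s(\yb)$, showing $\phi_s\in\widetilde\Phi^c$. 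For \ref*{enum:NNCC-csubdiff-v}$\Rightarrow$\ref*{enum:NNCC-csubdiff-iv}: given $x_0,x_1,\yb$ with $c(x_i,\yb)$ finite, use the witness functions $\phi_0,\phi_1$ from the previous paragraph (which are in $\widetilde\Phi^c$ with the prescribed c-subdifferential memberships at $\yb$); convexity of $\widetilde\Phi^c$ gives $\phi_s\in\widetilde\Phi^c$ and finiteness $\phi_s(\yb)=0$ provides a point $\rmx(s)\in\partial^c\phi_s(\yb)$; by \ref*{enum:NNCC-csubdiff-ii}$\Rightarrow$\ref*{enum:NNCC-csubdiff-i} this point satisfies the NNCC inequality at time $s$, and assembling these choices together with $\rmx(0)=x_0$, $\rmx(1)=x_1$ produces the required variational c-segment.
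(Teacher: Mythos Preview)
Your proposal is correct and follows essentially the same approach as the paper: the paper proves \ref*{enum:NNCC-csubdiff-i}$\Rightarrow$\ref*{enum:NNCC-csubdiff-iii} and \ref*{enum:NNCC-csubdiff-ii}$\Rightarrow$\ref*{enum:NNCC-csubdiff-i} using the witness functions $\phi_i(y)=c(x_i,y)$ (yours are the same up to the harmless constant shift $-c(x_i,\yb)$), and treats \ref*{enum:NNCC-csubdiff-iv}$\Leftrightarrow$\ref*{enum:NNCC-csubdiff-v} as a direct consequence, which you spell out. One small wording issue: in your final step for \ref*{enum:NNCC-csubdiff-v}$\Rightarrow$\ref*{enum:NNCC-csubdiff-iv} you invoke ``\ref*{enum:NNCC-csubdiff-ii}$\Rightarrow$\ref*{enum:NNCC-csubdiff-i}'', but what you are actually using is the \emph{content} of that proof (namely that $\tilde x\in\partial^c\phi_s(\yb)$ for the specific witness pair already yields the NNCC inequality), not the implication as stated, since you only know membership for one pair rather than all pairs; this is clear from context but worth phrasing more precisely.
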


In our setting nonnegative cross-curvature is therefore characterized by the convexity of the set $\widetilde\Phi^c$. 
In the case where the cost takes only finite values, \ref*{enum:NNCC-csubdiff-v} can be replaced with convexity of the set 
\[
\Phi^c_0 = \{\phi\colon Y\to\R : \partial^c\phi(y)\neq\emptyset \text{ for every $y\in Y$}\}.
\]
Recall by the discussion after \eqref{eq:condition-nonempty-subdiff} that in particular $\Phi^c_0\subset\Phi^c$. This characterization of NNCC spaces is closer in spirit to the one of Figalli--Kim--McCann. 

\begin{proof}[Proof of \Cref{prop:NNCC-contact-sets}]
    Since the implication from \ref*{enum:NNCC-csubdiff-iii} to \ref*{enum:NNCC-csubdiff-ii} is obvious, we show that \ref*{enum:NNCC-csubdiff-i} implies \ref*{enum:NNCC-csubdiff-iii} and that \ref*{enum:NNCC-csubdiff-ii} implies \ref*{enum:NNCC-csubdiff-i}.

    \ref*{enum:NNCC-csubdiff-i}$\implies$\ref*{enum:NNCC-csubdiff-iii}. Let $\phi_0,\phi_1\colon Y\to[-\infty,+\infty]$ 
    be such that $\phi_0(\yb)$, $\phi_1(\yb)$ are finite and 
    \begin{equation}\label{eq:proof-prop:NNCC-contact-sets-1}
    \forall y\in Y,\quad\left\{\begin{aligned}
    c(x_0,\yb)-c(x_0,y)&\leq \phi_0(\yb)-\phi_0(y)\\
    c(x_1,\yb)-c(x_1,y)&\leq \phi_1(\yb)-\phi_1(y)\,.
    \end{aligned}\right.
    \end{equation}
    The above inequalities may involve infinities but never any undefined combination. We then want to add the first inequality multiplied by $1-s$ to the second inequality multiplied by $s$. If there are no undefined infinite combination we obtain using \ref*{enum:NNCC-csubdiff-i}
    \begin{align}
    \forall y\in Y,\quad c(\tilde x,\yb)-c(\tilde x,y)&\leq (1-s) [c(x_0,\yb)-c(x_0,y)]+ s [c(x_1,\yb)-c(x_1,y)]\label{eq:proof-prop:NNCC-contact-sets-2}\\
    &\leq (1-s)[\phi_0(\yb)-\phi_0(y)]+s\,[\phi_1(\yb)-\phi_1(y)]\\
    &\leq \phi_s(\yb)-\phi_s(y)\,,\label{eq:proof-prop:NNCC-contact-sets-4}
    \end{align}
    since $\phi_0(\yb)$ and $\phi_1(\yb)$ are finite. If the right-hand side of \eqref{eq:proof-prop:NNCC-contact-sets-2} contains an undefined infinite combination, then one of $c(x_0,y)$ or $c(x_1,y)$ must be $-\infty$. Without loss of generality say it is $c(x_0,y)$. Then \eqref{eq:proof-prop:NNCC-contact-sets-1} forces $\phi_0(y)=-\infty$. But then $\phi_s(y)=-\infty$ by construction of $\phi_s$, so that \eqref{eq:proof-prop:NNCC-contact-sets-4} holds. This proves that $\tilde x\in\partial^c\phi_s(\yb)$. 

    \ref*{enum:NNCC-csubdiff-ii}$\implies$\ref*{enum:NNCC-csubdiff-i}. Define $\phi_0(y)=c(x_0,y)$ and $\phi_1(y)=c(x_1,y)$. 
    These two functions belong to $\widetilde\Phi^c$ since for any $y\in Y$ such that $\phi_i(y)$ is finite we have $x_i\in\partial^c\phi_i(y)$ (for $i=0,1$). One may consider for instance the function $\psi_i(x)=0$ at $x=x_i$ and $-\infty$ elsewhere, which is sub-conjugate to $\phi_i$, satisfies $\phi_i=\psi_i^c$ and $\psi_i(x_i)+\phi_i(\yb)=c(x_i,\yb)$. In particular since $c(x_0,\yb),c(x_1,\yb)$ are finite we have that $x_i\in\partial^c\phi_i(\yb)$, $i=0,1$. By \ref*{enum:NNCC-csubdiff-ii} we have $\tilde x\in\partial\phi_s(\yb)$ which means that 
    $c(\tilde x,\yb)$ is finite and 
    $c(\tilde x,\yb)-c(\tilde x,y)\leq \phi_s(\yb)-\phi_s(y)$ for all $y\in Y$. This is precisely \ref*{enum:NNCC-csubdiff-i}. 
\end{proof}

\section{Cross-curvature of the Wasserstein space}\label{sec:sncc_proba}

In this section, we prove that the space of probability measures inherits the NNCC property of its base space when endowed with the corresponding transport cost.
The converse implication is also true.
This converse implication also holds for condition \smtw{},
however condition \smtw{} is not inherited by the space of probability measures in general: we provide a counterexample.

\subsection{Main results}

Throughout this section, we assume that the sets $X$ and $Y$ are Polish spaces and that the cost $c\colon X\times Y\to\R\cup\{+\infty\}$ is a lower semi-continuous function bounded from below.
We denote by $\calP(Z)$ the set of Borel probability measures on $Z$. Given $\mu\in \mathcal{P}(X)$ and $\nu\in\mathcal{P}(Y)$, the optimal transport problem of sending mass from the source $\mu$ to the target $\nu$ according to cost $c$ is given by 
\begin{equation}\label{eq:ot}
    \Tc_c(\mu,\nu)\coloneqq\inf_{\pi\in\Pi(\mu,\nu)} \int_{X\times Y} c(x,y) \, d \pi \,.
\end{equation}
Here the minimization is taken among all the admissible transport plans
\[
\Pi(\mu,\nu)=\left\{ \pi\in\Pc(X\times Y): (\p_1)_\# \pi=\mu, \,(\p_2)_\# \pi=\nu \right\}.
\]
The maps $\p_1\colon X\times Y\to X$ and $\p_2\colon X\times Y\to Y$ denote projections onto the first and second component respectively, and $(\cdot)_\#\pi $ represents the pushforward operation. 
We will refer to $(\calP(X)\times \calP(Y),\Tc_c)$ as a Wasserstein space. In the considered setting, existence of solutions for Problem \eqref{eq:ot} is guaranteed \cite[Theorem 1.7]{santambrogio2015optimal}. A solution $\pi$ that achieves a finite value $\Tc_c(\mu,\nu)$ will be called an optimal coupling of $(\mu,\nu)$. 
As for uniqueness of solutions, it requires further assumptions, see Section \ref{ssec:converse}.

On the Wasserstein space $(\calP(X)\times \calP(Y),\Tc_c)$, a curve $(\mu(s),\nu)$ is a variational c-segment if $\Tc_c(\mu(s),\nu)<+\infty$ and for any $\sigma \in \Pc(Y)$,
\begin{equation}\label{EqSNCCOnMeasures}
    \Tc_c(\mu(s),\nu) - \Tc_c(\mu(s),\sigma)\leq (1-s)[\Tc_c(\mu(0),\nu) - \Tc_c(\mu(0),\sigma)]+s[\Tc_c(\mu(1),\nu) - \Tc_c(\mu(1),\sigma)]\,.
\end{equation}
In general the transport costs $\Tc_c(\cdot,\sigma)$ may take the value $+\infty$ but never $-\infty$ since $\Tc_c$ is bounded from below under our assumptions. In order to systematically construct such paths $\mu(s)$ we propose a procedure that gives mass to variational c-segments.
It is given in the following definition. Here we have $\p_1(x_0,x_1,y)=x_0$, $\p_2(x_0,x_1,y)=x_1$ and $\p_3(x_0,x_1,y)=y$.

\begin{remark}[Notation]For the remainder of this section, all references to measurable functions or sets should be understood as referring to Borel measurability.
\end{remark}

\begin{definition}[Lifted c-segments]\label{def:lifted_csegment}
    Let $(s\in[0,1])\mapsto \mu(s)$ be a path in $\calP(X)$ and let $\nu\in\Pc(Y)$. We say that $s\mapsto (\mu(s),\nu)$ is a \emph{lifted c-segment} from $(X\times Y,c)$ if there exist a measurable set $\Gamma\subset X\times X\times Y,$ a $3$-plan $\gamma_\nu \in \mathcal{P}(X \times X \times Y)$, and a collection of measurable maps $\Lambda_s\colon \Gamma\to X$ ($s\in [0,1]$) such that
    \begin{enumerate}[(i)]                
        \item $\gamma_\nu$ is concentrated on $\Gamma$ (i.e. $\gamma_\nu(\Gamma)=1$), and $\mu(s)=(\Lambda_s)_\#\gamma_\nu$ and $\nu=(\p_3)_\#\gamma_\nu$ ; \label{enum:lcs-gamma}
        
        \item for $\gamma_\nu$-almost every $(x_0,x_1,y)\in\Gamma,$ we have: $\Lambda_0(x_0,x_1,y)=x_0$, $\Lambda_1(x_0,x_1,y)=x_1$ and $s\mapsto (\Lambda_s(x_0,x_1,y),y)$ is a variational c-segment on $(X\times Y,c)$ ; \label{enum:lcs-Lambda}
        
        \item$\pi_0\coloneqq (\p_1,\p_3)_\# \gamma_\nu$ and $\pi_1\coloneqq (\p_2,\p_3)_\# \gamma_\nu$ are respective optimal couplings of $(\mu(0),\nu)$ and $(\mu(1),\nu)$ for the cost $c$.\label{enum:lcs-pi}
    \end{enumerate}
\end{definition}

In other words we assume that we have at our disposal a class of variational c-segments, each depending measurably on its endpoints collected in the set $\Gamma$. The definition is flexible in the choice of $\Gamma$, in particular it does not necessarily require $(X\times Y,c)$ to be an NNCC space. For instance with $\Gamma=\{(x_0,x_1,x_0)\}$ where the base point is taken to match the initial point we may lift geodesics in a PC metric space, see \Cref{prop:geodesics-are-vcs}. After that the variational c-segments are weighted according to $\gamma_\nu$, through their endpoints, and combined into a lifted c-segment. 

The above definition may be straightforwardly adapted to lift \snccconv{}-variational c-segments (respectively, \smtw{}-variational c-segments). This simply requires asking the map $s\mapsto (\Lambda_s(x_0,x_1,y),y)$ to be an \snccconv{}-variational c-segments on $(X\times Y,c)$ (respectively a \smtw{}-variational c-segment). To avoid confusion, we call this curve \snccconv{}-lifted c-segment (respectively, \smtw{}-lifted c-segment). Since this construction is not the main focus of this work but is needed in Section \ref{ssec:converse}, we develop it in Appendix \ref{sec:snccconv-lift-v2}.

Let us now give some conditions that guarantee the existence of a lifted c-segment between $(\mu_0,\nu)$ and $(\mu_1,\nu)$ such that $\Tc_c(\mu_0,\nu)<+\infty$, $\Tc_c(\mu_1,\nu)<+\infty$. First, optimal couplings $\pi_0$ and $\pi_1$ in point \ref*{enum:lcs-pi} always exist under our assumptions. Second by the definition of variational c-segments we must have $c(x_0,\yb)<+\infty$, $c(x_1,\yb)<+\infty$ for any $(x_0,x_1,\yb)\in\Gamma$. The largest possible set $\Gamma$ is thus
\begin{equation}\label{eq:maximal-Gamma}
    \Gamma=\{(x_0,x_1,\yb)\in X\times X\times Y : c(x_0,\yb)<+\infty,\,c(x_1,\yb)<+\infty\}.
\end{equation}
Since $\pi_0$ and $\pi_1$ share a common marginal $\nu$, it is always possible to further couple these into a $3$-plan $\gamma_\nu$ that is concentrated on $\Gamma$, for example by ``gluing'' along $\nu$ \cite[Lemma 5.3.2]{ambrosio2005gradient}. To sum up, we may always find $\gamma_\nu$ satisfying \ref*{enum:lcs-pi}. Let us now consider point \ref*{enum:lcs-Lambda}. It asks that variational c-segments may be obtained through a \emph{measurable} map $\Lambda_s$. The next result shows this added measurability requirement is not an issue.

\begin{lemma}\label{lemma:continuous-cost-measurable-NNCC}
    Let $X$ and $Y$ be Polish spaces and $c\colon X\times Y\to\R\cup\{+\infty\}$ a lower semi-continuous function bounded from below. Define $\Gamma$ by \eqref{eq:maximal-Gamma} and consider a probability measure $\gamma \in \Pc(X\times X \times Y)$ concentrated on $\Gamma$. If $(X\times Y,c)$ is an NNCC space then there exists a collection of \emph{measurable} maps $\Lambda_s\colon \Gamma\to X$ ($s\in [0,1]$) such that for $\gamma$-almost every $(x_0,x_1,y)\in \Gamma$, the path $s\mapsto (\Lambda_s(x_0,x_1,y),y)$ is a variational c-segment from $(x_0,y)$ to $(x_1,y)$.
\end{lemma}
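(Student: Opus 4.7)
The NNCC inequality at time $s$ involves the value $\rmx(s)$ but not $\rmx$ at any other time, so my plan is to construct $\Lambda_s$ separately for each $s\in[0,1]$. The endpoints are immediate: set $\Lambda_0(x_0,x_1,\yb)=x_0$ and $\Lambda_1(x_0,x_1,\yb)=x_1$, both trivially Borel. For $s\in(0,1)$ I introduce the multifunction $M_s\colon\Gamma\rightrightarrows X$ sending $(x_0,x_1,\yb)\in\Gamma$ to the set of $x\in X$ such that $c(x,\yb)<+\infty$ and
\[
c(x,\yb)-c(x,y)\leq(1-s)[c(x_0,\yb)-c(x_0,y)]+s[c(x_1,\yb)-c(x_1,y)]
\]
holds for every $y\in Y$, with the stated convention on undefined combinations. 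The NNCC hypothesis on $(X\times Y,c)$ is exactly that $M_s(x_0,x_1,\yb)\neq\emptyset$ for every $(x_0,x_1,\yb)\in\Gamma$, so everything reduces to producing a $\gamma$-measurable selector of $M_s$.

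The heart of the argument will be a descriptive-set-theoretic analysis of the graph
\[
\mathrm{Gr}(M_s)=\{(x_0,x_1,\yb,x)\in\Gamma\times X:x\in M_s(x_0,x_1,\yb)\}\,.
\]
Lower semi-continuity of $c$ makes it Borel, so for each fixed quintuple $(x_0,x_1,\yb,x,y)$ the NNCC inequality defines a Borel subset of $X^3\times Y^2$ (one case-splits on which of $c(x,y), c(x_0,y), c(x_1,y)$ is infinite and checks that each piece is a Borel condition). The universal quantifier ``$\forall y\in Y$'' then makes $\mathrm{Gr}(M_s)$ the complement of the projection of a Borel set onto $X^3\times Y$; since such projections are analytic in the Polish category, $\mathrm{Gr}(M_s)$ is co-analytic.

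Since $\pi_\Gamma(\mathrm{Gr}(M_s))=\Gamma$ by the NNCC hypothesis, I then invoke Kond\^o's uniformization theorem for co-analytic sets (or, equivalently, Aumann's measurable selection theorem applied to the $\gamma$-completion of the Borel $\sigma$-algebra, using that co-analytic sets are universally measurable) to extract a universally measurable, hence $\gamma$-measurable, selector $\Lambda_s\colon\Gamma\to X$ with $\Lambda_s(x_0,x_1,\yb)\in M_s(x_0,x_1,\yb)$ for all $(x_0,x_1,\yb)\in\Gamma$. Assembling these $\Lambda_s$ over $s\in[0,1]$ yields the required family.

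I expect the main obstacle to be the descriptive-set-theoretic step. Because $c$ is only lower semi-continuous and may jump to $+\infty$, the values $M_s(x_0,x_1,\yb)$ need not be closed, which blocks a direct application of the Kuratowski--Ryll-Nardzewski selection theorem; and the universal quantifier over the uncountable Polish space $Y$ pushes $\mathrm{Gr}(M_s)$ from the Borel class into the co-analytic class. These are exactly the complications that a projective-set uniformization theorem is designed to bypass.
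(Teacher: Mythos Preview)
Your argument is correct. Both you and the paper fix $s\in(0,1)$, introduce the same multifunction $M_s$, and apply a measurable selection theorem; the difference lies in how the graph is shown to be amenable to selection. The paper approximates $c$ from below by continuous costs $c_n$, uses Egorov's theorem (with respect to the marginals of $\gamma$) to get uniform convergence on large compact sets $Z_k$, and writes $\mathrm{Gr}(M_s)$, up to a $\gamma$-null set, as a countable union/intersection of \emph{closed} sets built from relaxed NNCC inequalities for the $c_n$; it then invokes the Souslin-graph selection theorem \cite[Theorem~6.9.13]{bogachev2007measure}. Your route bypasses this approximation entirely: you observe that lower semi-continuity already makes $c$ Borel, so the pointwise NNCC inequality cuts out a Borel set in $X^3\times Y^2$, and the universal quantifier over $y\in Y$ makes $\mathrm{Gr}(M_s)$ co-analytic; Kond\^o uniformization (or universal measurability plus von Neumann--Aumann) then gives the selector directly. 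Your approach is shorter and conceptually cleaner, at the price of a heavier descriptive-set-theoretic hammer; the paper's approach is more hands-on and stays within the Borel/Souslin world, but the Egorov bookkeeping is delicate. Either way, the key observation you both share is that the NNCC inequality at time $s$ decouples from other times, so no joint measurability in $s$ is needed.
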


See Appendix \ref{sec:appendix-proofs} for the proof.
All the considerations above prove the existence of lifted c-segments.

\begin{lemma}[Existence of lifted c-segments] \label{lemma:existence-lifted}
    Let $(X\times Y,c)$ be an NNCC space, where $X,Y$ are two Polish spaces and $c$ is lower semi-continuous and bounded from below.
    Then for any $\mu_0,\mu_1\in\Pc(X)$ and $\nu\in\Pc(Y)$ such that $\Tc_c(\mu_0,\nu)<+\infty$ and $\Tc_c(\mu_1,\nu)<+\infty$, there exists a lifted c-segment from $(X\times Y,c)$ between $(\mu_0,\nu)$ and $(\mu_1,\nu)$. 
\end{lemma}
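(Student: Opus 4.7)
The plan is to construct the triple $(\Gamma,\gamma_\nu,\Lambda_s)$ required by \Cref{def:lifted_csegment} essentially by assembling the pieces already prepared in the paragraphs just before the statement, namely: (a) existence of optimal couplings for $\Tc_c$, (b) the gluing lemma to merge these optimal couplings along their shared marginal $\nu$, and (c) the measurable selection of variational c-segments provided by \Cref{lemma:continuous-cost-measurable-NNCC}. The maximal choice of $\Gamma$ from \eqref{eq:maximal-Gamma} will be the right domain.

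First I would invoke classical optimal transport theory: since $X,Y$ are Polish and $c$ is lower semi-continuous and bounded from below, and since $\Tc_c(\mu_0,\nu)$ and $\Tc_c(\mu_1,\nu)$ are finite, there exist optimal couplings $\pi_0\in\Pi(\mu_0,\nu)$ and $\pi_1\in\Pi(\mu_1,\nu)$ attaining these finite values (cf. \cite[Theorem 1.7]{santambrogio2015optimal}). Since $c$ is bounded from below and $\int c\,d\pi_i<+\infty$, we have $c(x,y)<+\infty$ for $\pi_i$-almost every $(x,y)$, so $\pi_0$ and $\pi_1$ are concentrated on $\{c<+\infty\}$.

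Next I would apply the gluing lemma \cite[Lemma 5.3.2]{ambrosio2005gradient} to $\pi_0$ and $\pi_1$ along their common marginal $\nu$ on $Y$: this yields a $3$-plan $\gamma_\nu\in\Pc(X\times X\times Y)$ with $(\p_1,\p_3)_\#\gamma_\nu=\pi_0$ and $(\p_2,\p_3)_\#\gamma_\nu=\pi_1$, which already takes care of requirement \ref*{enum:lcs-pi}. From the concentration property of $\pi_0$ and $\pi_1$ we get that $c(x_0,y)<+\infty$ and $c(x_1,y)<+\infty$ for $\gamma_\nu$-almost every $(x_0,x_1,y)$; hence, up to restricting to a measurable subset of full $\gamma_\nu$-measure, we may assume $\gamma_\nu$ is concentrated on the set $\Gamma$ defined by \eqref{eq:maximal-Gamma}, which is itself measurable because $c$ is lower semi-continuous.

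The key remaining step, and the one that would be the main obstacle without the preparation in the excerpt, is producing the maps $\Lambda_s$ in a jointly measurable way: for each triple $(x_0,x_1,y)\in\Gamma$ the NNCC axiom guarantees \emph{some} variational c-segment, but there is an uncountable choice problem here. This is precisely resolved by \Cref{lemma:continuous-cost-measurable-NNCC} applied to $\gamma=\gamma_\nu$: it produces measurable maps $\Lambda_s\colon \Gamma\to X$ such that, for $\gamma_\nu$-a.e.\ $(x_0,x_1,y)$, the path $s\mapsto(\Lambda_s(x_0,x_1,y),y)$ is a variational c-segment from $(x_0,y)$ to $(x_1,y)$, which yields requirement \ref*{enum:lcs-Lambda} together with the boundary conditions $\Lambda_0=\p_1$, $\Lambda_1=\p_2$ (modified on a null set if necessary).

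Finally I would define $\mu(s)\coloneqq(\Lambda_s)_\#\gamma_\nu$, which is a well-defined probability measure on $X$ thanks to measurability of $\Lambda_s$, and verify requirement \ref*{enum:lcs-gamma}: $\mu(0)=(\p_1)_\#\gamma_\nu=\mu_0$ and $\mu(1)=(\p_2)_\#\gamma_\nu=\mu_1$ by the marginal property of $\gamma_\nu$, while $\nu=(\p_3)_\#\gamma_\nu$ also holds. This produces the desired lifted c-segment and closes the proof.
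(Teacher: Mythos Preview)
Your proposal is correct and follows essentially the same approach as the paper: the paper's ``proof'' is precisely the discussion in the paragraphs preceding the lemma (existence of optimal couplings, gluing along $\nu$ to obtain $\gamma_\nu$ concentrated on the maximal $\Gamma$ of \eqref{eq:maximal-Gamma}, and the measurable selection from \Cref{lemma:continuous-cost-measurable-NNCC}), and you have reproduced these steps accurately with the appropriate justifications.
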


Let us give a few words on the lack of uniqueness of lifted c-segments. In general, there are three sources of non-uniqueness:  the first one is in the choice of the optimal transport plans $\pi_i$ in \Cref{def:lifted_csegment}\ref*{enum:lcs-pi}; the second one is in how $\pi_0$ and $\pi_1$ are combined into a $3$-plan $\gamma_\nu$ (see Example \ref{ex:MTW_counterexample}); the last one is in the measurable selection $\Lambda_s$.
If optimal transport plans $\pi_i$ for $(\mu_i,\nu)$ are unique and induced by $\nu$-a.e. unique maps $T_i:Y\rightarrow X$, that is $\pi_i=[y\mapsto (T_i(y),y)]_\#\nu$, then the $3$-plan $\gamma_\nu$ is also unique and is given by $[y\mapsto(T_0(y),T_1(y),y)]_\#\nu$.
Uniqueness of the measurable selection requires instead uniqueness of variational c-segments on the base space.
This condition may be restrictive in general.
In Lemma \ref{lem:Tc-var-c-seg_nuac}, we will consider a setting where this holds true and lifted c-segments (and more generally variational c-segments in the Wasserstein space) are unique.

The main reason for us to introduce lifted c-segments is that they are always variational c-segments. To properly state this result, let us now introduce an assumption that ensures finiteness of certain integrals:

\begin{assumption}\label{hyp:c-bounded-above}
    The function $x\mapsto\inf_{y\in Y}c(x,y)$ is uniformly bounded above on $X$. 
\end{assumption}
\noindent
This assumption is often satisfied in practice, for instance for nonnegative costs on $X \times X$ that vanish along the diagonal. We then have:

\begin{proposition}\label{prop:var_lifted_c-seg}
    Let $X$ and $Y$ be Polish spaces and $c\colon X\times Y\to\R\cup\{+\infty\}$ a lower semi-continuous function bounded from below satisfying \Cref{hyp:c-bounded-above}.
    If $s\mapsto(\mu(s),\nu)$ is a lifted c-segment from $(X\times Y,c)$, then it is a variational c-segment on $(\calP(X)\times \calP(Y),\Tc_c)$.
\end{proposition}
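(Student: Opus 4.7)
The strategy is to convert the pointwise NNCC inequality of each variational c-segment $s\mapsto(\Lambda_s(x_0,x_1,y),y)$ into the Wasserstein NNCC inequality \eqref{EqSNCCOnMeasures} by integrating it against a carefully glued four-marginal plan. The plan proceeds in two main stages: a preliminary finiteness step using \Cref{hyp:c-bounded-above}, and a gluing-and-integration step using the optimality properties of the ingredients of a lifted c-segment.

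First, I would establish $\Tc_c(\mu(s),\nu)<+\infty$ for every $s\in[0,1]$. Let $M\coloneqq\sup_x\inf_y c(x,y)<+\infty$ from \Cref{hyp:c-bounded-above} and let $M'\in\R$ satisfy $c\geq-M'$. By a Borel selection argument there exists $T\colon X\to Y$ with $c(x,T(x))\leq M+1$. Applying the pointwise NNCC inequality of the variational c-segment $(\Lambda_s(x_0,x_1,y),y)$ at the auxiliary point $y'=T(\Lambda_s(x_0,x_1,y))$ yields
$$c(\Lambda_s(x_0,x_1,y),y)\leq (M+1+M')+(1-s)\,c(x_0,y)+s\,c(x_1,y)$$
for $\gamma_\nu$-a.e. $(x_0,x_1,y)$. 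Since $(\Lambda_s,\p_3)_\#\gamma_\nu\in\Pi(\mu(s),\nu)$, integrating against $\gamma_\nu$ and using the optimality of $\pi_0,\pi_1$ gives $\Tc_c(\mu(s),\nu)\leq (1-s)\Tc_c(\mu(0),\nu)+s\Tc_c(\mu(1),\nu)+(M+M'+1)<+\infty$.

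For the Wasserstein NNCC inequality, fix $s\in(0,1)$ and $\sigma\in\Pc(Y)$. If $\Tc_c(\mu(s),\sigma)=+\infty$, then the left-hand side of \eqref{EqSNCCOnMeasures} equals $-\infty$ and the inequality is trivial. Otherwise pick an optimal coupling $\pi^s_\sigma\in\Pi(\mu(s),\sigma)$, disintegrate $\pi^s_\sigma=\mu(s)(dx)\,\eta(x,dy')$, and define the four-marginal plan
$$\bar\gamma(dx_0,dx_1,dy,dy')\coloneqq\gamma_\nu(dx_0,dx_1,dy)\,\eta(\Lambda_s(x_0,x_1,y),dy')\in\Pc(X\times X\times Y\times Y).$$
Using $\mu(s)=(\Lambda_s)_\#\gamma_\nu$ one checks: (i) $(\p_1,\p_3)_\#\bar\gamma=\pi_0$ and $(\p_2,\p_3)_\#\bar\gamma=\pi_1$ remain the given optimal couplings of $(\mu(0),\nu)$ and $(\mu(1),\nu)$; (ii) $(\Lambda_s\circ\p_{1,2,3},\p_4)_\#\bar\gamma=\pi^s_\sigma$, which is optimal for $\Tc_c(\mu(s),\sigma)$; (iii) $(\p_1,\p_4)_\#\bar\gamma\in\Pi(\mu(0),\sigma)$ and $(\p_2,\p_4)_\#\bar\gamma\in\Pi(\mu(1),\sigma)$.

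Finally I would integrate the rearranged pointwise NNCC inequality
$$c(\Lambda_s,y)+(1-s)c(x_0,y')+sc(x_1,y')\leq c(\Lambda_s,y')+(1-s)c(x_0,y)+sc(x_1,y)$$
against $\bar\gamma$, working with the shifted nonnegative cost $c+M'$ to sidestep any $\pm\infty$ ambiguity. Properties (i) and (ii) make the right-hand side equal to $\Tc_c(\mu(s),\sigma)+(1-s)\Tc_c(\mu(0),\nu)+s\Tc_c(\mu(1),\nu)$, which is finite; since $c\geq-M'$ this forces each of the three left-hand integrals to be finite as well, and by property (iii) together with $(\Lambda_s,\p_3)_\#\gamma_\nu\in\Pi(\mu(s),\nu)$ they are bounded below by $\Tc_c(\mu(s),\nu)$, $\Tc_c(\mu(0),\sigma)$ and $\Tc_c(\mu(1),\sigma)$ respectively. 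Rearranging produces \eqref{EqSNCCOnMeasures}. The main technical obstacle is keeping the integration meaningful when some of the transport costs between $\mu(i)$ and $\sigma$ may be infinite; the case split on $\Tc_c(\mu(s),\sigma)$ together with the nonnegativity shift handle this, and the crucial structural point is that the optimality of $\pi^s_\sigma$ is precisely what makes all relevant integrals simultaneously finite.
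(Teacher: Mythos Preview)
Your proof is correct and follows essentially the same strategy as the paper: build a four-marginal plan that couples $\gamma_\nu$ with an optimal plan $\pi^s_\sigma\in\Pi(\mu(s),\sigma)$ along the image of $\Lambda_s$, then integrate the pointwise NNCC inequality and use optimality. Your explicit disintegration $\bar\gamma=\gamma_\nu\otimes\eta(\Lambda_s(\cdot),dy')$ is exactly the construction the paper packages as its ``coupling extension'' \Cref{lemma:coupling-extension}. Two minor presentational differences: you prove $\Tc_c(\mu(s),\nu)<+\infty$ as a separate preliminary step (the paper recovers it as a byproduct by first choosing $\sigma=T_\#\mu(s)$), and you rearrange the NNCC inequality into two nonnegative sums before integrating, which handles the $+\infty$ bookkeeping a touch more cleanly than the paper's difference form.
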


Before proving \Cref{prop:var_lifted_c-seg}, we state the main result of this section which is a direct consequence of \Cref{lemma:existence-lifted} and \Cref{prop:var_lifted_c-seg}.

\begin{theorem}\label{ThMainTheorem}
    Let $X$ and $Y$ be Polish spaces and $c\colon X\times Y\to\R\cup\{+\infty\}$ a lower semi-continuous cost function bounded from below satisfying \Cref{hyp:c-bounded-above}. Suppose that $(X\times Y,c)$ is an NNCC space.
    Then, the Wasserstein space $(\Pc(X)\times\Pc(Y),\Tc_c)$ is an NNCC space.
\end{theorem}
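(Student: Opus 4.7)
The statement will follow as a direct consequence of \Cref{lemma:existence-lifted} and \Cref{prop:var_lifted_c-seg}: the lemma will produce a lifted c-segment $s\mapsto(\mu(s),\nu)$ between any two $(\mu_0,\nu),(\mu_1,\nu)\in\Pc(X)\times\Pc(Y)$ with $\Tc_c(\mu_i,\nu)<+\infty$, and the proposition will assert that any such lifted c-segment is already a variational c-segment on $(\Pc(X)\times\Pc(Y),\Tc_c)$. Together they supply the curve required by \Cref{def:nncc-space}, so my plan is to concentrate on \Cref{prop:var_lifted_c-seg}, which carries the essential content.

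For \Cref{prop:var_lifted_c-seg}, I would fix $\sigma\in\Pc(Y)$ and $s\in(0,1)$ and aim at the inequality \eqref{EqSNCCOnMeasures}. The idea is to integrate the pointwise NNCC inequality available on each fiber of $\gamma_\nu$ against a carefully chosen kernel on the $\sigma$-side. Concretely, I would take an optimal plan $\tilde\pi\in\Pi(\mu(s),\sigma)$ and disintegrate it as $\tilde\pi(dx\,dy')=\mu(s)(dx)\otimes K(x,dy')$. Using $\mu(s)=(\Lambda_s)_\#\gamma_\nu$ from \Cref{def:lifted_csegment}, define the candidate couplings
\[
    \pi_0^\sigma\coloneqq\int_\Gamma \delta_{x_0}\otimes K(\Lambda_s(x_0,x_1,y),\cdot)\,d\gamma_\nu(x_0,x_1,y),
\]
and symmetrically $\pi_1^\sigma$ with $\delta_{x_1}$. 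A direct marginal check gives $\pi_i^\sigma\in\Pi(\mu(i),\sigma)$, hence $\Tc_c(\mu(i),\sigma)\leq\int c\,d\pi_i^\sigma$. Integrating the pointwise variational c-segment inequality (valid for $\gamma_\nu$-almost every $(x_0,x_1,y)\in\Gamma$ and every $y'\in Y$) against $K(\Lambda_s(x_0,x_1,y),dy')\otimes\gamma_\nu(dx_0\,dx_1\,dy)$ and collecting terms should yield
\[
    \int c(\Lambda_s,y)\,d\gamma_\nu-\Tc_c(\mu(s),\sigma)\leq(1-s)\Bigl[\Tc_c(\mu(0),\nu)-\textstyle\int c\,d\pi_0^\sigma\Bigr]+s\Bigl[\Tc_c(\mu(1),\nu)-\textstyle\int c\,d\pi_1^\sigma\Bigr],
\]
where optimality of $\pi_0,\pi_1$ from \Cref{def:lifted_csegment}\ref*{enum:lcs-pi} turns each $\int c(x_i,y)\,d\gamma_\nu$ into $\Tc_c(\mu(i),\nu)$. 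Finally, bounding the left-hand side from below by $\Tc_c(\mu(s),\nu)-\Tc_c(\mu(s),\sigma)$ (since $(\Lambda_s,\p_3)_\#\gamma_\nu\in\Pi(\mu(s),\nu)$) and bounding the right-hand side from above via $\Tc_c(\mu(i),\sigma)\leq\int c\,d\pi_i^\sigma$ will produce exactly \eqref{EqSNCCOnMeasures}; all four inequality directions conveniently align.

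The main obstacle I expect is book-keeping for $+\infty$ values: the kernel $K$ may charge points $y'$ where $c(x_0,y')$ or $c(x_1,y')$ is infinite, so one must check that no undefined combinations $(+\infty)-(+\infty)$ arise in the integrated inequality, and must justify interchanging integrations. This is where \Cref{hyp:c-bounded-above} should come in, by keeping enough natural transport costs finite and providing a uniform upper bound against which the lower semicontinuous integrands can be split into nonnegative parts. For \Cref{lemma:existence-lifted}, my plan is to take the maximal $\Gamma$ of \eqref{eq:maximal-Gamma}, glue optimal couplings $\pi_0,\pi_1$ along the common marginal $\nu$ into a $3$-plan $\gamma_\nu$ concentrated on $\Gamma$, and produce the measurable selection $\Lambda_s$ via \Cref{lemma:continuous-cost-measurable-NNCC}; the measurable selection step is the one requiring nontrivial measure-theoretic input, handled in the appendix.
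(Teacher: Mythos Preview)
Your proposal is correct and follows essentially the same route as the paper: both integrate the pointwise NNCC inequality against a $4$-plan on $X\times X\times Y\times Y$ obtained by coupling $\gamma_\nu$ with an optimal plan $\tilde\pi\in\Pi(\mu(s),\sigma)$, then use suboptimality of the induced marginals on each side. The only presentational difference is that the paper packages your disintegration-kernel step as a separate ``coupling extension'' lemma (\Cref{lemma:coupling-extension}) proved via gluing, whereas you write the kernel $K$ explicitly; the resulting $\gamma^4$ is the same measure, and the handling of $+\infty$ values and of \Cref{hyp:c-bounded-above} matches what the paper does.
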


Let us now prove \Cref{prop:var_lifted_c-seg}. We start with a simple but important coupling result, whose proof is deferred to \Cref{AppendixOtherProofs}.

\begin{lemma}[Coupling extension]\label{lemma:coupling-extension}
    Let $X,Y,\Gamma$ be Polish spaces, $\mu\in\calP(X)$, $\sigma\in\calP(Y)$ and $\gamma\in \Pc(\Gamma)$. Let $\pi\in\Pi(\mu,\sigma)$ and  $\Lambda\colon \Gamma\to X$ be a measurable map such that $\Lambda_\#\gamma=\mu$. Then, there exists $\tilde\gamma\in\Pi(\gamma,\sigma)$ such that $[(w,y)\mapsto(\Lambda(w),y)]_\#\tilde\gamma=\pi$.
\end{lemma}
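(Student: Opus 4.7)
The statement is a standard gluing/disintegration result, so the plan is to build $\tilde\gamma$ explicitly via disintegration of $\pi$ along its first marginal $\mu$. Since $X$ and $Y$ are Polish spaces, the disintegration theorem provides a $\mu$-a.e.\ unique Borel family $\{k_x\}_{x\in X}$ of probability measures on $Y$ such that
\[
d\pi(x,y) = dk_x(y)\,d\mu(x).
\]
The idea is to pull this kernel back to $\Gamma$ through $\Lambda$ and define
\[
d\tilde\gamma(w,y) = dk_{\Lambda(w)}(y)\,d\gamma(w).
\]
The map $w\mapsto k_{\Lambda(w)}$ is measurable as a composition of the measurable map $\Lambda$ with the Borel kernel $x\mapsto k_x$, so $\tilde\gamma$ is a well-defined Borel probability measure on $\Gamma\times Y$.

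Then I would verify the three required properties in turn. First, the $\Gamma$-marginal of $\tilde\gamma$ is $\gamma$ since each $k_{\Lambda(w)}$ has unit mass. Second, for any Borel $B\subset Y$, using $\Lambda_\#\gamma=\mu$,
\[
\tilde\gamma(\Gamma\times B) = \int_\Gamma k_{\Lambda(w)}(B)\,d\gamma(w) = \int_X k_x(B)\,d\mu(x) = \pi(X\times B) = \sigma(B),
\]
so the $Y$-marginal is $\sigma$, giving $\tilde\gamma\in\Pi(\gamma,\sigma)$. Third, denoting $F(w,y)=(\Lambda(w),y)$, for Borel $A\subset X$ and $B\subset Y,$
\[
(F_\#\tilde\gamma)(A\times B) = \int_{\Lambda^{-1}(A)} k_{\Lambda(w)}(B)\,d\gamma(w) = \int_A k_x(B)\,d\mu(x) = \pi(A\times B),
\]
again by $\Lambda_\#\gamma=\mu$; hence $F_\#\tilde\gamma=\pi$ on the generating rectangles, and therefore everywhere.

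Alternatively, one could phrase the same argument via the gluing lemma \cite[Lemma 5.3.2]{ambrosio2005gradient}: the coupling $\alpha \coloneqq (\mathrm{Id},\Lambda)_\#\gamma\in\Pi(\gamma,\mu)$ and the coupling $\pi\in\Pi(\mu,\sigma)$ share the common marginal $\mu$, so they can be glued into a measure $\eta$ on $\Gamma\times X\times Y$ whose projections onto $\Gamma\times X$ and $X\times Y$ are $\alpha$ and $\pi$ respectively; then $\tilde\gamma\coloneqq(\mathrm{p}_{1,3})_\#\eta$ is the desired measure (note $\eta$ is concentrated on $\{x=\Lambda(w)\}$ since $\alpha$ is). There is no real obstacle here: the only substantive ingredient is the existence of a measurable disintegration kernel, which is classical on Polish spaces. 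The measurability of $w\mapsto k_{\Lambda(w)}$ (or equivalently, the applicability of the gluing lemma) is the only point that warrants a brief mention in the write-up.
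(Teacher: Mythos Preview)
Your proof is correct, and your alternative via the gluing lemma is precisely the route the paper takes: it forms the coupling $\tilde\pi=(\Lambda,\Id)_\#\gamma\in\Pi(\mu,\gamma)$, glues it with $\pi$ along $X$ into a $3$-plan $\gamma^3$ on $X\times Y\times\Gamma$, observes that $\gamma^3$ is concentrated on $\{x=\Lambda(w)\}$, and projects to $\tilde\gamma=(\p_3,\p_2)_\#\gamma^3$. Your primary disintegration argument is simply a more explicit unpacking of the same construction (the gluing lemma itself being proved by disintegration), so there is no substantive difference.
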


In other words, if $\mu$ is a pushforward measure obtained from another measure $\gamma$, and given another measure $\sigma$, any coupling $\pi$ of $(\mu,\sigma)$ can be seen as a coupling of $(\gamma,\sigma)$.

\begin{proof}[Proof of \Cref{prop:var_lifted_c-seg}]
    Let $s\mapsto(\mu(s),\nu)$ be a lifted c-segment induced by $(\Gamma,\Lambda_s,\gamma_\nu)$, as in \Cref{def:lifted_csegment}. Set $\mu_0=\mu(0)$, $\mu_1=\mu(1)$ and define $\pi(s)=[(x_0,x_1,y)\mapsto (\Lambda_s(x_0,x_1,y),y)]_\#\gamma_\nu$. Note that $\pi(s)$ is a coupling of $(\mu(s),\nu)$ and that $\pi(0)=(\p_1,\p_3)_\#\gamma_\nu$ and $\pi(1)=(\p_2,\p_3)_\#\gamma_\nu$ are optimal couplings of $(\mu_0,\nu)$ and $(\mu_1,\nu)$ respectively: in particular $\int c\,d\pi(0)$ and $\int c\,d\pi(1)$ are finite. 

    Fix $s\in [0,1]$ for the remainder of the proof. 
    We want to prove that $\Tc_c(\mu(s),\nu)<+\infty$ and that for every $\sigma\in\Pc(Y)$,
    \begin{equation*}\label{eq:proof-prop:var_lifted_c-seg-0}
    \Tc_c(\mu(s),\nu)-\Tc_c(\mu(s),\sigma) \leq (1-s)\Big(\Tc_c(\mu_0,\nu)-\Tc_c(\mu_0,\sigma)\Big)+s\Big(\Tc_c(\mu_1,\nu)-\Tc_c(\mu_1,\sigma)\Big).
    \end{equation*}
    Let $\sigma\in\calP(Y)$ such that $\Tc_c(\mu(s),\sigma)<+\infty$. Note that such a $\sigma$ exists by \Cref{hyp:c-bounded-above}: taking a map $T\colon X\to Y$ such that $c(x,T(x))\leq \inf_{y\in Y}c(x,y)+1$ and choosing it measurable (feasible by lower semi-continuity of $c$), then $\sigma\coloneqq T_\#\mu(s)$ satisfies $\Tc_c(\mu(s),\sigma)<+\infty$ by \Cref{hyp:c-bounded-above}. Finally let $\tilde\pi(s)$ be an optimal coupling of $(\mu(s),\sigma)$.
    Since $\mu(s)=(\Lambda_s)_\#\gamma_\nu$, we use \Cref{lemma:coupling-extension} to find $\gamma^4\in\calP(\Gamma\times Y)$ which we view as an element of $\calP(X\times X\times Y\times Y)$ (a $4$-plan) such that $(\p_1,\p_2,\p_3)_\#\gamma^4=\gamma_\nu$, $(\p_4)_\#\gamma^4=\sigma$ and such that 
    \begin{equation}\label{eq:proof-prop:var_lifted_c-seg-1}
        [(x_0,x_1,y,z)\mapsto (\Lambda_s(x_0,x_1,y),z)]_\#\gamma^4=\tilde\pi(s)\,.
    \end{equation}
    In case $\Gamma$ is not itself a Polish space it may always be extended to a Polish subspace of $X\times X\times Y$ and the map $\Lambda_s$ extended measurably since only its action on a set where $\gamma_\nu$ concentrates is relevant. The NNCC inequality gives for every $(x_0,x_1,y)\in\Gamma$ and $z\in Y$,
    \begin{equation}\label{eq:proof-prop:var_lifted_c-seg-11}
    c(\Lambda_s(x_0,x_1,y),y)-c(\Lambda_s(x_0,x_1,y),z)\leq (1-s)[c(x_0,y)-c(x_0,z)]+s[c(x_1,y)-c(x_1,z)]\,.
    \end{equation}    
    Since the cost function is bounded from below and only differences of costs appear in \eqref{eq:proof-prop:var_lifted_c-seg-11} we may assume from now on that $c\geq 0$. 
    We may then integrate \eqref{eq:proof-prop:var_lifted_c-seg-11} against $\gamma^4(dx_0,dx_1,dy,dz)$ (which is concentrated on $\Gamma\times Y$) and obtain 
    \begin{equation}\label{eq:proof-prop:var_lifted_c-seg-12}
    \int_{X\times Y}c\,d\pi(s)-\int_{X\times Y} c\,d\tilde\pi(s)\leq (1-s)\Big(\int_{X\times Y}c\,d\pi(0)-\int_{X\times Y}c\,d\tilde\pi(0)\Big)+s\Big(\int_{X\times Y}c\,d\pi(1)-\int_{X\times Y}c\,d\tilde\pi(1)\Big)\,.
    \end{equation}
    Here $\int_{X\times Y}c\,d\pi(0)$ and $\int_{X\times Y}c\,d\pi(1)$ are finite by definition of a lifted c-segment and $\int_{X\times Y}c\,d\tilde\pi(s)$ is finite by construction. Therefore \eqref{eq:proof-prop:var_lifted_c-seg-12} is well-defined. Moreover since $\pi(0)$, $\pi(1)$ and $\tilde\pi(s)$ are optimal \eqref{eq:proof-prop:var_lifted_c-seg-12} implies 
    \begin{equation}\label{eq:proof-prop:var_lifted_c-seg-2}
    \int_{X\times Y}c\,d\pi(s)-\Tc_c(\mu(s),\sigma)\leq (1-s)\Big(\Tc_c(\mu_0,\nu)-\Tc_c(\mu_0,\sigma)\Big)+s\Big(\Tc_c(\mu_1,\nu)-\Tc_c(\mu_1,\sigma)\Big).
    \end{equation}
    This gives us the desired inequality,
    \begin{equation}\label{eq:proof-prop:var_lifted_c-seg-3}
        \Tc_c(\mu(s),\nu)-\Tc_c(\mu(s),\sigma) \leq (1-s)\Big(\Tc_c(\mu_0,\nu)-\Tc_c(\mu_0,\sigma)\Big)+s\Big(\Tc_c(\mu_1,\nu)-\Tc_c(\mu_1,\sigma)\Big),
    \end{equation}
    together with finiteness of $\Tc_c(\mu(s),\nu)$. In conclusion we have established \eqref{eq:proof-prop:var_lifted_c-seg-3} when $\Tc_c(\mu(s),\sigma)$ is finite. Since \eqref{eq:proof-prop:var_lifted_c-seg-3} automatically holds when $\Tc_c(\mu(s),\sigma)=+\infty$, this finishes the proof. 
\end{proof}

\begin{remark}\label{rmk:pis-optimal}
    The proof of \Cref{prop:var_lifted_c-seg} also shows that the constructed plan $\pi(s)$ is an optimal coupling of the variational c-segment $(\mu(s),\nu)$, for each $s\in[0,1]$. This follows from taking $\sigma=\nu$ and noting that the right-hand side of \eqref{eq:proof-prop:var_lifted_c-seg-2} vanishes, so that $\int_{X\times Y}c\,d\pi(s)\leq \Tc_c(\mu(s),\nu)$. Therefore
    \[
    \Tc_c(\mu(s),\nu)=\int_{X\times Y}c\,d\pi(s)=\int_{X\times X\times Y} c(\Lambda_s(x_0,x_1,y),y)\,d\gamma_\nu(x_0,x_1,y)<+\infty\,.
    \]
\end{remark}

\begin{remark}
    If $X,Y$ are Polish spaces and $c$ is continuous and bounded, then $\Pc(X)$ and $\Pc(Y)$ are Polish spaces and the transport cost $\Tc_c$ is continuous with respect to the narrow topology and bounded. Then, thanks to Lemma \ref{lemma:continuous-cost-measurable-NNCC}, one can iterate the construction of lifted c-segments and lift the NNCC property from the Wasserstein space to the Wasserstein space on the Wasserstein space itself (and so on).
\end{remark}

The proof of Proposition \ref{prop:var_lifted_c-seg} can be easily adapted to show that whenever the base space $(X\times Y,c)$ satisfies the stronger condition \snccconv{}, this is inherited as well by the corresponding Wasserstein space $(\Pc(X)\times\Pc(Y),\Tc_c)$. 
However, this requires a stronger definition of lift which in turn requires stronger hypotheses on the base space.
For better readability, we state the following theorem here and postpone its proof to Appendix \ref{sec:snccconv-lift-v2}.

\begin{theorem}\label{thm:nncc-conv-lift}
    Let $(X\times Y,c)$ satisfy condition \snccconv{} for $X,Y$ two Polish spaces and $c$ a continuous function bounded from below satisfying Assumption \ref{hyp:c-bounded-above}.
    Suppose also that \snccconv{}-variational c-segments are continuous curves on $(0,1)$ with limits at $0$ and $1$.
    Then, the Wasserstein space $(\Pc(X)\times\Pc(Y),\Tc_c)$ satisfies condition \snccconv{} as well.
\end{theorem}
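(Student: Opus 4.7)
The plan is to mirror the construction of lifted c-segments from \Cref{def:lifted_csegment}, replacing variational c-segments on the fibers by \snccconv{}-variational c-segments. Given $\mu_0,\mu_1\in\Pc(X)$ and $\nu\in\Pc(Y)$ with $\Tc_c(\mu_i,\nu)<+\infty$, pick optimal couplings $\pi_i\in\Pi(\mu_i,\nu)$ and glue them along $\nu$ into a $3$-plan $\gamma_\nu$ concentrated on $\Gamma=\{(x_0,x_1,y):c(x_0,y),c(x_1,y)<+\infty\}$. The continuity hypothesis makes every \snccconv{}-variational c-segment an element of the Polish space $C([0,1],X)$; the subset of pairs $((x_0,x_1,y),\rmx)\in\Gamma\times C([0,1],X)$ for which $(\rmx,y)$ is an \snccconv{}-variational c-segment from $(x_0,y)$ to $(x_1,y)$ is Borel, by continuity of $c$ and the argument behind \Cref{lemma:continuous-cost-measurable-NNCC}. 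A Kuratowski--Ryll-Nardzewski-type measurable selection theorem then yields a measurable map $\Lambda\colon\Gamma\to C([0,1],X)$. Setting $\Lambda_s(x_0,x_1,y)=\Lambda(x_0,x_1,y)(s)$ and $\mu(s)=(\Lambda_s)_\#\gamma_\nu$, call $s\mapsto(\mu(s),\nu)$ an \emph{\snccconv{}-lifted c-segment}. Since every \snccconv{}-variational c-segment is in particular a variational c-segment, this is a lifted c-segment in the sense of \Cref{def:lifted_csegment}, and by \Cref{prop:var_lifted_c-seg} together with \Cref{rmk:pis-optimal} the natural plans $\pi(s)\coloneqq[(x_0,x_1,y)\mapsto(\Lambda_s(x_0,x_1,y),y)]_\#\gamma_\nu$ are optimal couplings of $(\mu(s),\nu)$ for every $s\in[0,1]$.

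It remains to upgrade ``below the chord'' into full convexity on $[0,1]$ of $s\mapsto \Tc_c(\mu(s),\nu)-\Tc_c(\mu(s),\sigma)$ for each $\sigma\in\Pc(Y)$. Convexity of the epigraph reduces this to: for all $0\leq a<b\leq 1$ with $\Tc_c(\mu(a),\sigma),\Tc_c(\mu(b),\sigma)<+\infty$ and all $t\in[0,1]$, setting $s=(1-t)a+tb$,
\begin{equation*}
\Tc_c(\mu(s),\nu)-\Tc_c(\mu(s),\sigma)\leq (1-t)[\Tc_c(\mu(a),\nu)-\Tc_c(\mu(a),\sigma)]+t[\Tc_c(\mu(b),\nu)-\Tc_c(\mu(b),\sigma)].
\end{equation*}
Following the proof of \Cref{prop:var_lifted_c-seg}, pick an optimal coupling $\tilde\pi(s)$ of $(\mu(s),\sigma)$ and use \Cref{lemma:coupling-extension} to build a $4$-plan $\gamma^4\in\Pc(X\times X\times Y\times Y)$ with $(\p_1,\p_2,\p_3)_\#\gamma^4=\gamma_\nu$, $(\p_4)_\#\gamma^4=\sigma$, and $[(x_0,x_1,y,z)\mapsto(\Lambda_s(x_0,x_1,y),z)]_\#\gamma^4=\tilde\pi(s)$. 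The fiberwise \snccconv{} condition yields, for $\gamma^4$-almost every $(x_0,x_1,y,z)$,
\begin{equation*}
c(\Lambda_s,y)-c(\Lambda_s,z)\leq (1-t)[c(\Lambda_a,y)-c(\Lambda_a,z)]+t[c(\Lambda_b,y)-c(\Lambda_b,z)].
\end{equation*}
Integrating against $\gamma^4$ and using the optimality of the three plans $\pi(a),\pi(b),\pi(s)$ to identify $\int c(\Lambda_r,y)\,d\gamma^4$ with $\Tc_c(\mu(r),\nu)$ for $r\in\{a,b,s\}$, together with $\int c(\Lambda_r,z)\,d\gamma^4\geq \Tc_c(\mu(r),\sigma)$ for $r\in\{a,b\}$ (since these pushforwards are admissible couplings of $(\mu(r),\sigma)$), produces the desired subinterval inequality, in exact analogy with the discussion in \Cref{rem:c-sub-NNCC-conv} for cost submersions.

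The main obstacle lies in the first step: producing a measurable selection of \snccconv{}-variational c-segments as continuous curves in $X$. The continuity hypothesis is precisely what endows $C([0,1],X)$ with its Polish structure and makes such a selection possible; without it, even defining the pushforward $\mu(s)=(\Lambda_s)_\#\gamma_\nu$ would be delicate. Once measurability is secured, the optimality of the intermediate plans $\pi(s)$ (inherited from the already-established NNCC analysis) reduces the remainder of the argument to a routine adaptation of the proof of \Cref{prop:var_lifted_c-seg}, applied on every subinterval $[a,b]\subset[0,1]$ rather than only on $[0,1]$.
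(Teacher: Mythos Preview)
Your proposal is correct and follows essentially the same approach as the paper: construct an \snccconv{}-lifted c-segment via a measurable selection of fiberwise \snccconv{}-variational c-segments (viewed as curves, using the continuity hypothesis and a closed-graph argument in a Suslin curve space), then replay the proof of \Cref{prop:var_lifted_c-seg} on every subinterval $[a,b]$, invoking \Cref{rmk:pis-optimal} for the optimality of the intermediate plans $\pi(a),\pi(b)$. The only minor technical slip is your identification of the curve space with $C([0,1],X)$: the hypothesis only gives continuity on $(0,1)$ with limits at the endpoints, so the paper works in the space $\mathcal{S}([0,1],X)$ of such curves (isomorphic to $C([0,1],X)$ as a Suslin space), handling $\Lambda_0$ and $\Lambda_1$ separately as projections; this changes nothing in substance.
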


The most important cases of application of \Cref{ThMainTheorem} are squared $2$-Wasserstein distances.
When $(X,d)$ is a complete and separable metric space, the squared $2$-Wasserstein distance between $\mu$ and $\nu\in\calP(X)$ is defined by 
\begin{equation}\label{eq:2Wdistance}
    W_2^2(\mu,\nu) = \min_{\pi\in\Pi(\mu,\nu)}\int_{X\times X} d^2(x,y)\,d\pi\,.
\end{equation}

\begin{theorem}[The $2$-Wasserstein space is NNCC]
Let $(X,d)$ be a complete and separable metric space. If $(X\times X,d^2)$ is an NNCC space then so is $(\calP(X)\times \Pc(X),W_2^2)$. 
\end{theorem}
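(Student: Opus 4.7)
The plan is to recognize that this theorem is an immediate consequence of \Cref{ThMainTheorem} applied to the cost $c(x,y) = d^2(x,y)$ on $X \times X$, and to simply verify that all the hypotheses of that theorem are met in the present setting.

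First I would check the structural assumptions on the base space. Since $(X,d)$ is complete and separable, it is by definition a Polish space, so the hypothesis that both factors be Polish is satisfied. Next, the cost $c(x,y) = d^2(x,y)$ is continuous (hence lower semi-continuous) on $X \times X$ and nonnegative, so it is bounded from below. The transport cost $\Tc_c$ then coincides with $W_2^2$ by definition \eqref{eq:2Wdistance}.

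The only remaining hypothesis is \Cref{hyp:c-bounded-above}, namely that $x \mapsto \inf_{y \in X} d^2(x,y)$ is uniformly bounded above. This is trivial here since for every $x \in X$ one may take $y = x$, giving $\inf_{y \in X} d^2(x,y) = 0$. The NNCC hypothesis on $(X \times X, d^2)$ holds by assumption. Applying \Cref{ThMainTheorem} with $Y = X$ and $c = d^2$ yields that $(\Pc(X) \times \Pc(X), W_2^2)$ is an NNCC space, which is the desired conclusion.

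There is no real obstacle: the theorem is a clean specialization of the general Wasserstein lifting result, and the only subtlety is confirming \Cref{hyp:c-bounded-above}, which holds automatically for any nonnegative cost that vanishes on the diagonal.
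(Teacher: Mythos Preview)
Your proposal is correct and matches the paper's approach exactly: the theorem is stated immediately after \Cref{ThMainTheorem} as a direct specialization, with no separate proof given, and your verification of the hypotheses (Polish, lower semi-continuity, nonnegativity, \Cref{hyp:c-bounded-above} via $y=x$) is precisely what is implicitly required.
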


Because of its prevalence in applications let us look at a few examples of squared $2$-Wasserstein spaces that are NNCC:

\begin{example}[The standard $2$-Wasserstein space and generalized geodesics]\label{ex:euclidean}
    If $X=Y=\R^n$ and $d(x,y)=\lvert x-y\rvert$ is the standard Euclidean distance, the space $(\Pc(X),W_2^2)$ is NNCC. 
    In this setting, lifted c-segments with respect to the cost $W_2^2$ coincide with generalized geodesics, a notion of interpolation between probability measures which has been extensively considered in \cite{ambrosio2005gradient} for the study of gradient flows in $(\Pc(X),W_2)$. The reader can compare our Definition \ref{def:lifted_csegment} with \cite[Definition 9.2.2]{ambrosio2005gradient}.
    We discuss for simplicity the case where transport costs in \eqref{eq:2Wdistance} for $(\mu_0, \nu)$ and $(\mu_1,\nu)$ are induced by transport maps. That is, there exist two maps $T_0,T_1\colon Y\rightarrow X$ such that $\pi_i=(T_i,\Id_Y)_\#\nu$ are optimal couplings of $(\mu_i,\nu)$. A generalized geodesic is then defined as $\mu(s)=(T_s)_\#\nu$, where $T_s(y)=(1-s)T_0(y)+sT_1(y)$.
    In this case, the 3-plan $\gamma_\nu$ in Definition \ref{def:lifted_csegment} is $[y\mapsto(T_0(y),T_1(y),y)]_\#\nu$, and if the maps $T_i$ are unique $\nu$-a.e., $\gamma_\nu$ is unique as well.
    As pointed out in Example \ref{ex:Hilbert-norm}, variational c-segments in $(\R^n\times \R^n, |\cdot-\cdot|^2)$ are unique for any triplet $(x_0,x_1,y)\in (\R^n)^3$, so that the measurable maps $\Lambda_s$ are uniquely determined.
    Therefore, in this case, the lifted c-segment from $(\mu_0, \nu)$ to $(\mu_1,\nu)$ is unique.
    Note that the fact that the function $s\mapsto W_2^2(\mu(s),\nu)-W_2^2(\mu(s),\sigma)$ is convex along generalized geodesics, for any $\sigma\in\Pc(X)$, when the base space is the Euclidean space, had already been pointed out in \cite{matthes2019variational}.
\end{example}

\begin{example}
    Let $X=Y=\mathbb{S}^n$, the $n$-dimensional sphere, and let $d$ be the geodesic distance on the sphere. The space $(\mathbb{S}^n,d^2)$ is NNCC, see Section \ref{ssec:examples_riemannian}. Therefore the space $(\Pc(\mathbb{S}^n),W_2^2)$ is NNCC.
\end{example}

Optimal transport on the base space of positive semi-definite matrices $\mathrm{S}_{+}^n$ has recently gained interest, in particular in machine learning \cite{ju2023deep,bonet2023slicedwasserstein,yair2020domain}. In these applications, different choices of metrics, and consequently geometries, can be made. 
A first possibility is the affine invariant metric on positive definite matrices $\mathrm{S}_{++}^n$, also called the log-Euclidean metric:
\begin{equation}\label{eq:log-euclidean}
S_0,S_1\in \mathrm{S}_{++}^n\longmapsto d_L(S_0,S_1)\coloneqq\|\log(S_0)-\log(S_1)\|.
\end{equation}
Here the matrix logarithm is defined as
$\log(S)\coloneqq U^T[\operatorname{diag}(\log(\lambda_i))] U$ for $U^T[\operatorname{diag}(\lambda_i)]U=S$ the eigendecomposition of $S$, 
$\| S \|=\operatorname{Tr}(S^\top S)$ is the Fr\"obenius norm and $\mathrm{S}_{++}^n$ denotes the set of positive definite matrices.
Since the space $(\mathrm{S}_{++}^n\times \mathrm{S}_{++}^n,\|\cdot-\cdot\|^2)$ is NNCC and the logarithm mapping is surjective in $\mathrm{S}_{++}^n$, the space $(\mathrm{S}_{++}^n\times \mathrm{S}_{++}^n,d_L^2)$ is NNCC as well (see Example \ref{ex:Hilbert-norm}). As a consequence:

\begin{example}
    If $X=Y=\mathrm{S}_{++}^n$ and $d$ is the log-Euclidean metric \eqref{eq:log-euclidean}, the space $(\Pc(\mathrm{S}_{++}^n),W_2^2)$ is NNCC.
\end{example}

The Bures--Wasserstein geometry (\Cref{ssec:BW}) is another natural example in which nonnegative cross-curvature holds. Indeed, this is a simple consequence of Theorems \ref{thm:BW_NNCC} and \ref{ThMainTheorem}.
This setting is of interest as a metric on the space of Gaussian mixtures, induced by the Bures--Wasserstein distance. It was first introduced in \cite{8590715}, see also \cite{delon2020wassersteintypedistancespacegaussian}.

\begin{example}
    Let $X=Y=\R^n \times \mathrm{S}_{+}^n$ and let $d((a,x),(b,y))^2=\| a-b\|^2 + \BW(x,y)^2$, where $\BW$ is the Bures--Wasserstein distance defined in Section \ref{ssec:BW}. Then $(\Pc(\R^n \times \mathrm{S}_{+}^n),W_2^2)$ is an NNCC space.
\end{example}

Our last example consists of a type of infinite product. We show that adding randomness to a measurable NNCC space preserves NNCC. Consider a probability space $(\Omega,\Fc,\mathbb{P})$, two Polish spaces $X$ and $Y$ and a lower semi-continuous cost $c\colon X\times Y\to\R\cup\{+\infty\}$ bounded from below. We say that $T$ is a \emph{random element} on $X$ if it is a measurable map $T\colon\Omega\to X$ ($X$ being equipped with its Borel $\sigma$-algebra). We similarly define random elements on $Y$. Let $\Xc$ and $\Yc$ denote the set of random elements on $X$ and $Y$ respectively. If $T\in\Xc$ and $S\in\Yc$ we define the expected cost 
\[
\hat\Tc_c(T,S)=\mathbb{E}(c(T,S))=\int_\Omega c(T(\omega), S(\omega))\,d\mathbb{P}(\omega)\,.
\]
We then have the following result, which can be seen as an infinite product version of \Cref{prop:products}.

\begin{proposition}[Random variables and infinite products]\label{prop:RE}
    Suppose that $(X\times Y,c)$ is an NNCC space. Then $(\Xc\times\Yc,\hat\Tc_c)$ is an NNCC space.
\end{proposition}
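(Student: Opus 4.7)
The plan is to lift NNCC from $(X\times Y,c)$ pointwise over $\omega\in\Om$, an infinite-product analogue of the finite-product argument of \Cref{prop:products} and of the lifted c-segments of \Cref{def:lifted_csegment}; here the prescribed joint law of $(T_0,T_1,\Sb)$ plays the role of the $3$-plan $\gamma_\nu$, so no optimization over couplings is required --- once a variational c-segment is selected at each $\omega$ in a measurable way, the NNCC inequality for $\hat\Tc_c$ should reduce to integrating the pointwise NNCC inequality against $\mathbb P$.

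Fix $T_0,T_1\in\Xc$ and $\Sb\in\Yc$ with $\hat\Tc_c(T_0,\Sb),\hat\Tc_c(T_1,\Sb)<+\infty$. After shifting $c$ I may assume $c\ge 0$, so that $c(T_i,\Sb)<+\infty$ $\mathbb P$-a.s.\ for $i=0,1$. The pushforward $\gamma\coloneqq(T_0,T_1,\Sb)_\#\mathbb P$ is then a Borel probability measure on $X\times X\times Y$ concentrated on the set $\Gamma$ of \eqref{eq:maximal-Gamma}, and \Cref{lemma:continuous-cost-measurable-NNCC} produces measurable maps $\Lambda_s\colon\Gamma\to X$ such that $s\mapsto(\Lambda_s(x_0,x_1,y),y)$ is a variational c-segment from $(x_0,y)$ to $(x_1,y)$ for $\gamma$-a.e.\ $(x_0,x_1,y)$. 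Setting $T(s)(\omega)\coloneqq \Lambda_s(T_0(\omega),T_1(\omega),\Sb(\omega))$ gives a measurable map $\Om\to X$ with $T(0)=T_0$ and $T(1)=T_1$, which is the natural candidate for a variational c-segment on $\Xc\times\Yc$.

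Next, for any $S\in\Yc$, specializing the pointwise NNCC inequality to $y=S(\omega)$ yields, for $\mathbb P$-a.e.\ $\omega$,
\[
c(T(s)(\omega),\Sb(\omega))-c(T(s)(\omega),S(\omega))\le (1-s)[c(T_0(\omega),\Sb(\omega))-c(T_0(\omega),S(\omega))]+s[c(T_1(\omega),\Sb(\omega))-c(T_1(\omega),S(\omega))].
\]
If $\hat\Tc_c(T_i,S)=+\infty$ for some $i$, the right-hand side of the target inequality is $+\infty$ by the paper's convention and there is nothing to check. Otherwise all four cost terms on the right are $\mathbb P$-a.s.\ finite, and because $c\ge 0$ the inequality rearranges into a comparison between six nonnegative quantities that Fubini lets me integrate termwise against $\mathbb P$, producing the NNCC inequality for $\hat\Tc_c$.

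The delicate step, which I expect to be the main obstacle, is the finiteness requirement $\hat\Tc_c(T(s),\Sb)<+\infty$ that is built into the definition of a variational c-segment. Pointwise finiteness of $c(T(s)(\omega),\Sb(\omega))$ is automatic from the construction but does not yield integrability on its own --- an issue that parallels the role of \Cref{hyp:c-bounded-above} in \Cref{prop:var_lifted_c-seg}. I would address this by producing, through a measurable selection using the lower semi-continuity of $c$, an auxiliary $S\in\Yc$ whose expected costs $\mathbb E[c(T_0,S)]$, $\mathbb E[c(T_1,S)]$ and $\mathbb E[c(T(s),S)]$ are all finite; the rearranged inequality from the previous paragraph then transfers finiteness from $\mathbb E[c(T(s),S)]$ to $\mathbb E[c(T(s),\Sb)]$ and closes the argument.
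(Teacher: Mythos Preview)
Your approach is essentially the same as the paper's: both push forward $\mathbb P$ via $(T_0,T_1,\Sb)$ to obtain $\gamma$, invoke \Cref{lemma:continuous-cost-measurable-NNCC} for a measurable selection $\Lambda_s$, set $T_s=\Lambda_s\circ(T_0,T_1,\Sb)$, and then integrate the pointwise NNCC inequality against $\mathbb P$. The paper's proof stops there, writing simply ``Integrating \ldots\ we obtain the desired inequality.''

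Where you go beyond the paper is in isolating the finiteness requirement $\hat\Tc_c(T(s),\Sb)<+\infty$, which is part of the definition of a variational c-segment but which the paper's proof does not address. Your proposed fix---producing an auxiliary $S$ with $\mathbb E[c(T_0,S)]$, $\mathbb E[c(T_1,S)]$, $\mathbb E[c(T(s),S)]$ all finite and then reading finiteness of $\mathbb E[c(T(s),\Sb)]$ off the rearranged inequality---is exactly the mechanism used in the proof of \Cref{prop:var_lifted_c-seg}, and it works \emph{provided} such an $S$ exists. But that existence is precisely what \Cref{hyp:c-bounded-above} supplies, and \Cref{prop:RE} as stated does not assume it. So you have correctly identified a subtlety that the paper glosses over; your fix is the natural one, but it tacitly imports an assumption the proposition does not list.
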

\begin{proof}
    Let $T_0,T_1$ be random elements on $\Xc$ and $\bar{S}$ a random element on $Y$ such that $\hat\Tc_c(T_i,\bar S)<+\infty$, $i=0,1$. Then $c(T_i,\bar S)<+\infty$ $\mathbb{P}$-a.s.
    Define $\Gamma$ by \eqref{eq:maximal-Gamma} and fix $0<s<1$. By \Cref{lemma:continuous-cost-measurable-NNCC}, setting $\gamma=(T_0,T_1,\bar S)_\#\mathbb{P}$, there exists a measurable $\Lambda_s\colon\Gamma\to X$ such that for any random element $S$ on $Y$ the NNCC inequality holds $\mathbb{P}$-a.e.,
    \begin{multline}\label{eq:proof-prop:RE-1}
    c(\Lambda_s(T_0(\omega), T_1(\omega), \bar S(\omega)), \bar S(\omega))-c(\Lambda_s(T_0(\omega), T_1(\omega), \bar S(\omega)), S(\omega)) \leq \\(1-s)[c(T_0(\omega),\bar S(\omega))-c(T_0(\omega),S(\omega))] + s[c(T_1(\omega),\bar S(\omega))-c(T_1(\omega),S(\omega))]\,.
\end{multline}
Define $T_s(\omega)=\Lambda_s(T_0(\omega), T_1(\omega), \bar S(\omega))$, a random element on $X$ since $\Lambda_s$ is measurable. Integrating \eqref{eq:proof-prop:RE-1} against $\mathbb{P}$ we obtain the desired inequality. 
\end{proof}

Let us conclude this section by discussing variational c-segments in the case of a convex cost with an application to the optimal transport cost.
Assume that $Y$ is a Banach space and that $y \mapsto c(x,y)$ is convex for each $x\in X$. Convex analysis can be used to study differential properties of variational c-segments.
Consider $\yb\in Y$ and $x_0,x_1 \in X$ such that there exist $q_0 \in \partial_y c(x_0,\yb)$ and $q_1 \in \partial_y c(x_1,\yb)$, where $\partial_y c(x,y)$ denotes the subdifferential from convex analysis in the $y$ variable only. One has, for all $y\in Y,$
\begin{align}
    c(x_0,\yb) - c(x_0, y) &\leq \langle q_0,\yb - y \rangle\,,\\
    c(x_1,\yb) - c(x_1, y) &\leq \langle q_1,\yb- y \rangle\,.
\end{align}
Let $(\rmx,\yb)$ be a variational c-segment with $\rmx(0)=x_0$ and $\rmx(1)=x_1$, then the two previous inequalities imply 
\begin{equation}
  c(\rmx(s),\yb)  - c(\rmx(s), y) \leq (1-s)  [c(x_0,\yb) - c(x_0, y)] + s[c(x_1,\yb) - c(x_1, y)] \leq (1-s)\langle q_0,\yb-y \rangle + s\langle q_1,\yb-y \rangle\,,
\end{equation}
which means $(1-s)q_0+s \,q_1 \in \partial_y c(\rmx(s),\yb)$. In summary, we obtain a generalization of \Cref{lemma:auto-csegment} in the absence of differentiability: if $(\rmx,\yb)$ is a variational c-segment, then 
\[
(1-s)\partial_y c(\rmx(0),\yb) + s \,\partial_y c(\rmx(1),\yb) \subset \partial_y c(\rmx(s),\yb),
\]
where the addition denotes here the Minkowski sum of sets.

We now apply this remark to the optimal transport cost itself. We consider Polish spaces $X$, $Y$ and a continuous cost $c$ on $X\times Y$. We use the duality between the space of bounded continuous functions $C_b(X)$ and regular Radon measures on $X$.
The dual formulation of optimal transport reads
\begin{equation}\label{eq:otdual}    
    \Tc_c(\mu,\nu)=\sup_{\phi\in C_b(Y)} \int_X \phi^c \, d\mu + \int_Y \phi \,d \nu,
\end{equation}
 where we recall that the c-transform of $\phi$ is defined as
\begin{equation}\label{eq:c-trans}
    \phi^c(x)\coloneqq\inf_{y\in Y} c(x,y)-\phi(y)\,.    
\end{equation}
Let $H(\phi) = \int_X-\phi^c\,d\mu=\sup_{T: X \to Y }\int_X [\phi(T(x))-c(x,T(x))]\,d\mu(x)$, where the supremum is taken over measurable maps $T\colon X \to Y$. Then $H$ is convex and lower semi-continuous and
\begin{equation}
    \Tc_c(\mu,\nu)= \sup_{\phi\in C_b(Y)}  \int_Y \phi \,d \nu - H(\phi)\,.
\end{equation}
Applying \cite[Proposition 5.1 and Corollary 5.2]{ekeland1999convex} to this duality setting, we obtain that the subdifferential of the optimal transport cost in its second slot equals the set of optimal potentials that are continuous and bounded,
\[
\partial_\nu \Tc_c(\mu,\nu)\coloneqq \left\{ \phi \in C_b(Y)~:~ \int_X \phi^c d\mu+\int_Y \phi \,d\nu = \Tc_c(\mu,\nu) \right\}\,.
\]
Note that, in general, the supremum in the dual formulation is not necessarily attained in $C_b(Y)$. When $X \times Y$ is compact and $c$ is continuous, optimal potentials exist \cite[Theorem 1.39]{santambrogio2015optimal}, thus $\partial_\nu \Tc_c(\mu,\nu)$ is nonempty.
In this setting, the remark above on convex costs that are NNCC can be applied to the optimal transport cost, since it is convex in its input measures. 

\begin{proposition}\label{prop:convexityTcI2}
    Let $X$ and $Y$ be compact Polish spaces and $c\colon X\times Y\to\R$ a continuous cost. Consider a variational c-segment $(\mu(s),\nu)$ in $(\Pc(X)\times \Pc(Y),\Tc_c)$ and let $\phi_0,\phi_1\in C_b(Y)$ be two optimal potentials for $(\mu(0),\nu)$ and $(\mu(1),\nu)$ respectively. Then for any $0< s< 1$, $\phi_s = (1-s)\phi_0+s\,\phi_1$ is an optimal potential for $(\mu(s),\nu)$. 
\end{proposition}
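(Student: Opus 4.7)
The plan is to exploit the convex-analysis observation made in the paragraph immediately preceding the statement: for a cost convex in its second variable, a variational c-segment yields the subdifferential inclusion $(1-s)\partial_y c(\rmx(0),\yb) + s\,\partial_y c(\rmx(1),\yb) \subset \partial_y c(\rmx(s),\yb)$. Here the ``base'' cost is $\Tc_c$ itself, viewed as a convex lower semi-continuous functional on $\Pc(Y)$, whose subdifferential at $\nu$ is identified in the previous paragraph with $\partial_\nu \Tc_c(\mu,\nu)$, i.e.\ the set of bounded continuous optimal potentials. So the task reduces to reproducing the two-line convex-analysis argument in this dual formulation.

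The first concrete step is to write the optimality of $\phi_0,\phi_1$ as a family of pointwise inequalities against test measures. For $i=0,1$ and every $\sigma\in\Pc(Y)$, weak duality from \eqref{eq:otdual} gives $\int_X \phi_i^c\,d\mu(i) + \int_Y \phi_i\,d\sigma \leq \Tc_c(\mu(i),\sigma)$, with equality at $\sigma=\nu$ by the assumed optimality. Subtracting the equality from the inequality yields
\[
\Tc_c(\mu(i),\nu)-\Tc_c(\mu(i),\sigma) \leq \int_Y \phi_i \,d(\nu-\sigma)\,, \quad i=0,1.
\]
The second step is to plug these into the NNCC inequality for the variational c-segment $(\mu(s),\nu)$ in $(\Pc(X)\times\Pc(Y),\Tc_c)$. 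Forming the convex combination $(1-s)(\cdot) + s(\cdot)$ of the two bounds, and using linearity of $\sigma\mapsto \int_Y \phi\,d\sigma$, produces for every $\sigma\in\Pc(Y)$ the master inequality
\[
\Tc_c(\mu(s),\nu)-\int_Y \phi_s\,d\nu \leq \Tc_c(\mu(s),\sigma)-\int_Y \phi_s\,d\sigma.
\]

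The third step is to take the infimum of the right-hand side over $\sigma\in\Pc(Y)$. Unrolling the definition of $\Tc_c(\mu(s),\sigma)$ as an infimum over couplings $\pi\in\Pi(\mu(s),\sigma)$, the joint infimum over $\sigma$ and $\pi$ reduces to an infimum over plans with only the fixed first marginal $\mu(s)$. Disintegrating such plans against $\mu(s)$ and pulling the infimum inside the integral identifies the right-hand side with $\int_X \phi_s^c\,d\mu(s)$, via the definition \eqref{eq:c-trans} of the c-transform. Combined with the reverse weak duality inequality $\int_X \phi_s^c\,d\mu(s) + \int_Y \phi_s\,d\nu \leq \Tc_c(\mu(s),\nu)$, this produces equality, exhibiting $\phi_s$ as an optimal potential.

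The main potential obstacle is making the identification of the infimum with the integrated c-transform rigorous when $\phi_s$ is merely bounded continuous (not necessarily c-concave), but this is a standard exercise resting on compactness of $Y$, continuity of $c$, and a measurable selection argument to turn the pointwise infimum in $y$ into an integrable function against $\mu(s)$; alternatively, one can bypass the infimum step entirely by noting that $\phi_s \in C_b(Y)$ and directly verifying $\phi_s \in \partial_\nu \Tc_c(\mu(s),\nu)$ from the master inequality via the general convex-analysis identification recorded before the statement.
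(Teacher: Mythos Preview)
Your proposal is correct and follows essentially the same approach as the paper. The paper does not give a self-contained proof of the proposition; it simply observes that the preceding general remark---that for a cost convex in its second slot, a variational c-segment satisfies $(1-s)\partial_y c(\rmx(0),\yb)+s\,\partial_y c(\rmx(1),\yb)\subset \partial_y c(\rmx(s),\yb)$---applies verbatim to $\Tc_c$, and invokes the Ekeland--Temam identification $\partial_\nu\Tc_c(\mu,\nu)=\{\text{optimal potentials in }C_b(Y)\}$ to conclude. Your Steps~1--2 reproduce exactly that subdifferential argument in the concrete setting of $\Tc_c$, and your Step~3 (the infimum over $\sigma$ and the identification with $\int\phi_s^c\,d\mu(s)$) unrolls by hand what the paper obtains from Ekeland--Temam; you even note this shortcut yourself in the final paragraph.
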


\subsection{On the converse implication}\label{ssec:converse}

It is relatively straightforward to show a converse to \Cref{ThMainTheorem}:

\begin{proposition}[NNCC of Wasserstein $\implies$ NNCC of the base space]\label{ThConverseImplication}
    Let $X$ and $Y$ be two Polish spaces and $c\colon X\times Y\to\R\cup\{+\infty\}$ a lower semi-continuous cost function bounded from below. Suppose that $(\Pc(X)\times\Pc(Y),\Tc_c)$ is an NNCC space. Then so is $(X\times Y,c)$.
\end{proposition}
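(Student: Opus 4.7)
The plan is to lift the problem to the Wasserstein space via Dirac measures, exploiting that $\Tc_c(\delta_x,\delta_y)=c(x,y)$. Fix $x_0,x_1\in X$ and $\yb\in Y$ with $c(x_0,\yb)$ and $c(x_1,\yb)$ finite. The NNCC property of $(\Pc(X)\times\Pc(Y),\Tc_c)$ applied to $(\delta_{x_0},\delta_{x_1},\delta_{\yb})$ yields a variational c-segment $s\mapsto(\mu(s),\delta_{\yb})$ with $\mu(0)=\delta_{x_0}$ and $\mu(1)=\delta_{x_1}$. I would then construct the sought variational c-segment on the base by extracting a suitable point $\rmx(s)$ from $\supp\mu(s)$ for each $s\in(0,1)$.

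The key observation is that one should test the Wasserstein NNCC inequality for $(\mu(s),\delta_{\yb})$ not only against Dirac measures $\sigma=\delta_y$, but against pushforwards $\sigma=(y^*)_\#\mu(s)$ for measurable maps $y^*\colon X\to Y$. Using the coupling $(\Id,y^*)_\#\mu(s)\in\Pi(\mu(s),\sigma)$ to bound $\Tc_c(\mu(s),\sigma)\leq\int c(x,y^*(x))\,d\mu(s)(x)$, together with the identity $\Tc_c(\delta_{x_i},\sigma)=\int c(x_i,y^*(x))\,d\mu(s)(x)$, the Wasserstein NNCC inequality rearranges to
\[
\int f(x,y^*(x))\,d\mu(s)(x)\leq 0,
\]
where
\[
f(x,y)\coloneqq[c(x,\yb)-c(x,y)]-(1-s)[c(x_0,\yb)-c(x_0,y)]-s[c(x_1,\yb)-c(x_1,y)]
\]
is precisely the slack in the pointwise NNCC inequality one wants to establish at $\rmx(s)$.

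To extract $\rmx(s)$ I would argue by contradiction: suppose the good set $A=\{x\in X:f(x,y)\leq 0 \text{ for all } y\in Y\}$ has $\mu(s)$-measure zero. Writing its complement as $B=\bigcup_k B_k$ with $B_k=\{x:\sup_y f(x,y)>1/k\}$, some $B_k$ would have positive $\mu(s)$-measure. By the Jankov--von Neumann selection theorem (applicable since $f$ is Borel, as $c$ is lower semi-continuous), one can select a universally measurable $y^*$ with $f(x,y^*(x))>1/k$ on $B_k$; extending $y^*(x)=\yb$ on $X\setminus B_k$ so that $f(x,\yb)=0$ there gives $\int f(x,y^*(x))\,d\mu(s)(x)\geq\mu(s)(B_k)/k>0$, contradicting the integral bound above. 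Hence $A$ has full $\mu(s)$-measure, and intersected with $\{x:c(x,\yb)<+\infty\}$ (which is $\mu(s)$-full since $\Tc_c(\mu(s),\delta_{\yb})<+\infty$) yields a nonempty set from which $\rmx(s)$ can be picked. Setting $\rmx(0)=x_0$ and $\rmx(1)=x_1$ then delivers the desired variational c-segment.

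The main obstacle is the technical handling of $+\infty$ cost values. To ensure the integrals $\int c(x_i,y^*(x))\,d\mu(s)$ and $\int c(x,y^*(x))\,d\mu(s)$ are finite (otherwise the Wasserstein NNCC inequality is trivially satisfied and yields no information), I would truncate the range of $y^*$ to $Y_M=\{y:c(x_0,y),c(x_1,y)\leq M\}$ and work with $B_k^M=\{x:\sup_{y\in Y_M}f(x,y)>1/k\}$, which monotonically increase up to $B_k$ as $M\to\infty$; indeed the paper's convention $(+\infty)+(-\infty)=+\infty$ in the right-hand side of the NNCC inequality ensures $f(x,y)>1/k$ forces $c(x_i,y)$ finite. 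On $B_k^M$, solving $f(x,y^*(x))>1/k$ for $c(x,y^*(x))$ yields $c(x,y^*(x))\leq c(x,\yb)+2M$, and integrating gives $\int c(x,y^*(x))\,d\mu(s)(x)\leq\Tc_c(\mu(s),\delta_{\yb})+2M<+\infty$, making the argument rigorous.
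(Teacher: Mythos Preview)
Your proposal is correct and follows essentially the same strategy as the paper's proof: reduce to Dirac measures $\delta_{x_0},\delta_{x_1},\delta_{\yb}$, use a measurable selection of ``bad'' $y$-values to build a test measure as a pushforward of $\mu(s)$, and truncate to control $+\infty$ values. The only organizational difference is that the paper argues by global contrapositive (assume the base is not NNCC, so for some fixed $x_0,x_1,\yb,s$ the good set $A\cap X'$ is empty, then show no $\mu$ works), whereas you argue directly and contradict at the level of a single $\mu(s)$ (assume $\mu(s)(A)=0$); the selection step, the choice $\widetilde T=\yb$ off the bad set, and the truncation idea are the same.
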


\begin{proof}
    Let us show the contrapositive. Assume that $(X\times Y,c)$ is not an NNCC space. Then there exist $\yb\in Y$, $x_0,x_1\in X'\coloneqq\{x\in X : c(x,\yb)<+\infty\}$ and $s\in (0,1)$ such that for every $x\in X'$ we can find $y\in Y$ such that the NNCC inequality does not hold. This means there exists a map $T\colon X'\to Y$ such that for all $x\in X'$,
    \begin{equation}\label{eq:proof-1}
    c(x,\yb)-c(x,T(x))>(1-s)[c(x_0,\yb)-c(x_0,T(x))]+s\,[c(x_1,\yb)-c(x_1,T(x))]\,.
    \end{equation}
    Moreover, by defining the set-valued map 
    \[
    \Psi(x) = \{ y\in Y \,:\, c(x,\yb)-c(x,T(x)) >  (1-s)[c(x_0,\yb)-c(x_0,T(x))]+s\,[c(x_1,\yb)-c(x_1,T(x))]\}\,,
    \]
    $T$ can be chosen measurable by means of a measurable selection of $\Psi$, similarly to the proof of \Cref{lemma:continuous-cost-measurable-NNCC}.
    
    To show that $(\Pc(X)\times\Pc(Y),\Tc_c)$ is not an NNCC space, let us define the Dirac masses $\mu_0=\delta_{x_0}$, $\mu_1=\delta_{x_1}$ and $\nub=\delta_{\yb}$. Given $\alpha\in\R$ whose value will be chosen later consider the set $X_\alpha = \{x\in X' : c(x,T(x))\leq\alpha\}$ and define the map $\widetilde T\colon X'\to Y$ by
    \begin{equation*}
        \widetilde T\colon\left\{\begin{aligned}
        x\in X_\alpha&\longmapsto T(x)\\
        x\in X'\setminus X_\alpha &\longmapsto \yb\,.
        \end{aligned}\right.
    \end{equation*}
    Then $\widetilde T$ satisfies for all $x\in X'$
    \begin{equation}\label{eq:proof-2}
        c(x,\yb)-c(x,\widetilde T(x))\geq (1-s)[c(x_0,\yb)-c(x_0,\widetilde T(x))]+s\,[c(x_1,\yb)-c(x_1,\widetilde T(x))]\,,
    \end{equation}
    with a strict inequality on $X_\alpha$ and with equality on $X'\setminus X_\alpha$. Take now $\mu\in\Pc(X)$ satisfying $\Tc_c(\mu,\nub)<+\infty$. Then $\mu$ is concentrated on $X'$ and we may define $\nu=\widetilde T_\#\mu$. Integrating \eqref{eq:proof-2} against $\mu$ gives us 
    \begin{equation}\label{eq:proof-3}
    \Tc_c(\mu,\nub)-\int_{X'} c(x,\widetilde T(x))\,d\mu(x) \geq (1-s)[\Tc_c(\mu_0,\nub)-\Tc_c(\mu_0,\nu)] + s\,[\Tc_c(\mu_1,\nub)-\Tc_c(\mu_1,\nu)]\,.
    \end{equation}
    The left-hand side is finite since by splitting the integral on $X_\alpha$ and $X'\setminus X_\alpha$ we find that  
    \[
    \int_{X'} c(x,\widetilde T(x))\,d\mu(x)\leq \alpha + \Tc_c(\mu,\nub)\,.
    \]
    Therefore the inequality in \eqref{eq:proof-3} is strict as soon as $\mu(X_\alpha)>0$. We can ensure that happens by taking $\alpha$ large enough since \eqref{eq:proof-1} forces $c(x,T(x))<+\infty$ on $X'$, thus $X_\alpha$ forms an increasing family such that $\bigcup_{\alpha\in\R}X_\alpha$ is all of $X'$, which has full $\mu$-measure. 
    To finish, we bound $\int_{X'} c(x,\widetilde T(x))\,d\mu(x)\geq \Tc_c(\mu,\nu)$, which proves that for every $\mu\in\Pc(X)$ such that $\Tc_c(\mu,\nub)<+\infty$, there exists $\nu\in\Pc(Y)$ such that 
    \[
    \Tc_c(\mu,\nub)-\Tc_c(\mu,\nu)>(1-s)[\Tc_c(\mu_0,\nub)-\Tc_c(\mu_0,\nu)] + s\,[\Tc_c(\mu_1,\nub)-\Tc_c(\mu_1,\nu)]\,.
    \]
    This forbids $(\Pc(X)\times\Pc(Y),\Tc_c)$ to be an NNCC space.
\end{proof}

One can then ask for a converse result for the two other properties, namely \snccconv{} and \smtw{}.
We prove it under more stringent assumptions on the cost than for NNCC. While these assumptions might be weakened, our results are sufficient to prove with an example that the \smtw{} property does not lift to the Wasserstein space (see Section \ref{SecCounterExampleMTW}).
Let us recall that a cost function $c$ is said to be twisted (or to satisfy the twist condition) if 
\begin{equation}\label{eq:twist_condition}
    \text{$\nabla_y c(\cdot,y)$ is injective for every $y\in Y$.}
\end{equation}

\begin{proposition}\label{ThMainTheorem_converse}
    Let $X,Y\subset\Rd$ be compact domains, with $Y$ coinciding with the closure of its interior, $Y=\overline{\operatorname{int}Y}$. Let $c\in C^1(X\times Y)$ be a cost satisfying the twist condition \eqref{eq:twist_condition}. Then, if the Wasserstein space $(\Pc(X)\times \Pc(Y),\Tc_c)$ satisfies the \snccconv{} condition (resp. the \smtw{} condition), so does the space $(X \times Y,c)$.
\end{proposition}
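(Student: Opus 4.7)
My plan is to exploit Dirac test data on the Wasserstein side and use the twist and $C^1$ structure to force the Wasserstein curve through Dirac measures. I would take $\mu_0 = \delta_{x_0}$, $\mu_1 = \delta_{x_1}$, $\bar\nu = \delta_{\bar y}$ for a given triple $(x_0, x_1, \bar y) \in X \times X \times Y$, obtain the Wasserstein curve $\mu(s)$ from the assumed \snccconv{} (resp.\ \smtw{}) property, and reduce it to a Dirac $\delta_{\rmx(s)}$. Testing the resulting Wasserstein inequality against $\sigma = \delta_y$ for arbitrary $y \in Y$ would then pull the desired property directly back to the base. I would first handle the case $\bar y \in \operatorname{int} Y$ and recover the boundary case afterwards by approximating $\bar y$ with interior points, using compactness of $X$ and continuity of $c$ to pass to the limit in the pointwise (convexity or max) inequalities.

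For the \snccconv{} case, since \snccconv{} implies the NNCC chord inequality, the Wasserstein curve $\mu(s)$ is in particular a variational c-segment in $(\Pc(X) \times \Pc(Y), \Tc_c)$. The natural candidates $\phi_i(y) := c(x_i, y) \in C_b(Y)$ are optimal Kantorovich potentials for the pairs $(\delta_{x_i}, \delta_{\bar y})$ (because $\phi_i^c(x_i) = 0$ and thus $\phi_i^c(x_i) + \phi_i(\bar y) = c(x_i, \bar y) = \Tc_c(\delta_{x_i}, \delta_{\bar y})$), so Proposition~\ref{prop:convexityTcI2} shows that $\phi_s := (1-s)\phi_0 + s\,\phi_1$ is an optimal potential for $(\mu(s), \delta_{\bar y})$. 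Unpacking the Kantorovich optimality identity
\[
\int c(x, \bar y)\,d\mu(s)(x) = \int \phi_s^c\,d\mu(s) + \phi_s(\bar y)
\]
together with the pointwise c-transform bound $\phi_s^c(x) \leq c(x, \bar y) - \phi_s(\bar y)$ forces $\mu(s)$-a.e.\ $x$ to make $\bar y$ a minimizer of $y \mapsto c(x, y) - \phi_s(y)$. The interior first-order condition then gives
\[
\nabla_y c(x, \bar y) = (1-s)\,\nabla_y c(x_0, \bar y) + s\,\nabla_y c(x_1, \bar y),
\]
which by the twist condition admits at most one solution in $X$; since $\mu(s)$ must be concentrated on that (necessarily non-empty) solution set, $\mu(s) = \delta_{\rmx(s)}$ for a unique $\rmx(s) \in X$. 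Feeding $\sigma = \delta_y$ into the Wasserstein \snccconv{} inequality then yields the convexity of $s \mapsto c(\rmx(s), \bar y) - c(\rmx(s), y)$ for every $y \in Y$, which is the base \snccconv{} condition.

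For the \smtw{} case the same overall strategy applies, but Proposition~\ref{prop:convexityTcI2} is no longer directly available since \smtw{} does not imply NNCC. Instead, I would Taylor-expand the max inequality against $\sigma = \delta_{\bar y + \epsilon v}$ as $\epsilon \to 0^{\pm}$ to constrain $Q(s) := \int \nabla_y c(\cdot, \bar y)\,d\mu(s)$ to the segment joining $\nabla_y c(x_0, \bar y)$ and $\nabla_y c(x_1, \bar y)$, combine the narrow continuity of $\mu(s)$ with the twist injectivity of $\nabla_y c(\cdot, \bar y)$ to extract a continuous curve $\rmx(s) \in X$, and transfer the pointwise max inequality to $\rmx$ by testing the Wasserstein inequality against Dirac targets $\sigma = \delta_y$. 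The main obstacle lies precisely in this last case: without the optimal-potential rigidity supplied by \snccconv{}, the Wasserstein \smtw{} curve $\mu(s)$ is not a priori forced to be Dirac-valued, and verifying that the barycentric curve $\rmx(s)$ extracted from $Q(s)$ actually satisfies the pointwise max inequality rather than merely an integrated version of it is the delicate step, requiring an essential use of the combination of twist, $C^1$ regularity, and interiority of $\bar y$.
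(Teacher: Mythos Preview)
Your \snccconv{} argument is correct and takes a genuinely different route from the paper. The paper never works directly with $\bar\nu=\delta_{\bar y}$: it first treats absolutely continuous $\nu$ (Lemma~\ref{lem:Tc-var-c-seg_nuac}), using differentiability of $\Tc_c$ in the $\nu$-slot to show that \emph{any} Wasserstein \snccconv{}-variational c-segment is the lift of base c-segments, and then passes to general $\nu$ by stability of optimal plans (Lemma~\ref{lem:Tc-var-c-seg}). Your shortcut via Proposition~\ref{prop:convexityTcI2} is cleaner for \snccconv{}: the interpolated potential $\phi_s$ pins the support of $\mu(s)$ to the unique solution of the twist equation at the interior point $\bar y$, bypassing the density argument entirely. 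The trade-off is that the paper's route works uniformly for both conditions, whereas yours is tied to \snccconv{} through the NNCC hypothesis in Proposition~\ref{prop:convexityTcI2}.

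For \smtw{} the gap you flag is real and your sketch does not close it: the barycentric constraint on $Q(s)=\int\nabla_y c(\cdot,\bar y)\,d\mu(s)$ places $Q(s)$ on the segment but says nothing about the support of $\mu(s)$, and the max inequality tested against Diracs $\delta_y$ only controls $\int c(x,\bar y)\,d\mu(s)-\int c(x,y)\,d\mu(s)$, not a pointwise quantity. The paper sidesteps this by reversing the logic: rather than showing the given Wasserstein \smtw{} curve is Dirac, it proves (via the a.c.\ approximation of Lemma~\ref{lem:Tc-var-c-seg}) that the lifted curve $\mu(s)=(\bar\Lambda_s)_\#\gamma_\nu$ is always an \smtw{}-variational c-segment on the Wasserstein side. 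For $\gamma_\nu=\delta_{(x_0,x_1,\bar y)}$ this lifted curve is $\delta_{\rmx(s)}$ with $\rmx$ the base c-segment, and testing against $\sigma=\delta_y$ then gives the base \smtw{} inequality directly.
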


The proof is postponed to Appendix \ref{ssec:appendix-converse}.

\subsection{Condition \smtw{} does not lift to the Wasserstein space}\label{SecCounterExampleMTW}

In this section, we show that condition \smtw{} does not lift to the Wasserstein space, contrary to the NNCC property. To this end, we provide a counterexample. The reasoning is as follows: If the Wasserstein space satisfies condition \smtw{}, then lifts of c-segments from the base space (as defined in Lemma \ref{lem:Tc-var-c-seg_nuac}), whenever these are well-defined, are always \smtw{}-variational c-segments. Therefore, it is sufficient to find an example such that all lifts of c-segments do not satisfy condition \smtw{}.
	
\begin{example}[No lifts satisfy condition \smtw{}]\label{ex:MTW_counterexample}
    We consider the space $\R^2\times\R^2$ endowed with the cost $c(x,y)=-\log(|x-y|)$. On the subspace $N=\R^2\times\R^2\setminus \{x=y\}$, the cost $c$ is smooth and twisted, and c-segments are well-defined for any points $(x_0,\ov y),(x_1,\ov y)\in N$. Furthermore, the space $(N,c)$ satisfies the \smtw{} condition \cite[Example 12.41]{villani2009optimal}. Then, we consider the space $\Pc(\R^2)\times\Pc(\R^2)$, endowed with the transport cost $\Tc_c$, and the following four measures (with disjoint supports):
    \[
    \mu_0=\frac12 \delta_{x_0^1}+\frac12 \delta_{x_0^2}\,, \quad \mu_1=\frac12 \delta_{x_1^1}+\frac12 \delta_{x_1^2}\,, \quad \nu=\delta_{\ov y}\,,\quad \sigma=\delta_{y}\,,
    \]
    where $x_0^1=(-1,0)$, $x_0^2=(1,0)$, $x_1^1=(0,-1)$, $x_1^2=(0,1)$, $\ov y=(-\frac12,0)$ and $y=(\frac12,0)$. There exist four possible different c-segments $(\rmx,\ov y)$ on the base space, corresponding respectively to the four different triplets $( x_0^1, x_1^2, \ov y )$, $( x_0^2, x_1^2, \ov y )$, $( x_0^2, x_1^1, \ov y )$ and $( x_0^1, x_1^1, \ov y )$. These are explicitly given by (see Figure \ref{fig:MTW_counterexample_a})
    \begin{alignat*}{2}
        &\rmx_1(s)= \Big(-\frac{4s^2 - 9s + 5}{8s^2 - 12s + 5}, \frac{s}{8s^2 - 12s + 5}\Big), \quad &&
        \rmx_2(s)= \Big( -\frac{4s^2 + s - 5}{8s^2 - 4s + 5}, \frac{9s}{8s^2 - 4s + 5}\Big), \\
        &\rmx_3(s)= \Big(-\frac{4s^2 + s - 5}{8s^2 - 4s + 5}, -\frac{9s}{8s^2 - 4s + 5}\Big),
        &&\rmx_4(s)=\Big(-\frac{4s^2 - 9s + 5}{8s^2 - 12s + 5}, -\frac{s}{8s^2 - 12s + 5}\Big).
    \end{alignat*}
    Since $\nu$ is concentrated on a single Dirac mass, the optimal transport plans for $(\mu_i,\nu)$,  are $\pi_i=\frac12 \delta_{(x_i^1,\ov y)}+\frac12 \delta_{(x_i^2,\ov y)}$, for $i=0,1$.
    In this case, any transport plan $\gamma_\nu\in \Pc(\R^2)\times\Pc(\R^2)\times \Pc(\R^2)$ as in point \ref{enum:lcs-pi} of Definition \ref{def:lifted_csegment} is given by convex combinations of $\gamma^1_\nu=\frac12 \delta_{(x_0^1,x_1^2,\ov y)}+\frac12 \delta_{(x_0^2,x_1^1,\ov y)}$ and $\gamma^2_\nu=\frac12 \delta_{(x_0^1,x_1^1,\ov y)}+\frac12 \delta_{(x_0^2,x_1^2,\ov y)}$.
    Let us focus on $\gamma^1_\nu$ and the corresponding lift, $\mu^1(s)=\frac12 \delta_{\rmx_1(s)}+\frac12 \delta_{\rmx_3(s)}$. 
    The optimal transport plans for $(\mu^1(s),\nu)$ and $(\mu^1(s),\sigma)$ are again explicit and given, respectively, by $\pi(s)=\frac12 \delta_{(\rmx^1(s), \ov y)}+\frac12 \delta_{(\rmx^3(s), \ov y)}$ and $\tilde\pi(s)=\frac12 \delta_{(\rmx^1(s), y)}+\frac12 \delta_{(\rmx^3(s), y)}$. Then, the difference of transport costs along $\mu^1(s)$ is
    \[
    \Tc_c(\mu^1(s),\nu)-\Tc_c(\mu^1(s),\sigma)=\frac12 \big(c(\rmx_1(s),\ov y)-c(\rmx_1(s),y)\big)+\frac12 \big(c(\rmx_3(s),\ov y)-c(\rmx_3(s),y)\big)
    \]
    and it does not satisfy the \smtw{} inequality (see Figure \ref{fig:MTW_counterexample_b}). By symmetry, neither does $\Tc_c(\mu^2(s),\nu)-\Tc_c(\mu^2(s),\sigma)$, where $\mu^2(s)=\frac12 \delta_{\rmx_2(s)}+\frac12 \delta_{\rmx_4(s)}$. Since any lift of c-segments can be written as the convex combinations $\mu^t=(1-t)\mu^1+t\mu^2$, for $t\in[0,1]$, and the corresponding difference of transport costs is given by
    \[
    \Tc_c(\mu^t(s),\nu)-\Tc_c(\mu^t(s),\sigma)=(1-t)\left[\Tc_c(\mu^1(s),\nu)-\Tc_c(\mu^1(s),\sigma)\right]+t \left[\Tc_c(\mu^2(s),\nu)-\Tc_c(\mu^2(s),\sigma)\right],
    \] 
    there exists no lift of c-segments which satisfies the \smtw{} inequality.
\end{example}

\begin{figure}[t]
    \centering
    \subfloat[]{\includegraphics[trim={1.8cm 8cm 2cm 8cm},clip,width=0.4\textwidth]{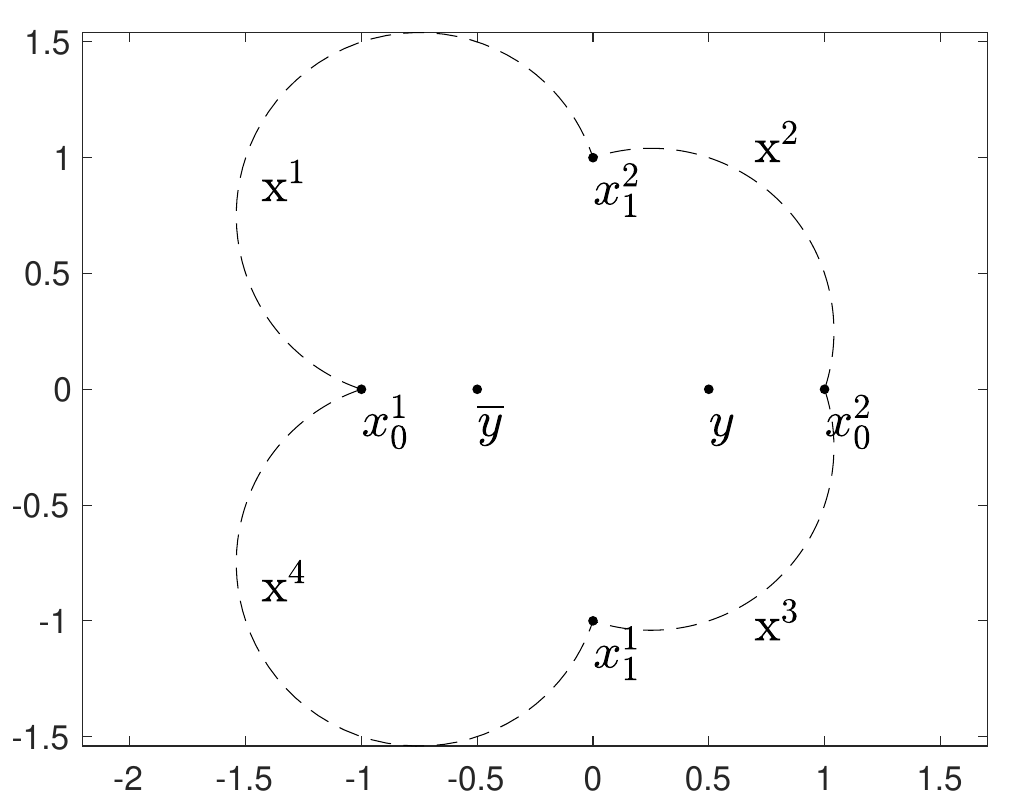} \label{fig:MTW_counterexample_a}}
    \hspace{3em}
    \subfloat[]{\includegraphics[trim={1.8cm 8cm 2cm 8cm},clip,width=0.4\textwidth]{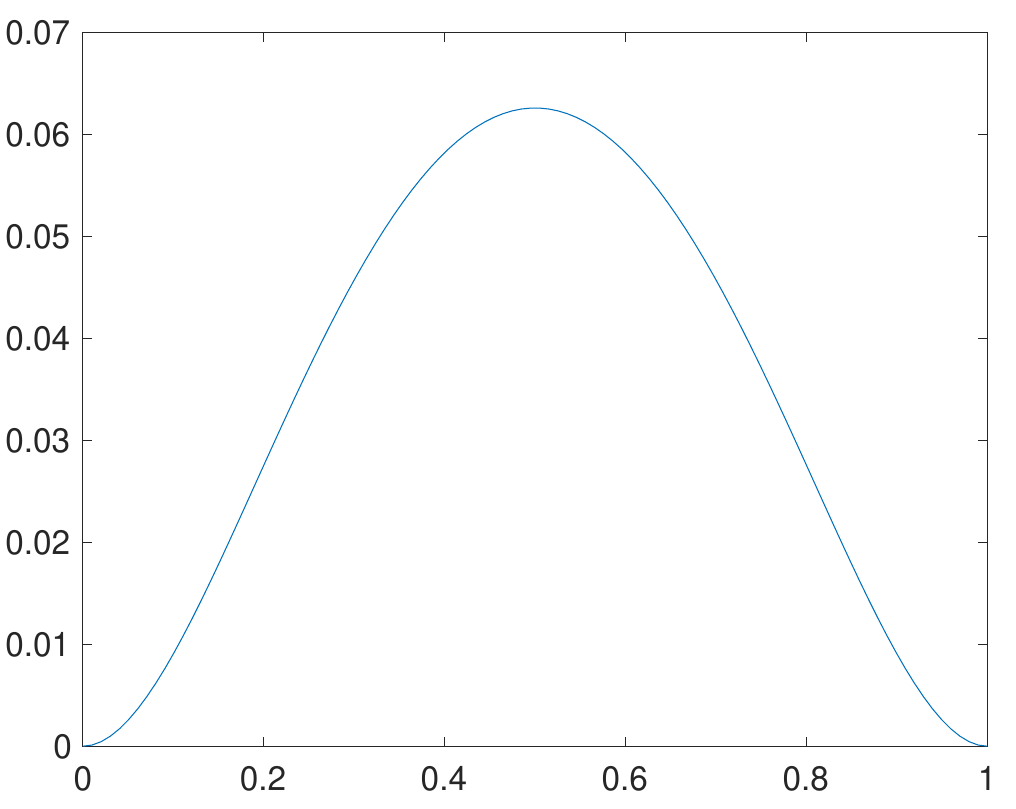}\label{fig:MTW_counterexample_b}}
    \caption{On the left, setting of Example \ref{ex:MTW_counterexample} with the four possible c-segments $\rmx^i$, for $1\le i\le4$, in $(\R^2\times\R^2,c)$ with $c=-\log(|x-y|)$. On the right, plot of the difference of transport costs $f(s)=\Tc_c(\mu^1(s),\nu)-\Tc_c(\mu^1(s),\sigma)$, for the lift of c-segments $\mu^1(s)=\frac12 \delta_{\rmx_1(s)}+\frac12 \delta_{\rmx_3(s)}$, which does not satisfy condition \smtw{}, since $f(s)>(1-s)f(0)+sf(1)=0$ for every $s\in(0,1)$.}
\end{figure}

\section{Other examples}\label{SecExamples}

\subsection{NNCC Riemannian manifolds}\label{ssec:examples_riemannian}

Let $M$ be a compact connected Riemannian manifold, whose geodesic distance is denoted by $d$. We consider the cost $c(x,y)=\frac12d^2(x,y)$ on $M\times M$. Given $x\in M$ and $v\in T_xM$ define:
\begin{itemize}
    \item the \emph{cut time} of $(x,v)$ by 
    \[t_C(x,v)=\max\{t\geq 0 : (\exp_x(sv))_{0\leq s\leq t} \text{ is a minimizing geodesic}\},
    \]
    \item the \emph{injectivity domain} of the exponential map at $x$ by 
    \[\operatorname{I}(x)=\{tv : 0\leq t<t_C(x,v), v\in T_xM\},
    \]
    \item the \emph{cut locus} of $x$ by
    \[
    \operatorname{cut}(x)=\{\exp_x(t_C(x,v)v) : v\in T_xM\setminus\{0\}\},
    \]
    and the cut locus of $M$ by $\operatorname{cut}(M)=\bigcup_{x\in M}(\{x\}\times\operatorname{cut}(x))$.
\end{itemize}

Let $\yb\in M$. If $\operatorname{I}(\yb)$ contains a segment $v(s)=(1-s)v_0+sv_1$, $s\in [0,1]$, then $s\mapsto (\rmx(s),\yb)$ with $\rmx(s)\coloneqq \exp_\yb v(s)$ is a c-segment since $v(s)=\nabla_yc(\rmx(s),\yb)$. Indeed the squared Riemannian distance is continuously differentiable on $(M\times M)\setminus\operatorname{cut}(M)$ (in fact as smooth as $M$ itself) and here $\rmx(s)\in M\setminus \operatorname{cut}(\yb)$. This is a minor extension of the notion of c-segment given in \Cref{def:c-segment} since $c$ fails to be smooth on all of $M\times M$.

\begin{proposition}\label{prop:Riemannian-NNCC}
Suppose that $M$ satisfies 
\begin{itemize}
    \item for each $x\in M$, $\operatorname{I}(x)$ is a convex subset of $T_xM$;
    \item $M$ has nonnegative cross-curvature in the sense that the MTW tensor satisfies $\mathfrak{S}_c(x,y)(\xi,\eta)\geq 0$ for every $(x,y)\in (M\times M)\setminus\operatorname{cut}(M)$ and every $\xi\in T_xM$, $\eta\in T_yM$.
\end{itemize}
Then $(M\times M, d^2)$ satisfies condition \snccconv{}, and in particular is an NNCC space. 
\end{proposition}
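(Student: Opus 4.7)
The plan is to construct the \snccconv{}-variational c-segment explicitly through the Riemannian exponential map at $\yb$, and then reduce the required convexity to the smooth Kim--McCann characterization of \Cref{thm:KM_characterization}. Given $x_0,x_1,\yb\in M$, compactness of $M$ ensures surjectivity of $\exp_\yb$, so I can select $v_0,v_1\in\overline{\operatorname{I}(\yb)}$ with $\exp_\yb v_i = x_i$ for $i=0,1$. Since $\operatorname{I}(\yb)$ is convex (and open), its closure is also convex, so the segment $v(s)\coloneqq (1-s)v_0+sv_1$ lies in $\overline{\operatorname{I}(\yb)}$ for all $s\in[0,1]$, and in $\operatorname{I}(\yb)$ itself for $s\in (0,1)$ (outside the degenerate situation where the whole segment sits on $\partial \operatorname{I}(\yb)$). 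Defining $\rmx(s)\coloneqq\exp_\yb v(s)$ then gives the candidate curve connecting $x_0$ to $x_1$.

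To verify condition \snccconv{}, I would fix $y\in M$ and analyze the function $F(s)\coloneqq c(\rmx(s),\yb)-c(\rmx(s),y)$. Along $\rmx$ one has $c(\rmx(s),\yb)=\tfrac12 |v(s)|^2$, a quadratic polynomial in $s$; convexity of $F$ therefore amounts to controlling $s\mapsto c(\rmx(s),y)$ so that its concavity relative to this quadratic is preserved. In the favourable regime where $v(s)$ stays in the open set $\operatorname{I}(\yb)$ and $\rmx(s)\notin \operatorname{cut}(y)$ for every $s$, the cost $c$ is $C^4$ on neighbourhoods of $(\rmx(s),\yb)$ and $(\rmx(s),y)$, the curve $s\mapsto(\rmx(s),\yb)$ is the classical c-segment of \Cref{def:c-segment}, and the pointwise hypothesis $\mathfrak{S}_c\geq 0$ combined with \Cref{thm:KM_characterization} directly yields convexity of $F$ on $[0,1]$.

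To cover all $y$ and all configurations, I would close the argument by approximation. The cut locus $\operatorname{cut}(y)$ is closed of measure zero, so for the one-dimensional curve $\rmx$ I can find a sequence $y_n\to y$ with $\rmx(s)\notin\operatorname{cut}(y_n)$ for every $s\in[0,1]$; the previous paragraph then gives convexity of $F_n(s)\coloneqq c(\rmx(s),\yb)-c(\rmx(s),y_n)$, and continuity of the squared Riemannian distance allows passing to the pointwise limit in $s$, preserving convexity of $F$. The edge case where the segment $v(s)$ lies on $\partial\operatorname{I}(\yb)$ is handled analogously: perturb $\yb$ to $\yb_n\to\yb$ so that the corresponding segment lies in the open injectivity domain, apply the preceding construction, and take a limit using continuity of $c$.

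The main obstacle is the technical handling of the cut locus and of the boundary of the injectivity domain, precisely because $c=\tfrac12 d^2$ fails to be smooth there. The approximation strategy relies on continuity of $d^2$ and on the stability of the chord inequality under pointwise convergence, both of which are standard in the spirit of Loeper--Villani. The substantive content of the argument is the identification of $\mathfrak{S}_c\geq 0$ with the chord inequality for $F$ provided by \Cref{thm:KM_characterization}; the remainder is a density argument ensuring that the smooth regime is approached generically. Once \snccconv{} is established the NNCC property follows immediately.
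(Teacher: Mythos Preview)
Your overall architecture matches the paper's: construct the candidate curve as $\rmx(s)=\exp_{\yb}((1-s)v_0+sv_1)$, establish convexity of $s\mapsto c(\rmx(s),\yb)-c(\rmx(s),y)$ in the smooth regime via $\mathfrak{S}_c\geq 0$, and close by approximation. The paper uses a $(1-\varepsilon)$-rescaling of the $v_i$ to force the segment into the open injectivity domain, rather than your perturbation of $\yb$; this is a minor difference.

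The substantive gap is your treatment of the cut locus of $y$. You claim that because $\operatorname{cut}(y)$ is a null set you can find $y_n\to y$ with $\rmx(s)\notin\operatorname{cut}(y_n)$ for \emph{every} $s\in[0,1]$. The bad set of $y$'s is $\bigcup_{s\in[0,1]}\operatorname{cut}(\rmx(s))$, a continuum union of codimension-one sets, which can have full measure (or even be all of $M$); nothing prevents $y$ from lying in its interior. Fubini only gives that for a.e.\ $y_n$ the set of bad $s$ has measure zero, which is not enough to conclude convexity from a pointwise second-derivative inequality. Relatedly, \Cref{thm:KM_characterization} is a global statement under assumptions \eqref{eq:ass-C4}--\eqref{eq:ass-biconvex} on all of $X\times Y$; you would still need to carve out an appropriate domain on which these hold, which you have not done.

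The paper avoids this entirely by working pointwise in $s$: at each fixed $s_0$ it replaces $y$ by $y^\varepsilon\coloneqq\exp_{\rmx^\varepsilon(s_0)}((1-\varepsilon)w)$ along the geodesic from $\rmx^\varepsilon(s_0)$ to $y$, applies the non-tensorial integral expression of $\mathfrak{S}_c$ (Kim--McCann, Lemma~4.5) to get $\frac{d^2}{ds^2}c(\rmx^\varepsilon(s),\yb)\geq \frac{d^2}{ds^2}c(\rmx^\varepsilon(s),y^\varepsilon)$ at $s_0$, and then uses a metric monotonicity inequality for the Hessian of $d^2$ along geodesics (from \cite{CEMS01}) to bound the possibly nonexistent second derivative of $c(\cdot,y)$ by that of $c(\cdot,y^\varepsilon)$ in the sense of $\limsup$ of second differences. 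This yields a lower second-difference bound for $f^\varepsilon(s)=c(\rmx^\varepsilon(s),\yb)-(1-\varepsilon)c(\rmx^\varepsilon(s),y)$ at every $s_0$, hence convexity, and one finishes by sending $\varepsilon\to 0$. The key point you are missing is that the approximation of $y$ must be done $s$-by-$s$ and combined with a comparison lemma that works without smoothness of $c(\cdot,y)$ at $\rmx(s)$.
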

\begin{proof}
    Let $x_0,x_1,\yb\in M$. By connectedness and completeness of $M$ there exist minimizing geodesics joining $\yb$ to respectively $x_0$ and $x_1$. The tangent vectors at $\yb$ encoding the geodesics are denoted by $v_0,v_1$, i.e.\ $x_i=\exp_\yb v_i$ for $i=0,1$. Then for any $\varepsilon>0$ smaller than $1$, $(1-\varepsilon)v_0$ and $(1-\varepsilon)v_1$ belong to $\operatorname{I}(\yb)$. 
    
    Since $\operatorname{I}(\yb)$ is convex it contains $(1-\varepsilon) v(s)$ for every $s\in [0,1]$, where $v(s)\coloneqq (1-s)v_0 + s v_1$. Let $\rmx^\eps(s)=\exp_\yb((1-\varepsilon)v(s))$. Let $y\in M$ and fix $0<s_0<1$. Again we may find a minimizing geodesic connecting $x\coloneqq \rmx^\varepsilon(s_0)$ to $y$ whose initial velocity we denote $w\in T_{x}M$. Then  $(1-\varepsilon)w\in \operatorname{I}(x)$, and since $\yb$ belongs to $\exp_x(\operatorname{I}(x))$ there exists a c-segment $(t\in[0,1])\mapsto (x,\rmy(t))$, with $\rmy(0)=\yb$ and $\rmy(1)=\exp_x((1-\varepsilon) w)\eqqcolon y^\varepsilon$. From the ``non-tensorial expression of $\Sc$'' \cite[Lemma 4.5]{kim2010continuity}, we obtain (see e.g.\ the proof of \cite[Theorem 2.10]{kim2012towards})
    \[
    \frac{d^2}{ds^2}\Big|_{s=s_0}\big[c(\rmx^\varepsilon(s),\yb)-c(\rmx^\varepsilon(s),y^\varepsilon)\big] = \int_0^1(1-t)\,\Sc(x,\rmy(t))(\dot\rmx^\varepsilon(s_0),\dot\rmy(t))\,dt\,,
    \]
    which implies by nonnegative cross-curvature
    \begin{equation}\label{prop:Riemannian-NNCC-2}
    \frac{d^2}{ds^2}\Big|_{s=s_0}c(\rmx^\varepsilon(s),\yb)\geq \frac{d^2}{ds^2}\Big|_{s=s_0}c(\rmx^\varepsilon(s),y^\varepsilon)\,.
    \end{equation}
    To conclude we want to bound the right-hand side of \eqref{prop:Riemannian-NNCC-2} by a corresponding quantity at $y$ instead of $y^\varepsilon$. Since $x$ may belong to the cut locus of $y$ the cost $c(\cdot,y)$ may not be twice differentiable at $x$. To get around this, we can use a type of monotonicity of the Hessian of the squared distance along geodesics. We may for instance adapt \cite[Lemma 2.3]{CEMS01}. By \cite[Claim 2.4]{CEMS01} we have the metric inequality for $r\geq 0$ small enough
    \[
    (1-\varepsilon)c(\rmx^\varepsilon(s_0\pm r),y)\leq c(\rmx^\varepsilon(s_0\pm r),y^\varepsilon)-\varepsilon (1-\varepsilon)c(x,y)\,.
    \]
    This implies 
    \begin{equation}\label{prop:Riemannian-NNCC-3}
    (1-\varepsilon)\limsup_{r\to 0}\frac{c(\rmx^\varepsilon(s_0+ r),y)+c(\rmx^\varepsilon(s_0- r),y)-2c(x,y)}{r^2}\leq \frac{d^2}{ds^2}\Big|_{s=s_0}c(\rmx^\varepsilon(s),y^\varepsilon)\,.
    \end{equation}
    Therefore while $s\mapsto c(\rmx^\eps(s),y)$ may not have a Hessian at $s=s_0$, the finite differences remain bounded above. 
    Introduce $f^\varepsilon(s)=c(\rmx^\varepsilon(s),\yb)-(1-\varepsilon)c(\rmx^\varepsilon(s),y)$. Combining \eqref{prop:Riemannian-NNCC-2} with \eqref{prop:Riemannian-NNCC-3} we have shown that 
    \begin{equation}\label{prop:Riemannian-NNCC-4}
    \liminf_{r\to 0}\frac{f^\varepsilon(s_0+r)+f^\varepsilon(s_0-r)-2f(s_0)}{r^2}\geq 0\,.
    \end{equation}
    Since $s_0$ is arbitrary in $(0,1)$ and $f^\varepsilon$ is continuous, this implies convexity of $f^\varepsilon$ (see e.g.\ the proof of \cite[Lemma 3.11]{CEMS01}). Since $f^\varepsilon$ converges pointwise to $c(\rmx(s),\yb)-c(\rmx(s),y)$ as $\varepsilon\to 0$, where $\rmx(s)$ is defined by $\rmx(s)=\exp_\yb(v(s))$, we obtain convexity of $s\mapsto c(\rmx(s),\yb)-c(\rmx(s),y)$. 
    \end{proof}

Note that variational c-segments on a Riemannian manifold are not unique in general, as it is already the case for geodesics.
There are not many examples of Riemannian manifolds that are nonnegatively cross-curved in the differentiable sense. 
Kim and McCann \cite[Remark 3.4 and Section 6]{kim2012towards} show the sphere to have nonnegative cross-curvature outside of its cut locus. Then \Cref{prop:Riemannian-NNCC} implies: 

\begin{corollary}\label{cor:sphereNNCC}
    Spheres and products of spheres satisfy \snccconv{}, and are thus NNCC spaces.
\end{corollary}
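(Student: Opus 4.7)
The plan is to apply Proposition \ref{prop:Riemannian-NNCC} to a single sphere and then combine this with the stability of \snccconv{} under finite products to handle arbitrary products of spheres.

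For a single round sphere $S^n$, I would check the two hypotheses of Proposition \ref{prop:Riemannian-NNCC}. The injectivity domain $\operatorname{I}(x)$ at any $x\in S^n$ is the open Euclidean ball of radius $\pi$ in $T_xS^n$ (the cut locus of $x$ being its antipode), hence convex. Nonnegative cross-curvature of the sphere outside the cut locus is the Kim--McCann computation \cite[Remark 3.4 and Section 6]{kim2012towards} already cited in the paragraph just before the corollary. Proposition \ref{prop:Riemannian-NNCC} then yields that $(S^n\times S^n,d^2)$ satisfies \snccconv{}.

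For a finite product $M=S^{n_1}\times\cdots\times S^{n_k}$, the squared Riemannian distance decomposes as $d_M^2((x^i)_i,(y^i)_i)=\sum_{i=1}^k d_i^2(x^i,y^i)$, where $d_i$ is the geodesic distance on $S^{n_i}$. This matches the additive cost structure of Proposition \ref{prop:products}. The proof of that proposition adapts verbatim to condition \snccconv{}: picking an \snccconv{}-variational c-segment $(\rmx^i,\yb^i)$ in each factor from $x_0^i$ to $x_1^i$ (which exists by the first step) and setting $\boldsymbol{\rmx}(s)=(\rmx^i(s))_i$, one has for every $\boldsymbol{y}=(y^i)_i\in M$
\[
d_M^2(\boldsymbol{\rmx}(s),\boldsymbol{\yb})-d_M^2(\boldsymbol{\rmx}(s),\boldsymbol{y})=\sum_i \bigl[d_i^2(\rmx^i(s),\yb^i)-d_i^2(\rmx^i(s),y^i)\bigr],
\]
a finite sum of convex functions of $s$, hence convex. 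Therefore $(M\times M,d_M^2)$ satisfies \snccconv{}, and in particular is NNCC.

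No real obstacle is anticipated: both ingredients (convexity of the injectivity domain and nonnegative cross-curvature of $S^n$; stability of the additive \snccconv{} property under taking products) are either already in the literature or follow immediately from the fact that a sum of convex functions is convex.
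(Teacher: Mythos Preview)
Your proposal is correct and matches the paper's approach for the single sphere: verify the two hypotheses of Proposition~\ref{prop:Riemannian-NNCC} (convex injectivity domain, $\mathfrak{S}_c\ge 0$ off the cut locus via Kim--McCann) and conclude \snccconv{}.

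For products the paper gives no explicit argument; the most natural reading of the surrounding text is that one applies Proposition~\ref{prop:Riemannian-NNCC} directly to the product manifold, using Kim--McCann's differential stability of nonnegative cross-curvature under products together with the fact that the injectivity domain of a Riemannian product is the product of injectivity domains (hence a product of open balls, so convex). Your route is slightly different: you first get \snccconv{} on each factor and then invoke the synthetic product stability, observing that the proof of Proposition~\ref{prop:products} adapts verbatim to \snccconv{} because a finite sum of convex functions is convex. Both arguments are valid and equally short; yours has the mild advantage of being fully self-contained within the paper (it does not appeal to the smooth product result from \cite{kim2012towards}), while the paper's implicit route avoids restating the product argument.
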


In the next section, \Cref{ThFRNNCC} directly extends this result to (a part of) an infinite-dimensional sphere.

\subsection{Hellinger distance and Fisher--Rao metric on probability measures} \label{sec:hellinger}
Let $X$ denote a measurable space. 
The Hellinger distance $\mathcal{H}$ on $\mathcal{P}(X)$, the set of probability measures on $X$, is defined by
\begin{equation}
    \mathcal{H}^2(\mu,\nu) \coloneqq \int_X \left( \sqrt{\frac{d\mu}{d\lambda}(x)} - \sqrt{\frac{d\nu}{d\lambda}(x)}\right)^2 d\lambda(x)\,,
\end{equation}
where $\lambda$ is any dominating measure for both $\mu$ and $\nu$,
that is, $\mu$ and $\nu$ are absolutely continuous with respect to $\lambda$, and $\frac{d\mu}{d\lambda}$ denotes the Radon--Nikodym derivative.
This distance is well-known in statistics, popularized by Le Cam for its use for hypothesis testing \cite{LeCam1986}. 
The Hellinger distance also appears as a limit case in the unbalanced optimal transport distance (see Section \ref{SecUOT}) but on the cone of nonnegative Radon measures.
There it has the geometry of a Hilbert space via the square root transform. 
In contrast, the geometry of the Hellinger distance on the space of probability measures is rather an infinite dimensional sphere endowed with an ambient $L^2$ metric. In the following theorem, we define a family of sets on the sphere that satisfy the NNCC property for the extrinsic squared Hilbert distance.

\begin{theorem}\label{ThSphereAmbientMetricSNCC}
    Let $\mathbb S$ denote the unit sphere in a Hilbert space $(H,\langle\cdot,\cdot\rangle)$. Consider a convex cone $K_0 \subset H$ satisfying $\bracket{x,y}\geq 0$ for all $x,y \in K_0$, and let $S_0 = K_0 \cap \mathbb{S}$.     
    Let $D(x,y)=\lVert x-y\rVert^2$ denote the squared Hilbert distance. Then $(S_0,D)$ satisfies condition \snccconv{}, and is thus an NNCC space.
\end{theorem}

\begin{proof}
    Let $x_0,x_1,\yb\in S_0$. We search for a variational c-segment joining $(x_0,\yb)$ to $(x_1,\yb)$ in the form 
    \begin{equation*}
        \rmx(s)=z(s)+h(s)\yb,
    \end{equation*}
    where $z(s)\coloneqq (1-s)x_0+s x_1$, and $h(s)$ is a scalar chosen so that $\rmx(s)\in \mathbb{S}$. Since $z(s)$ belongs to the unit ball, the line $(t\in\R)\mapsto z(s)+t\yb$ intersects the sphere at one or two points, which correspond to a positive and a negative value of $t$. The choice $h(s)\geq 0$, which can be expressed as 
    \begin{equation*}
        h(s)=\sup\{t \in\R : \norm{z(s)+t\yb} \leq 1\},
    \end{equation*}
    gives $\rmx(s)$ as a conical combination of $x_0$, $x_1$ and $\yb$, thereby guaranteeing that $\rmx(s)$ belongs to $K_0$, and therefore to $S_0$.   
    Note that when $s=0$, a value $t>0$ implies $\norm{x_0+t\yb}^2=1+t^2+2t\bracket{x_0,\yb}>1$ (using that $\bracket{x_0,\yb}\geq 0$); then necessarily $h(0)=0$, which shows $\rmx(0)=x_0$. A similar reasoning gives $\rmx(1)=x_1$.

    Convexity of the unit ball directly implies convexity of the set $C\coloneqq \{(s,t)\in [0,1]\times\R : \norm{z(s)+t\yb}\leq 1\}$; in turn  this implies that $h$ is \emph{concave}. Indeed, given $\lambda\in [0,1]$ and $(s_0,t_0), (s_1,t_1)\in C$, the linear interpolation $(s_\lambda,t_\lambda)$ lies in the convex set $C$; by definition of $h$, we deduce $h(s_\lambda)\geq t_\lambda=(1-\lambda)t_0+\lambda t_1$. Maximizing over $t_0,t_1$ then gives $h(s_\lambda)\geq (1-\lambda)h(s_0)+\lambda h(s_1)$. We compute now for any $y\in S_0$,
    \begin{align*}
        D(\rmx(s),\yb)-D(\rmx(s),y)&=2\bracket{\rmx(s),y-\yb}\\
        &=2\bracket{(1-s)x_0+sx_1,y-\yb}+2h(s)\bracket{\yb,y-\yb}.
    \end{align*}
    Note that $\bracket{\yb,y-\yb}=\bracket{\yb,y}-1\leq 0$ by the Cauchy--Schwarz inequality. Concavity of $h$ then suffices to show the convexity of $s \mapsto D(\rmx(s),\yb)-D(\rmx(s),y)$. 

\end{proof}

The next result says that up to taking a square root transform, the space of probability measures endowed with the squared Hellinger distance is a part of a sphere satisfying the condition in \Cref{ThSphereAmbientMetricSNCC}, showing it to be a NNCC space.

\begin{corollary}\label{ThHellingerSNCC}
  The space $(\mathcal{P}(X),\mathcal{H}^2)$ satisfies condition \snccconv{}, and is thus an NNCC space.
\end{corollary}
\begin{proof}
    Let $\mu_0,\mu_1,\nub \in \Pc(X)$, and define their square root densities $f_0, f_1, \bar g$ with respect to a common dominating measure $\lambda$. Consider the Hilbert space $H=L^2(\lambda)$ and the convex cone $K_0=\{f\in L^2(\lambda) : f \geq 0 \,\, \lambda\text{-a.e.}\}$. Then $f_0,f_1, \bar g$ belong to $S_0\coloneqq \{f\in K_0 : \norm{f}_{L^2(\lambda)}=1\}$. By \Cref{ThSphereAmbientMetricSNCC}, there exists a variational c-segment $(f(s),\bar g)$ connecting $(f_0,\bar g)$ to $(f_1,\bar g)$ which satisfies the \snccconv{} property. Then $(\mu(s) \coloneqq f(s)^2 d \lambda,\nub)$ is a variational c-segment on $(\mathcal{P}(X),\mathcal{H}^2)$ satisfying the \snccconv{} property. We also point out that an explicit computation of $f(s)$ reveals $\mu(s)$ to be independent of $\lambda$.

\end{proof}

Endowed with the Hellinger distance, the metric space $\mathcal{P}(X)$ is not a length space. The induced (length space) distance is called the Fisher--Rao distance, and can be written as
\begin{equation}\label{EqFisherRao}
    \operatorname{FR}(\mu,\nu) = \arccos\left(1 - \frac{1}{2} \mathcal{H}(\mu,\nu)\right).
\end{equation}
The corresponding geometry on the space of probability measures is an infinite-dimensional sphere equipped with its intrinsic distance.

Since a finite-dimensional sphere is NNCC, one can expect this result to hold in infinite dimensions. 
We first start with a lemma that similarly to Theorem \ref{ThSphereAmbientMetricSNCC} describes a family of subsets of the sphere that are NNCC.

\begin{lemma}\label{ThLemmaSphereIntrinsic}
Let $\mathbb S$ be the unit sphere of a Hilbert space $H$ and let $S_0$ denote a subset of $\mathbb S$ satisfying the same properties as in \Cref{ThSphereAmbientMetricSNCC}.
Let $d$ be the geodesic distance on $\mathbb S$. Then $(S_0,d^2)$ satisfies condition \snccconv{} and is thus an NNCC space.
\end{lemma}

\begin{proof}
    Let $x_0,x_1,\yb\in\mathbb{S}$. There exists a 2-dimensional unit sphere $\mathbb{S}_2\subset H$ centered at $0$ which contains these three points. By the proof of \Cref{prop:Riemannian-NNCC} and by \Cref{cor:sphereNNCC}, there exists a curve $\rmx(s)$ on $\mathbb{S}_2$ connecting $x_0$ to $x_1$ such that $(\rmx,\yb)$ is a variational c-segment satisfying the convex NNCC inequality; this curve is given by 
    \begin{equation}\label{eq:proof-ThLemmaSphereIntrinsic-1}
        \rmx(s)=\exp_\yb(v(s)),\quad v(s)=(1-s)v(0)+sv(1).
    \end{equation}
    Here, $v(0)$ and $v(1)$ are tangent vectors to $\yb$ within $\mathbb{S}_2$ which encode the initial velocity of \emph{minimizing geodesics} in $\mathbb{S}_2$ joining $\yb$ to $x_0$ and $x_1$ respectively.

    Consider now another point $y\in\mathbb{S}$ and any $n$-dimensional unit sphere $\mathbb{S}_n\supset\mathbb{S}_2$ centered at $0$ containing $x_0,x_1,\yb$ and $y$. Since geodesics of centered spheres are given by arcs of great circles, $\mathbb{S}_2$ is a totally geodesic submanifold of $\mathbb{S}_n$. As a consequence, $v(0)$ and $v(1)$ still encode minimizing geodesics within $\mathbb{S}_n$. By the proof of \Cref{prop:Riemannian-NNCC} and by \Cref{cor:sphereNNCC}, $(\rmx,\yb)$ is then a variational c-segment on $(\mathbb{S}_n,d^2)$, such that $s\mapsto d^2(\rmx(s),\yb)-d^2(\rmx(s),y)$ is convex. Here we slightly abuse notation by denoting by $d$ the intrinsic distance in $\mathbb{S}_n$, since it coincides with the restriction of the distance on the infinite-dimensional sphere. Since $y$ is an arbitrary point on $\mathbb{S}$, we deduce that $(\rmx(s),\yb)$ is a variational c-segment satisfying the convex NNCC inequality on all of $(\mathbb{S},d^2)$. Therefore, $(\mathbb{S},d^2)$ satisfies condition \snccconv{}.

    To conclude, we need to show that if $x_0,x_1$ and $\yb$ belong to $S_0$, then the path $\rmx(s)$ stays inside $S_0$. Recall that $S_0$ is the intersection of $\mathbb{S}$ and a convex cone $K_0$ such that 
    \begin{equation}\label{eq:proof-ThLemmaSphereIntrinsic-2}
        \bracket{x,y}\geq 0 \quad\text{for all $x,y\in K_0$}. 
    \end{equation}    
    Using the explicit expression of the exponential map on a centered unit sphere, \eqref{eq:proof-ThLemmaSphereIntrinsic-1} can be written as 
    \begin{equation*}\label{eq:proof-ThLemmaSphereIntrinsic-3}
        \rmx(s)=\cos(\alpha(s))\yb+\sin(\alpha(s))\frac{v(s)}{\alpha(s)}, \quad \alpha(s)=\norm{v(s)},\quad v(s)=(1-s)v(0)+sv(1).
    \end{equation*}
    Here and for the remainder of the proof, expressions of the form $\sin(\alpha)/\alpha$ should be replaced with the limiting value $1$ when $\sin(\alpha)=0$. Let $f(\alpha)=\alpha \cos(\alpha)/\sin(\alpha)$. Substituting $v(s)$ with its values at $s=0$ and $s=1$, we obtain the expression 
    \begin{equation*}
        \rmx(s)=\frac{\sin(\alpha(s))}{\alpha(s)}\bigg[\Big(f(\alpha(s))-(1-s)f(\alpha(0))-s f(\alpha(1))\Big)\yb+(1-s)\frac{\alpha(0)}{\sin(\alpha(0))}x_0+s\frac{\alpha(1)}{\sin(\alpha(1))} x_1\bigg].
    \end{equation*}
    Property \eqref{eq:proof-ThLemmaSphereIntrinsic-2} applied to $(x_0,\yb)$ and $(x_1,\yb)$ implies that the angles $\alpha(0)$ and $\alpha(1)$ lie in $[0,\pi/2]$. Convexity of the norm gives $\alpha(s)=\norm{(1-s)v(0)+sv(1)}\leq (1-s)\alpha(0)+s\alpha(1)$. Now, since $\sin(\alpha)/\alpha$ is nonnegative on $[0,\pi/2]$, and $f$ is both decreasing and concave on $[0,\pi/2]$, we obtain that $\rmx(s)$ is a conical combination of $\yb$, $x_0$ and $x_1$. We deduce that $\rmx(s)$ belongs to $K_0$, and therefore to $S_0$.

\end{proof}

\begin{proposition}\label{ThFRNNCC}
    The space $(\mathcal{P}(X),\operatorname{FR}^2)$ satisfies \snccconv{} and is thus NNCC.
\end{proposition}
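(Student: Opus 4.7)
The plan is to identify $(\Pc(X),\operatorname{FR})$ isometrically with a subset of an infinite-dimensional round sphere, construct the variational c-segment via the spherical exponential map, and then reduce verification of \snccconv{} to the finite-dimensional sphere result already proved in \Cref{cor:sphereNNCC}. Concretely, given $\mu_0,\mu_1,\bar\nu\in\Pc(X)$, choose a common dominating measure $\lambda$; the square-root map $\mu\mapsto\alpha_\mu\coloneqq\sqrt{d\mu/d\lambda}$ sends $\Pc(X)$ into the positive part $S^+=\{f\geq 0,\,\lVert f\rVert_{L^2(\lambda)}=1\}$ of the unit sphere $\mathbb{S}\subset L^2(\lambda)$, and the formula $\operatorname{FR}(\mu,\nu)=\arccos\langle\alpha_\mu,\alpha_\nu\rangle$ makes this identification an isometry with $(S^+,d_{\mathbb S})$.

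Write $\alpha_0,\alpha_1,\bar\beta\in S^+$ and set $\theta_i=\arccos\langle\alpha_i,\bar\beta\rangle\in[0,\pi/2]$ (nonnegativity since $\alpha_i,\bar\beta\geq 0$). For the candidate c-segment I would use the spherical exponential: with $v_i=\theta_i(\alpha_i-\cos\theta_i\,\bar\beta)/\sin\theta_i$ (set $v_i=0$ when $\theta_i=0$) and $v(s)=(1-s)v_0+sv_1$, define
\[
\alpha(s)=\cos\lVert v(s)\rVert\,\bar\beta+\sin\lVert v(s)\rVert\,\frac{v(s)}{\lVert v(s)\rVert}.
\]
This interpolates from $\alpha_0$ to $\alpha_1$, has unit norm, and lies in the at-most-three-dimensional subspace $\operatorname{span}(\alpha_0,\alpha_1,\bar\beta)$.

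The main obstacle will be the positivity $\alpha(s)\geq 0$ $\lambda$-a.e., which is what makes $\alpha(s)^2\cdot\lambda$ a genuine element of $\Pc(X)$. Writing $\alpha(s)=a\,\alpha_0+b\,\alpha_1+c\,\bar\beta$, the coefficients $a,b$ are manifestly nonnegative, while $c\geq 0$ reduces to the scalar inequality
\[
\lVert v(s)\rVert\cot\lVert v(s)\rVert\;\geq\;(1-s)\theta_0\cot\theta_0+s\theta_1\cot\theta_1.
\]
I would prove this by combining the $L^2$ triangle bound $\lVert v(s)\rVert\leq(1-s)\theta_0+s\theta_1\leq\pi/2$ with the two properties that $\varphi(t)=t\cot t$ is decreasing and concave on $(0,\pi/2]$, the latter following from the direct computation $\varphi''(t)=2\csc^2 t(\varphi(t)-1)\leq 0$. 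Monotonicity gives $\varphi(\lVert v(s)\rVert)\geq\varphi((1-s)\theta_0+s\theta_1)$ and concavity then gives $\varphi((1-s)\theta_0+s\theta_1)\geq(1-s)\varphi(\theta_0)+s\varphi(\theta_1)$, which is the required bound. Hence $\alpha(s)\in S^+$ defines a measure $\mu(s)\in\Pc(X)$.

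To conclude the \snccconv{} inequality, fix any test $\sigma\in\Pc(X)$, adjoin $\sigma$ to the dominating measure if needed, and let $\beta\in S^+$ be its square root. Set $V=\operatorname{span}(\alpha_0,\alpha_1,\bar\beta,\beta)$, a subspace of $L^2(\lambda)$ of dimension at most $4$. Then $\mathbb{S}_V\coloneqq\mathbb{S}\cap V$ is a totally geodesic sub-sphere isometric to a standard finite-dimensional sphere $\mathbb{S}^{\dim V-1}$, and because the Riemannian exponential of $\mathbb{S}_V$ agrees with the ambient one, $\alpha(s)$ coincides with the spherical c-segment on $\mathbb{S}_V$ from $\alpha_0$ to $\alpha_1$ with base $\bar\beta$. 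Applying \Cref{cor:sphereNNCC} to $\mathbb{S}_V$, the function $s\mapsto d_{\mathbb{S}_V}^2(\alpha(s),\bar\beta)-d_{\mathbb{S}_V}^2(\alpha(s),\beta)$ is convex; since $d_{\mathbb{S}_V}$ agrees with the ambient spherical distance, hence with $\operatorname{FR}$, on these points, this is exactly the \snccconv{} inequality for the triple $(\mu(s),\bar\nu)$ against $\sigma$, and NNCC follows.
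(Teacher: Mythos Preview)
Your proof is correct and follows the same overall architecture as the paper: identify $(\Pc(X),\operatorname{FR})$ with the nonnegative part of the unit sphere in $L^2(\lambda)$ via the square-root map, build the variational c-segment as the spherical exponential $\alpha(s)=\exp_{\bar\beta}\big((1-s)v_0+sv_1\big)$, and reduce the convexity check to a finite-dimensional round sphere where \Cref{cor:sphereNNCC} applies.

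The one substantive difference is how you establish $\alpha(s)\geq 0$. The paper invokes \Cref{ThLemmaSphereIntrinsic}, the abstract statement that variational c-segments on $(S_q,d^2)$ remain in $S_q$, and then reads nonnegativity off from membership in $S_0$. You instead write $\alpha(s)=a\,\alpha_0+b\,\alpha_1+c\,\bar\beta$ explicitly and prove $c\geq 0$ via the scalar inequality $\lVert v(s)\rVert\cot\lVert v(s)\rVert\geq(1-s)\theta_0\cot\theta_0+s\theta_1\cot\theta_1$, which you derive from the monotonicity and concavity of $t\mapsto t\cot t$ on $[0,\pi/2]$. This is a direct, self-contained computation that bypasses the $S_q$ machinery entirely; it buys you an argument that does not rely on any maximality property of $S_0$ and makes the positivity transparent at the level of coefficients. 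The paper's route is more structural but leans on an auxiliary lemma; yours is more hands-on. For the test measure $\sigma$, your device of enlarging $\lambda$ and working in $\mathbb{S}\cap\operatorname{span}(\alpha_0,\alpha_1,\bar\beta,\beta)$ is equivalent to the paper's use of a fourth basis direction $h$ to embed $\beta$ into a fixed $\mathbb{S}_3$: in both cases the key point is that $\alpha(s)$ lies in $\operatorname{span}(\alpha_0,\alpha_1,\bar\beta)$, so $\langle\alpha(s),\beta\rangle$ depends only on the projection of $\beta$ onto that span.
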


\begin{proof}
Consider a triplet $(\mu_0,\mu_1,\overline{\nu}) \in \mathcal{P}(X)^3$ and a measure $\lambda$ dominating these three measures. 
The statement of the proposition follows easily by considering the measure $\mu(s)$ corresponding to the square of the variational c-segment on the $L^2(\lambda)$ sphere and using \Cref{ThLemmaSphereIntrinsic}.
\end{proof}

\subsection{Relative entropy, a.k.a. the Kullback--Leibler divergence} \label{sec:KL}

Let $(X,\Ac)$ be a measurable space. Given two probability measures $\mu$ and $\nu$ on $(X,\Ac)$, the \emph{relative entropy} or \emph{Kullback--Leibler divergence} between $\mu$ and $\nu$ is defined by 
\begin{equation}\label{eq:def-KL}
\operatorname{KL}(\mu,\nu) = \begin{cases} 
    \displaystyle\int_X F\Big(\frac{d\mu}{d\nu}\Big)\,d\nu & \text{if $\mu\ll\nu$},\\
+\infty & \text{otherwise},
\end{cases}
\end{equation}
with $F(t)=t\log(t)-t+1\geq 0$. In \eqref{eq:def-KL} $\mu\ll\nu$ means that $\mu$ is absolutely continuous with respect to $\nu$ and $\frac{d\mu}{d\nu}$ denotes the Radon--Nikodym derivative. Since the relative entropy can formally be seen as a Bregman divergence over the space of probability measures $\Pc(X)$, we can expect it to have nonnegative cross-curvature. That is indeed the case. 

\begin{proposition}\label{prop:KL-NNCC}
    $\big(\Pc(X)\times \Pc(X),\operatorname{KL}\big)$ is an NNCC space.
\end{proposition}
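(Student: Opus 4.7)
The plan is to take the linear interpolation $\mu(s) = (1-s)\mu_0 + s\mu_1$ as the $\rmx$-component of the variational c-segment connecting $(\mu_0,\bar\nu)$ to $(\mu_1,\bar\nu)$, analogously to the construction for Bregman divergences on Banach spaces (Example \ref{ex:Bregman}), since the relative entropy is formally the Bregman divergence of the Boltzmann--Shannon entropy on $\Pc(X)$. The key identity to exploit is that whenever $\KL(\mu,\bar\nu)$ and $\KL(\mu,\nu)$ are both finite,
\[
\KL(\mu,\bar\nu) - \KL(\mu,\nu) = \int_X \log(r/\bar r)\, d\mu\,,
\]
where $r = d\nu/d\lambda$ and $\bar r = d\bar\nu/d\lambda$ for any common dominating measure $\lambda$ (e.g.\ $\lambda = \nu + \bar\nu$). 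Since the right-hand side is \emph{linear} in $\mu$, substituting $\mu(s) = (1-s)\mu_0 + s\mu_1$ will immediately produce \emph{equality} in the NNCC inequality in the generic case.

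I would first verify that $\KL(\mu(s),\bar\nu)$ is finite for every $s \in [0,1]$; this follows from convexity of $F(t) = t\log t - t + 1$ and hence of $\mu \mapsto \KL(\mu,\bar\nu)$. For an arbitrary $\nu \in \Pc(X)$, I would then distinguish two cases. If $\mu_i \not\ll \nu$ for some $i \in \{0,1\}$, pick $A$ with $\nu(A) = 0$ but $\mu_i(A) > 0$; then $\mu(s)(A) > 0$ for every $s \in (0,1)$, so $\mu(s) \not\ll \nu$ and $\KL(\mu(s),\nu) = +\infty$. Both sides of the NNCC inequality then reduce to $-\infty$ under the sign conventions of Definition \ref{def:sncc}, and the inequality holds trivially. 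If instead $\mu_0,\mu_1 \ll \nu$, all four KL terms are finite and applying the displayed identity to $\mu_0$, $\mu_1$ and $\mu(s)$ yields the NNCC inequality with equality, by linearity of the integral in the measure.

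The main technical obstacle is justifying the displayed identity itself: the passage $\int \log(p/\bar r)\,d\mu - \int \log(p/r)\,d\mu = \int \log(r/\bar r)\,d\mu$ requires $\log(p/\bar r)$ and $\log(p/r)$ to be $\mu$-integrable (not merely to have finite signed integrals), where $p = d\mu/d\lambda$. This is handled by the bound $t\log t \geq -1/e$ on $(0,\infty)$: rewriting $\KL(\mu,\bar\nu) = \int (t\log t)\,\bar r\,d\lambda$ with $t = p/\bar r$, the negative part of the integrand is dominated by $\bar r/e$ and therefore contributes at most $1/e$, so the positive part is finite as well since the total is. The same applies to $\KL(\mu,\nu)$, after which $\log(r/\bar r) = \log(p/\bar r) - \log(p/r)$ is $\mu$-integrable, the split is legal, and the proof concludes by linearity.
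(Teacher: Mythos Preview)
Your approach via the linear interpolation $\mu(s)=(1-s)\mu_0+s\mu_1$ and the identity $\KL(\mu,\bar\nu)-\KL(\mu,\nu)=\int\log(r/\bar r)\,d\mu$ is a clean and direct route, and your integrability justification through $t\log t\geq -1/e$ is correct. The paper argues a little differently---via the observation that the Jensen gap $(1-s)\KL(\mu_0,\nu)+s\KL(\mu_1,\nu)-\KL(\mu(s),\nu)$ equals $(1-s)\KL(\mu_0,\mu(s))+s\KL(\mu_1,\mu(s))$ and is therefore independent of $\nu$ (\Cref{lemma:KL-NNCC-identity})---but in the finite case both routes yield the same equality in the NNCC inequality.

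There is, however, a genuine gap in your case distinction. You assert that ``if $\mu_0,\mu_1\ll\nu$, all four KL terms are finite,'' but absolute continuity is necessary and \emph{not} sufficient for $\KL(\mu_i,\nu)<+\infty$. You are thus missing the case $\mu_0,\mu_1\ll\nu$ with $\KL(\mu_i,\nu)=+\infty$ for some $i$. There the right-hand side of the NNCC inequality equals $-\infty$ (the brackets can never be $+\infty$ since $\KL(\mu_i,\bar\nu)$ is finite), so you must show the left-hand side is $-\infty$ as well, i.e.\ $\KL(\mu(s),\nu)=+\infty$; your absolute-continuity argument does not give this because here $\mu(s)\ll\nu$. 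This is exactly what the paper's \Cref{lemma:KL-NNCC-bound-below} supplies:
\[
\KL(\mu(s),\nu)\geq (1-s)\KL(\mu_0,\nu)+s\KL(\mu_1,\nu)+(1-s)\log(1-s)+s\log(s)\,,
\]
which forces $\KL(\mu(s),\nu)=+\infty$ whenever either $\KL(\mu_i,\nu)=+\infty$. With this lemma added and the cases split according to finiteness of $\KL(\mu_i,\nu)$ rather than absolute continuity, your proof becomes complete.
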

The proof of \Cref{prop:KL-NNCC} relies on two auxiliary lemmas.

\begin{lemma}\label{lemma:KL-NNCC-identity}
    Take $\mu_0,\mu_1,\nu\in\Pc(X)$ such that $\operatorname{KL}(\mu_0,\nu)<+\infty$ and $\operatorname{KL}(\mu_1,\nu) < +\infty$. Define $\mu(s)=(1-s)\mu_0+s\,\mu_1$ for some $0<s<1$. Then 
    \begin{equation}\label{eq:lemme-KL-NNCC-1}
        (1-s)\operatorname{KL}(\mu_0,\nu)+s \,\operatorname{KL}(\mu_1,\nu)-\operatorname{KL}(\mu(s),\nu)=(1-s)\operatorname{KL}(\mu_0,\mu(s))+s\operatorname{KL}(\mu_1,\mu(s))\,.
    \end{equation}
\end{lemma}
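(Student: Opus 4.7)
The plan is to reduce the claimed identity to a pointwise algebraic identity on the Radon--Nikodym densities of $\mu_0$, $\mu_1$ and $\mu(s)$ with respect to $\nu$, then integrate. This is a standard ``compensation identity'' for the relative entropy, and once the Radon--Nikodym bookkeeping is set up, the proof is purely computational.

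First I would unpack the absolute continuity structure. Since $\operatorname{KL}(\mu_0,\nu)$ and $\operatorname{KL}(\mu_1,\nu)$ are finite, both $\mu_0$ and $\mu_1$ admit densities $\rho_0,\rho_1$ with respect to $\nu$; hence so does $\mu(s)$, with $\rho_s=(1-s)\rho_0+s\rho_1$. On the $\nu$-measurable set $\{\rho_s=0\}$ both $\rho_0$ and $\rho_1$ vanish, and on its complement one has $\mu_0,\mu_1\ll\mu(s)$ with Radon--Nikodym derivatives $\rho_0/\rho_s$ and $\rho_1/\rho_s$ respectively.

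The key step is then to verify the pointwise identity, for $F(t)=t\log t-t+1$ and $\nu$-a.e.,
$$(1-s)F(\rho_0)+sF(\rho_1)-F(\rho_s)=(1-s)\,\rho_s F(\rho_0/\rho_s)+s\,\rho_s F(\rho_1/\rho_s),$$
with the convention $0\cdot F(0/0)=0$ so that the set $\{\rho_s=0\}$ contributes zero to both sides. Both sides reduce to $(1-s)\rho_0\log\rho_0+s\rho_1\log\rho_1-\rho_s\log\rho_s$: on the left the affine piece $-t+1$ of $F$ cancels via $(1-s)\rho_0+s\rho_1=\rho_s$ and $(1-s)+s=1$; on the right, expanding $\rho_s F(\rho_i/\rho_s)=\rho_i\log\rho_i-\rho_i\log\rho_s-\rho_i+\rho_s$ and combining with the weights $(1-s)$ and $s$ recombines the $\log\rho_s$ contributions into $-\rho_s\log\rho_s$ and eliminates the affine terms.

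Integrating this identity against $\nu$ yields the desired equality. All four integrals are well-defined elements of $[0,+\infty]$ since $F\geq 0$, and the two on the $\operatorname{KL}(\mu_i,\nu)$ side are finite by hypothesis, which rules out any indeterminate $+\infty-\infty$ and shows as a byproduct that $\operatorname{KL}(\mu_i,\mu(s))$ is finite. The only subtle point is the handling of $\{\rho_s=0\}$ together with the $0\cdot\infty$ conventions, but this is routine bookkeeping rather than a genuine obstacle.
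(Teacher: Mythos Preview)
Your proposal is correct and follows essentially the same route as the paper: both set up the densities $\rho_0,\rho_1,\rho_s$ with respect to $\nu$, verify the pointwise identity $(1-s)F(\rho_0)+sF(\rho_1)-F(\rho_s)=(1-s)\rho_sF(\rho_0/\rho_s)+s\,\rho_sF(\rho_1/\rho_s)$, and integrate against $\nu$. Your treatment is in fact slightly more careful than the paper's in that you explicitly address the set $\{\rho_s=0\}$ and the absence of $+\infty-\infty$ indeterminacies.
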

\begin{proof}
    Finiteness of $\operatorname{KL}(\mu_0,\nu),\operatorname{KL}(\mu_1,\nu)$ forces $\mu_0$, $\mu_1$ and therefore $\mu(s)$ to be absolutely continuous with respect to $\nu$. By the properties of the Radon--Nikodym derivative \cite[Chapter 3]{bogachev2007measure}, we have $\nu$-a.e.
    \begin{equation}\label{eq:proof-prop-KL-NNCC-2}
      \frac{d\mu(s)}{d\nu}=(1-s)\frac{d\mu_0}{d\nu}+s\frac{d\mu_1}{d\nu}\,,
    \end{equation}    
    \begin{equation}\label{eq:proof-prop-KL-NNCC-3}
        \frac{d\mu_i}{d\nu}=\frac{d\mu_i}{d\mu(s)}\frac{d\mu(s)}{d\nu} \quad\text{for } i=0,1\,.
    \end{equation}
    Using successively \eqref{eq:proof-prop-KL-NNCC-2} and \eqref{eq:proof-prop-KL-NNCC-3} twice, we obtain $\nu$-a.e. that
    \begin{align*}
        (1-s)F\Big(\frac{d\mu_0}{d\nu}\Big)+sF\Big(\frac{d\mu_1}{d\nu}\Big)-F\Big(\frac{d\mu(s)}{d\nu}\Big)&=
        \begin{aligned}[t]
        &(1-s)\frac{d\mu_0}{d\nu}\Big[\log\Big(\frac{d\mu_0}{d\nu}\Big)-\log\Big(\frac{d\mu(s)}{d\nu}\Big)\Big]\\
        &+s\frac{d\mu_1}{d\nu}\Big[\log\Big(\frac{d\mu_1}{d\nu}\Big)-\log\Big(\frac{d\mu(s)}{d\nu}\Big)\Big]
        \end{aligned}\\
        &=(1-s)\frac{d\mu_0}{d\nu}\log\Big(\frac{d\mu_0}{d\mu(s)}\Big)+s\frac{d\mu_1}{d\nu}\log\Big(\frac{d\mu_1}{d\mu(s)}\Big)\\
        &=(1-s)F\Big(\frac{d\mu_0}{d\mu(s)}\Big)\frac{d\mu(s)}{d\nu}+sF\Big(\frac{d\mu_1}{d\mu(s)}\Big)\frac{d\mu(s)}{d\nu}\,.
    \end{align*}
    Integrating against $\nu$ yields \eqref{eq:lemme-KL-NNCC-1}. 
\end{proof}

\begin{lemma}\label{lemma:KL-NNCC-bound-below}
    Let $\mu_0,\mu_1,\sigma\in\Pc(X)$, let $0<s<1$ and $\mu(s)=(1-s)\mu_0+s\,\mu_1$. Then 
    \begin{equation}\label{eq:proof-prop-KL-NNCC-5}
        \KL(\mu(s),\sigma)\geq (1-s)\KL(\mu_0,\sigma)+s\,\KL(\mu_1,\sigma)+(1-s)\log(1-s)+s\log(s)\,.
    \end{equation}
\end{lemma}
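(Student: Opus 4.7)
The plan is to apply the identity already recorded in Lemma \ref{lemma:KL-NNCC-identity} with the choice $\nu=\sigma$ and then to control the two ``correction'' terms $\KL(\mu_i,\mu(s))$, $i=0,1$, from above by elementary means. Before doing so, the boundary case where at least one of $\KL(\mu_0,\sigma)$ or $\KL(\mu_1,\sigma)$ equals $+\infty$ should be dispatched. In that case the right-hand side of \eqref{eq:proof-prop-KL-NNCC-5} equals $+\infty$, so it suffices to observe that $\KL(\mu(s),\sigma)=+\infty$ too. This follows from the fact that if $\mu_i\not\ll\sigma$ for some $i$ then the nonzero singular part of $\mu_i$ survives in $\mu(s)=(1-s)\mu_0+s\mu_1$; and in the remaining subcase where both measures are absolutely continuous with densities $f_i=d\mu_i/d\sigma$ but $\int (f_i\log f_i)_+\,d\sigma=+\infty$, the pointwise bound $g\log g\geq (1-s)f_0\log((1-s)f_0)$ (where $g=(1-s)f_0+sf_1$) on the set where $f_0$ is large forces $\int (g\log g)_+\,d\sigma=+\infty$ as well.

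Assume from now on that both $\KL(\mu_0,\sigma)$ and $\KL(\mu_1,\sigma)$ are finite. Applying Lemma \ref{lemma:KL-NNCC-identity} to the triple $(\mu_0,\mu_1,\sigma)$ yields
\begin{equation}\label{eq:kl-id-sketch}
(1-s)\KL(\mu_0,\sigma)+s\,\KL(\mu_1,\sigma)-\KL(\mu(s),\sigma)=(1-s)\KL(\mu_0,\mu(s))+s\,\KL(\mu_1,\mu(s)).
\end{equation}
Thus the inequality of the lemma is equivalent to the \emph{upper} bound
\begin{equation}\label{eq:kl-upper-sketch}
(1-s)\KL(\mu_0,\mu(s))+s\,\KL(\mu_1,\mu(s))\leq -(1-s)\log(1-s)-s\log s.
\end{equation}

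The key observation is that $\mu(s)=(1-s)\mu_0+s\mu_1$ dominates each of its components, so $\mu_0,\mu_1\ll\mu(s)$ and the Radon--Nikodym derivatives satisfy
\[
\frac{d\mu_0}{d\mu(s)}\leq \frac{1}{1-s}\quad \text{$\mu(s)$-a.e.},\qquad \frac{d\mu_1}{d\mu(s)}\leq \frac{1}{s}\quad \text{$\mu(s)$-a.e.}
\]
Indeed $(1-s)\mu_0\leq \mu(s)$ as measures, and similarly for $\mu_1$. Taking logarithms and integrating against $\mu_0$ (respectively $\mu_1$), which have unit mass, gives
\[
\KL(\mu_0,\mu(s))=\int \log\!\Big(\frac{d\mu_0}{d\mu(s)}\Big)d\mu_0\leq -\log(1-s),\qquad \KL(\mu_1,\mu(s))\leq -\log s,
\]
where we also used that the additional $-t+1$ contribution in $F(t)=t\log t-t+1$ integrates to zero between probability measures. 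Plugging these two bounds into \eqref{eq:kl-upper-sketch} concludes the proof via \eqref{eq:kl-id-sketch}.

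No step is a serious obstacle: the main work was done in Lemma \ref{lemma:KL-NNCC-identity}, and once that identity is in hand the present lemma reduces to the elementary pointwise density comparison $d\mu_i/d\mu(s)\leq 1/\text{(weight)}$. The only mildly delicate point is handling potentially infinite values of the entropies at the outset, which as noted above is dealt with by verifying that singular parts (or non-integrable tails of $f_i\log f_i$) persist in the convex combination $\mu(s)$.
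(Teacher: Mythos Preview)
Your proof is correct but takes a different route from the paper's. The paper argues directly: assuming $\KL(\mu(s),\sigma)<+\infty$ (otherwise the inequality is trivial since the left-hand side is $+\infty$), it writes $\KL(\mu(s),\sigma)=(1-s)\int\log\big(\frac{d\mu(s)}{d\sigma}\big)d\mu_0+s\int\log\big(\frac{d\mu(s)}{d\sigma}\big)d\mu_1$ and applies the pointwise bound $\frac{d\mu(s)}{d\sigma}\geq (1-s)\frac{d\mu_0}{d\sigma}$ (and its analogue for $\mu_1$) inside the logarithm. You instead route through Lemma~\ref{lemma:KL-NNCC-identity} with $\nu=\sigma$, reducing the claim to the upper bound $\KL(\mu_i,\mu(s))\leq -\log(\text{weight}_i)$, which follows from the density comparison $\frac{d\mu_i}{d\mu(s)}\leq 1/\text{weight}_i$. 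Both arguments ultimately rest on the same inequality $(1-s)\mu_0\leq\mu(s)$. The paper's version is shorter because it sidesteps the identity and handles infinities in one line (no need to argue that $\KL(\mu_i,\sigma)=+\infty$ forces $\KL(\mu(s),\sigma)=+\infty$, which is the direction you must establish to invoke Lemma~\ref{lemma:KL-NNCC-identity}). Your version has the conceptual merit of identifying \eqref{eq:proof-prop-KL-NNCC-5} as the identity \eqref{eq:lemme-KL-NNCC-1} combined with the classical fact that the Jensen--Shannon-type quantity $(1-s)\KL(\mu_0,\mu(s))+s\KL(\mu_1,\mu(s))$ is bounded above by the binary entropy $-(1-s)\log(1-s)-s\log s$.
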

\begin{proof}
    Assume that $\KL(\mu(s),\sigma)$ is finite. By monotonicity of the logarithm we have
    \begin{align*}
    \KL(\mu(s),\sigma)=\int_X\log\Big(\frac{d\mu(s)}{d\sigma}\Big)\,d\mu(s)=(1-s)\int_X\log\Big(\frac{d\mu(s)}{d\sigma}\Big)\,d\mu_0+s\int_X\log\Big(\frac{d\mu(s)}{d\sigma}\Big)\,d\mu_1\\
    \geq (1-s)\int_X\log\Big((1-s)\frac{d\mu_0}{d\sigma}\Big)\,d\mu_0+s\int_X\log\Big(s\frac{d\mu_1}{d\sigma}\Big)\,d\mu_1,
    \end{align*}
    which implies \eqref{eq:proof-prop-KL-NNCC-5}
\end{proof}
\begin{proof}[Proof of \Cref{prop:KL-NNCC}]
    Fix $\mu_0,\mu_1,\nu\in\Pc(X)$ such that $\operatorname{KL}(\mu_0,\nu),\operatorname{KL}(\mu_1,\nu) < +\infty$. Fix $0<s<1$ and define $\mu(s)=(1-s)\mu_0+s\,\mu_1$. By \Cref{lemma:KL-NNCC-identity} we have 
    \begin{equation}\label{eq:proof-prop-KL-NNCC-1}
        (1-s)\operatorname{KL}(\mu_0,\nu)+s \,\operatorname{KL}(\mu_1,\nu)-\operatorname{KL}(\mu(s),\nu)=(1-s)\operatorname{KL}(\mu_0,\mu(s))+s\operatorname{KL}(\mu_1,\mu(s)).
    \end{equation}
    Note that the right-hand side of \eqref{eq:proof-prop-KL-NNCC-1} does not depend on $\nu$ and note that $\KL(\mu(s),\nu)$ must be finite.
    Let $\sigma$ be another probability measure such that $\operatorname{KL}(\mu_i,\sigma)$ for $i = 0,1$ is finite. Then, \eqref{eq:proof-prop-KL-NNCC-1} holds replacing $\nu$ with $\sigma$. Subtracting these two equalities implies that the NNCC inequality holds as an equality:
    \begin{equation}\label{eq:proof-prop-KL-NNCC-4}
    \KL(\mu(s),\nu)-\KL(\mu(s),\sigma)=(1-s)[\KL(\mu_0,\nu)-\KL(\mu_0,\sigma)]+s[\KL(\mu_1,\nu)-\KL(\mu_1,\sigma)]\,.
    \end{equation}
     To conclude the proof, observe that convexity of $\KL$ and \Cref{lemma:KL-NNCC-bound-below} imply
    \[\KL(\mu(s),\sigma)=+\infty\iff \KL(\mu_0,\sigma)=+\infty\text{ or } \KL(\mu_1,\sigma)=+\infty.
    \]    
\end{proof}

\subsection{Bures--Wasserstein}\label{ssec:BW}

In this section, we show that the set of Positive Semi-Definite (PSD) real valued matrices $\mathrm{S}^n_{+}$ endowed with the Bures--Wasserstein squared metric
\begin{equation}\label{eq:BW}
\operatorname{BW}^2(S_1,S_2) = \operatorname{tr}(S_1) + \operatorname{tr}(S_2) - 2 \operatorname{tr}\left(\sqrt{S_1^{1/2}S_2S_1^{1/2}}\right),
\end{equation}
where $S^\frac12$ denotes the unique square root of a PSD matrix $S$, is NNCC.
Interestingly, this is an example of Riemannian manifold with boundary, the latter being given by the set of singular PSD matrices $\mathrm{S}_{+}^n\setminus \mathrm{S}_{++}^n$.
In \cite{bhatia2017bureswasserstein}, it is shown that    
\begin{equation}\label{eq:BW_variational}
\operatorname{BW}^2(S_1,S_2) = \min_{\substack{M \,:\, MM^\top=S_1\\N \,:\, NN^\top=S_2}} \|M-N\|^2=\min_{U,U' \in \mathrm{O}(n)} \| S_1^{\frac12} U - S_2^{\frac12}U' \|^2= \min_{U \in \mathrm{O}(n)} \| S_1^{\frac12} - S_2^{\frac12} U \|^2\,,
\end{equation}
for any $S_1,S_2\in \mathrm{S}^n_{+}$, where $\| S \|=\operatorname{Tr}(S^\top S)$ is the Fr\"obenius norm and $\mathrm{O}(n)$ is the orthogonal group of matrices in dimension $d \geq 2$. In fact, the map $\tilde P: M \in \mathrm{GL}(n) \mapsto M M^\top$ is a Riemannian submersion between the linear group of matrices $\mathrm{GL}(n)$ endowed with the Fr\"obenius norm and the space of positive definite matrices $\mathrm{S}_{++}^n=\mathrm{GL}(n)/\mathrm{O}(n)$ endowed with the Bures--Wasserstein metric. 
Applying the result by Kim and McCann on Riemannian submersions \cite[Theorem 4.5]{kim2012towards}, one can therefore deduce that $(\mathrm{S}_{++}^n,\BW^2)$ has nonnegative cross-curvature. However, since $\mathrm{S}^n_{+}$ is a Riemannian manifold with boundary, the aforementioned theorem cannot be applied on the whole space. We will resort instead to Proposition \ref{prop:sncc-surj}.

\begin{theorem}\label{thm:BW_NNCC}
    The space $(\mathrm{S}_{+}^n, \BW^2)$ satisfies \snccconv{} and is thus an NNCC space.
\end{theorem}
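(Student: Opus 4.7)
The plan is to present $(\mathrm{S}^n_+ \times \mathrm{S}^n_+, \BW^2)$ as the image of a flat Euclidean space under a cost submersion, and then invoke \Cref{prop:sncc-surj} together with \Cref{rem:c-sub-NNCC-conv}. Take $X = Y = \R^{n \times n}$ equipped with the Frobenius-based cost $c(M,N) = \|M-N\|^2$; by \Cref{ex:Hilbert-norm} this cost space satisfies \snccconv{}, since along the affine variational c-segments $\rmx(s) = (1-s)M_0 + s M_1$ the difference of costs $c(\rmx(s),\overline N) - c(\rmx(s),N)$ is already an affine function of $s$. The candidate projection is $P = \tilde P \times \tilde P$ where $\tilde P : M \mapsto MM^\top$. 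This map is surjective onto $\mathrm{S}^n_+$ because every PSD matrix admits a symmetric square root, and the fiber through $M$ is exactly the orbit $\{MU : U \in O(n)\}$ by polar decomposition.

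The main task is to verify that $P$ is a cost submersion in the sense of \Cref{DefCostSubmersion}. The identity $\BW^2(S_1,S_2) = \inf_{M' \sim M,\,N' \sim N}\|M' - N'\|^2$ for any lifts $MM^\top = S_1$, $NN^\top = S_2$ is given by \eqref{eq:BW_variational}. The stronger property required is that the same value is reached with \emph{one side frozen}: $\inf_{N'\sim N}\|M - N'\|^2 = \BW^2(S_1,S_2)$ for any fixed lift $M$, and symmetrically. To prove this, use polar decomposition to write $M = S_1^{1/2} R_M$ and $N = S_2^{1/2} R_N$ with $R_M, R_N \in O(n)$; then the invariance of the Frobenius norm under right multiplication by orthogonal matrices yields
\[
\|M - NU\|^2 = \|S_1^{1/2} R_M - S_2^{1/2} R_N U\|^2 = \|S_1^{1/2} - S_2^{1/2} R_N U R_M^\top\|^2,
\]
and since $U\mapsto R_N U R_M^\top$ is a bijection of $O(n)$, taking the infimum over $U$ recovers $\BW^2(S_1,S_2)$. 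The symmetric identity follows identically, and compactness of $O(n)$ guarantees that all three infima are attained, so $P$ is indeed a cost submersion.

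With $P$ established as a cost submersion, \Cref{prop:sncc-surj} gives that $(\mathrm{S}^n_+ \times \mathrm{S}^n_+, \BW^2)$ is an NNCC space, and \Cref{rem:c-sub-NNCC-conv} upgrades this to \snccconv{}. The one point requiring care is that the polar decomposition must be invoked in a form that remains valid for singular $M$: the polar factor $R_M$ is no longer unique at the boundary of $\mathrm{S}^n_+$, but existence of some $R_M \in O(n)$ with $M = S_1^{1/2} R_M$ (obtainable from the SVD $M = U\Sigma V^\top$, taking $R_M = UV^\top$) is all that the argument above requires. This flexibility is precisely what the cost submersion framework exploits, and is the reason this approach succeeds uniformly on all of $\mathrm{S}^n_+$, whereas the classical Riemannian submersion argument à la Kim--McCann applies only to the interior $\mathrm{S}^n_{++}$.
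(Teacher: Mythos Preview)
Your proof is correct and follows essentially the same approach as the paper: present $\BW^2$ as the projected cost of the Frobenius squared distance on $\mathrm{M}(n)$ under $M\mapsto MM^\top$, verify the cost-submersion property \eqref{eq:c-subm-property} via the right-$O(n)$ invariance of the Frobenius norm (the paper writes this as $\|(M-N'U^\top)U\|^2=\|M-N'\|^2$, you equivalently pass through polar factors), and then apply \Cref{prop:sncc-surj} together with \Cref{rem:c-sub-NNCC-conv}. Your explicit treatment of the singular case via SVD-based polar decomposition is a welcome elaboration of what the paper leaves implicit.
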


\begin{remark}
    The space of non-degenerate Gaussian measures forms a geodesically complete subspace in the $2$-Wasserstein space $(\Pc(\Rd),W_2)$. Since they are parametrized by their mean and covariance matrix, it gives a metric on the space $\R^n\times \mathrm{S}_{++}^n$ which is precisely given by
    \begin{equation}
    d^2((m_1,S_1),(m_2,S_2))=|m_1-m_2|^2+\operatorname{BW}^2(S_1,S_2)\,,
    \end{equation}
    where $|\cdot|$ is the Euclidean norm.
    Thanks to Proposition \ref{prop:products}, the space $(\R_+\times \mathrm{S}_{++}^n,d^2)$ is NNCC. More generally, the space $(\R_+\times \mathrm{S}_+^n,d^2)$ is NNCC.
\end{remark}

\begin{proof}
    To apply Proposition \ref{prop:sncc-surj}, consider the following surjective map
    \[
    P: M\in \mathrm{M}(n)\longmapsto MM^\top\in \mathrm{S}^n_{+}
    \]
    where $\mathrm{M}(n)$ is the set of $n\times n$ matrices. The space $(\mathrm{M}(n),\|\cdot\|)$ is a Hilbert space and therefore $(\mathrm{M}(n),\|\cdot-\cdot\|^2)$ satisfies \snccconv{} and is an NNCC space (see Example \ref{ex:Hilbert-norm}).
    The projected cost on $\mathrm{S}_{+}^n$ is the Bures--Wasserstein distance squared from \eqref{eq:BW_variational}.
    Since any $M'\sim M$ (where we recall that $M'\sim M$ if $P(M')=P(M)$) can be written as $M'=MU$ for a unitary matrix $U$, this implies
    \[
    \min_{\substack{M'\sim M\\N'\sim N}} \|M'-N'\|^2 = \min_{\substack{U \text{unitary}\\N'\sim N}} \|(M-N' U^\top)U\|^2=\min_{N'\sim N} \|M-N'\|^2\,.
    \]
    Assumption \ref{eq:c-subm-property} is therefore satisfied, $P$ is a cost submersion and $(\mathrm{S}_{+}^n, \BW^2)$ satisfies \snccconv{} (see Remark \ref{rem:c-sub-NNCC-conv}).
\end{proof}

The previous proof does not provide the form of variational c-segments. This can be useful for applications.

\begin{proposition}\label{prop:BW-segments}
    For $S_0,S_1,S_2\in \mathrm{S}^n_+$, consider the corresponding unique square roots $S_0^{\frac12},S_1^{\frac12}$ and $S_2^{\frac12}$ and the following unitary matrices
    \[
    U_i=(S_2^{\frac12}S_i^{\frac12})(S_i^{\frac12}S_2^{\frac12}S_i^{\frac12})^{-\frac12}\,,\quad \text{for $i=0,1$,}
    \]
    where $(S_i^{\frac12}S_2^{\frac12}S_i^{\frac12})^{-\frac12}$ is any right inverse of $(S_i^{\frac12}S_2^{\frac12}S_i^{\frac12})^{\frac12}$.
    Then, curves of the form $(S(s),S_2)$, with $S(s)=M(s)M(s)^\top$ and $M(s)=(1-s)M_0+sM_1=(1-s)S_0^{\frac12}U_0^\top+sS_1^{\frac12}U_1^\top$,
    are variational c-segments. The curve $S(s)$ is unique if the matrices $S_2^{\frac12}S_i^{\frac12}$, for $i=0,1$, are invertible.
\end{proposition}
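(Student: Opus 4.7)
The plan is to apply the cost submersion $P\colon (\mathrm{M}(n)\times\mathrm{M}(n),\|\cdot-\cdot\|^2)\to (\mathrm{S}_+^n\times\mathrm{S}_+^n,\BW^2)$ defined by $(M,N)\mapsto(MM^\top,NN^\top)$, shown in the proof of \Cref{thm:BW_NNCC}, and project a Hilbert-space variational c-segment by means of \Cref{lemma:projected-c-segments}. Since the Frobenius product makes $\mathrm{M}(n)$ a Hilbert space, variational c-segments in $(\mathrm{M}(n)\times\mathrm{M}(n),\|\cdot-\cdot\|^2)$ are the affine interpolations (\Cref{ex:Hilbert-norm}), so the candidate path $M(s)=(1-s)M_0+sM_1$ is automatically a variational c-segment from $(M_0,S_2^{1/2})$ to $(M_1,S_2^{1/2})$.

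The main point to verify is that each endpoint $(M_i,S_2^{1/2})$ is optimal for the cost submersion, i.e.\ that $M_iM_i^\top=S_i$ and $\|M_i-S_2^{1/2}\|^2=\BW^2(S_i,S_2)$. Expanding the Frobenius norm,
\[
\|M_i-S_2^{1/2}\|^2=\operatorname{tr}(S_i)+\operatorname{tr}(S_2)-2\operatorname{tr}(U_i\, S_i^{1/2}S_2^{1/2}),
\]
and applying the cyclic property of the trace together with the push-through identity $f(AB)A=Af(BA)$ to $A=S_2^{1/2}S_i^{1/2}$ shows that the last trace equals $\operatorname{tr}((S_i^{1/2}S_2S_i^{1/2})^{1/2})$, recovering \eqref{eq:BW}. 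The same identity yields $U_i^\top U_i=I$ on the range of $S_i^{1/2}$, so $M_iM_i^\top=S_i^{1/2}U_i^\top U_i S_i^{1/2}=S_i$. With the endpoints optimal, \Cref{lemma:projected-c-segments} projects $M(\cdot)$ to the variational c-segment $(S(s),S_2)$ with $S(s)=P_1(M(s))=M(s)M(s)^\top$ on $(\mathrm{S}_+^n\times\mathrm{S}_+^n,\BW^2)$, which is exactly the claimed curve.

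For the uniqueness statement, invertibility of $S_2^{1/2}S_i^{1/2}$ eliminates any ambiguity in the right inverse $(S_i^{1/2}S_2^{1/2}S_i^{1/2})^{-1/2}$, so $U_i$ and hence $M_i$ are uniquely determined by the base lift $S_2^{1/2}$. Uniqueness of variational c-segments in a Hilbert space (\Cref{ex:Hilbert-norm}) then fixes the affine interpolation $M(s)$, and replacing $S_2^{1/2}$ by any other lift $S_2^{1/2}V$ (with $V$ orthogonal) simultaneously sends each $M_i$ to $M_iV$, leaving the product $S(s)=M(s)M(s)^\top$ invariant.

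The principal technical obstacle will be the singular case in the verification of optimality: when $S_i$ or $S_2$ fails to be invertible the right inverse in the definition of $U_i$ is only determined on a subspace, and one must check that the resulting $M_i$ remains a valid preimage of $S_i$ under $P_1$ and that the trace identities continue to hold for any admissible choice of right inverse.
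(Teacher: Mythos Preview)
Your proof is correct and follows essentially the same route as the paper: use the cost submersion $P$ from \Cref{thm:BW_NNCC}, verify that the endpoints $(M_i,S_2^{1/2})$ are optimal lifts, and project the Hilbert affine interpolation via \Cref{lemma:projected-c-segments}. The only noteworthy differences are that the paper cites \cite[Theorem~1]{bhatia2017bureswasserstein} for the optimality of the polar factor rather than verifying the trace identity directly as you do, and that your uniqueness argument (addressing both the right inverse and the orthogonal freedom in the base lift $S_2^{1/2}$) is slightly more thorough than the paper's one-line remark.
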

\begin{proof}
    To construct variational c-segments from $(S_0,S_2)$ to $(S_1,S_2)$, we need first to find $M_0,M_1,M_2\in \mathrm{M}(n)$ such that $P(M_i)=S_i$ and $(M_0,M_2)$ and $(M_1,M_2)$ are optimal couples.
    Then, given the variational c-segment $M(s)=(1-s)M_0+sM_1$ in the space $(\mathrm{M}(n),\| \cdot-\cdot\|^2)$, from Lemma \ref{lemma:projected-c-segments} the curve $S(s)=P(M(s))=M(s)M(s)^\top$ is a variational c-segment in $(\mathrm{S}_+^n,\BW^2)$.  
    
    In \eqref{eq:BW_variational}, an optimal matrix $U_i$ for $(S_i,S_2)$ is given by any unitary matrix in the polar decomposition of $S_2^\frac12S_i^\frac12$, that is any unitary matrix $U_i$ such that
    \[
    S_2^\frac12S_i^\frac12 =U_i(S_i^\frac12S_2^\frac12S_i^\frac12)^\frac12
    \]
    (see \cite[Theorem 1]{bhatia2017bureswasserstein} for details).
    Then, choosing $M_2=S_2^\frac12$ and for $i=0,1$, $M_i=S_i^\frac12 U_i^\top$ where 
    \[
    U_i=(S_2^{\frac12}S_i^{\frac12})(S_i^{\frac12}S_2^{\frac12}S_i^{\frac12})^{-\frac12}\,,
    \]
    with $(S_i^{\frac12}S_2^{\frac12}S_i^{\frac12})^{-\frac12}$ any right inverse of $(S_i^{\frac12}S_2^{\frac12}S_i^{\frac12})^{\frac12}$, we have
    \[
    |M_i-M_2|^2=\min_{\substack{M_iM_i^\top=S_i\\ M_2M_2^\top=S_2}} |M_i-M_2|^2=\operatorname{BW}^2(S_i,S_2)\,.
    \]
    Hence, the couples $(M_0,M_2)$ and $(M_1,M_2)$ chosen in this way are optimal.
    The polar decompositions of $S_2^{\frac12}S_i^{\frac12}$ are unique if these are invertible, in which case the matrices $M_i$ are uniquely determined.
\end{proof}

We highlight that for $S_0,S_1,S_2\in \mathrm{S}^n_{++}$, variational c-segments with respect to the cost $\operatorname{BW}^2$ correspond to generalized geodesics in the $2$-Wasserstein space $(\Pc(\R^n),W_2)$.
This can be intuitively understood since the space of zero-mean non-degenerate Gaussian measures endowed with the quadratic Wasserstein distance $W_2$ is totally geodesic in the quadratic Wasserstein space on $\R^n$. 
    
\begin{remark}
    The family of variational c-segments constructed above is not necessarily exhaustive, as there may exist variational c-segments on the projected space which are not projections of variational c-segments from the above space.
\end{remark}

\subsection{Unbalanced optimal transport}\label{SecUOT}

Let $X$ and $Y$ be two Polish spaces and let $\mathcal{M}_+(X)$, $\mathcal{M}_+(Y)$ denote the sets of nonnegative and finite Radon measures on $X$ and $Y$ respectively. We consider in this section the case of $\mathcal{M}_+(X) \times \mathcal{M}_+(Y)$ endowed with the unbalanced optimal transport cost. We refer the reader to \cite{SEJOURNE2023407} for an overview and to \cite{GeneralizedOT2,gallouet:hal-03498098,liero2018optimal} for more details on unbalanced optimal transport.  

We start with the definitions of entropy functions and functionals. 
\begin{definition}
An {\it entropy function} $F: \R \to [0,+\infty]$  is a 
 convex, lower semi-continuous, nonnegative function such that $F(1) = 0$ and $F(r) = +\infty$ if $r<0$. Its {\it recession constant} is defined as $F^{'}_{\infty} = \lim_{r\to+\infty} \frac{F(r)}{r}$. 
\end{definition}

\begin{definition}\label{ThCsiszar}
Let $F$ be an entropy function and $\mu,\nu \in \mathcal{M}_+(X)$.  The {\it Csiszár divergence} associated with $F$ is
\begin{equation*}
D_F(\mu,\nu) = \int_X F\left(\frac{d \mu(x)}{d \nu(x) }\right) d \nu(x) + F^{'}_{\infty} \int_X d \mu^{\perp}\,,
\end{equation*}
where $\mu^{\perp}$ is the orthogonal part of the Lebesgue decomposition of $\mu$ with respect to $\nu$. 
\end{definition}
\noindent
Unbalanced optimal transport is an optimal transport problem where soft penalizations (via Csiszár divergences) replace the hard marginal constraints on transport plans.

\begin{definition}[Entropy-Transport problem]\label{ThUOT} 
Let $(\mu,\nu)\in\mathcal{M}_+(X)\times\mathcal{M}_+(Y)$ and $F_0,F_1$ be entropy functions. The {\it unbalanced optimal transport problem} is defined as
\begin{equation}\label{EqPrimalUOT}
    \Uc\Tc(\mu,\nu) = \inf_{\gamma \in \mathcal{M}_+(X\times Y)} D_{F_{0}}(\gamma_0,\mu) + D_{F_1}(\gamma_1,\nu) + \int_{X\times Y} c(x,y) \,d \gamma(x,y)\,,
\end{equation}
where $\gamma_0,\gamma_1$ are respectively the first and second marginal of $\gamma$, and $c:X \times Y \to \R\cup \{+\infty\}$ is a cost function which is bounded below and continuous at each point where it has finite value.
\end{definition}
\noindent
An important example is the Wasserstein--Fisher--Rao (WFR) metric \cite{GeneralizedOT2}, also called Hellinger--Kantorovich \cite{liero2018optimal}, which is obtained for the cost $c(x,y) = -\log(\cos^2(\min(d(x,y),\pi)))$ (where $d$ is a cost on the base space $X \times Y$) and for the Kullback--Leibler entropy functionals \eqref{eq:def-KL}.
We underline the fact that the cost may be unbounded, as it is the case for WFR.

This problem has an equivalent definition on the cone over the base space $X$ denoted by $\mathcal C(X) = (X \times \R_+)/\{(x,0) \, ;\, x \in X\}$. The product of cones is endowed with the following cost $c_\mathcal{C}: \mathcal{C}(X) \times \mathcal{C}(Y) \to \R \cup \{+\infty\}$,
\begin{equation}\label{EqConeFormulation}
c_{\mathcal{C}}((x,r),(y,s)) = \inf_{z >0} \left(rF_0(z/r) + sF_1(z/s) + c(x,y)z\right),
\end{equation}
with the convention that the value of the functions are respectively $[F_0]_\infty'$ and $[F_1]_\infty'$ if $r=0$ or $s = 0$.
If $c(x,y) = +\infty$ then $c_{\mathcal{C}}((x,r),(y,s)) = F_0(0)r + F_2(0)s$.
This cost is obtained by taking an infimum on the perspective function of the function $z \mapsto F_0(z) + F_1(z) + c(x,y)z$. As a consequence of the perspective function, it is one-homogeneous and convex in $(r,s)$. One of the main results in \cite{liero2018optimal} is the equivalence between Definition \ref{ThUOT} and an optimal transport problem with moment constraints instead of marginal constraints:
\begin{theorem}[Conic formulation, \cite{liero2018optimal}]\label{ThConicUOT}
Let $(\mu,\nu)\in\mathcal{M}_+(X)\times\mathcal{M}_+(Y)$. Define
\begin{equation}\label{EqConicFormulation}
\Cc\Tc(\mu,\nu) = \inf_{\gamma \in \tilde\Gamma} \int c_{\mathcal{C}}((x,r),(y,s)) \,d\gamma((x,r),(y,s))\,,
\end{equation}
where $\tilde\Gamma$ denotes the set of positive Radon measures $\gamma$ on $\mathcal{C}(X) \times \mathcal{C}(Y)$  such that 
\begin{equation}\label{EqMoments}
    \begin{cases}
         \mu(x) = \int_{\R_{+}} r [(p_1)_\#\gamma](x,dr)\,,\\
          \nu(y) = \int_{\R_{+}} s [(p_2)_\#\gamma](y,ds)\,,
    \end{cases}
\end{equation}
where $p_1,p_2$ are the two respective projections on $\mathcal{C}(X)$ and $\mathcal{C}(Y)$. It holds that
$$\Cc\Tc = \Uc\Tc\,.$$
In addition, the optimization set can be reduced to probability measures on $\mathcal{C}(X) \times \mathcal{C}(Y)$.
\end{theorem}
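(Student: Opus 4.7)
The plan is to prove both inequalities $\Cc\Tc \leq \Uc\Tc$ and $\Cc\Tc \geq \Uc\Tc$ through explicit \emph{lift} and \emph{projection} constructions between transport plans on $X \times Y$ and on the product of cones, and then to reduce to probability measures by invoking the $1$-homogeneity of $c_\mathcal{C}$ in $(r,s)$.

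For the inequality $\Cc\Tc \leq \Uc\Tc$, I would take any feasible plan $\gamma \in \mathcal{M}_+(X \times Y)$ with marginals $\gamma_0, \gamma_1$ and perform the Lebesgue decompositions $\gamma_0 = \rho_0 \mu + \gamma_0^\perp$, $\gamma_1 = \rho_1 \nu + \gamma_1^\perp$. Define the lift $\tilde\gamma$ on the part where $\rho_0, \rho_1 > 0$ as the pushforward of $\gamma$ through $(x,y) \mapsto ((x, 1/\rho_0(x)), (y, 1/\rho_1(y)))$. The moment constraint $\int r \, d[(p_1)_\# \tilde\gamma](x, dr) = d\mu$ then follows from $(1/\rho_0(x)) \cdot d\gamma_0(x) = d\mu(x)$; on the singular pieces (where $\mu$ and $\gamma_0$ are mutually singular, and symmetrically for the other coordinate) one adds auxiliary mass at approximate infinity rays or at the cone vertex, tuned to respect the moment identity. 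Evaluating the perspective infimum at the admissible choice $z = 1$ yields the pointwise bound
\[
c_\mathcal{C}((x, 1/\rho_0), (y, 1/\rho_1)) \leq (1/\rho_0) F_0(\rho_0) + (1/\rho_1) F_1(\rho_1) + c(x,y),
\]
and integrating against $\gamma$ with the change of density $d\gamma_0 = \rho_0 \, d\mu$ reproduces exactly $D_{F_0}(\gamma_0, \mu) + D_{F_1}(\gamma_1, \nu) + \int c \, d\gamma$ on the absolutely continuous part, while the singular contributions deliver the recession terms $F_i'(\infty) \gamma_i^\perp(X)$ via the asymptotic identity $r F_i(z/r) \to z F_i'(\infty)$ as $r \to \infty$.

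For the reverse inequality $\Cc\Tc \geq \Uc\Tc$, I would start from a feasible $\tilde\gamma \in \tilde\Gamma$ and apply a measurable selection theorem to the jointly lower semi-continuous integrand of \eqref{EqConeFormulation} to extract, for each $\eps > 0$, an $\eps$-optimal Borel map $z^\star : \mathcal{C}(X) \times \mathcal{C}(Y) \to (0,\infty)$. Define the projected plan $\gamma \coloneqq (p_X, p_Y)_\#(z^\star \, \tilde\gamma)$. Disintegrating $\tilde\gamma$ along the projection onto $X$ and exploiting the moment constraints (which turn the conditional distributions of $r$ given $x$ into probability measures after appropriate normalization), one applies Jensen's inequality to the convex function $F_0$ to get
\[
\int F_0\bigl(d\gamma_0/d\mu\bigr) \, d\mu \leq \int r \, F_0(z^\star/r) \, d\tilde\gamma,
\]
together with an analogous bound for $F_1$. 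Adding $\int c(x,y) z^\star \, d\tilde\gamma = \int c \, d\gamma$ and letting $\eps \to 0$ yields $\Uc\Tc(\mu,\nu) \leq \int c_\mathcal{C} \, d\tilde\gamma$. The reduction to probability measures then follows from the $1$-homogeneity of $c_\mathcal{C}$ in $(r,s)$ combined with the linearity of the moment constraints in those variables: if $M$ is the total mass of $\tilde\gamma$, rescaling $\tilde\gamma \mapsto M^{-1}\tilde\gamma$ together with the cone coordinates $(r,s) \mapsto (Mr, Ms)$ preserves both the cost and the two moment identities.

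The main obstacle will be the combined handling of singular parts, infinite values, and the measurable selection of $z^\star$: both $c$ and the entropies $F_i$ may take the value $+\infty$, so the perspective infimum need not be attained inside $(0,\infty)$ but only on the boundary or in a limiting sense, and this interacts delicately with the vertex $r=0$ and the points at infinity of the cones through the identity $r F_i(z/r) \to z F_i'(\infty)$. These issues are best reconciled by first establishing the equality under simplifying hypotheses (bounded cost, regular marginals, strictly positive densities) and then extending the result by truncation, approximation, and lower semi-continuity arguments.
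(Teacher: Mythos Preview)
The paper does not prove this theorem: it is stated with attribution to \cite{liero2018optimal} and used as a black box. There is therefore no proof in the paper to compare your proposal against.

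That said, your outline is broadly the right strategy and is in the spirit of the original argument in Liero--Mielke--Savar\'e. The lift via $(x,y)\mapsto((x,1/\rho_0(x)),(y,1/\rho_1(y)))$ together with the evaluation of the perspective infimum at $z=1$ is exactly how one gets $\Cc\Tc\leq\Uc\Tc$, and the Jensen step you describe for the reverse inequality is the core mechanism (your computation is correct: after disintegrating over $x$ and reweighting by $r$, the inequality $F_0(\int (z^\star/r)\,d\nu_x)\leq\int F_0(z^\star/r)\,d\nu_x$ is precisely Jensen for the convex $F_0$). The reduction to probability measures via the joint rescaling $(\tilde\gamma,r,s)\mapsto(M^{-1}\tilde\gamma,Mr,Ms)$ is also the right observation.

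Where your sketch is thin is exactly where you flag it: the interaction between the cone vertex, the recession constants $F_i'(\infty)$, points where $c=+\infty$, and the failure of the infimum in \eqref{EqConeFormulation} to be attained at an interior $z>0$. In \cite{liero2018optimal} these issues occupy a substantial amount of work (their marginal perspective cost and reverse formulation), and your proposed remedy of first proving the identity under regularity hypotheses and then passing to the limit is feasible but not automatic---lower semi-continuity of $\Uc\Tc$ and $\Cc\Tc$ under the relevant approximations must itself be established. So: correct skeleton, but the honest statement is that this is a citation, not a result of the present paper.
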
 
Here are a few important instances of conic and static formulations.
\begin{example}\label{ExampleWFR}
    Choose $F_0 = F_1$ the Kullback--Leibler divergence \eqref{eq:def-KL}, also called relative entropy, and $d$ being a cost on $X\times Y$. Then, there are two important cases:
    \begin{itemize}
        \item The WFR case with $c(x,y) = -\log(\cos(\min(d(x,y), \pi))^2)$. The cost on the cone is 
\begin{equation}\label{EqConeCostLinear}
            c_{\mathcal{C}}((x,r),(y,s)) = r + s - 2\sqrt{rs}\cos(\min(d(x,y),\pi)\,.
        \end{equation}
        \item The Gaussian-Hellinger case with $c(x,y) = d(x,y)^2$, $c_{\mathcal{C}}((x,r),(y,s)) = r + s - 2\sqrt{rs}e^{-d(x,y)^2/2}$.
    \end{itemize}
    A last example is the partial optimal transport case for which $F(r) = |r -1|$ for $r\geq 0$ and $c(x,y)=d(x,y)^q$ with $q \geq 1$. One has $c_{\mathcal{C}}((x,r),(y,s))^q = r + s - (\min(r,s))\min(0,2 - d(x,y)^q)$.
\end{example}

We now prove a simple lemma, reminiscent of the lemma proven in \cite[Lemma 7.11]{liero2018optimal}.  This lemma uses the homogeneity of the cost and the fact that the constraints on the marginals are moment constraints, which are invariant to rescaling: namely, $m\delta_{(x,1)}$ and  $\delta_{(x,m)}$ have the same moment with respect to $m$ as defined in Eq. \eqref{EqMoments}. 
\begin{lemma}\label{ThLemmaReparametrization}
  Let $\mu \in \mathcal{M}_+(X)$,  $\nu \in \mathcal{M}_+(Y)$ non null, and $\tilde \nu \in \mathcal{P}(\mathcal{C}(Y))$ such that $\int_{\R_+}m \tilde \nu(y,dm) = \nu(y)$ and $\mathcal{UT}(\mu,\nu)$ is finite.
  Then, there exists $\tilde \mu \in \mathcal{P}(\mathcal{C}(X))$ such that $\int_{\R_+}m  \tilde \mu(x,dm) = \mu(x)$ and 
  \begin{equation}\label{EqEqualityOnLifts}
      \mathcal{UT}(\mu,\nu) = \mathcal{T}_{c_{\mathcal{C}}}(\tilde \mu,\tilde \nu)\,.
  \end{equation}
\end{lemma}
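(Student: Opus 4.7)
The easy half is immediate from the conic formulation of Theorem~\ref{ThConicUOT}: for any lift $\tilde\mu$ of $\mu$ and any $\gamma\in\Pi(\tilde\mu,\tilde\nu)$, the moment conditions \eqref{EqMoments} are automatically satisfied, so $\gamma$ is admissible for $\Cc\Tc$ and $\mathcal{UT}(\mu,\nu)=\Cc\Tc(\mu,\nu)\leq\int c_\mathcal{C}\,d\gamma$. This yields $\mathcal{UT}(\mu,\nu)\leq\mathcal{T}_{c_\mathcal{C}}(\tilde\mu,\tilde\nu)$ for any lift $\tilde\mu$; the content of the lemma is thus to exhibit one concrete $\tilde\mu$ achieving the reverse inequality.

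My plan is to start from an optimizer $\gamma^{\ast}\in\mathcal{P}(\mathcal{C}(X)\times\mathcal{C}(Y))$ for $\Cc\Tc(\mu,\nu)$ (existence from the reduction in Theorem~\ref{ThConicUOT} combined with standard tightness, using the finiteness of $\mathcal{UT}(\mu,\nu)$ and the lower boundedness of $c_\mathcal{C}$) and deform it into a new probability $\gamma$ of the same total cost whose second marginal equals the prescribed $\tilde\nu$; then $\tilde\mu\coloneqq(p_1)_\#\gamma$ will be the desired lift. Two ingredients drive the deformation. First, the $s$-weighted measures $s\,d\tilde\nu^{\ast}$ (where $\tilde\nu^{\ast}\coloneqq(p_2)_\#\gamma^{\ast}$) and $s\,d\tilde\nu$ both project to $\nu$ on $Y$ since both are lifts of $\nu$. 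Second, $c_\mathcal{C}$ is positively $1$-homogeneous in $(r,s)$: $c_\mathcal{C}((x,\lambda r),(y,\lambda s))=\lambda\,c_\mathcal{C}((x,r),(y,s))$.

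Concretely, I would weight $\gamma^{\ast}$ by the second-cone coordinate $s$ to form $s\,d\gamma^{\ast}\in\mathcal{M}_+(\mathcal{C}(X)\times\mathcal{C}(Y))$, of total mass $|\nu|$ and with $Y$-projection $\nu$, and glue it with $s\,d\tilde\nu$ over this common $Y$-marginal. This produces a positive measure $\Gamma$ on $\mathcal{C}(X)\times Y\times\R_+\times\R_+$ with variables $(x,r,y,s^{\ast},s)$, whose projection to $(x,r,y,s^{\ast})$ is $s^{\ast}\,d\gamma^{\ast}$ and whose projection to $(y,s)$ is $s\,d\tilde\nu$. I then push $\Gamma$ forward by the rescaling
\[
T\colon (x,r,y,s^{\ast},s)\longmapsto \bigl((x,rs/s^{\ast}),(y,s)\bigr),
\]
well-defined $\Gamma$-a.e.\ because $\Gamma$ sits on $\{s^{\ast}>0\}$, and set $\gamma\coloneqq\tfrac{1}{s}\,T_{\#}\Gamma$ (well-defined since $T_{\#}\Gamma$ is concentrated on $\{s>0\}$). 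Using the gluing and the $1$-homogeneity, a direct calculation shows that $\gamma$ is a probability measure with $(p_2)_\#\gamma=\tilde\nu$ on the regular part of the cone, that the first marginal $\tilde\mu$ has first $r$-moment equal to $\mu$, and crucially that $\int c_\mathcal{C}\,d\gamma=\int c_\mathcal{C}\,d\gamma^{\ast}=\mathcal{UT}(\mu,\nu)$ (the multiplicative factor $s/s^{\ast}$ produced by homogeneity cancels with the $1/s$ factor and $s^{\ast}\,d\gamma^{\ast}$ becomes $d\gamma^{\ast}$ after dividing by $s^{\ast}$).

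The hardest part will be the treatment of the apex of the cones: the weighting by $s$ erases any mass $\gamma^{\ast}$ carries on $\{s^{\ast}=0\}$ and cannot generate $\gamma$-mass on $\{s=0\}$, so the construction above only matches $\tilde\nu$ on $\{s>0\}$. To recover the full lift I would add to $\gamma$ a complementary component concentrated on $\mathcal{C}(X)\times\{\mathrm{apex}\}$, with total mass $\tilde\nu(\{\mathrm{apex}\})$ and $r$-moment absorbing the missing contribution $\int_{\{s^{\ast}=0\}}r\,d\gamma^{\ast}$. The identities $\int s\,d\tilde\nu=\int s\,d\tilde\nu^{\ast}=|\nu|$ together with the probability normalization force these quantities to be compatible; in the degenerate case where $\tilde\nu$ has no apex mass, I would first replace $\gamma^{\ast}$ by an optimizer for which $\tilde\nu^{\ast}$ also has none, which is feasible since apex-supported mass can be shifted to the regular part without raising the cost thanks to the $1$-homogeneity. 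Measurability of the disintegration, gluing and rescaling is handled by standard selection tools on Polish spaces.
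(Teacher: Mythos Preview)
Your main line is correct and shares the paper's mechanism: exploit the $1$-homogeneity of $c_{\mathcal C}$ in $(r,s)$ to rescale an optimal conic plan radially so that its $\mathcal C(Y)$-marginal becomes the prescribed $\tilde\nu$, while cost and $r$-moment are preserved. The organization differs. The paper does not start from an arbitrary optimizer; it first invokes \cite[Lemma~7.12]{liero2018optimal}, which supplies an optimal pair $(\tilde\mu,\tilde\nu_0)$ with the specific lift $\tilde\nu_0=\tfrac{1}{\nu(Y)}\nu\otimes\delta_{\nu(Y)}$ concentrated at the single positive height $\nu(Y)$. From there it disintegrates the optimal plan over $y$ and rescales each fiber by the factor $n'/\nu(Y)$. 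Because the reference height is a fixed positive constant, one never divides by a variable $s^\ast$ and the apex of $\mathcal C(Y)$ plays no role. Your $s$-weighted gluing is essentially the same computation generalized to a non-constant starting height.

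The apex discussion is where a real gap remains. In the case $\tilde\nu(\{\mathrm{apex}\})=0$ but $\gamma^\ast(\{s^\ast=0\})>0$, you propose to replace $\gamma^\ast$ by an optimizer whose second marginal avoids the apex, on the grounds that ``apex-supported mass can be shifted to the regular part without raising the cost thanks to the $1$-homogeneity''. This is not automatic: the block of $\gamma^\ast$ over $\{s^\ast=0\}$ carries zero $s$-moment, and any replacement supported on $\{s^\ast>0\}$ has strictly positive $s$-moment, so you cannot relocate that mass in isolation---the rest of the plan must be rebalanced under both moment constraints and the probability normalization simultaneously. Exhibiting an apex-free optimizer is precisely what \cite[Lemma~7.12]{liero2018optimal} provides, and that is why the paper takes it as the starting point rather than an afterthought. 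Citing that lemma closes your gap; once you do, your construction and the paper's coincide.
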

The proof of this lemma is deferred to Appendix \ref{AppendixOtherProofs}.
The previous lemma was needed to prove that unbalanced optimal transport is a cost submersion of optimal transport for the corresponding cost on the product of cones defined in Formula \eqref{EqConeFormulation}. Thus, our main result can be seen as an application of Proposition \ref{prop:sncc-surj}:
\begin{theorem}\label{ThMainTheoremUOT}
    The space $\mathcal{M}_+(X) \times \mathcal{M}_+(Y)$ endowed with the  $\mathcal{UT}$ cost is NNCC if the cost $c_\mathcal{C}$ on $\mathcal{C}(X) \times \mathcal{C}(Y)$ defined in Formula \eqref{EqConeFormulation} is NNCC.
\end{theorem}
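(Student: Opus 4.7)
My plan is to present $\mathcal{UT}$ as the projected cost of a standard optimal transport problem on the product of cones, and combine our Wasserstein lifting theorem (\Cref{ThMainTheorem}) with the stability of NNCC under cost submersions (\Cref{prop:sncc-surj}). Concretely, set $X'=\calC(X)$, $Y'=\calC(Y)$, consider the cost $c_\calC$ of \eqref{EqConeFormulation} on $X'\times Y'$, and define the first-moment projections $P_1\colon \calP(X')\to\Mc_+(X)$ and $P_2\colon\calP(Y')\to\Mc_+(Y)$ by the formulas in~\eqref{EqMoments}. The NNCC structure we want on $(\Mc_+(X)\times\Mc_+(Y),\mathcal{UT})$ will then be transferred along $P=(P_1,P_2)$ from $(\calP(X')\times\calP(Y'),\Tc_{c_\calC})$.

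The first step is to apply \Cref{ThMainTheorem} on the cones to obtain that $(\calP(X')\times\calP(Y'),\Tc_{c_\calC})$ is NNCC. Since $X$ and $Y$ are Polish, so are their cones $X'=\calC(X)$, $Y'=\calC(Y)$; the cost $c_\calC$ is nonnegative (because $F_0,F_1\geq 0$ and $c$ is bounded below—after subtracting a constant this may be assumed $\geq 0$) and lower semi-continuous by convex duality from the defining infimum in \eqref{EqConeFormulation}. \Cref{hyp:c-bounded-above} is satisfied because taking $z=r$ in \eqref{EqConeFormulation} at the diagonal point $(y,s)=(x,r)$ gives $c_\calC((x,r),(x,r))\leq rF_0(1)+rF_1(1)+c(x,x)r=c(x,x)r$, which (combined with picking $(y,s)$ at the apex when $c(x,x)>0$) yields a uniform upper bound for $\inf_{(y,s)} c_\calC((x,r),(y,s))$; if needed one reduces to the standard NNCC examples where $c(x,x)=0$, as in \Cref{ExampleWFR}.

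The second step is to verify that $P=(P_1,P_2)$ is a cost submersion in the sense of \Cref{DefCostSubmersion}. The conic formulation of \Cref{ThConicUOT} gives exactly the identity $\mathcal{UT}(\mu,\nu)=\inf\{\Tc_{c_\calC}(\tilde\mu,\tilde\nu):P_1\tilde\mu=\mu,\;P_2\tilde\nu=\nu\}$ (and existence of a minimizer follows from the standard tightness arguments underlying \Cref{ThConicUOT}). \Cref{ThLemmaReparametrization} is precisely what is needed for the two one-sided submersion identities in~\eqref{eq:c-subm-property}: for any fixed lift $\tilde\nu$ of $\nu$ there is a lift $\tilde\mu$ of $\mu$ such that $\mathcal{UT}(\mu,\nu)=\Tc_{c_\calC}(\tilde\mu,\tilde\nu)$, and the symmetric statement (obtained by swapping the roles of $X$ and $Y$, i.e.\ applying \Cref{ThLemmaReparametrization} to $\Uc\Tc(\nu,\mu)$, which equals $\Uc\Tc(\mu,\nu)$ after the natural relabeling) gives the same property when $\tilde\mu$ is fixed instead. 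Together with the $\inf$ characterization above, this shows $P$ is a cost submersion, and an application of \Cref{prop:sncc-surj} finishes the proof.

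The main obstacle is the careful bookkeeping of the cost submersion axioms in this infinite-dimensional, potentially non-compact, moment-constrained setting: in particular, verifying the attainment of the infima in \eqref{eq:c-subm-property} requires the full strength of \Cref{ThLemmaReparametrization} (not just its one-sided form) and a symmetrization argument. A secondary technical point is ensuring \Cref{hyp:c-bounded-above} for $c_\calC$, which is mild but must be explicitly checked since the cone variable $r$ is unbounded; this is handled by exhibiting the identity $c_\calC((x,r),(x,r))=0$ whenever $c(x,x)=0$, which is the generic situation covered by the examples of interest.
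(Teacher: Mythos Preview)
Your approach is essentially identical to the paper's: the paper also establishes the result by exhibiting the moment projections as a cost submersion from $(\calP(\calC(X))\times\calP(\calC(Y)),\Tc_{c_\calC})$ to $(\Mc_+(X)\times\Mc_+(Y),\mathcal{UT})$, invoking \Cref{ThLemmaReparametrization} for the one-sided submersion identities and then \Cref{prop:sncc-surj} to conclude. The paper's proof is considerably more terse than yours and does not spell out the verification of \Cref{hyp:c-bounded-above} for $c_\calC$ (it simply notes that a direct argument along the lines of the standard Wasserstein proof also works), so your more explicit bookkeeping is in fact a reasonable expansion of what the paper leaves implicit.
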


\begin{proof}
There is a cost submersion between the space of probability measures endowed with the transport cost $\mathcal{T}_{c_{\mathcal{C}}}$ and the space of positive and finite Radon measures endowed with the $\mathcal{U}\mathcal{T}$ cost. To prove it, the needed properties to apply Proposition \ref{prop:sncc-surj} are given by Lemma \ref{ThLemmaReparametrization}.
\end{proof}
Let us underline that a direct proof can also be given using a similar strategy than for standard optimal transport.
To get more insight on the condition that the cone needs to be NNCC, the case of the Wasserstein--Fisher--Rao metric is instructive. When the base cost is the squared Riemannian distance on a Riemannian manifold, the cone cost gives a sectional curvature to the cone that is, in general, strictly less in some directions than the sectional curvature of the base manifold. Consequently, the base manifold needs to be positively curved to obtain the cone to be NNCC.
Although this condition is restrictive, we provide hereafter a couple of examples for the Wasserstein--Fisher--Rao metric. An open question is to find other examples in the case of the Hellinger--Kantorovich cost.

\begin{example}
In the WFR case (see Example \ref{ExampleWFR}),
    the simplest example is the $n$-dimensional sphere $X = Y =  \mathbb S_n$ endowed with its natural Riemannian distance.  Then, the cone $\mathcal{C}(\mathbb S_n)$ equipped with the cone cost is isometric to the Euclidean space $\R^{n+1}$ endowed with the Euclidean squared distance which is NNCC.
    Therefore, the space of nonnegative measures on the sphere endowed with the Wasserstein--Fisher--Rao metric is NNCC.
\end{example}

\begin{example}[Optimal transport on the cone of PSD matrices]
    Another important example is related to the Bures--Wasserstein metric.
    Indeed, the space of PSD matrices is itself a cone: Let us parametrize a (non zero) PSD matrix $S'$ by $(\operatorname{tr}(S'), S'/ \operatorname{tr}(S')) = (m,S)$ with $S \in \mathrm{S}^{n}_{+,1}$ the space of unit trace PSD matrices. Associated with the Bures metric, the Bures angle or quantum angle is defined between two unit trace matrices $S_0,S_1$:
    \begin{equation}
        D(S_1,S_2) \coloneqq \arccos\left( \operatorname{tr}\sqrt{S_0^{1/2}S_1 S_0^{1/2}}\right).
    \end{equation}
    Using Formula \eqref{EqConeCostLinear} and by definition, the cone over $\mathrm{S}^n_{+,1}$ endowed with the Bures length gives the Bures metric.
    In particular, the cone over the space of unit trace PSD matrices endowed with the usual Bures metric gives the Bures--Wasserstein metric when taking the WFR case. 
    Since $(\mathrm{S}^n_{+},\operatorname{BW}^2)$ is NNCC (see \Cref{thm:BW_NNCC}), Theorem \ref{ThMainTheoremUOT} applies: the unbalanced optimal transport metric on $\mathcal{M}_+(\mathrm{S}^{n}_{+,1})$ has the NNCC property.
\end{example}

A direct consequence of the cost submersion fact is that lifted c-segments in UOT are projections of lifted c-segments on the probability measures on the cone. 
\begin{remark}[On the necessity of the cone to be NNCC when WFR is NNCC]
In the Wasserstein--Fisher--Rao case, we conjecture an equivalence between the two properties when the diameter of the underlying manifold is less than $\pi/2$. Indeed, in such a case, the cone isometrically embeds into the space of positive Radon measures with the WFR metric. Above this cut-off distance, the isometric embedding is no longer true.
\end{remark}

\subsection{Gromov--Wasserstein} \label{sec:GW}

In this section we show that the set of all metric measure spaces, or more generally gauged spaces, equipped with the squared Gromov--Wasserstein distance is an NNCC space. 
This represents a significant advance over Sturm's discovery \cite{sturm2012space} that the ``space of spaces'' is positively curved, see \Cref{rem:GW-PC-NNCC}.  

A metric measure space is a triple $(X,f,\mu)$,
where $(X,f)$ is a complete separable metric space and $\mu$ is a Borel probability measure on $X$ with full support.
The Gromov--Wasserstein distance, as introduced by Mémoli \cite{memoli2011gromov} and also studied by Sturm \cite{sturm2012space}, is a way to compare two metric measure spaces in terms of their intrinsic qualities, \emph{up to isometries}. In particular one considers  equivalence classes $\bX=[X,f,\mu]$ defined as follows: two metric measure spaces $(X,f,\mu)$ and $(Y,g,\nu)$ are equivalent if there exists a map $T\colon X \to Y$ which is an isometry on the support of $\mu$ and such that $T_\# \mu = \nu$. The set of all equivalence classes of metric measure spaces will be denoted by $\Gauged$. 

The Gromov--Wasserstein distance between $\bX=[X,f,\mu]$ and $\bY=[Y,g,\nu]$ is defined by
\begin{equation}\label{eq:def-GW}
    \GW^2(\bX,\bY)=\inf_{\pi\in\Pi(\mu,\nu)}\int\abs{f(x,x')-g(y,y')}^2\,d\pi(x,y)\,d\pi(x',y')\,.
\end{equation}
This is a metric on $\Gauged$ valued in $[0,+\infty]$. 
From an optimization viewpoint, problem \eqref{eq:def-GW} is harder than optimal transport since it
is a nonconvex quadratic optimization problem. However, as shown in \cite[Theorem 3.1]{sturm2012space}, the structure of geodesics is rather simple. Once an optimal plan $\pi$ is found between $\bX$ and $\bY$, the geodesic consists of $\bX(s) = [X\times Y , (1-s)f + s g,\pi]$, which suggests there may also be a particularly simple form of variational c-segments. This is indeed the case:

\begin{proposition}[Variational c-segments for Gromov--Wasserstein]\label{prop:GW-vcs}
    Let $\bX_0=[X_0,f_0,\mu_0]$, $\bX_1=[X_1,f_1,\mu_1]$, $\bY=[Y,g,\nu]\in\Gauged$ be such that $\GW(\bX_i,\bY)<+\infty$ and let $\pi_i\in\Pi(\mu_i,\nu)$ be an optimal plan for $\GW(\bX_i,\bY)$ ($i=0,1$). Consider any $3$-plan $\gamma\in\calP(X_0\times X_1\times Y)$ such that $(\p_1,\p_3)_\#\gamma=\pi_0$ and $(\p_2,\p_3)_\#\gamma=\pi_1$, and define for every $s\in (0,1)$
    \begin{equation}\label{eq:c-segment-GW}
        \bX(s)=[X_0\times X_1, f(s)\coloneqq (1-s)f_0+s\,f_1,m\coloneqq (\p_1,\p_2)_\#\gamma]\,,
    \end{equation}
    and $\bX(0)=\bX_0$, $\bX(1)=\bX_1$.
    In the expression of $f(s)$, $f_0$ acts on the $X_0$ components and $f_1$ on the $X_1$ components of $(X_0\times X_1)^2$. Then $s\mapsto (\bX(s),\bY)$ is a variational c-segment on $(\Gauged\times\Gauged,\GW^2)$. 
\end{proposition}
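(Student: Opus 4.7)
My plan follows the blueprint of \Cref{prop:var_lifted_c-seg} in the Wasserstein case, simplified by the fact that the underlying pointwise convexity inequality is in fact an \emph{equality}. To begin, I would verify $\bX(0) = \bX_0$ and $\bX(1) = \bX_1$ in $\Gauged$: at $s = 0$ the projection $\p_1\colon X_0\times X_1\to X_0$ pushes the marginal $m$ to $\mu_0$ and respects the gauges, yielding an isometry modulo $\mu_0$-null sets, and the endpoint $s=1$ is symmetric. The NNCC inequality to prove, for an arbitrary $\bZ = [Z, h, \lambda]\in\Gauged$, reads
\begin{equation*}
\GW^2(\bX(s),\bY) - \GW^2(\bX(s),\bZ) \leq (1-s)\big[\GW^2(\bX_0,\bY) - \GW^2(\bX_0,\bZ)\big] + s\big[\GW^2(\bX_1,\bY) - \GW^2(\bX_1,\bZ)\big].
\end{equation*}
When $\GW^2(\bX(s),\bZ) = +\infty$ the inequality is trivial, so I would assume all four quantities on the right are finite, in which case an optimal coupling $\tilde\pi(s)\in\Pi(m,\lambda)$ exists for $\GW^2(\bX(s),\bZ)$ by standard GW existence results.

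The workhorse is a pointwise identity: for any $a,b,c,c'\in\R$ and $s\in[0,1]$, setting $x = (1-s)a + sb$, direct expansion gives
\begin{equation*}
|x-c|^2 - |x-c'|^2 = (1-s)\big[(a-c)^2 - (a-c')^2\big] + s\big[(b-c)^2 - (b-c')^2\big].
\end{equation*}
I will apply this with $a = f_0(x_0,x_0')$, $b = f_1(x_1,x_1')$, $c = g(y,y')$, $c' = h(z,z')$ and $x = f(s)((x_0,x_1),(x_0',x_1'))$.

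The remaining step is to weave together the relevant couplings. I glue $\gamma$ and $\tilde\pi(s)$ along their common $X_0\times X_1$-marginal $m$ to obtain a $4$-plan $\tilde\gamma\in\calP(X_0\times X_1\times Y\times Z)$ such that $(\p_1,\p_2,\p_3)_\#\tilde\gamma = \gamma$ and $(\p_1,\p_2,\p_4)_\#\tilde\gamma = \tilde\pi(s)$. Set $\tilde\pi_i \coloneqq (\p_i,\p_4)_\#\tilde\gamma \in \Pi(\mu_i,\lambda)$, and view $\gamma$ itself as an element of $\Pi(m,\nu)$. The optimality of $\pi_i$ and $\tilde\pi(s)$ provides
\begin{equation*}
\GW^2(\bX_i,\bY) = \int|f_i-g|^2\,d\gamma\,d\gamma, \qquad \GW^2(\bX(s),\bZ) = \int|f(s)-h|^2\,d\tilde\pi(s)\,d\tilde\pi(s),
\end{equation*}
while mere admissibility of $\gamma$ and $\tilde\pi_i$ gives the opposite-direction inequalities
\begin{equation*}
\GW^2(\bX(s),\bY) \leq \int|f(s)-g|^2\,d\gamma\,d\gamma, \qquad \GW^2(\bX_i,\bZ) \leq \int|f_i-h|^2\,d\tilde\pi_i\,d\tilde\pi_i.
\end{equation*}

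Finally, I will integrate the pointwise identity against $\tilde\gamma\otimes\tilde\gamma$. Since $f(s)-g$ depends only on the $(x_0,x_1,y)$-coordinates, its square integrates against $\tilde\gamma\otimes\tilde\gamma$ as against $\gamma\otimes\gamma$; likewise $f(s)-h$ reduces to $\tilde\pi(s)\otimes\tilde\pi(s)$, and each $f_i-h$ term to $\tilde\pi_i\otimes\tilde\pi_i$. The pointwise equality therefore upgrades to an equality of integrals which, combined with the four bounds above, yields the NNCC inequality. Finiteness of $\GW^2(\bX(s),\bY)$ follows from the convexity bound $|f(s)-g|^2 \leq (1-s)|f_0-g|^2 + s|f_1-g|^2$ integrated against $\gamma\otimes\gamma$. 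The main subtlety I anticipate is the existence of the optimal coupling $\tilde\pi(s)$ in a possibly noncompact Polish setting; if this fails, one replaces $\tilde\pi(s)$ with a near-optimizing sequence and passes to the limit in the inequality.
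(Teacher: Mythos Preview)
Your proposal is correct and follows essentially the same route as the paper: glue $\gamma$ with a coupling to $\bZ$ into a $4$-plan, use the pointwise affinity of $s\mapsto |f(s)-g|^2-|f(s)-h|^2$ (your identity is exactly this), and combine optimality of $\pi_i$ with admissibility of the induced $(\mu_i,\lambda)$-couplings. The only cosmetic difference is that the paper keeps $\tilde\pi\in\Pi(m,\sigma)$ arbitrary and takes a supremum at the end, which bypasses the existence question you flag; your near-optimizer remark handles this equivalently. One small remark: you need not \emph{assume} the four right-hand quantities are finite---once $\GW^2(\bX(s),\bZ)<+\infty$, the triangle-type bound $\int|f_i-h|^2\leq 2s_i^2\int|f_0-f_1|^2+2\int|f(s)-h|^2<+\infty$ (using $\int|f_0-f_1|^2\leq 2\GW^2(\bX_0,\bY)+2\GW^2(\bX_1,\bY)$) forces $\GW^2(\bX_i,\bZ)<+\infty$ automatically.
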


Since \eqref{eq:def-GW} always admits a solution \cite[Lemma 1.7]{sturm2012space}, we find as an immediate corollary:

\begin{theorem}
   $(\Gauged\times\Gauged,\GW^2)$ is an NNCC space.  
\end{theorem}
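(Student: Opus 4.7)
The plan is to verify the NNCC inequality by bounding $\GW^2(\bX(s),\bY)$ from above using $\gamma$ as a coupling, bounding $\GW^2(\bX(s),\bZ)$ from below using a lifted optimal coupling, and relying on a simple algebraic identity whose cross term cancels between the two bounds. The starting observation is the elementary identity
\begin{equation*}
    \bigl((1-s)A+sB-C\bigr)^2 = (1-s)(A-C)^2 + s(B-C)^2 - s(1-s)(A-B)^2,
\end{equation*}
which I will apply pointwise with $A=f_0(x_0,x_0')$, $B=f_1(x_1,x_1')$ and $C$ equal to either $g(y,y')$ or $h(z,z')$, where $\bZ=[Z,h,\sigma]$.

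For the upper bound, note that $\gamma$, viewed as a measure on $(X_0\times X_1)\times Y$, is an admissible coupling between $m$ and $\nu$. Expanding $\int|f(s)-g|^2\,d\gamma\,d\gamma$ via the identity and using optimality of $\pi_0=(\p_1,\p_3)_\#\gamma$ and $\pi_1=(\p_2,\p_3)_\#\gamma$ to recognize the $(f_0-g)^2$ and $(f_1-g)^2$ integrals as $\GW^2(\bX_0,\bY)$ and $\GW^2(\bX_1,\bY)$, I obtain
\begin{equation*}
\GW^2(\bX(s),\bY) \leq (1-s)\GW^2(\bX_0,\bY)+s\GW^2(\bX_1,\bY) - s(1-s)\,I,
\end{equation*}
where $I \coloneqq \int|f_0(x_0,x_0')-f_1(x_1,x_1')|^2\,dm(x_0,x_1)\,dm(x_0',x_1')$. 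A triangle-type bound $|a-b|^2 \leq 2|a-c|^2+2|b-c|^2$ applied with $c=g(y,y')$ shows that $I$ is controlled by $2\GW^2(\bX_0,\bY)+2\GW^2(\bX_1,\bY)<+\infty$, which in particular gives finiteness of $\GW^2(\bX(s),\bY)$ as required by \Cref{def:sncc}.

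For the lower bound, assume $\GW^2(\bX(s),\bZ)<+\infty$ (the complementary case is handled by the infinity convention of the NNCC inequality) and let $\tilde\pi\in\Pi(m,\sigma)$ be an optimal coupling. Since $m=(\p_1,\p_2)_\#\gamma$, the coupling extension \Cref{lemma:coupling-extension} yields a $4$-plan $\tilde\gamma\in\Pi(\gamma,\sigma)$ with $(\p_1,\p_2,\p_4)_\#\tilde\gamma=\tilde\pi$. Set $\pi_i'\coloneqq(\p_{i+1},\p_4)_\#\tilde\gamma\in\Pi(\mu_i,\sigma)$ for $i=0,1$. Applying the identity to the integrand defining $\GW^2(\bX(s),\bZ)=\int|f(s)-h|^2\,d\tilde\gamma\,d\tilde\gamma$ and using suboptimality $\int|f_i-h|^2\,d\pi_i'\,d\pi_i' \geq \GW^2(\bX_i,\bZ)$, I obtain
\begin{equation*}
\GW^2(\bX(s),\bZ) \geq (1-s)\GW^2(\bX_0,\bZ)+s\GW^2(\bX_1,\bZ) - s(1-s)\,I,
\end{equation*}
where the cross-term is \emph{the same} $I$ as before since $\int|f_0-f_1|^2\,d\tilde\gamma\,d\tilde\gamma$ only depends on the $(X_0\times X_1)^2$-marginal of $\tilde\gamma\otimes\tilde\gamma$, which equals $m\otimes m$. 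Subtracting the two bounds, the $s(1-s)I$ cross-terms cancel and I obtain precisely the NNCC inequality at level $s$ for the target $\bZ$.

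The main potential obstacle is bookkeeping with infinite values in the definition of $\GW^2$ and checking that the various lifts are well-defined measures, but both the finiteness of $I$ and the applicability of the coupling extension lemma are straightforward once the coupling structure is set up as above, so I expect no real difficulty.
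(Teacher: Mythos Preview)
Your proof is correct and follows essentially the same approach as the paper: both use $\gamma$ as an admissible coupling for the $\bY$ term, lift an optimal coupling $\tilde\pi\in\Pi(m,\sigma)$ for the $\bZ$ term to a $4$-plan via gluing (your use of \Cref{lemma:coupling-extension} is equivalent), and exploit the quadratic structure of the integrand. The only cosmetic difference is that the paper observes directly that $|f(s)-g|^2-|f(s)-h|^2$ is affine in $s$ (the $|f(s)|^2$ terms cancel), whereas you apply the parallelogram identity to each squared term separately and then cancel the common cross-term $I$; these are two presentations of the same elementary algebra.
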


\begin{proof}[Proof of \Cref{prop:GW-vcs}]
    Let $\bX_0=[X_0,f_0,\mu_0]$, $\bX_1=[X_1,f_1,\mu_1]$, $\bY=[Y,g,\nu]$, $\bZ=[Z,h,\sigma]$ denote four (equivalence classes of) metric measure spaces. For $i=0,1$, let $\pi_i\in\Pi(\mu_i,\nu)$ be an optimal Gromov--Wasserstein coupling  of $(\bX_i,\bY)$. Then
    \begin{equation}\label{eq:proof-GW-1}
        \GW^2(\bX_i,\bY) = \int\abs{f_i(x_i,x_i')-g(y,y')}^2\,d\pi_i(x_i,y)\,d\pi_i(x_i',y')\,.
    \end{equation}
    Define $\bX(s)$ by \eqref{eq:c-segment-GW} and fix $s\in [0,1]$.
    Since $\gamma$ is a coupling of $(m,\nu)$ we have the inequality
    \begin{equation}\label{eq:proof-GW-2}
        \GW^2(\bX(s),\bY)\leq \int\abs{f(s)((x_0,x_1),(x_0',x_1'))-g(y,y')}^2\,d\gamma(x_0,x_1,y)\,d\gamma(x_0',x_1',y')\,.
    \end{equation}
    Let $\tilde\pi\in\Pi(m,\sigma)$, which we will optimize over in a moment to obtain the term $\GW^2(\bX(s),\bZ)$. Assume without loss of generality that 
    \[
    \int\abs{f(s)((x_0,x_1),(x_0',x_1'))-h(z,z')}^2\,d\tilde\pi(x_0,x_1,z)\,d\tilde\pi(x_0',x_1',z')<+\infty\,,
    \]
    for if the above value is infinite then the desired conclusion \eqref{eq:proof-GW-4} holds automatically. Glue $\gamma$ and $\tilde\pi$ along $(x_0,x_1)$ into a $4$-plan $\gamma^4\in\calP(X_0\times X_1\times Y\times Z)$. Using \eqref{eq:proof-GW-2} we may write
    \begin{multline}\label{eq:proof-GW-3}
        \GW^2(\bX(s),\bY)-\int\abs{f(s)-h}^2\,d\tilde\pi(x_0,x_1,z)\,d\tilde\pi(x_0',x_1',z') \\
        \leq \int\Big(\abs{f(s)-g}^2-\abs{f(s)-h}^2\Big)\,d\gamma^4(x_0,x_1,y,z)\,d\gamma^4(x_0',x_1',y',z')\,,
    \end{multline}
    where $f(s)$ depends on $((x_0,x_1),(x_0',x_1'))$, $g$ on $(y,y')$ and $h$ on $(z,z')$. By expanding the square, we see that $\abs{f(s)-g}^2-\abs{f(s)-h}^2$ is an affine function of $s$. Therefore the right-hand side of \eqref{eq:proof-GW-3} is affine in $s$ and we can write  
    \begin{multline}\label{eq:proof-GW-4}
        \GW^2(\bX(s),\bY)-\int\abs{f(s)-h}^2\,d\tilde\pi\otimes d\tilde\pi \\
        \leq (1-s)\int\Big(\abs{f(0)-g}^2-\abs{f(0)-h}^2\Big)\,d\gamma^4\otimes d\gamma^4+s\int\Big(\abs{f(1)-g}^2-\abs{f(1)-h}^2\Big)\,d\gamma^4\otimes d\gamma^4 \\
        = (1-s)\Big(\int\abs{f_0-g}^2\,d\pi_0\otimes d\pi_0-\int\abs{f_0-h}^2\,d\tilde\pi\otimes d\tilde\pi\Big)+s\Big(\int\abs{f_1-g}^2\,d\pi_1\otimes d\pi_1-\int\abs{f_1-h}^2\,d\tilde\pi\otimes d\tilde\pi\Big).
    \end{multline}
    We recognize \eqref{eq:proof-GW-1} for $i=0,1$ in the right-hand side. Finally taking a supremum over $\tilde\pi\in\Pi(m,\sigma)$ we obtain the NNCC inequality
    \begin{multline*}
        \GW^2(\bX(s),\bY)- \GW^2(\bX(s),\bZ) \\
        \leq (1-s)\Big(\GW^2(\bX_0,\bY)-\GW^2(\bX_0,\bZ)\Big) + s \Big(\GW^2(\bX_1,\bY) -\GW^2(\bX_1,\bZ)\Big).
    \end{multline*}
    Note in particular that choosing $\bZ=\bX(s)$ implies finiteness of $\GW^2(\bX(s),\bY)$. 
\end{proof}

\begin{remark}
In the proof of \Cref{prop:GW-vcs}, the fact that $f_0,f_1,g$ are distances is not used. Therefore the result holds for more general functions, such as gauge functions \cite[Section 5]{sturm2012space}.
\end{remark}

\begin{remark}[NNCC and PC]\label{rem:GW-PC-NNCC}
    Consider the subspace $\Gauged_0\subset\Gauged$ of metric measure spaces $[X,f,\mu]$ with finite size $\int \abs{f(x,x')}^2 d\mu(x)d\mu(x')<+\infty$. Sturm proved that $(\Gauged_0,\GW)$ is a geodesic space that is positively curved in the sense of Alexandrov \cite[Theorem 4.4]{sturm2012space}. The proof of \Cref{prop:GW-vcs} shows just as well $(\Gauged_0\times\Gauged_0,\GW)$ to be an NNCC space, a stronger property than positive curvature by \Cref{thm:NNCC-implies-PC}.
\end{remark}

\subsection{Procrustes--Wasserstein}

In a similar spirit to Gromov--Wasserstein distances, comparing probability measures in the Euclidean space up to isometries has been addressed in the applied mathematics literature: one candidate is the Procrustes--Wasserstein distance, which is defined by
\begin{equation}
    \operatorname{PW}(\mu,\nu) = \inf_{R,S \in \operatorname{Isom}(\R^n)} W_2(R_\# \mu,S_\# \nu)\,,
\end{equation}
where $R,S$ are isometries of $\R^n$ and $\mu,\nu$ are two probability measures on $\R^n$. Sometimes, it is also defined as the quotient with the group of rotations only after the alignment of the mean of the two measures, see \cite{Grave2018UnsupervisedAO}.

\begin{theorem}
    The space $\mathcal{P}_2(\R^n)/\operatorname{Isom}(\R^n)$ endowed with the $\operatorname{PW}^2$ cost is NNCC.
\end{theorem}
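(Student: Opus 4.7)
The plan is to exhibit the Procrustes--Wasserstein space as the base of a cost submersion emanating from the Wasserstein space $(\mathcal{P}_2(\R^n), W_2^2)$, and then conclude via \Cref{prop:sncc-surj}.

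First I would recall that the Euclidean cost $(x,y)\mapsto |x-y|^2$ is NNCC by \Cref{ex:Hilbert-norm}, so that $(\mathcal{P}_2(\R^n) \times \mathcal{P}_2(\R^n), W_2^2)$ is NNCC by \Cref{ThMainTheorem}. I would then introduce the canonical quotient map $P\colon \mathcal{P}_2(\R^n) \to \mathcal{P}_2(\R^n)/\operatorname{Isom}(\R^n)$ associated with the pushforward action of $\operatorname{Isom}(\R^n)$ (which preserves $\mathcal{P}_2(\R^n)$ since isometries of $\R^n$ are affine with orthogonal linear part), and verify that $(P,P)$ is a cost submersion onto $(\mathcal{P}_2(\R^n)/\operatorname{Isom}(\R^n) \times \mathcal{P}_2(\R^n)/\operatorname{Isom}(\R^n), \operatorname{PW}^2)$. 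The isometry invariance $W_2(R_\#\mu, S_\#\nu) = W_2(\mu, (R^{-1}S)_\#\nu)$ immediately yields
\[
\operatorname{PW}^2(P(\mu), P(\nu)) = \inf_{R,S \in \operatorname{Isom}(\R^n)} W_2^2(R_\#\mu, S_\#\nu) = \inf_T W_2^2(\mu, T_\#\nu) = \inf_T W_2^2(T_\#\mu, \nu),
\]
which is precisely the cost submersion relation \eqref{eq:c-subm-property}.

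The main obstacle will be verifying the attainment of the infima required by \Cref{DefCostSubmersion}, since $\operatorname{Isom}(\R^n) = O(n) \ltimes \R^n$ is not compact. I would handle this through a centering argument. Pick centered representatives $\mu_0, \nu_0$ of the two equivalence classes, which exist because elements of $\mathcal{P}_2(\R^n)$ admit well-defined barycenters, and parametrize any isometry as $Tx = Rx + b$ with $R\in O(n)$ and $b\in\R^n$. Since $R_\#\mu_0$ remains centered, the standard barycenter decomposition of the quadratic Wasserstein distance gives
\[
W_2^2(T_\#\mu_0, \nu_0) = W_2^2(R_\#\mu_0, \nu_0) + |b|^2,
\]
so that the optimal translation is $b=0$ and the infimum reduces to $\inf_{R\in O(n)} W_2^2(R_\#\mu_0, \nu_0)$. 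The compactness of $O(n)$ together with the elementary continuity estimate $W_2^2(R_\#\mu_0, R'_\#\mu_0) \leq \|R - R'\|^2 \int |x|^2\,d\mu_0(x)$ yield attainment. With both properties of the cost submersion established, \Cref{prop:sncc-surj} immediately delivers the desired NNCC property.
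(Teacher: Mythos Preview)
Your proposal is correct and follows essentially the same route as the paper: realize $\operatorname{PW}^2$ as a cost submersion of $W_2^2$ via the isometry invariance $W_2(R_\#\mu,S_\#\nu)=W_2(\mu,(R^{-1}S)_\#\nu)$, then apply \Cref{prop:sncc-surj}. The only cosmetic difference is in the attainment step: the paper bounds the translation part of a near-optimizer by $\|\operatorname{mean}(\mu)-\operatorname{mean}(\nu)\|\le W_2(\mu,\nu)$ to reduce to a compact set of isometries, whereas you center the representatives and use the barycenter decomposition $W_2^2(T_\#\mu_0,\nu_0)=W_2^2(R_\#\mu_0,\nu_0)+|b|^2$ to conclude that the optimal translation is exactly $b=0$; both arguments are equally valid and equally short.
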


\begin{proof}
    Since $(\mathcal{P}(\R^n),W_2^2)$ is NNCC, Proposition \ref{prop:sncc-surj} on cost submersions shall imply the theorem. We now check that the hypotheses of Proposition \ref{prop:sncc-surj} are fulfilled.
    Note that by a change of variables, one has $W_2(R_\# \mu, S_\# \nu) = W_2( \mu, [R^{-1} S]_\# \nu)$.
    Under the additional fact that the infimum of the minimization problem is attained in $\operatorname{Isom(\R^n)}$, then the formula \eqref{eq:c-subm-property} follows directly. To prove the existence of minimizers, we prove that one can restrict the optimization to a compact subset of isometries. Note that $\operatorname{Isom}(\R^n) = \R^n \ltimes O(n)$ is a semidirect product of groups between translations and rotations. It is sufficient to reduce the set of translations to a compact set. For that, we note that by the convexity of the Euclidean distance squared $W_2(\mu,\nu) \geq \| \operatorname{mean}(\mu) - \operatorname{mean}(\nu)\|$. As an immediate consequence, the norm of the translation part of $R^{-1}S$ is bounded by $W_2(\mu,\nu)$. 
\end{proof}

\begin{remark}
Note that the result is obvious when one considers the quotient only with $O(n)$ instead of $\operatorname{Isom}(\R^n)$ due to the compactness of $O(n)$.
    In the same direction, this result directly extends to Riemannian manifolds $(M,g)$ that satisfy the two conditions: 
    \begin{enumerate}[(i)]
        \item $(M,d^2)$ is NNCC where $d$ is the geodesic distance.
        \item The isometry group of $(M,g)$ is compact.
    \end{enumerate}
    Indeed, in such a case, it satisfies the hypotheses of a cost submersion in Definition \ref{DefCostSubmersion} and one can apply Proposition \ref{prop:sncc-surj}.
\end{remark}

\appendix

\section{Infinite values} \label{sec:app:infinite}

In this appendix, we show how to manipulate infinite values taken by the cost function and by potentials. 
Here are our main references. Moreau \cite[Section 4.c.]{Moreau1970} considers costs taking both $\pm\infty$ values and explains how to deal with undefined expressions of infinites. He has opposite signs in regard to potentials. Villani \cite[Chapter 5]{villani2009optimal} is a main reference, he only considers costs taking the $+\infty$ value. Rockafellar and Wets \cite[Chapter 1]{RockefellarWetsbook} define epigraphs and hypographs of extended-real-valued functions in standard convex analysis.

\subsubsection*{Hypographs}
Since here we want to work with c-concave functions we consider hypographs.
Given an arbitrary set $X$ and a function $\psi\colon X\to[-\infty,+\infty]$, the hypograph of $\psi$ is the subset of $X\times \R$ defined by 
\begin{equation}
    \hypo \psi = \{(x,\alpha)\in X\times\R : \alpha\leq \psi(x)\}.
\end{equation}
Note that $\alpha$ is always finite, thus $\hypo\psi$ is a subset of the product of $X$ with the real numbers, and not with the extended real numbers.
In particular given $x\in X$, if $\psi(x)=-\infty$ then then the intersection of $\hypo f$ and the vertical line $\{x\}\times \R$ is empty, and if $\psi(x)=+\infty$ then $\hypo f$ contains the vertical line $\{x\}\times \R$. 
If $\psi(x)$ is finite then the intersection of $\hypo f$ and $\{x\}\times \R$ is $\{x\}\times (-\infty,\psi(x)]$ and we call $(-\infty,\psi(x)]$ the ``fiber above $x$''.
The domain of $\psi\colon X\to[-\infty,+\infty]$ is the set of $x$'s such that the vertical line $\{x\}\times\R$ intersects the hypograph of $\psi$, i.e.\ $\dom \psi = \{x\in X : \psi(x)>-\infty\}$. 

Let $\Ac$ denote the class of all subsets $A\subset X\times\R$ that satisfy the property:
\begin{enumerate}
    \item[] \label{property-hypographs}
    For every $x\in X$, the fiber above $x$ is either empty or a closed interval of $\R$ unbounded to the left (this includes $\R$ itself). 
\end{enumerate}
Clearly $A$ is the hypograph of a function $\psi\colon X\to [-\infty,+\infty]$ if and only if it belongs to $\Ac$, and  $\hypo(\cdot)$ realizes a one-to-one correspondence between functions $X\to [-\infty,+\infty]$ and  $\Ac$. Given $A\in\Ac$, we may find the corresponding function by 
\begin{equation}\label{eq:psi-from-hypo}
    \psi(x)=\sup\{\alpha\in\R : (x,\alpha)\in A\},
\end{equation}
with $\sup\R=+\infty$, and the convention $\sup \emptyset = -\infty$.

A simple but important property of the class $\Ac$ of hypographs is that it is closed under arbitrary intersections. Indeed we can reason fiber by fiber and then use the fact that a class of intervals of $\R$ that are closed and unbounded to the left is closed under arbitrary intersections.

Let us now consider a cost function $c\colon X\times Y\to[-\infty,+\infty]$, where $Y$ is another arbitrary set. To fix ideas, we will deal with functions $\psi\colon X\to[-\infty,+\infty]$ and $\phi\colon Y\to[-\infty,+\infty]$ that satisfy the sum $\psi(x)+\phi(y)\leq c(x,y)$. 

\subsubsection*{c-transform}
The c-conjugate of an arbitrary subset $B\subset Y\times\R$ is the subset $B^c\subset X\times\R$ defined by 
\begin{equation*}
    B^c := \bigcap_{(y,\beta)\in B} \hypo (c(\cdot,y)-\beta).
\end{equation*}
If $B$ is empty then $B^c=X\times \R$ by convention. 
We see that for any $B\subset Y\times\R$,  $B^c\in\Ac$. More precisely and in view of \eqref{eq:psi-from-hypo}, $B^c$ is the hypograph of the function $\psi\colon X\to[-\infty,+\infty]$ defined by 
\begin{align}
    \psi(x)&=\sup\{\alpha\in\R : \quad \forall (y,\beta)\in B,\,\, \alpha+\beta\leq c(x,y)\}\label{eq:c-transf-alpha-beta}\\
    &=\inf_{(y,\beta)\in B} c(x,y)-\beta.
\end{align}
When $B=\hypo\phi$ for some $\phi\colon Y\to[-\infty,+\infty]$, we define $\phi^c\colon X\to[-\infty,+\infty]$ by $\hypo (\phi^c) = (\hypo \phi)^c$ and call $\phi^c$ the c-transform of $\phi$. We see that 
\begin{equation}\label{eq:def-c-transform}
    \begin{aligned}
        \phi^c(x) &= \inf_{(y,\beta)\in\hypo\phi} c(x,y)-\beta\\
        &=\inf_{y\in Y} c(x,y)-\phi(y),    
    \end{aligned}
\end{equation}
where in the last infimum the following rule should be used: $(+\infty)-(+\infty)=+\infty$ and $(-\infty)-(-\infty)=+\infty$. It may be convenient to remember that the rule for undefined expressions $(+\infty)-(+\infty)$ and $(-\infty)-(-\infty)$ is to give the neutral element for $\inf$, i.e.\ $+\infty$. By exchanging the roles of $x$ and $y$ we similarly define c-conjugates of subsets $A\subset X\times R$ and c-transforms of functions $\psi\colon X\to [-\infty,+\infty]$, still denoted by $A^c$ and by $\psi^c$ respectively.

We say that a function $\psi\colon X\to[-\infty,+\infty]$ is c-concave if there exists $B\subset Y\times \R$ such that $\hypo \psi=B^c$. This is equivalent to the existence of a function $\phi\colon Y\to[-\infty,+\infty]$ such that $\psi=\phi^c$, or to $\psi=\psi^{cc}$. We may similarly define c-concave functions on $Y$.

\subsubsection*{Contact set}
We say that two subsets $A\subset X\times\R$ and $B\subset Y\times \R$ are \emph{sub-conjugate} if $A\subset \hypo(c(\cdot,y)-\beta)$ for every $(y,\beta)\in B$. This can be written as
\begin{equation}\label{eq:def-sub-conjugate}
\forall (x,\alpha)\in A,\,\,\forall (y,\beta)\in B,\quad \alpha+\beta\leq c(x,y),
\end{equation}
which shows sub-conjugation to be a symmetric relation for $A$ and $B$. Sub-conjugation is called \emph{sur-dualité} by Moreau who has opposite signs \cite{Moreau1970}. Given two sub-conjugate subsets $A$ and $B$ and $(x,\alpha)\in A$, their contact set is defined by 
\begin{equation}\label{eq:def-contact-set}
\contact(A,B)=\{(x,\alpha,y,\beta)\in X\times \R\times Y\times \R : c(x,y)\leq \alpha+\beta\},
\end{equation}
and by \eqref{eq:def-sub-conjugate} the inequality is in fact an equality. Therefore the quantity $c(x,y)$ in \eqref{eq:def-contact-set} is finite.

We say that two functions $\psi\colon X\to[-\infty,+\infty]$ and $\phi\colon Y\to[-\infty,+\infty]$ are sub-conjugate if the sets $A=\hypo\psi$ and $B=\hypo\phi$ are sub-conjugate. We see that this can be written as 
\begin{equation}\label{eq:app-sub-conjugate-inequality}
\forall (x,y)\in X\times Y,\quad \psi(x)+\phi(y)\leq c(x,y),
\end{equation}
with the rule $(+\infty)+(-\infty)=(-\infty)+(+\infty)=-\infty$ in the left-hand side; in other words \eqref{eq:app-sub-conjugate-inequality} is required to hold whenever the left-hand side is well-defined. By \eqref{eq:def-c-transform} this is also equivalent to $\psi\leq\phi^c$ and $\phi\leq\psi^c$. In particular $\phi^c$ and $\phi$ are always sub-conjugate. The contact set of $A=\hypo\phi^c$ and $B=\hypo\phi$
is called the c-subdifferential of $\phi$, and can be encoded as only those pairs $(x,y)\in X\times Y$ that satisfy $c(x,y)\leq \phi^c(x)+\phi(y)$, since combining \eqref{eq:def-sub-conjugate} and \eqref{eq:def-contact-set} forces $\alpha=\phi^c(x)$ and $\beta=\phi(y)$ (and in turn these are finite). More generally since a sub-conjugate pair $(\psi,\phi)$ satisfies $\psi\leq\phi^c$ we may ask for $c(x,y)\leq\psi(x)+\phi(y)$. In short, the c-subdifferential of an arbitrary function $\phi\colon Y\to[-\infty,+\infty]$ can be defined as 
\begin{multline}\label{eq:def-c-subdifferential}
    \partial^c\phi=\{(\xb,\yb)\in X\times Y : \text{$\phi(\yb), c(\xb,\yb)$ are finite and} \\
    \text{there exists $\psi\colon X\to[-\infty,+\infty]$ sub-conjugate to $\phi$ such that } c(\xb,\yb)\leq\psi(\xb)+\phi(\yb)\}.
\end{multline}
We can then set for any $\yb\in Y$,
\begin{equation}\label{eq:def-c-subdifferential-y}
\partial^c\phi(\yb)=\{\xb\in X : (\xb,\yb)\in\partial^c\phi\}.
\end{equation}
We could similarly define the c-subdifferential of a function defined on $X$.

\section{Proofs} \label{sec:appendix-proofs}

\subsection{Proof of Theorem \ref{ThPrincipalFiberResult}}\label{sec:appendix-submersion}

In this section we prove our extension of the Riemannian submersion result of Kim and McCann to the case of a principal fiber bundle endowed with a cost diagonally invariant to a group action. Let us recall first Definitions \ref{def:proj-fiber} and \ref{ThInvariantCostDefinition}.

\begin{definition}[Submersion and projection of a cost.]\label{def:proj-fiber-app}
Let $P_i: M_i \to N_i$ for $i = 1,2$ be two submersions between manifolds with compact fibers and ${c}: M_1 \times M_2 \to \R$ be a continuous cost. 
Define the projected cost $\underline{c}: N_1 \times N_2 \to \R$ by 
\begin{equation}
\underline c(\underline x,\underline y) \coloneqq \min \{ {c}({x},{y})\,;\, {x} \in P_1^{-1}(\{\underline x\})\text{ and }{y} \in P_2^{-1} (\{\underline y \}) \}\,.
\end{equation}
\end{definition}
\begin{definition}[Principal bundle with $c$-invariant cost]\label{ThInvariantCostDefinition-app}
Consider a principal fiber bundle $(G,L_1 \times L_2)$ where $G$ is a compact Lie group and a continuous cost $c: L_1 \times L_2 \mapsto \R$ such that $G$ leaves the cost invariant. Namely, we assume 
    \begin{equation}
        c(g \cdot x,g\cdot y) = c(x,y)\,, \quad \forall (x,y) \in L_1 \times L_2\,.
    \end{equation}
    We consider the projection $P : L_1 \times L_2  \to L_1/G \times L_2 /G$.
The corresponding projected cost is denoted $\underline c$.
\end{definition}

Under the assumptions of Definition \ref{ThInvariantCostDefinition-app},
let $y \in L_2$ and $\underline x \in L_1/G$. Then, we can define the set of optimal lifts of $\underline{x}$ at point $y$ by $x(y) = \arg \min_{x \in \pi^{-1}(\{\underline{x}\})} c(x,y)$. Similarly, we can define the set of optimal lifts $y(x)$.
If this minimization leads to a unique point, this enables the definition of lift of curves $(\underline{x}(s),\underline{y})$ by $(x(s),y)$.
The fact that the optimal lifts exist to any given $y \in L_2$ is because the action of $G$ on the product $L_1 \times L_2$ leaves the cost invariant so that one can fix the pre-image of $\underline{x}$: $c(g_1 \cdot x,g_2 \cdot y) = c((g_2^{-1}g_1) \cdot x, y)$.

We make the following assumption:

\begin{assumption}\label{AssumptionLifts}
    The lifting procedure is well-defined for curves, meaning that locally, there is a unique point at which the infimum cost is attained and that it is a smooth operation. 
\end{assumption}
\noindent
In particular, if the curve $\underline{\rmx}$ is $C^k$ for $k\geq 0$ then the curve $\rmx$ is also $C^k$. This is satisfied for instance in the case of Riemannian submersions which have the structure of a principal fiber bundle.
Assuming that the lift of curves is well-defined implies that the lift of tangent vectors is also well-defined.
We call it horizontal lift, similar to the Riemannian submersion case.
The most important fact in this construction is that the lifts of c-segments are c-segments and this fact relies crucially on the invariance of the cost:

\begin{proposition}[Lift of c-segments]
Under Assumption \ref{AssumptionLifts},
    the lift of a c-segment on $ L_1/G \times L_2 /G$ is a c-segment on $L_1 \times L_2$.
\end{proposition}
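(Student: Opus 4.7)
My plan is to reduce the c-segment equation on the total space to the c-segment equation satisfied by $(\underline{\rmx}(s), \underline{y})$ on the base via two ingredients: (i) a horizontality property of $\nabla_y c$ at optimal lifts, coming from the $G$-invariance of the cost, and (ii) a projection identity relating $\nabla_y c(\rmx(s), y)$ to $\nabla_{\underline{y}} \underline{c}(\underline{\rmx}(s), \underline{y})$ via $(dP_2|_y)^*$. Throughout, $\underline{y} \coloneqq P_2(y)$ and by $G$-invariance the optimal lift $\rmx(s)$ of $\underline{\rmx}(s)$ at $y$ satisfies $c(\rmx(s), y) = \underline{c}(\underline{\rmx}(s), \underline{y})$.

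The first step is to show that $\nabla_y c(\rmx(s), y)$ is horizontal at $y$, i.e.\ annihilates every fundamental vector field $\xi_y$ for $\xi \in \mathfrak{g}$. To do this, I differentiate the invariance $c(\exp(t\xi)\cdot\rmx(s), \exp(t\xi)\cdot y) = c(\rmx(s), y)$ at $t = 0$, which yields
\[
\nabla_x c(\rmx(s), y)\cdot \xi_{\rmx(s)} + \nabla_y c(\rmx(s), y)\cdot \xi_y = 0.
\]
Since $\rmx(s)$ minimizes $c(\cdot, y)$ on the $G$-orbit $P_1^{-1}(\{\underline{\rmx}(s)\})$, the first-order optimality condition forces the first term to vanish for every $\xi$, hence the second does too.

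The second step is to identify the horizontal covector $\nabla_y c(\rmx(s), y)$ with $\nabla_{\underline{y}} \underline{c}(\underline{\rmx}(s), \underline{y})$ through $(dP_2|_y)^*$. Fix $V \in T_y L_2$ and choose a curve $y(t) \subset L_2$ with $\dot{y}(0) = V$, whose projection $\underline{y}(t) = P_2(y(t))$ satisfies $\dot{\underline{y}}(0) = dP_2(V)$. By definition of $\underline{c}$, the map $t \mapsto c(\rmx(s), y(t)) - \underline{c}(\underline{\rmx}(s), \underline{y}(t))$ is nonnegative and vanishes at $t = 0$, so its derivative there vanishes, giving
\[
\nabla_y c(\rmx(s), y)\cdot V = \nabla_{\underline{y}} \underline{c}(\underline{\rmx}(s), \underline{y})\cdot dP_2(V),
\]
i.e.\ $\nabla_y c(\rmx(s), y) = (dP_2|_y)^* \nabla_{\underline{y}}\underline{c}(\underline{\rmx}(s),\underline{y})$ as elements of $T_y^* L_2$. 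Applying $(dP_2|_y)^*$, which is linear and independent of $s$, to the base c-segment equation $\nabla_{\underline{y}} \underline{c}(\underline{\rmx}(s), \underline{y}) = (1-s)\nabla_{\underline{y}} \underline{c}(\underline{\rmx}(0), \underline{y}) + s\nabla_{\underline{y}} \underline{c}(\underline{\rmx}(1), \underline{y})$ immediately yields the c-segment equation for $(\rmx(s), y)$ upstairs.

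The main obstacle will be the projection identification in the second step, which hinges on the optimality of the lift at every $s$ (so that the inequality $c(\rmx(s), y(t)) \geq \underline{c}(\underline{\rmx}(s), \underline{y}(t))$ becomes an equality at $t = 0$) together with the smoothness of $s \mapsto \rmx(s)$ provided by Assumption \ref{AssumptionLifts}. The remaining subtlety is a sanity check: the identity in the second step should be thought of as taking values in the \emph{horizontal} cotangent space at $y$, which is canonically isomorphic to $T_{\underline{y}}^* (L_2/G)$ via $(dP_2|_y)^*$, so no nondegeneracy assumption on the cost beyond what is already needed for c-segments to be defined is required.
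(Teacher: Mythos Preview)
Your proof is correct and follows essentially the same approach as the paper: both establish horizontality of $\nabla_y c(\rmx(s),y)$ from $G$-invariance plus first-order optimality of the lift, then combine this with the base c-segment equation to obtain the equation upstairs. Where the paper simply invokes ``local trivialization of the fiber bundle'' to pass from the base equation plus horizontality to the lifted c-segment equation, you make that step explicit via the projection identity $\nabla_y c(\rmx(s),y) = (dP_2|_y)^* \nabla_{\underline y}\,\underline c(\underline{\rmx}(s),\underline y)$ obtained from the minimum of $c(\rmx(s),y(t)) - \underline c(\underline{\rmx}(s),\underline y(t))$ at $t=0$; note that this identity already encodes horizontality, so your Step~1 is in fact subsumed by Step~2.
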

\begin{proof}
    The definition of a c-segment on $ L_1/G \times L_2 /G$ reads
    \begin{equation}\label{Eqcsegmentdownstairs}
        \nabla_y \underline{c}(\underline{\rmx}(s),\underline{y}) = (1-s)\nabla_{\underline{y}} c(\underline{x}_0,\underline{y})+s\nabla_{\underline{y}} \underline{c}(\underline{x}_1,\nabla{y})\,,\quad \forall s\in[0,1]\,.
    \end{equation}
    Consider the lift of the c-segment denoted by $(\rmx,y)$. Due to the invariance of the cost on $L_1 \times  L_2$, we have $c(g\cdot \rmx(s),y) = c(\rmx(s),g^{-1}\cdot y)$. Now, due to the optimality of the lift, the first-order optimality condition gives
    \begin{equation}\label{EqVanishingMomentum}
         \nabla_y c(\rmx(s),\xi \cdot y) = 0\,,
    \end{equation}
    for all $s \in [0,1]$ and $\xi \in T_{\operatorname{Id}}G$. Note that the notation $\xi \cdot y$ is defined as the tangent vector at $y$ given by the differentiation of a curve $g(t) \cdot y$ starting at identity with initial tangent vector $\xi$. 
    The previous formula means that the differentiation of the cost in the vertical direction (the tangent space to the fiber) vanishes. By local trivialization of the fiber bundle, 
    equations \eqref{Eqcsegmentdownstairs} and \eqref{EqVanishingMomentum} imply that $(\rmx,y)$ is a c-segment on $L_1 \times L_2$.
\end{proof}

This fact allows us to prove \Cref{ThPrincipalFiberResult}.

\begin{proof}[Proof of \Cref{ThPrincipalFiberResult}]
    The proof follows the one of \cite[Theorem 4.5]{kim2012towards} and we include it here for the reader's convenience. Consider $(\rmx,y)$ the lift at point $y$ of the c-segment $(\underline{\rmx},\underline{y})$ in a neighborhood of $0$. Consider also a curve also defined on a neighborhood of $0$, $\underline{y}(t)$ with tangent vector $\eta$ at $t = 0$ and the lift of $y(t)$ at the point $x(0)$. One can now consider the function 
    $F(s,t) = c(\rmx(s),y(t)) - \underline{c}(\underline{\rmx}(s),\underline{y}(t))$ which is nonnegative by definition of $\underline{c}$.
    We now use the definition of the cross-curvature
    \begin{equation}
        \mathfrak{S}_c(x,y)(\xi,\eta)=-\frac{\partial^4}{\partial_s^2\partial_t^2}\Big|_{s=t=0}c(\rmx(s),y(t)) \,,
    \end{equation}
    so that in order to prove the result it is sufficient to prove that $\frac{\partial^4}{\partial_s^2\partial_t^2}\big|_{s=t=0}F \geq 0$. 
    In addition to nonnegativity, $F(s,0) = F(0,t) = 0$ by its definition which implies that $F$ achieves its minimum on these two sets and the Hessian of $F$ is a positive semi-definite matrix. This Hessian must vanish since the diagonal terms are $0$ by differentiation of $F(s,0) = F(0,t) = 0$ and it implies by nonnegativity of the Hessian matrix $\partial_{st}F(0,0) = 0$. It implies that the Taylor expansion of $F$ takes the form
    \begin{equation}
        F(s,t) = f_{21} t^2s + f_{12} ts^2 + f_{31} t^3s + f_{13} ts^3 + f_{22}t^2 s^2\,,
    \end{equation}
    with similar notations to \cite[Proof of Theorem 4.5]{kim2012towards}, where the authors proved that $f_{21} = f_{12} = f_{13} = f_{31} = 0$ by differentiation of $F(t,\pm t) \geq 0$ and $F(t^2,t) \geq 0$. The Taylor expansion of $F(s,t)$ is reduced to $f_{22}t^2s^2$ so that nonnegativity of $F$ directly implies $f_{22} \geq 0$ which is the result.
\end{proof}

\begin{remark}
The result of the theorem is local. As a consequence, Assumption \ref{AssumptionLifts} can be replaced with a local version, i.e. the optimal lifts are well-defined in a neighborhood of $(x,y)$.
\end{remark}

\subsection{Proof of Theorem \ref{thm:nncc-conv-lift}}\label{sec:snccconv-lift-v2}

Condition \snccconv{} is a stronger property with respect to NNCC as it requires the function $s\mapsto c(\rmx(s),\ov y)-c(\rmx(s),y)$ to be convex along variational c-segments $(\rmx,y)$, for any $y\in Y$. That is, the NNCC inequality must be satisfied for any subinterval $[a,b]\subset [0,1]$: The restriction of a variational c-segment $(\rmx(s),\ov y)$ from $[0,1]$ to $[a,b]$ should remain a variational c-segment between $(\rmx(a),\ov y)$ and $(\rmx(b),\ov y)$. 
In order to lift condition \snccconv{} to the Wasserstein space, we need to refine the definition of lift.
We make the following assumption:

\begin{assumption}\label{hyp:continuity-var-c-seg}
    \snccconv{}-variational c-segments on $(X\times Y,c)$ are continuous curves on $(0,1)$ with limits at $0$ and $1$.
\end{assumption}
\noindent
We denote by $\mathcal{S}([0,1], X)$ the space of curves valued in $X$ that are continuous on $(0,1)$ with limit at $0$ and $1$.

\begin{definition}[\snccconv{}-lifted c-segments]\label{def:lifted_csegment-nnccconv}

    Let $(s\in[0,1])\mapsto \mu(s)$ be a path in $\calP(X)$ and let $\nu\in\Pc(Y)$. We say that $s\mapsto (\mu(s),\nu)$ is an \emph{\snccconv{}-lifted c-segment} from $(X\times Y,c)$ if there exist a measurable set $\Gamma\subset X\times X\times Y,$ a collection of measurable maps $\Lambda_s\colon \Gamma\to X$ ($s\in [0,1]$), and a $3$-plan $\gamma_\nu \in \mathcal{P}(X \times X \times Y)$ such that
    \begin{enumerate}[(i)]
        \item $\gamma_\nu$ is concentrated on $\Gamma$, i.e. $\gamma_\nu(\Gamma)=1$, and $\mu(s)=(\Lambda_s)_\#\gamma_\nu$ and $\nu=(\p_3)_\#\gamma_\nu$ ;
        \label{enum:nnccconvlcs-gamma}
        
        \item $\Lambda_0(x_0,x_1,y)=x_0$, $\Lambda_1(x_0,x_1,y)=x_1$, and $s\mapsto (\Lambda_s(x_0,x_1,y),y)$ is an \snccconv{} variational c-segment on $(X\times Y,c)$ for each $(x_0,x_1,y)\in\Gamma$ ; \label{enum:nnccconvlcs-Lambda}
        
        \item$\pi_0\coloneqq (\p_1,\p_3)_\# \gamma_\nu$ and $\pi_1\coloneqq (\p_2,\p_3)_\# \gamma_\nu$ are respective optimal couplings of $(\mu(0),\nu)$ and $(\mu(1),\nu)$ for the cost $c$;\label{enum:nnccconvlcs-pi}

        \item there exists $\Lambda:\Gamma \to \mathcal{S}([0,1], X)$ such that $\Lambda_s \coloneqq \operatorname{ev}_s \circ \Lambda$, where $\operatorname{ev}_s: \mathcal{S}([0,1], X) \to X$ is the evaluation at time $s \in [0,1]$.
        
    \end{enumerate}
\end{definition}

The set $\Gamma$ is defined as in \eqref{eq:maximal-Gamma}.
Note that since the cost function is now continuous, it is in particular real valued and $\Gamma$ coincides with $X\times X\times Y$.
With respect to Definition \ref{def:lifted_csegment}, we require now the additional condition that the collection of measurable maps $\Lambda_s$ have a specific form, composition of the evaluation at time $s$ and a measurable map $\Lambda$ mapping from $\Gamma \subset X\times X\times Y$ to the set of curves $\mathcal{S}([0,1], X)$.
Assumption \ref{hyp:continuity-var-c-seg} ensures that the collection of measurable maps $\Lambda_s$ exists.

\begin{lemma}[Existence of \snccconv{}-lifted c-segments]\label{lemma:existence-lifted-v2}
    Let $(X\times Y,c)$ satisfies condition \snccconv{}, for $X$ and $Y$ Polish spaces and $c\colon X\times Y\to\R$ a continuous cost function bounded from below, satisfying further Assumption \ref{hyp:continuity-var-c-seg}.
    Then for any $\mu_0,\mu_1\in\Pc(X)$ and $\nu\in\Pc(Y)$ such that $\Tc_c(\mu_0,\nu)<+\infty$ and $\Tc_c(\mu_1,\nu)<+\infty$, there exists a \snccconv{}-lifted c-segment from $(X\times Y,c)$ between $(\mu_0,\nu)$ and $(\mu_1,\nu)$.
\end{lemma}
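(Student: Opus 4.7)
The plan is to imitate the construction behind \Cref{lemma:existence-lifted}, upgrading the pointwise measurable selection of \Cref{lemma:continuous-cost-measurable-NNCC} to a measurable selection in a space of \emph{curves}. Let me outline the steps.

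First, under \Cref{hyp:c-bounded-above} together with lower semi-continuity and lower-boundedness, optimal couplings $\pi_0 \in \Pi(\mu_0,\nu)$ and $\pi_1 \in \Pi(\mu_1,\nu)$ exist by standard optimal transport theory. Gluing them along their common marginal $\nu$ (e.g., by \cite[Lemma 5.3.2]{ambrosio2005gradient}) gives a 3-plan $\gamma_\nu \in \Pc(X\times X\times Y)$ with $(\p_1,\p_3)_\#\gamma_\nu = \pi_0$ and $(\p_2,\p_3)_\#\gamma_\nu = \pi_1$. Since $c$ is real-valued we may take $\Gamma = X\times X\times Y$.

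The heart of the proof is to construct a measurable map $\Lambda\colon \Gamma \to \mathcal{S}([0,1],X)$ whose value at $(x_0,x_1,y)$ is an \snccconv{}-variational c-segment from $(x_0,y)$ to $(x_1,y)$. Because $X$ is Polish and, by \Cref{hyp:continuity-var-c-seg}, every \snccconv{}-variational c-segment belongs to $\mathcal{S}([0,1],X) = C([0,1],X)$, this target space is itself Polish under the uniform metric. Define the multivalued map
\[
F(x_0,x_1,y) = \{\rmx \in C([0,1],X) : \rmx(0)=x_0,\, \rmx(1)=x_1,\ s\mapsto c(\rmx(s),y)-c(\rmx(s),y')\text{ is convex } \forall y'\in Y\}.
\]
Each $F(x_0,x_1,y)$ is nonempty by condition \snccconv{}, and is closed in $C([0,1],X)$ because continuity of $c$ turns the endpoint conditions and each convexity inequality (evaluated at rational chord points) into closed conditions, while separability of $Y$ allows restricting the quantifier over $y'$ to a countable dense subset. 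The same argument shows that the graph of $F$ is closed, hence Borel, in $\Gamma\times C([0,1],X)$.

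Having a Borel graph with nonempty closed values in a Polish space, one applies a measurable selection theorem (e.g., Jankov--von Neumann uniformization, or the projection-theorem version of Kuratowski--Ryll-Nardzewski) to obtain a universally measurable selection $\Lambda\colon \Gamma \to C([0,1],X)$ with $\Lambda(x_0,x_1,y)\in F(x_0,x_1,y)$. Since $\gamma_\nu$ is a finite Borel measure, $\Lambda$ is $\gamma_\nu$-measurable, which is all that is needed to define the pushforwards $\mu(s) \coloneqq (\Lambda_s)_\#\gamma_\nu$ with $\Lambda_s \coloneqq \mathrm{ev}_s \circ \Lambda$. The evaluation map $\mathrm{ev}_s\colon C([0,1],X)\to X$ is continuous, so each $\Lambda_s$ is measurable. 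All properties \ref*{enum:nnccconvlcs-gamma}--\ref*{enum:nnccconvlcs-pi} of \Cref{def:lifted_csegment-nnccconv} follow directly from the construction of $\gamma_\nu$ and $\Lambda$, while the new condition that $\Lambda_s$ factors through a map into curves holds by definition.

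The main obstacle is the measurable selection step: we are selecting not a point of $X$ but an entire continuous curve, so one must be careful with the topology on $C([0,1],X)$ and verify the hypotheses of a selection theorem. The key technical points are the closedness of the graph of $F$, which relies on the joint continuity of $c$ combined with separability of $Y$ to reduce the convexity condition to countably many closed inequalities, and the Polishness of $C([0,1],X)$, which is where the continuity assumption in \Cref{hyp:continuity-var-c-seg} is used to ensure that \snccconv{}-variational c-segments actually live in this well-behaved target space.
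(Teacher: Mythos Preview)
Your proposal is correct and follows essentially the same approach as the paper: construct $\gamma_\nu$ by gluing optimal couplings, set up the multivalued map from $\Gamma$ into the curve space $\mathcal{S}([0,1],X)$, show its graph is closed using continuity of $c$, and apply a measurable selection theorem (the paper invokes \cite[Theorem~6.9.13]{bogachev2007measure}, you invoke Jankov--von Neumann) before composing with evaluation. One minor slip: you invoke \Cref{hyp:c-bounded-above}, which is neither assumed in the statement nor needed for the existence of optimal couplings---lower semi-continuity, lower-boundedness, and finiteness of $\Tc_c(\mu_i,\nu)$ suffice.
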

\begin{proof}
    Compared to Lemma \ref{lemma:existence-lifted}, we need now to guarantee the existence of a measurable selection $\Lambda$ as a map from $\Gamma$ to the set of curves valued in $X$ that are continuous on $(0,1)$ with limit at $0$ and $1$.
     
    The \snccconv{} property guarantees the existence of an \snccconv{}-variational c-segment for each triplet $(x_0,x_1,y) \in \Gamma$. We aim to construct a measurable map $\Lambda_s$ mapping each triplet to the evaluation at time $s\in [0,1]$ of an associated \snccconv{}-variational c-segment.
    Consider then the multi-valued map $\tilde \Lambda: \Gamma \to \mathcal{S}([0,1], X)$ which maps a given triplet $(x_0,x_1,y) \in \Gamma$ to the set of \snccconv{}-variational c-segments associated with it.
    By the assumptions, the graph of $\tilde \Lambda$ is the set
    \begin{multline*}
    \big\{
    (x_0,x_1,\ov{y},\rmx) \in \Gamma \times \mathcal{S}([0,1], X) \;: \\
    \text{$(\rmx,y)$ is a \snccconv{}-variational c-segment from $(x_0,\ov{y})$ to $(x_1,\ov{y})$}
    \big\},
    \end{multline*}
    where $\mathcal{S}([0,1], X)$ is endowed with the uniform norm topology. Note that this is a Suslin space (see \cite{bogachev2007measure}) since it is isomorphic to $C([0,1],X)$.
    By \cite[Theorem 6.9.13]{bogachev2007measure}, it is sufficient to show that the graph of $\tilde \Lambda$ is closed to guarantee the existence of a measurable selection $\Lambda$ as in Definition \ref{def:lifted_csegment-nnccconv}. This is true since the convex constraint in \ref{condition-NNCC-conv} can be formulated as an intersection of closed subsets due to the continuity of the cost. Again by continuity of the curves on $(0,1)$, the evaluation $\operatorname{ev}_s$ is continuous for $s \in (0,1)$ and therefore the map $\Lambda_s \coloneqq \operatorname{ev}_s \circ \Lambda$ is measurable.
\end{proof}

We can now present the proof of Theorem \ref{thm:nncc-conv-lift}, which is an adaptation of the proof of Thereom \ref{ThMainTheorem}.

\begin{proof}[Proof of Theorem \ref{thm:nncc-conv-lift}]
    Thanks to the new definition of lift, for any triplet $(x_0,x_1,y)\in \Gamma$, the following inequality holds for any $0\le a\le b\le 1$
    \[
    \begin{aligned}
        c(\Lambda_s(x_0,x_1,y),y)-c(\Lambda_s(x_0,x_1,y),z)\leq& (1-s)[c(\Lambda_a(x_0,x_1,y),y)-c(\Lambda_a(x_0,x_1,y),z)]\\
        &+s[c(\Lambda_b(x_0,x_1,y),y)-c(\Lambda_b(x_0,x_1,y),z)]\,.
    \end{aligned}
    \]
    Moreover, as pointed out in Remark \ref{rmk:pis-optimal}, the transport plan $\pi(s)$ as defined in the proof of Proposition \ref{prop:var_lifted_c-seg} is optimal for any $s\in[0,1]$. These two facts combined ensure that one can repeat the proof of Proposition \ref{prop:var_lifted_c-seg} on any subinterval $[a,b]\subset[0,1]$, implying that the restriction of a lifted c-segment to $[a,b]$ is again a variational c-segment in $(\Pc(X)\times\Pc(Y),\Tc_c)$. \snccconv{}-lifted c-segments satisfy therefore condition \snccconv{}. The claim of Theorem \ref{thm:nncc-conv-lift} is a direct consequence.
\end{proof}

\subsection{Proof of Proposition \ref{ThMainTheorem_converse}}\label{ssec:appendix-converse}

To prove Proposition \ref{ThMainTheorem_converse}, we follow the strategy of \Cref{lemma:auto-csegment} that relies on the differentiability of the cost. In this case, the cost is the transport cost $\Tc_c$ and its differentiability can be obtained when the reference measure $\nu$ has density w.r.t. Lebesgue.
We recall that a c-segment (Definition \ref{def:c-segment}) from $(x_0,y)$ to $(x_1,y)$ is a path $(s\in[0,1])\mapsto (\rmx(s),y)\in X\times Y$ that satisfies
\begin{equation}\label{eq:c-segment-recall}
    \nabla_yc(\rmx(s),y)=(1-s)\nabla_yc(x_0,y)+s\nabla_yc(x_1,y)\,,
\end{equation}
with $\rmx(0)=x_0$ and $\rmx(1)=x_1$.
In the remaining part of this section $c$ will always be assumed to be $C^1$ and twisted, so that c-segments, when they exist, are unique.
We define then the collection of maps $\bar\Lambda_s:X\times X\times Y\to X$, for $s\in[0,1]$, as
\[
(x_0,x_1,y)\mapsto \bar\Lambda_s(x_0,x_1,y)=
\begin{cases}
    \rmx(s) &\text{if the c-segment $(\rmx,y)$ exists for $(x_0,x_1,y)$} \\
    x_0 &\text{otherwise.}
\end{cases}
\]
In practice, $\bar\Lambda$ maps triplets in $(x_0,x_1,y)$ to the evaluation at time $s$ of a corresponding c-segment when this exists, and the map is extended to the whole $X\times X\times Y$ by assigning arbitrarily the first point $x_0$ (but any other measurable extension could be chosen).
By similar arguments to Lemma \ref{lemma:existence-lifted-v2}, one can show that $\bar\Lambda_s$ are measurable maps.

\begin{lemma}\label{lem:Tc-var-c-seg_nuac}
    Let $X,Y$ be two compact subsets of $\Rd$, with $Y=\overline{\operatorname{int}Y}$, and let $c\in C^1(X\times Y)$ be a twisted cost.
    Consider $\mu_0,\mu_1\in \Pc(X)$ and an absolutely continuous measure $\nu\in\Pc(Y)$, and assume furthermore that
    $\supp(\nu)\subset \operatorname{int}Y$.
    Let $\mu$ be an \snccconv{}-variational c-segment (respectively, an \smtw{}-variational c-segment) from $(\mu_0,\nu)$ to $(\mu_1,\nu)$ and $\gamma_\nu$ a 3-plan as in point \ref*{enum:lcs-pi} of Definition \ref{def:lifted_csegment}.
    Then, for $\gamma_\nu$-almost every $(x_0,x_1,y)\in X\times X\times Y$, $s\mapsto (\bar\Lambda_s(x_0,x_1,y),y)$ is a c-segment and $\mu(s)=(\bar\Lambda_s)_\# \gamma_\nu$ (respectively, there exists a continuous function $f\colon[0,1]\to[0,1]$, with $f(0) = 0$ and $f(1) = 1$, such that for $\gamma_\nu$-almost every $(x_0,x_1,y)\in X\times X\times Y$, $s\mapsto (\bar\Lambda_s(x_0,x_1,y),y)$ is a reparametrization via $f$ of a c-segment and $\mu(s)=(\bar\Lambda_{f(s)})_\# \gamma_\nu$).
\end{lemma}
\begin{proof} 
    For $\varepsilon>0$ (small enough) consider a curve $\sigma:(-\varepsilon,\varepsilon)\ni t \mapsto \sigma(t)\in\Pc(Y)$ defined as
    \[
    \sigma(t)=X(t)_\# \nu\,, \quad \text{with $\dot{X}(t)=\xi (X(t))$ and $X(0)=\Id$}\,,
    \]
    where $\xi\in C_c^\infty(\operatorname{int}Y)$ is a smooth vector field. We can write (see \cite[Corollary 10.2.7]{ambrosio2005gradient}, which can be easily adapted to our setting):
    \[
    \Tc_c(\mu(s),\nu)-\Tc_c(\mu(s),\sigma(t))=-t \int_{X\times Y} (\nabla_y c(x_s,y) \cdot \xi( y))\,d \pi(s) +o(t)\,,
    \]
    with $\pi(s)$ optimal transport plan between $\mu(s)$ and $\nu$.
    Since $\mu(s)$ is a \snccconv{}-variational c-segment, hence in particular a variational c-segment, by repeating the argument of \Cref{lemma:auto-csegment} we obtain
    \[
    \int_{X\times Y} (\nabla_y c(x_s,y) \cdot \xi(y))\,d \pi(s)=(1-s) \int_{X\times Y} (\nabla_y c(x_0,y) \cdot \xi(y))\,d \pi_0+s\int_{X\times Y} (\nabla_y c(x_1,y) \cdot \xi(y))\,d \pi_1\,.
    \]
    Absolute continuity of $\nu$ and the twist assumption on the cost $c$ imply that there exist $\nu$-a.e. unique optimal transport maps $T_0$ and $T_1$ from $\nu$ to $\mu_0$ and from $\nu$ to $\mu_1$, and a $\nu$-a.e. unique optimal transport map $T_s$ from $\nu$ to $\mu(s)$ (see \cite[Chapter 2]{villani2003topics}).
    The above equality can be rewritten as
    \[
    \int_{Y} \left[\nabla_y c(T_s(y),y) -(1-s) \nabla_y c(T_0(y),y) -s\nabla_y c(T_1(y),y)\right] \cdot \xi(y)\,d \nu(y)=0\,,
    \]
    for any $\xi\in C_c^\infty(\operatorname{int}Y)$. 
    By compactness of $X$ and $Y$, and since $C_c^\infty(\operatorname{int}Y)$ is dense in $L^2_\nu(Y)$, the equality holds also in $L^2_\nu(Y)$. 
    Then, using that $\xi$ is arbitrary, we deduce that $\nu$-a.e.
    \[
    \nabla_y c(T_s(y),y) =(1-s) \nabla_y c(T_0(y),y) +s\nabla_y c(T_1(y),y)\,, \quad \text{for $s\in[0,1]$.}
    \]
    Therefore, $\nu$-a.e. $T_s(y)$ is a c-segment from $T_0(y)$ to $T_1(y)$. From the regularity of the cost, these are unique and continuous. Finally,
    $\mu(s)=(\bar\Lambda_s)_\# \gamma_\nu$, where $\gamma_\nu=[y\mapsto(T_0(y),T_1(y),y)]_\# \nu$. 

    For the second part of the statement, the reasoning is similar. Assuming now that $\mu(s)$ is a \smtw{}-variational c-segment and considering variations $\sigma(t)$ of the same type as above, we deduce that
    \begin{equation}\label{eq:MTW_c-seg_max}
    \begin{aligned}
    \int_{Y} \nabla_y c(T_s(y),y)\cdot \xi(y)\,d \nu(y) &\le\max\left( \int_{Y} \nabla_y c(T_0(y),y) \cdot \xi(y)\,d \nu(y),\int_{Y} \nabla_y c(T_1(y),y) \cdot \xi(y) \,d \nu(y)\right)\\
    &\le \int_{Y} \max\left(\nabla_y c(T_0(y),y) \cdot \xi(y),\nabla_y c(T_1(y),y) \cdot \xi(y) \right) \,d \nu(y)\,,
    \end{aligned}
    \end{equation}
    for any $\xi\in C_c^\infty(\operatorname{int}Y)$. Again, the inequality actually holds for any $\xi\in L^2_\nu(Y)$ and is therefore true $\nu$-a.e.
    Following the arguments of \Cref{lem:auto-csegment-mtw} and using that $\xi$ is arbitrary, we obtain that $\nu$-a.e.
    \begin{equation}\label{eq:MTW_c-seg_param_y}
    \nabla_y c(T_s(y),y) =(1-f(y,s)) \nabla_y c(T_0(y),y) +f(y,s)\nabla_y c(T_1(y),y)\,, \quad \text{for all $s\in[0,1]$},
    \end{equation}
    where $f\colon Y\times[0,1] \to[0,1]$ is a family of continuous functions, measurable in $y\in\supp(\nu)$, such that $f(\cdot,0) = 0$ and $f(\cdot,1)= 1$. However, one can show that the parametrization of the c-segments has to be independent of $y$.
    The previous inequality can be rewritten
    \begin{multline}
        \int_{Y} f(y,s)(\nabla_y c(T_1(y),y) - \nabla_y c(T_0(y),y)) \cdot \xi(y)\,d \nu(y) \\ \le\max\left( 0 ,\int_{Y} \left(\nabla_y c(T_1(y),y) - \nabla_y c(T_0(y),y)\right) \cdot \xi(y) \,d \nu(y)\right).
    \end{multline}
    Writing $g(y) \coloneqq \nabla_y c(T_1(y),y) - \nabla_y c(T_0(y),y)$, we  assume $\int_Y \| g(y)\|^2 d\nu(y) > 0$, otherwise any global parametrization satisfies the result. Then, we consider $\xi(y) = u(y)\frac{g(y)}{\| g \|^2_{\nu}}$ with a measurable bounded function $u: Y \to \R$. We obtain
    \begin{equation}
        \int_Y u(y) f(y,s) \frac{\lvert g(y) \rvert^2}{\|g\|_\nu^2} d\nu(y) \leq \max\left(0,\int_Y u(y) \frac{\lvert g(y)\rvert^2}{\|g\|_\nu^2} d\nu(y)  \right),
    \end{equation}
    for all $u$. Considering $u$ that has $0$ mean w.r.t. the measure $\tilde{\nu} \coloneqq \frac{\lvert g(y)\rvert^2}{\|g\|_\nu^2} d\nu(y)$, it implies that $f(y,s)$ is $\tilde \nu$ a.e. constant in $y$.
    When $g(y) = 0$, every parametrization is admissible, in particular the one found for all the other points. 
\end{proof}

As an immediate consequence of the previous lemma, in this smooth setting, whenever \snccconv{} or \smtw{} variational c-segments in the Wasserstein space exist for any triplet $(\mu_0,\mu_1,\nu)$, c-segments on the base space are necessarily well-defined.

\begin{corollary}\label{cor:c-segments-existence}
    Let $X,Y$ be compact subsets of $\Rd$, with $Y=\overline{\operatorname{int}Y}$, and let $c\in C^1(X\times Y)$ be a twisted cost. If $(\Pc(X)\times\Pc(Y),\Tc_c)$ satisfies the \snccconv{} condition (or the \smtw{} condition), then c-segments on the base space $(X\times Y,c)$ exist and are unique for any triplets of points $(x_0,x_1,y)\in X\times X\times \operatorname{int}Y$.
\end{corollary}
\begin{proof}
    Take any triplet $(x_0,x_1,y)\in X\times X\times \operatorname{int}Y$. Consider the measures $\mu_0=\delta_{x_0}, \mu_a=\delta_{x_1}$ and an absolutely continuous measure $\nu\in\Pc(Y)$, such that $y\in \supp(\nu)$ and $\supp(\nu)\subset \operatorname{int}Y$. From Lemma \ref{lem:Tc-var-c-seg_nuac} we know that for a.e. $y\in\supp(\nu)$ there exists a unique c-segment from $(x_0,y)$ to $(x_1,y)$. Consider then a sequence $y_k$ converging to $y$ such that, for any $k$, there exists a unique c-segment $\rmx_k$ from $(x_0,y_k)$ to $(x_1,y_k)$. For any $s\in[0,1]$, since $X$ is compact and $c\in C^1(X\times Y)$, we can extract a converging subsequence from $\rmx_k(s)$ converging to a point denoted $\rmx(s)$ and pass to the limit in \eqref{eq:c-segment-recall} to obtain
    \[
    \nabla_y c(\rmx(s),y) =(1-s) \nabla_y c(x_0,y) +s\nabla_y c(x_1,y)\,.
    \]
    Since the cost is twisted, $\rmx(s)$ solution to the previous equation is unique and the whole sequence converges. Hence, $\rmx_k$ converges pointwise to $\rmx$ c-segment from $(x_0,y)$ to $(x_1,y)$. The regularity of the cost implies the continuity of the curve.
\end{proof}

To prove Proposition \ref{ThMainTheorem_converse}, we now extend the previous lemma to atomic measures by a density argument.

\begin{lemma}\label{lem:Tc-var-c-seg}
    In the same setting of Lemma \ref{lem:Tc-var-c-seg_nuac}, consider now a general measure $\nu\in\Pc(Y)$ (not necessarily absolutely continuous) with $\supp(\nu)\subset \operatorname{int}Y$. Assume further that the Wasserstein space $(\Pc(X)\times\Pc(Y),\Tc_c)$ satisfies the \snccconv{} condition (respectively, \smtw{} condition).
    Then, curves of the form $\mu(s)=(\bar\Lambda_s)_\# \gamma_\nu$ always provide \snccconv{}-variational c-segments on $(\Pc(X)\times\Pc(Y),\Tc_c)$ (respectively, \smtw{}-variational c-segments).
\end{lemma}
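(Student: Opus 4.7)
The plan is to prove the claim via a two-step reduction. First, I would derive a pointwise inequality along c-segments of the base $(X\times Y,c)$ from the Wasserstein hypothesis, using Dirac approximations combined with \Cref{lem:Tc-var-c-seg_nuac}. Second, I would lift this pointwise inequality to the Wasserstein level by a coupling and gluing argument modeled on the proof of \Cref{prop:var_lifted_c-seg}. I describe the \snccconv{} case; the \smtw{} case is analogous.

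\textbf{Step 1 (pointwise inequality on the base).} Fix $(x_0,x_1,y_0)\in X\times X\times\operatorname{int}Y$ and $z\in Y$; by \Cref{cor:c-segments-existence} the c-segment $\rmx$ from $x_0$ to $x_1$ with base $y_0$ is well-defined and continuous in $(x_0,x_1,y_0)$. I would approximate $\delta_{y_0}$ narrowly by absolutely continuous $\nu_k\in\Pc(Y)$ with $\supp(\nu_k)\subset\operatorname{int}Y$. The hypothesis applied to $(\delta_{x_0},\nu_k)$ and $(\delta_{x_1},\nu_k)$ produces an \snccconv{}-variational c-segment in the Wasserstein space. Since the optimal couplings from $\delta_{x_i}$ to $\nu_k$ are uniquely the product measures $\delta_{x_i}\otimes\nu_k$, \Cref{lem:Tc-var-c-seg_nuac} forces this c-segment to take the form $\tilde\mu_k(s)=\int\delta_{\bar\Lambda_s(x_0,x_1,y')}\,d\nu_k(y')$. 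Applying the \snccconv{} inequality with $\sigma=\delta_z$ and invoking \Cref{rmk:pis-optimal} to identify $\Tc_c(\tilde\mu_k(s),\nu_k)=\int c(\bar\Lambda_s(x_0,x_1,y'),y')\,d\nu_k(y')$, I obtain convexity in $s$ of $\int[c(\bar\Lambda_s,y')-c(\bar\Lambda_s,z)]\,d\nu_k$. Passing to the narrow limit $\nu_k\to\delta_{y_0}$ using continuity of $c$ and of $\bar\Lambda_s$ then yields convexity of $s\mapsto c(\rmx(s),y_0)-c(\rmx(s),z)$ for every $z\in Y$.

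\textbf{Step 2 (lift to the Wasserstein space).} Given a valid 3-plan $\gamma_\nu$, set $\mu(s)=(\bar\Lambda_s)_\#\gamma_\nu$ and $\pi(s)=(\bar\Lambda_s,\p_3)_\#\gamma_\nu$. Convexity of $s\mapsto\Tc_c(\mu(s),\nu)-\Tc_c(\mu(s),\sigma)$ on $[0,1]$ reduces to the chord inequality on every subinterval $[a,b]\subset[0,1]$, since the sub-c-segment of $\rmx$ on $[a,b]$ is itself a c-segment with the same base $y_0$ and Step 1 furnishes the pointwise convexity $\gamma_\nu$-a.e. I would then repeat the argument of \Cref{prop:var_lifted_c-seg}: for $s_\tau=(1-\tau)a+\tau b$, pick an optimal coupling $\tilde\pi(s_\tau)$ of $(\mu(s_\tau),\sigma)$ and apply \Cref{lemma:coupling-extension} to produce a $4$-plan $\gamma^4$ with $(\p_1,\p_2,\p_3)_\#\gamma^4=\gamma_\nu$, $(\p_4)_\#\gamma^4=\sigma$, and $(\bar\Lambda_{s_\tau},\p_4)_\#\gamma^4=\tilde\pi(s_\tau)$. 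Integrating the pointwise inequality against $\gamma^4$ and using optimality of $(\p_1,\p_3)_\#\gamma_\nu$ and $(\p_2,\p_3)_\#\gamma_\nu$ to identify the right-hand side with $(1-\tau)[\Tc_c(\mu(a),\nu)-\Tc_c(\mu(a),\sigma)]+\tau[\Tc_c(\mu(b),\nu)-\Tc_c(\mu(b),\sigma)]$, together with optimality of $\tilde\pi(s_\tau)$ and admissibility of $\pi(s_\tau)\in\Pi(\mu(s_\tau),\nu)$ on the left-hand side, yields the desired chord inequality.

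\textbf{Main obstacle.} The decisive step is Step 1: the use of Dirac endpoints is essential because it is only there that the optimal couplings to the approximating $\nu_k$ degenerate into unique product measures, allowing \Cref{lem:Tc-var-c-seg_nuac} to pin down the structure of the approximating \snccconv{}-variational c-segments. For general (non absolutely continuous) $\nu$ the 3-plan $\gamma_\nu$ is not unique and a direct approximation at the level of $\gamma_\nu$ would fail; the two-step structure is precisely what bypasses this difficulty, by first pushing the information down to individual c-segments on the base and then re-integrating against the chosen $\gamma_\nu$. The \smtw{} case requires extra care because the pointwise max inequality does not integrate as cleanly as convexity; one exploits the reparametrization invariance of \smtw{} together with the surjectivity (via the intermediate value theorem) of the reparametrizations $f_k\colon[0,1]\to[0,1]$ produced by \Cref{lem:Tc-var-c-seg_nuac} in order to bypass the unknown $f_k$ when passing to the limit.
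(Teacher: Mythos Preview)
Your two–step route is genuinely different from the paper's argument, and for the \snccconv{} case it can be made to work, but there is a real gap in Step~2 as you describe it, and a fatal one in the \smtw{} case.

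\textbf{On Step 2 for \snccconv{}.} You write that the right-hand side on a subinterval $[a,b]$ is identified using ``optimality of $(\p_1,\p_3)_\#\gamma_\nu$ and $(\p_2,\p_3)_\#\gamma_\nu$''. But those are the optimal plans for $(\mu(0),\nu)$ and $(\mu(1),\nu)$, not for $(\mu(a),\nu)$ and $(\mu(b),\nu)$. What you actually need is that $\pi(a)=(\bar\Lambda_a,\p_3)_\#\gamma_\nu$ and $\pi(b)=(\bar\Lambda_b,\p_3)_\#\gamma_\nu$ are optimal; otherwise the term $\int c(\bar\Lambda_a,y)\,d\gamma_\nu$ strictly exceeds $\Tc_c(\mu(a),\nu)$ and the bound fails. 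This is fixable: first run your integration argument on the full interval $[0,1]$ with $\sigma=\nu$, which collapses both sides to zero and forces $\int c\,d\pi(s)=\Tc_c(\mu(s),\nu)$ for every $s$ (exactly the mechanism of \Cref{rmk:pis-optimal}). Only then can the subinterval argument proceed. The paper, by contrast, stays entirely at the Wasserstein level: it approximates $\nu$ by absolutely continuous $\nu_k$, invokes \Cref{lem:Tc-var-c-seg_nuac} to get $\mu_k(s)=(\bar\Lambda_s)_\#\gamma_{\nu_k}$, then passes to the limit using stability of optimal plans and continuity of $\Tc_c$ on compact domains. Your detour through the base is more work, but has the merit of handling every admissible $3$-plan $\gamma_\nu$ directly rather than only those arising as limits of $\gamma_{\nu_k}$.

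\textbf{On the \smtw{} case.} Here your approach breaks down at Step~2 and cannot be repaired by the reparametrization trick you mention (which concerns Step~1, not Step~2). Integrating the pointwise max inequality against $\gamma^4$ yields $\int\max\{A,B\}\,d\gamma^4$ on the right, and since $\int\max\{A,B\}\geq\max\{\int A,\int B\}$, you cannot conclude the Wasserstein-level \smtw{} bound. This is not a technicality: \Cref{SecCounterExampleMTW} exhibits precisely a base space satisfying \smtw{} whose lifts of c-segments fail the Wasserstein \smtw{} inequality. The paper's proof avoids this by never going through the base---it uses the \emph{assumed} Wasserstein-level \smtw{} property on absolutely continuous $\nu_k$ and passes to the limit, exploiting that \smtw{} is parametrization-invariant so one may fix a common parametrization before taking limits.
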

\begin{proof}
    Let $\nu_k$ be a sequence of absolutely continuous measures, with $\supp(\nu_k)\subset \operatorname{int}Y$, converging weakly with respect to the narrow convergence of measures to $\nu$, as $k\rightarrow +\infty$.
    Since $(\Pc(X)\times\Pc(Y),\Tc_c)$ satisfies the \snccconv{} condition we can find a corresponding sequence of \snccconv{}-variational c-segments $\mu_k$.
    Moreover, from Corollary \ref{cor:c-segments-existence}, the collection of measurable maps $\bar\Lambda_s:X\times X\times Y\to X$, for $s\in[0,1]$, is such that $s\mapsto (\bar\Lambda_s(x_0,x_1,y),y)$ is a c-segment for each $(x_0,x_1,y)\in X\times X\times \operatorname{int}Y$ and can be easily shown to be continuous, from the compactness of $X$ and the regularity of $c$.
    For any $k$, $\mu_k(s)$ is uniquely defined as $\mu_k(s)=(\bar\Lambda_s)_\# \gamma_{\nu_k}$ from Lemma \ref{lem:Tc-var-c-seg_nuac}. Since $\nu_k\rightharpoonup \nu$, by the stability of optimal transport plans \cite[Theorem 5.20]{villani2009optimal}, there exist optimal plans $\pi_0$ and $\pi_1$ between $\mu_0$ and $\nu$ and between $\mu_1$ and $\nu$, respectively, and a measure $\gamma_\nu$ as in Definition \ref{def:lifted_csegment}, such that (up to the extraction of a subsequence), $(\pi_0)_k=(\p_1,\p_3)_\#\gamma_{\nu_k}\rightharpoonup\pi_0$, $(\pi_1)_k=(\p_2,\p_3)_\#\gamma_{\nu_k}\rightharpoonup\pi_1$ and $\gamma_{\nu_k}\rightharpoonup\gamma_\nu$.
    Hence $\mu_k(s)=(\bar\Lambda_s)_\# \gamma_{\nu_k} \rightharpoonup (\bar\Lambda_s)_\# \gamma_\nu=\mu(s)$ (up to the extraction of a subsequence). Since $X$ and $Y$ are compact, $\Tc_c(\cdot,\cdot)$ is continuous with respect to the narrow convergence. Hence, the convex function $\Tc(\mu_k(s),\nu_k)-\Tc(\mu_k(s),\sigma)$ converges pointwise to $\Tc(\mu(s),\nu)-\Tc(\mu(s),\sigma)$, for any $\sigma\in\Pc(Y)$. By pointwise convergence of convex functions, we deduce that $\mu(s)$ is a \snccconv{}-variational c-segment.

    The proof of the second part of the statement is identical. We highlight that for each $k$, the curve $\mu_k$ may have a different parametrization, via a continuous function $f_k$, as explained in Lemma \ref{lem:Tc-var-c-seg_nuac}.  However, condition \smtw{} is independent of such a continuous parametrization, i.e. $\mu_k(s)$ satisfies \smtw{} if and only if so does $\mu_k(f(s))$, for any continuous function $f:[0,1]\rightarrow[0,1]$ with $f(0)=0$ and $f(1)=1$. This implies that one may fix a common parametrization $f$ for any $\mu_k$, as for example the linear one, and pass to the limit.
\end{proof}

Relying on the previous lemma, Proposition \ref{ThMainTheorem_converse} can now be proven straightforwardly by considering atomic measures. 

\begin{proof}[Proof of Proposition \ref{ThMainTheorem_converse}]
    We treat the case of the \snccconv{} condition, the proof being similar for the \smtw{} condition.
    Consider any $\ov y\in\operatorname{int}Y$, $y\in Y$ and $x_0,x_1\in X$. Take the measures $\mu_0=\delta_{x_0}, \mu_1=\delta_{x_1}\in \Pc(X)$ and $\sigma=\delta_y,\nu=\delta_{\ov{y}}\in\Pc(Y)$. Then, there exists at least one \snccconv{}-variational c-segment on $(\Pc(X)\times\Pc(Y),\Tc_c)$ of the form $\mu(s)=\delta_{\rmx(s)}$, where $\rmx$ is the c-segment from $(x_0,\ov y)$ to $(x_1,\ov y)$.
    In particular, the function $s\mapsto \Tc(\mu(s),\nu)-\Tc(\mu(s),\sigma)=c(\rmx(s),\yb)-c(\rmx(s),y)$ is convex for any $y\in Y$ and $(\rmx,\yb)$ is therefore an \snccconv{}-variational c-segment on $(X\times Y,c)$.
    
    The general case $\ov{y}\in Y$ can be tackled by defining $\ov{y}_k\rightarrow \ov{y}$, $(\ov{y}_k)_k\subset \operatorname{int}Y$ and considering as above a sequence of \snccconv{}-variational c-segments $\rmx_k$, from $(x_0,\yb_k)$ to $(x_1,\yb_k)$.
    For any $k$, $s\mapsto c(\rmx_k(s),\ov{y}_k)-c(\rmx_k(s),y)$ is a convex function for any $y\in Y$.
    By compactness of $X$, for any $s\in[0,1]$, we can extract a converging subsequence (without relabeling it) and define the limit curve $\rmx(s)=\lim_k \rmx_k(s)$. Since $c$ is continuous, $c(\rmx_k(s),\ov{y}_k)-c(\rmx_k(s),y)$ converges pointwise to $c(\rmx(s),\ov{y}_k)-c(\rmx(s),y)$, which is therefore convex, for any $y\in Y$, and $\rmx$ is an \snccconv{}-variational c-segment.
    
\end{proof}

\subsection{Other proofs}
\label{AppendixOtherProofs}
\begin{proof}[Proof of \Cref{lem:auto-csegment-mtw}]
    Using a similar argument as in the proof of Lemma \ref{lemma:auto-csegment}, we 
    obtain
    \begin{equation}\label{EqSMTWReducedEquation}
        \langle \nabla_y c(\rmx(s),\overline{y}),\eta\rangle \leq \max(\langle \nabla_y c(\rmx(0),\overline{y}),\eta\rangle,\langle \nabla_y c(\rmx(1),\overline{y}),\eta\rangle)\,,
    \end{equation}
    for all $\eta \in T_{\overline{y}}Y$.
    This inequality implies that $\nabla_y c(\rmx(s),\overline{y}) = \alpha \nabla_y c(\rmx(0),\overline{y}) + \beta \nabla_y c(\rmx(1),\overline{y})$ for some real coefficients $\alpha,\beta$,  by testing this inequality with $\pm \eta $ orthogonal to the span of the two vectors. The inequality reduces to a two-dimensional problem. The first case is when $\nabla_y c(\rmx(0),\overline{y}) = \nabla_y c(\rmx(1),\overline{y})$, then any candidate $f$ satisfies our statement, in particular $f(s) = s$. 
     
    The second case is when the two vectors on the right-hand side of \eqref{EqSMTWReducedEquation} span a two-dimensional space.
    Testing the inequality with a vector such that 
    $\langle \nabla_y c(\rmx(0),\overline{y}),\eta \rangle = \langle \nabla_y c(\rmx(1),\overline{y}), \eta \rangle = 1$, we get $\alpha + \beta \leq 1$ and testing with the opposite, we get $\alpha  + \beta \geq 1$. A similar reasoning with a vector $\eta$ orthogonal $\nabla_y c(x(0),\yb)$ and such that $\langle \nabla_y c(x(0),\yb),\eta\rangle = -1$ gives that $0\leq \beta $. Similarly, $\alpha \geq 0$. 
    Since the coefficient $f(s)$ is unique and the curve $x(s)$ is continuous, $s \mapsto f(s)$ is also continuous, which is the result.
    
    The last case when $\nabla_y c(\rmx(0),\overline{y})$ and $\nabla_y c(\rmx(1),\overline{y})$ are collinear but different is similar.
\end{proof}

\begin{proof}[Proof of \Cref{lemma:subdiff-connected}]
    We show that \ref*{enum:LMP-csubdiff-i} implies \ref*{enum:LMP-csubdiff-iii} and that \ref*{enum:LMP-csubdiff-ii} implies \ref*{enum:LMP-csubdiff-i}.

    \ref*{enum:LMP-csubdiff-i}$\implies$\ref*{enum:LMP-csubdiff-iii}. Let $\phi\colon Y\to [-\infty,+\infty]$ be such that $\phi(\yb)$ is finite and 
    \[
    \forall y\in Y,\quad\left\{\begin{aligned}
    c(x_0,\yb)-c(x_0,y)&\leq \phi(\yb)-\phi(y)\\
    c(x_1,\yb)-c(x_1,y)&\leq \phi(\yb)-\phi(y)\,.
    \end{aligned}\right.\]
    The above inequalities may involve infinities but never any undefined combination. Then $\max\{c(x_0,\yb)-c(x_0,y), c(x_1,\yb)-c(x_1,y)\}\leq \phi(\yb)-\phi(y)$ which combined with \ref*{enum:LMP-csubdiff-i} yields the desired result.

    \ref*{enum:LMP-csubdiff-ii}$\implies$\ref*{enum:LMP-csubdiff-i}. Define the function 
    \[
    \psi(x)=\begin{cases}
        c(x_0,\yb)\quad&\text{if } x=x_0,\\
        c(x_1,\yb)\quad&\text{if } x=x_1,\\
        -\infty \quad&\text{otherwise.}        
    \end{cases}
    \]
    Let $\phi=\psi^c$, a c-concave function. Regardless of the infinities taken by the cost function we have 
    \[
    \phi(y)=\inf_{x\in\{x_0,x_1\}}c(x,y)-\psi(x)=\min\{c(x_0,y)-c(x_0,\yb), c(x_1,y)-c(x_1,\yb)\}.
    \]
    Since $\phi(\yb)=0$ and $\psi(x_0), \psi(x_1)$ are finite we find that $x_0\in\partial^c\phi(\yb)$ and $x_1\in \partial^c\phi(\yb)$. Then by \ref*{enum:LMP-csubdiff-ii} $\tilde x\in \partial^c\phi(\yb)$, which gives finiteness of $c(\tilde x,\yb)$ and
    \[
    \forall y\in Y,\quad c(\tilde x,\yb)-c(\tilde x,y)\leq \phi(\yb)-\phi(y)\,.
    \]
    Substituting with the value of $\phi(y)$ we obtain \ref*{enum:LMP-csubdiff-i}. 
\end{proof}

\begin{proof}[Proof of Lemma \ref{lemma:continuous-cost-measurable-NNCC}]
Let $\Gamma\subset X\times X\times Y$ be defined as in \eqref{eq:maximal-Gamma}.
The NNCC property guarantees the existence of a variational c-segment for each triplet $(x_0,x_1,y) \in \Gamma$. For each $s\in(0,1)$ we aim to construct a measurable map $\Lambda_s$ mapping $\gamma$-a.e. triplet to the evaluation at time $s$ of an associated variational c-segment.

Fix $s\in (0,1)$ and consider the multi-valued map $ \tilde\Lambda_s\colon \Gamma \rightrightarrows X$ which maps a given triplet $(x_0,x_1,y) \in \Gamma$ to the (nonempty by hypothesis) set of points that satisfy the NNCC inequality \eqref{eq:NNCC-inequality}.
By \cite[Theorem 6.9.13]{bogachev2007measure}, 
it is sufficient that the graph of $\tilde \Lambda_s$ is measurable to prove the existence of a measurable selection $\Lambda_s$.
Consider the two transport plans $\gamma^1,\gamma^2\in\Pc(X\times Y)$ defined as $\gamma^i=(\p_i,\p_3)_\#\gamma$, for $i=1,2$.
Since $c$ is lower semi-continuous and bounded from below (and taking finite values on $\Gamma$), it can be approximated by a non-decreasing sequence of continuous costs, denoted $c_n$, $n \in \mathbb N$.
By Egorov's theorem and the fact that $X\times Y$ is Polish, this pointwise convergence can be turned into a uniform convergence on a compact set $Z_k=Z^1_k\cap Z^2_k$, for $Z^1_k$ and $Z^2_k$ two compact sets of measure $\gamma^1(Z^1_k) \geq 1 - 1/k$ and $\gamma^2(Z^2_k) \geq 1 - 1/k$, for every $k \geq 1$ integer (see \cite[Proof of Theorem 5.10]{villani2009optimal}). For every $m \geq 1$, there exists $N(m)$ such that for $n\geq N(m)$, we have
\begin{equation}\label{eq:uniform}
    |c_n(x,y) - c(x,y)| \leq 1/m \quad \text{for $(x,y) \in Z_k$}\,.
\end{equation}

For $m \geq 1$ integer, consider the inequality
\begin{equation}\label{EqRelaxedNNCC}
   c_n(x,y)-c_n(x,z)\leq 4/m +  (1-s)[c_n(x_0,y)-c_n(x_0,z)]+s[c_n(x_1,y)-c_n(x_1,z)]\,.
\end{equation}
Define the closed sets  (since $c_n$ is continuous)
\begin{equation}
    F_{k,n,m} \coloneqq \{ (x_0,x_1,y,x) : \text{for } i= 0,1\, (x_i,y) \in Z_k \text{ , }(x,y) \in Z_k \text{ and }\eqref{EqRelaxedNNCC} \text{ is satisfied }  \forall (x,z) \in Z_k  \}\,.
\end{equation}
By \eqref{eq:uniform} and the triangle inequality, defining
\[
\tilde Z_k=\{(x_0,x_1,y,x)\in X\times X\times Y\times X : (x_0,y)\in Z_k \text{ and } (x_1,y)\in Z_k\},
\]
we have
$\operatorname{Graph}(\tilde \Lambda_s) \cap \tilde Z_k \subset F_{k,n,m}$
and $\cup_{k \geq 1} \operatorname{Graph}(\tilde \Lambda_s) \cap \tilde Z_k=\operatorname{Graph}(\tilde \Lambda_s)$, $\gamma$ a.e.
Finally, by pointwise convergence,
$ \operatorname{Graph}(\tilde \Lambda_s) \cap \tilde Z_k = \cap_{m \geq 1} (\cap_{n \geq N(m)} F_{k,n,m}) $, $\gamma$ a.e.
Then, there exists a measurable set $W$ such that $\gamma(W) = 0$ and we can change $\tilde \Lambda_s$ on $W$ by choosing the value $x_0$, for instance (in fact any measurable extension would work). Denoting
\[
\tilde W=\{ (x_0,x_1,y,x_0)\in X\times X \times Y\times X : (x_0,x_1,y)\in W \},
\]
we have the union $\operatorname{Graph}(\tilde \Lambda_s) = \tilde W \cup \big(\cup_{k\ge1}  \cap_{m \geq 1} (\cap_{n \geq N(m)} F_{k,n,m})\big) $.
Being a countable union and intersection of measurable sets, $\operatorname{Graph}(\tilde \Lambda_s)$ is itself measurable. Then, the selection theorem can be applied and it gives the result.
\end{proof}

\begin{remark}
    The previous proof is inspired by \cite[Proof of Theorem 5.10]{villani2009optimal} in which a similar measurability issue is treated.
\end{remark}

\begin{proof}[Proof of \Cref{lemma:coupling-extension}]
    Define $\tilde\pi=[(w\in \Gamma)\mapsto(\Lambda(w),w)]_\#\gamma$. Since this is a coupling of $(\mu,\gamma)$ there exists (e.g.\ by gluing $\pi$ and $\tilde\pi$ along $X$) a $3$-plan $\gamma^3\in\Pc(X\times Y\times \Gamma)$ such that 
    \begin{align}
        (\p_1,\p_2)_\#\gamma^3=\pi \label{eq:proof-lemma:coupling-extension-0}\\
        (\p_1,\p_3)_\#\gamma^3=\tilde\pi\label{eq:proof-lemma:coupling-extension-1}\,.
    \end{align}
    Note that $\tilde\pi$ is concentrated on the antigraph $\{(\Lambda(w),w) : w\in \Gamma\}$. By \eqref{eq:proof-lemma:coupling-extension-1}, $\gamma^3$ is then concentrated on the set $\Sigma=\{(x,y,w)\in X\times Y\times \Gamma : x=\Lambda(w)\}$. Define $\tilde\gamma=(\p_3,\p_2)_\#\gamma^3\in\Pi(\gamma,\sigma)$. Then using successively the definition of $\tilde\gamma$, the concentration of $\gamma^3$ on $\Sigma$, and \eqref{eq:proof-lemma:coupling-extension-0} we obtain
    \begin{align*}
        [(w,y)\mapsto(\Lambda(w),y)]_\#\tilde\gamma &=[(x,y,w)\mapsto(\Lambda(w),y)]_\#\gamma^3\\
        &=[(x,y,w)\mapsto(x,y)]_\#\gamma^3\\
        &=\pi\,.
    \end{align*}
\end{proof}

\begin{proof}[Proof of \Cref{ThLemmaReparametrization}] 
    We first use \cite[Lemma 7.12]{liero2018optimal} which says that given marginals $\mu, \nu$, there exist lifts $\tilde \mu,\tilde \nu_0$ realizing Equation \eqref{EqEqualityOnLifts} (replacing $\tilde \nu$ by $\tilde \nu_0$).
    However, the measure $\tilde \nu_0$ is not necessarily equal to $\tilde \nu$ which is fixed in our setting. In fact, \cite[Lemma 7.12]{liero2018optimal} shows that it is possible to fix the marginal $\tilde \nu_0$ to $\frac{1}{\nu(Y)}\nu(y)\otimes \delta_{\nu(X)}$, which we use in the rest of the proof. Denote $\tilde \pi_0$ an optimal plan on the cone between the measures $\tilde \nu_0$ and $\tilde \mu$. This optimal plan only charges $(y,\nu(X))$ due to the marginal constraint $\tilde \nu_0$, so that when $y$ is fixed, it defines a measure on $\mathcal{C}(X)$ by disintegration:
    \begin{equation}
      \tilde \pi_0((x,m),(y,n)) =  \tilde \pi_0^y((x,m)) \frac{\nu(y)}{\nu(X)}\otimes \delta_{\nu(X)}\,.
    \end{equation}
    We consider the plan $\tilde \pi$ defined by its disintegration with respect to $\tilde \nu$
    \begin{equation}
        \tilde \pi((x,m),(y,n')) =  r^{\nu(X),n'}_{\#}(\tilde \pi_0^y) \tilde \nu(y,n')\,,
    \end{equation}
    where $r^{n_0,n'}((x,m)) \coloneqq (x,mn'/n_0)$ for $n_0 \neq 0$, which is a map on $\mathcal{C}(X)$. 
    By definition, the marginal of $\tilde \pi$ on $(y,n)$ is $\tilde \nu$. 
    We consider a function $f: \mathcal{C}(X) \times \mathcal{C}(X) \to \R$ that is one-homogeneous with respect to the couple of radial variables $(m,n)$. One has 
    \begin{align}
       \int f((x,m),(y,n))\tilde \pi((x,m),(y,n))  &= \int f((x,mn/\nu(X)),(y,n)) \tilde \pi_0^y((x,m)) \tilde \nu(y,n) \\
        & = \int (1/\nu(X))f((x,m),(y,\nu(X))) \tilde \pi_0^y((x,m)) \int_n n\nu(y,n)\\
        & = \int (1/\nu(Y))f((x,m),(y,\nu(X))) \tilde \pi_0^y((x,m)) \nu(y)\\
        & = \int f((x,m),(y,m)) \tilde \pi_0((x,m),(y,n))\,.
    \end{align}
    It implies that the cost of the plan is preserved since the cost on the product of cones is one-homogeneous. Moreover, the marginal constraint on $(x,m)$ can be tested by functions of the type $mg(x)$ where $g: X \to \R$ 
    which is also homogeneous with respect to $(m,n)$.
\end{proof}

\section*{Acknowledgements}
We wish to thank T. Gallou\"et, Y.-H. Kim, H. Lavenant, C. Léonard and G. Loeper for helpful discussions.
FXV and GT would like to thank the Mathematisches Forschungsinstitut Oberwolfach for hosting in 2024 the workshop 2406, during which this work has advanced significantly. 
The work of FXV and GT is supported by the Bézout Labex (New Monge Problems), funded by ANR, reference ANR-10-LABX-58.

\bibliographystyle{siam}
\bibliography{refs}

\end{document}